\documentclass[a4paper, 10pt, twoside, notitlepage]{amsart}

\usepackage{amsmath,amscd}
\usepackage{amssymb}
\usepackage{amsthm}
\usepackage{comment}
\usepackage{graphicx, xcolor}

\usepackage{mathrsfs}
\usepackage[ocgcolorlinks, linkcolor=blue]{hyperref}


\usepackage{bm}
\usepackage{bbm}
\usepackage{url}

\usepackage[utf8]{inputenc}
\usepackage{mathtools,amssymb}
\usepackage{esint}
\usepackage{tikz}
\usepackage{dsfont}
\usepackage{relsize}
\usepackage{url}
\urlstyle{same}
\usepackage{xcolor}
\usepackage{graphicx}
\usepackage{mathrsfs}
\usepackage[shortlabels]{enumitem}
\usepackage{lineno}
\usepackage{amsmath}
\usepackage{enumitem}
\usepackage{amsthm} 
\usepackage{verbatim}
\usepackage{dsfont}
\numberwithin{equation}{section}

\allowdisplaybreaks

\mathtoolsset{showonlyrefs}

\graphicspath{{images/}}

\newtheorem{theorem}{Theorem}[section]
\newtheorem{lemma}[theorem]{Lemma}
\newtheorem{definition}[theorem]{Definition}
\newtheorem{corollary}[theorem]{Corollary}

\newtheorem{claim}[theorem]{Claim}
\newtheorem{proposition}[theorem]{Proposition}
\newtheorem{question}{Question}
\newtheorem{remark}[theorem]{Remark}

\title[Inverse problem for nonlocal porous medium equations]{Unique determination of coefficients and kernel in nonlocal porous medium equations with absorption term}

\author[Y.-H. Lin]{Yi-Hsuan Lin}
\address{Department of Applied Mathematics, National Yang Ming Chiao Tung University, Hsinchu, Taiwan}
\email{yihsuanlin3@gmail.com}

\author[P. Zimmermann]{Philipp Zimmermann}
\address{Department of Mathematics, ETH Zurich, Z\"urich, Switzerland}
\email{philipp.zimmermann@math.ethz.ch}

\newcommand{\R}{{\mathbb R}}

\newcommand{\N}{{\mathbb N}}

\newcommand{\eps}{\epsilon}
\newcommand{\vareps}{\varepsilon}

\newcommand {\p} {\partial}

\newcommand{\LC}{\left(}
\newcommand{\RC}{\right)}
\newcommand{\wt}{\widetilde}



\newcommand{\tempered}{\mathscr{S}^{\prime}}

\newcommand{\test}{\mathscr{D}}
\newcommand{\distr}{\mathscr{D}^{\prime}}
\newcommand{\norm}[1]{\lVert #1 \rVert}
\DeclareMathOperator{\Div}{div} 
\DeclareMathOperator{\supp}{supp} 
\DeclareMathOperator{\dist}{dist} 

\newcommand{\weak}{\rightharpoonup}
\newcommand{\weakstar}{\overset{\ast}{\rightharpoonup}}


\begin{document}

	\maketitle
	\begin{abstract}
		The main purpose of this article is the study of an inverse problem for nonlocal porous medium equations (NPMEs) with a linear absorption term. More concretely, we show that under certain assumptions on the time-independent coefficients $\rho,q$ and the time-independent kernel $K$ of the nonlocal operator $L_K$, the (partial) Dirichlet-to-Neumann map uniquely determines the three quantities $(\rho,K,q)$ in the nonlocal porous medium equation $\rho \partial_tu+L_K(u^m)+qu=0$, where $m>1$.  In the first part of this work we adapt the Galerkin method to prove existence and uniqueness of nonnegative, bounded solutions to the homogenoeus NPME with regular initial and exterior conditions. Additionally, a comparison principle for solutions of the NPME is proved, whenever they can be approximated by sufficiently regular functions like the one constructed for the homogeneous NPME. These results are then used in the second part to prove the unique determination of the coefficients $(\rho,K,q)$ in the inverse problem. Finally, we show that the assumptions on the nonlocal operator $L_K$ in our main theorem are satisfied by the fractional conductivity operator $\mathcal{L}_{\gamma}$, whose kernel is $\gamma^{1/2}(x)\gamma^{1/2}(y)/|x-y|^{n+2s}$ up to a normalization constant.
		
		\medskip
		
		\noindent{\bf Keywords.} Inverse problem, nonlocal porous medium equation, quasilinear, comparison principle, unique continuation principle.
		
		\noindent{\bf Mathematics Subject Classification (2020)}: Primary 26A33, 35R30; secondary 35K59, 76S05

	\end{abstract}

	\tableofcontents

	\section{Introduction}
	\label{sec: introduction}
	
	In recent years inverse problems for a wide class of nonlocal partial differential equations (PDEs) have been studied. The most classical example is the \emph{fractional Calder\'on problem}. In this problem one considers Dirichlet problem for the \emph{fractional Schr\"odinger equation}
	\begin{equation}
		\label{eq: fractional schroedinger}
		\begin{cases}
			((-\Delta)^s+q)u=0 &\text{ in }\Omega,\\
			u=\varphi &\text{ in }\Omega_e,
		\end{cases}
	\end{equation}
	where $0<s<1$, $(-\Delta)^s$ is the fractional Laplacian, $q$ is a given potential and $\Omega_e=\R^n\setminus\overline{\Omega}$ is the exterior of the bounded domain $\Omega\subset\R^n$. The fractional Calder\'on problem now asks to uniquely determine the potential $q$ from the (exterior) Dirichlet-to-Neumann (DN) map $\varphi\mapsto \Lambda_q(\varphi)=\left. (-\Delta)^su \right|_{\Omega_e}$. A first affirmative answer has been established in \cite{GSU20} for bounded potentials.

	Later, this work initiated many further developments in the field of nonlocal inverse problems, which includes determination of singular potentials, lower order local perturbations, higher order fractional Laplacians, single measurement results, generalizations to other nonlocal operators in place of the fractional Laplacian and unbounded domains (see \cite{bhattacharyya2021inverse,CMR20,CMRU20,GLX,CL2019determining,CLL2017simultaneously,cekic2020calderon,feizmohammadi2021fractional,harrach2017nonlocal-monotonicity,harrach2020monotonicity,GRSU18,GU2021calder,ghosh2021non,lin2020monotonicity,LL2020inverse,LL2022inverse,LLR2019calder,LLU2022calder,KLW2021calder,RS17,ruland2018exponential,RZ2022unboundedFracCald} and the references therein). Another type of inverse problems for nonlocal operators has been investigated for example in the articles \cite{RZ2022unboundedFracCald,RZ2022FracCondCounter,CRZ2022global,RZ-low-2022,CRTZ-2022,zimmermann2023inverse,GU2021calder,li2021inverse}, where the authors try to recover coefficients in a nonlocal operator from the DN map instead of determining lower order perturbations of a nonlocal operator. The solvability of the inverse problems in the above works strongly depend on the linear structure of the nonlocal operator, the unique continuation principle (UCP) and the Runge approximation. The last property is a consequence of the UCP, which in turn can be phrased as follows:\\
	
	\noindent\emph{\textbf{Unique continuation principle.}
		Let $X$ be a Banach space satisfying $C_c^{\infty}(\R^n)\hookrightarrow X\hookrightarrow\distr(\R^n)$. We say that an operator $L\colon X\to \distr(\R^n)$ has the UCP on $X$ if $Lu=u=0$ in some nonempty open set $\Omega$ implies $u=0$ in $\R^n$. 
	}\\
	
	In particular, in the article \cite{GSU20} it has been shown that the fractional Laplacian $(-\Delta)^s$, $0<s<1$, has the UCP on $X=H^t(\R^n)$ for any $t\in\R$. The same clearly remains true for local perturbations of fractional Laplacians. In \cite{GLX}, it has been proved that fractional powers of second order operators $L=-\Div(A\nabla \cdot)$, where $A\in C^{\infty}_b(\R^{n\times n};\R)$ is symmetric and uniformly elliptic, satisfy the UCP on $X=H^s(\R^n)$. Furthermore, in \cite{zimmermann2023inverse} it has been established by using the fractional Liouville reduction that the fractional conductivity operator $\mathcal{L}_{\gamma}$ has the UCP as well, at least when the coefficient $\gamma$ is sufficiently regular. The fractional conductivity operator $\mathcal{L}_{\gamma}$ belongs to a subclass of integro-differential operators of order $2s$, that is of operators of the form
	\begin{equation}
		\label{eq: integro diff op}
		L_Ku(x)=\mathrm{P.V.}\int_{\R^n}K(x,y)\frac{u(x)-u(y)}{|x-y|^{n+2s}}\,dy,
	\end{equation}
	where $K\colon \R^n\times\R^n\to \R$ is a given function and $\mathrm{P.V.}$ stands for the Cauchy principal value. In the rest of the article, we abuse the terminology and refer to $K$ as the kernel of $L_K$. Such an operator is said to belong to the class $\mathcal{L}_0$, whenever the kernel $K$ satisfies the following properties:
	\begin{enumerate}[(i)]
		\item $K$ is symmetric in the sense that
		\[
		K(x,y)=K(y,x)\ \text{for all} \ x,y\in\R^n.
		\]
		\item $K$ is uniformly elliptic, that is there holds
		\[
		\lambda \leq K(x,y)\leq \Lambda
		\]
		for all $x,y\in\R^n$ and some constants $0<\lambda \leq \Lambda<\infty$.
	\end{enumerate}
	Now, the fractional conductivity operator $\mathcal{L}_{\gamma}$ is the integro-differential operator of order $2s$ of the form \eqref{eq: integro diff op}, whose kernel is given by 
	\begin{equation}
		\label{eq: kernel fractional conductivity operator}
		K(x,y)=C_{n,s}\gamma^{1/2}(x)\gamma^{1/2}(y)
	\end{equation}
	for some uniformly elliptic function $\gamma\colon\R^n\to \R$ and hence $\mathcal{L}_{\gamma}\in \mathcal{L}_0$. Here $C_{n,s}$ is the constant given by 
	\begin{align}\label{C_ns}
		C_{n,s}:=\frac{4^s \Gamma(n/2+s)}{\pi^{n/2}|\Gamma(-s)|},
	\end{align}
    where $\Gamma$ is the Gamma function. Let us note that in the special case $\gamma=1$ the operator $\mathcal{L}_{\gamma}$ reduces to the fractional Laplacian $(-\Delta)^s$ and we remark that the constant $C_{n,s}$ is defined as in \eqref{C_ns} precisely that the Fourier symbol of $(-\Delta)^s$ is $|\xi|^{2s}$.
	
	Recently, the interplay between nonlocality and nonlinearity of an operator and its influence on the solvability of the related inverse problem has been studied in \cite{KRZ-2023} and \cite{KLZ-2022}. The underlying models in these articles are closely related, but exhibit dramatically different unique continuation properties. In the first article vector-valued generalizations of weighted fractional $p\,$-biharmonic operators
	\begin{equation}
		\label{eq: p biharm}
		(-\Delta)^{s/2}\LC \sigma |(-\Delta)^{s/2}u|^{p-2}(\Delta)^{s/2}u\RC 
	\end{equation}
	has been investigated, where one wants to determine the uniformly elliptic function $\sigma$ from the DN map. This has been achieved by monotonicity methods and the observation that the operators in \eqref{eq: p biharm} satisfy the UCP on the Bessel potential space $X=H^{s,p}(\R^n)$. The latter rests on the fact that the fractional Laplacian also satisfies the UCP on the Bessel potential spaces $X=H^{t,p}(\R^n)$ with $t\in\R$ and $1\leq p<\infty$. On the other hand, in \cite{KLZ-2022} the authors studied weighted fractional $p\,$-Laplacian, which are integro-differential operators of the form
	\begin{equation}
		\label{eq: weighted p lap}
		(-\Delta)^s_{\sigma,p}u(x)=\mathrm{P.V.}\int_{\R^n}\sigma(x,y) |u(x)-u(y)|^{p-2}\frac{u(x)-u(y)}{|x-y|^{n+sp}}\,dxdy,
	\end{equation}
	where $0<s<1$, $1<p<\infty$ and $\sigma\colon \R^n\times\R^n\to\R$ is a uniformly elliptic function. A disadvantage concerning inverse problems is the fact that its not known whether operators of the form \eqref{eq: weighted p lap} satisfy the UCP and hence an alternative approach has been established. In this work an exterior determination method in the spirit of the boundary determination result of Kohn and Vogelius \cite{KV84} has been introduced. A similar method was also used for the classical $p\,$-Calder\'on problem by Salo and Zhong \cite{Salo:Zhong:2012} and for the fractional conductivity problem in \cite{CRZ2022global,RZ-low-2022} or the parabolic fractional conductivity problem in \cite{LRZ2022calder}.
	
	In this article we follow the same line of research, namely we investigate an inverse problem for a nonlocal and nonlinear PDE. More concretely, we study an inverse problem for a class of \emph{nonlocal porous medium equations} (NPMEs), which are of the form
	\begin{equation}
		\label{eq: NPME}
		\rho\partial_t u+L_K(\Phi^m(u))+qu=0\ \text{in}\ \Omega_T.
	\end{equation}
	Here $\Omega\subset\R^n$ is a bounded Lipschitz domain, $T>0$, $\Omega_T=\Omega\times (0,T)$\footnote{Throughout the article we write $A_{\tau}$ to denote the space-time cylinder $A\times (0,\tau)$, whenever $\tau>0$ and $A\subset\R^n$.}, the function $\Phi^m\colon\R\to\R$ is given by
	\begin{equation}
		\label{eq: nonlinearity intro}
		\Phi^m(t)=|t|^{m-1}t\ \text{for some}\ m>1,
	\end{equation}
	$\rho\colon\R^n\to \R$ is uniformly elliptic, $L_K$ an integro-differential operator of order $2s$ in the class $\mathcal{L}_0$ and $q\colon\R^n\to\R$ a nonnegative potential. More concrete assumptions on the coefficients $\rho,q$ and the nonlocal operator $L_K$ will be given below (Section~\ref{subsec: nonlocal porous medium equation}). To motivate the investigation of the PDE \eqref{eq: NPME} and the related inverse problem, we first review in the next section the classical porous medium equation and then discuss natural nonlocal generalizations of it.
	
	\subsection{Local and nonlocal porous medium equations}\label{subsec: PME}
	
	The \emph{porous medium equation} (PME) is one of the simplest examples of a nonlinear evolution equation and its study at least dates back to 1957 \cite{oleinik1957equations}. One of its basic forms is
	\begin{equation}
		\label{eq: PME}
		\partial_t u-\Delta \Phi(u)=0\ \text{in}\ \Omega_T,
	\end{equation}
	where $\Omega\subset\R^n$ can be any domain and $\Phi$ is typically assumed to be of the form $\Phi=\Phi^m$ for some $m>1$ but not restricted to. Let us note that for $m>1$ the PME is degenerate parabolic, whereas for $m=1$ one would obtain the classical heat equation and for $m<1$ a singular PDE or fast diffusion equation. Physically speaking this equation models the gas flow through a porous medium and it is also used to model various phenomena in other fields, such as plasma physics \cite{rosenau1983thermal} and population dynamics \cite{namba1980density}. In the literature many generalizations of \eqref{eq: PME} had been studied like adding a forcing term $f(u,\nabla u)$ on the right hand side. For such a term the dependence on $\nabla u$ incorporates convection of the medium and the dependence on $u$ reaction or absorption effects. For a detailed account on the mathematical theory for the porous medium equation, we refer the reader to the monographs \cite{Adrian60,PME}.
	
	Furthermore, inverse problems for the PME \eqref{eq: PME} have been investigated in \cite{CGN21,CGU21}. In \cite{CGN21}, the authors determined two coefficients $(\rho,\gamma)$ for the PME in the form 
	\begin{equation}
		\label{PME abs}
		\rho \p_t u -\Div (\gamma \nabla \Phi^m(u))=0
	\end{equation}
	for $m>1$. In \cite{CGU21} this result has been generalized to porous medium equations with a possible nonlinear absorption term. They prove a unique determination result for the three parameters $(\rho,\gamma,q)$ in
	\begin{equation}
		\label{eq: gunter PME}
		\rho \p_t u -\Div (\gamma \nabla \Phi^m(u))+q \Phi^r(u)=0,
	\end{equation}
	where $m>1$ and $m^{-1}<r<\sqrt{m}$.

	An interesting, nonlocal generalization of the PME \eqref{eq: PME} is obtained by replacing the Laplacian by the fractional Laplacian, which leads to
	\begin{equation}
		\label{eq: basic fractional porous medium equation}
		\partial_tu+(-\Delta)^s(\Phi^m(u))=0 \text{ in }\Omega_T
	\end{equation}
	for $0<s<1$. This model describes anomalous diffusion through a porous medium and additional information on this equation as well as generalizations of it can be found for example in the articles \cite{RecentProgressFracPorous,BSV-2015,BV-2016,BFX-2017}.
	
	Hence, based on what we said a natural generalization to consider are nonlocal porous medium equations of the form
	\begin{equation}
		\label{eq: nonlinear NPME}
		\rho\partial_t u+L_K(\Phi^m(u))+q\Phi^r(u)=0\ \text{in}\ \Omega_T
	\end{equation}
	(see \eqref{eq: gunter PME} and \eqref{eq: basic fractional porous medium equation}). To simplify the presentation and the technicalities, we will restrict our considerations in this work to the linear case \eqref{eq: NPME} instead of \eqref{eq: nonlinear NPME}.

	\subsection{Inverse problem for nonlocal porous medium equations}
	\label{subsec: nonlocal porous medium equation}

		Next, let us consider the initial-exterior value problem for the NPME
		\begin{equation}\label{eq: main}
			\begin{cases}
				\rho\partial_tu+ L_K\LC \Phi^m (u) \RC +qu=0&  \text{ in }\Omega_T,\\
				u=\varphi &\text{ in }\LC\Omega_e\RC_T,\\
				u(0)=0 & \text{ in }\Omega,
			\end{cases}
		\end{equation}
		where $\Omega\subset\R^n$ is a bounded Lipschitz domain, $L_K\in \mathcal{L}_0$, $m>1$, $\rho\colon \R^n\to \R$ is uniformly elliptic, $q\colon\R^n\to\R$ is a nonnegative potential and $\varphi\colon \Omega_e\times (0,T)\to\R$ is a given exterior condition.

		In Section \ref{sec: forward problem}, it is established that under suitable assumptions on the coefficients $\rho$, $q$ and the exterior condition $\varphi$ the problem \eqref{eq: main} is well-posed. More concretely, we show in Theorem~\ref{Existence result with linear absorption term} and Theorem~\ref{Uniquenerss with lin absorption term} that there exists a unique, nonnegative, bounded solution of \eqref{eq: main}. Therefore, we can now introduce the DN map $\Lambda_{\rho,K,q}$ related to \eqref{eq: main}, which is strongly given by
		\begin{equation}\label{DN map}
			\Lambda_{\rho,K,q}\varphi=\left.L_K(\Phi^m(u))\right|_{(\Omega_e)_T}
		\end{equation}
		for suitable exterior conditions $\varphi$, where $u\colon\R^n\times (0,T)\to\R$ is the unique solution to \eqref{eq: main}. A more detailed account on the DN map is given in Section~\ref{subsec: DN maps}. Now, we can formulate our inverse problem.
		
		\begin{question}
        \label{inverse problem}
			Can one uniquely determine the coefficients $\rho,q$ in $\overline{\Omega}$ and the kernel $K$ in $\R^{2n}$ by the nonlocal DN map $\Lambda_{\rho,K,q}$?
		\end{question}
		
		Next, let us recall that for any $L_K\in\mathcal{L}_0$ one can show by the Lax--Milgram theorem and the fractional Poincar\'e inequality (see \eqref{eq: Poincare int1}) that for any $\varphi\in H^s(\R^n)$ there is a unique weak solution $u\in H^s(\R^n)$ of 
		\begin{equation}
			\label{eq: Dirichelt problem}
			\begin{cases}
				L_K u=0& \text{ in } \Omega,\\
				u=\varphi &\text{ in } \Omega_e.
			\end{cases}
		\end{equation}
		Furthermore, it is not difficult to prove that the exterior value to solution map of \eqref{eq: Dirichelt problem} is well-defined on the (abstract) trace space $Z=H^s(\R^n)/\widetilde{H}^s(\Omega)$. Hence, we can define the DN map related to \eqref{eq: Dirichelt problem} by
		\begin{equation}
			\label{eq: DN map nonlocal operatot}
			\Lambda_K\colon Z\to Z^*,\  \left\langle \Lambda_K \varphi,\psi\right\rangle\vcentcolon = B_K(u,\psi),
		\end{equation}
		where $u\in H^s(\R^n)$ is the unique solution to \eqref{eq: Dirichelt problem}, $\psi\in Z$ and $B_K\colon H^s(\R^n)\times H^s(\R^n)\to \R$ is the naturally bilinear form related to $L_K$ (see~\eqref{eq: bilinear form kernel}).
		
		In general, the determination of the kernel $K$ from the DN map \eqref{eq: DN map nonlocal operatot}
		is a highly nontrivial inverse problem. Nevertheless, it can be expected that if the linear elliptic nonlocal inverse problem is not uniquely solvable, then the same remains true for the inverse problem of the NPME. To rule out this possibility, we next introduce a suitable class of nonlocal operators, so called measurement equivalent operators. In a similar spirit, the authors of \cite{RZ2022unboundedFracCald} formulated general conditions under which local perturbations of linear elliptic nonlocal operators can be uniquely determined from the related DN map.
		
		\begin{definition}[Measurement equivalent operators]
			\label{def: nonlocal op for inverse problem}
			Let $\Omega\subset\R^n$ be a bounded domain and $0<s<1$. We say that two uniformly elliptic nonlocal operators $L_{K_1},L_{K_2}\in \mathcal{L}_0$ of order $2s$ are \emph{measurement equivalent}, written as $L_{K_1}\sim L_{K_2}$, if the condition
			\begin{equation}
				\label{eq: for measurement equivalent}
				\left. \Lambda_{K_1}\varphi\right|_{W_2}=\left. \Lambda_{K_2}\varphi\right|_{W_2},\ \text{for all}\ \varphi\in C_c^{\infty}(W_1)\ \text{with}\ \varphi\geq 0
			\end{equation}
			for some nonempty, open sets $W_1,W_2\subset \Omega_e$ with $W_1\cap W_2\neq \emptyset$, implies $K_1=K_2$. 
		\end{definition}
		
		\begin{remark}
			We assumed that the measurement sets $W_1,W_2\subset \Omega_e$  in the above definition are non-disjoint due to the counterexamples constructed in \cite{RZ2022FracCondCounter} and \cite{RZ-low-2022}. For example in \cite{RZ-low-2022}, it is proved that if the measurement sets are disjoint and have a positive distance to $\Omega$, then one can construct two different conductivities $\gamma_1,\gamma_2$ such that the related fractional conductivity operators $\mathcal{L}_{\gamma_j}$, $j=1,2$, satisfy \eqref{eq: for measurement equivalent} and the regularity assumptions in Proposition~\ref{prop: fractional conductivity operator} below.
		\end{remark}
		
		In Section \ref{subsec: meas equivalence}, we provide some examples of nonlocal operators that fulfill Definition \ref{def: nonlocal op for inverse problem}.
		Now, we are ready to state our main result of this article, which generalizes the recent work \cite{CGU21} on the local porous medium equation to the a nonlocal setting. 
  
		\begin{theorem}\label{thm: main}
			Let $\Omega\subset\R^n$ be a bounded Lipschitz domain, $T>0$, $0<s<\alpha\leq 1$, $m> 1$ and $W_1,W_2\Subset\Omega_e$ two nonempty open sets with $W_1\cap W_2\neq\emptyset$. Assume that for $j=1,2$ we have given nonlocal operators $L_{K_j}\in \mathcal{L}_0$ satisfying the UCP on $H^s(\R^n)$, and coefficients $\rho_j, q_j \in C_+^{1,\alpha}(\R^n)$ such that $\rho_j$ is uniformly elliptic. 
			If $L_{K_1}\sim L_{K_2}$ and there holds
			\begin{align}\label{same DN maps}
				\left.  \Lambda_{\rho_1,K_1,q_1}\varphi\right|_{(W_2)_T}=\left.  \Lambda_{\rho_2,K_2,q_2}\varphi \right|_{(W_2)_T}, 
			\end{align}  
			for any $0\leq \varphi \in C_c([0,T]\times W_1)$ with $ \Phi(\varphi)\in L^2(0,T;H^s(\R^n))$, then we have 
			\begin{align}
				\rho_1 =\rho_2, \ q_1 =q_2 \text{ in } \overline{\Omega} \text{ and } K_1=K_2 \text{ in }\R^{2n}.
			\end{align}
		\end{theorem}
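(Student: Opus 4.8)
The plan is to follow the now-standard linearization scheme for nonlocal inverse problems, adapted to the quasilinear parabolic setting as in \cite{CGU21,LRZ2022calder}, and to split the argument into three determinations: first the kernel $K$, then the density $\rho$, and finally the potential $q$. The starting point is the well-posedness established in Theorem~\ref{Existence result with linear absorption term} and Theorem~\ref{Uniquenerss with lin absorption term}, which guarantees that the DN maps $\Lambda_{\rho_j,K_j,q_j}$ in \eqref{same DN maps} are defined on the stated class of nonnegative exterior data, together with the comparison principle, which will let me control the sign and size of solutions.

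\emph{Step 1: Determination of the kernel.} I would first exploit the quasilinear structure by testing \eqref{same DN maps} against exterior data of small amplitude. Writing $u_j^\varepsilon$ for the solution with exterior condition $\varepsilon\varphi$, the degeneracy of $\Phi^m$ near $0$ (since $m>1$) means that a crude linearization at $0$ is delicate; instead I would use a rescaling in the spirit of the porous-medium self-similarity, or alternatively compare with the \emph{stationary} elliptic problem \eqref{eq: Dirichelt problem} by a time-integration argument: integrating the equation in $t$ over $(0,T)$ and using $u(0)=0$, the leading behaviour of $\int_0^T L_{K_j}(\Phi^m(u_j))\,dt$ on $(W_2)_T$ should, after the appropriate limit, reduce the identity to the elliptic DN-map identity $\Lambda_{K_1}\varphi|_{W_2}=\Lambda_{K_2}\varphi|_{W_2}$ for all nonnegative $\varphi\in C_c^\infty(W_1)$. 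The hypothesis $L_{K_1}\sim L_{K_2}$ (Definition~\ref{def: nonlocal op for inverse problem}) then yields $K_1=K_2$ in $\R^{2n}$ immediately. Once the kernels agree, set $K:=K_1=K_2$ and $L_K$ has the UCP on $H^s(\R^n)$ by assumption; this is what powers the Runge approximation used in the remaining two steps.

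\emph{Steps 2 and 3: Determination of $\rho$ and $q$.} With $K$ fixed, let $u_1,u_2$ be the solutions with the same exterior data $\varphi$. Their difference $w=u_1-u_2$ solves an equation of the form $\rho_1\partial_t w + L_K\bigl(\Phi^m(u_1)-\Phi^m(u_2)\bigr) + q_1 w = (\rho_2-\rho_1)\partial_t u_2 + (q_2-q_1)u_2$ in $\Omega_T$, with $w=0$ in $(\Omega_e)_T$ and $w(0)=0$. Since the Neumann data agree on $(W_2)_T$ and $\Phi^m(u_1)-\Phi^m(u_2)$ has the UCP-compatible structure, an integration-by-parts / Alessandrini-type identity gives
\begin{equation}
  \int_0^T\!\!\int_\Omega \bigl[(\rho_1-\rho_2)\partial_t u_2\,\psi + (q_1-q_2)u_2\,\psi\bigr]\,dx\,dt = 0
\end{equation}
for all $\psi$ ranging over a suitable solution set of the adjoint problem. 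By Runge approximation for $L_K$ (a consequence of the UCP, exactly as in \cite{GSU20,zimmermann2023inverse}) the products $\{(\partial_t u_2, u_2)\}$ and the adjoint solutions $\psi$ are dense enough in $\overline{\Omega}$ to force $\rho_1=\rho_2$ and $q_1=q_2$ there. To disentangle $\rho$ from $q$ I would choose two families of exterior data producing, respectively, solutions whose time-derivative dominates and solutions that are essentially stationary in $t$ (again using the comparison principle to keep everything nonnegative and bounded), so that the two density/potential contributions can be separated — this is the nonlocal analogue of the argument of Cârstea--Nakamura--Vidyasankar and of \cite{CGU21}.

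\emph{Main obstacle.} The crux is Step 1: handling the degenerate nonlinearity $\Phi^m$ with $m>1$ so as to extract a \emph{linear} elliptic DN-map identity from the parabolic data. The map $\varepsilon\mapsto u^\varepsilon$ is not differentiable at $\varepsilon=0$ in the usual sense because the equation degenerates, so the first-order linearization that works for semilinear problems is unavailable; one must instead argue through time-integration, rescaling, or a barrier/comparison argument, and carefully justify the passage to the limit in $L_K(\Phi^m(u^\varepsilon))|_{(W_2)_T}$ using the regularity of solutions from the forward theory. A secondary technical point is verifying that the solution set produced by admissible nonnegative exterior data is still rich enough for the Runge approximation in Steps 2–3 despite the one-sided ($\varphi\ge 0$) constraint; here the comparison principle and the freedom to add constants/translate data should suffice.
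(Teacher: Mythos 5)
Your plan is right in spirit in two places: reducing the parabolic problem to a stationary elliptic one by time-integration, and invoking the comparison principle to keep the solutions controlled, and you are right that the degeneracy of $\Phi^m$ at $0$ kills a naive first linearization. But the way you then propose to carry out the determination departs from what actually works, and the departures are not cosmetic.

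\emph{Step 1.} You open with ``small amplitude'' exterior data and then hedge between several alternatives. The mechanism the paper uses is the \emph{opposite} large-parameter limit: take $\widetilde{\varphi}(x,t)=t^m h\,\widetilde{\varphi}_0(x)$ with $h\to\infty$, set $v=\Phi^m(u)$, and form the \emph{weighted} time integral $V(x)=\int_0^{T_0}(T_0-t)^\beta v(x,t)\,dt$. One computes that $V$ solves $L_K V=-(\mathcal{M}+\mathcal{N})$ in $\Omega$ with prescribed data on $\Omega_e$, where $\mathcal{M}$, $\mathcal{N}$ come from the $\rho$ and $q$ terms and are $\mathcal{O}\!\left(h^{1/m}\right)$ while the Dirichlet data is $\mathcal{O}(h)$. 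Because $1/m<1$, dividing by $h$ and letting $h\to\infty$ kills the nonlinear contribution and leaves the \emph{linear} elliptic DN map $\Lambda_K$, which is exactly what feeds into the definition of measurement equivalence. Without the weight $(T_0-t)^\beta$ and the $h\to\infty$ asymptotics this reduction does not go through; an unweighted integration over $(0,T)$ does not eliminate the boundary term at $t=T$ or produce the elliptic structure you need, and ``small $\varepsilon$'' is exactly the regime where the degeneracy is worst.

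\emph{Steps 2 and 3.} Here the proposal takes a genuinely different route, and I do not see how to make it work. You would write the equation for $w=u_1-u_2$, integrate against adjoint solutions $\psi$, and then use Runge approximation to deduce density of the products. Two problems. First, the differenced equation contains the term $L_K\!\left(\Phi^m(u_1)-\Phi^m(u_2)\right)$, which is not of the form $L_K(aw)$ for any coefficient $a$ you control, so there is no linear adjoint problem against which the Alessandrini identity closes; the integration by parts you would like to perform has no well-defined target equation for $\psi$. Second, Runge approximation is available (via UCP) for the \emph{linear elliptic} problem $L_K u=0$, not for the quasilinear degenerate parabolic one, and the paper nowhere establishes or uses such a parabolic Runge property. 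What the paper does instead is to push the $h$-asymptotics one order further: $V=h\,c\,V^{(0)}+h^{1/m}V^{(1)}+R_2$ with $R_2=\mathcal{O}\!\left(h^{1/m^2}\right)$, where $V^{(1)}$ solves a linear elliptic Dirichlet problem whose right-hand side is a linear combination of $\rho(V^{(0)})^{1/m}$ and $q(V^{(0)})^{1/m}$ with coefficients carrying different powers of $T_0$. Comparing the two expansions and using the UCP on $H^s(\R^n)$ for the linear operator $L_K$ gives $V^{(1)}_1=V^{(1)}_2$, hence the pointwise relation $\left[\beta^{-1}(\rho_1-\rho_2)+\tfrac{T_0}{\beta+2}(q_1-q_2)\right](V^{(0)})^{1/m}=0$ a.e.\ in $\Omega$. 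Letting $T_0\to 0$ separates $\rho$ from $q$, and a second application of UCP (to the solution $V^{(0)}$ of $L_K V^{(0)}=0$) upgrades the pointwise relation to $\rho_1=\rho_2$, and similarly for $q$. No Runge approximation enters, and the nonlinearity is handled entirely through the comparison principle, which is exactly what controls $v\le v_0=h t^m V^{(0)}$ and hence bounds $R_1$ and $R_2$.

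\emph{Bottom line.} The time-integration intuition, the role of the comparison principle, and the use of UCP and of measurement equivalence are the right building blocks and you identified them. The gap is that you never fix the correct asymptotic regime (large $h$, weighted integral, expansion in powers $h$, $h^{1/m}$, $h^{1/m^2}$), and your proposed mechanism for extracting $\rho$ and $q$ — Alessandrini plus parabolic Runge — fails because the equation for the difference is not linear in $w$ and no parabolic Runge theorem is available here; the actual determination is entirely via the second-order coefficient $V^{(1)}$ in the elliptic reduction together with the $T_0\to 0$ limit.
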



        
        Finally, let us observe that if one can show that the fractional powers $(-\Div (A\nabla \cdot))^s$ induce measure equivalent operators, then Theorem~\ref{thm: main} implies that the related inverse problems for the NPME are uniquely solvable (see~Remark~\ref{remark: measurement equiv op}). 
		
		
		\subsection{Inverse problem for the porous medium equation}

		In this section, we will have a closer look at some aspects of the proof of the unique determination in the inverse problem for the porous medium equation \eqref{PME abs}, since some of these ideas also enter into the proof of Theorem \ref{thm: main}. For this purpose let us consider the initial-boundary value problem related \eqref{PME abs} for $r=1$, namely
		\begin{equation}
			\label{eq: PME with absorption}
			\begin{cases}
				\rho\partial_tu-\Div(\gamma\nabla   \Phi^m(u))+q  u=0 &\text{ in }\Omega_T,\\
				u=\varphi & \text{ on }\p\Omega_T,\\
				u(0)=0 &\text{ in }\Omega.
			\end{cases}
		\end{equation}
		

		For any regular boundary value $\varphi\geq 0$, one can prove under suitable regularity assumptions on $\rho$, $\gamma$ and $q$ the existence of a unique nonnegative solution to \eqref{eq: PME with absorption} (see \cite{CGU21}). Hence, the natural measurements related to \eqref{eq: PME with absorption} can be encoded in the DN map 
		\begin{equation}
        \label{eq: DN map local case}
	       	\varphi\mapsto \Lambda_{\rho,\gamma,q}\varphi\vcentcolon = \left.\gamma\partial_{\nu}\Phi^m(u)\right|_{\partial\Omega_T}.
		\end{equation}
		In \cite{CGU21}, they showed that if one has
        \begin{equation}
        \label{eq: equal dn maps local case}
            \Lambda_{\rho_1,\gamma_1,q_1}\varphi=\Lambda_{\rho_2,\gamma_2,q_2}\varphi
        \end{equation}
        for all sufficiently regular boundary data $\varphi\geq 0$, then there holds $\rho_1=\rho_2$, $\gamma_1=\gamma_2$ and $q_1=q_2$ in $\Omega$. Furthermore, under the additional assumption $\gamma_1=\gamma_2=1$ in $\Omega$, the authors of \cite{CGU21} also obtained a partial data uniqueness result. Actually, these results were established for $r>0$ satisfying $m^{-1}<r<m^{1/2}$. 
		
		Next, we describe the main idea of the uniqueness proof for the inverse problem related to \eqref{eq: PME with absorption}. In the first step, one introduces the new function $v:=\Phi^m(u)=u^m$, which solves the problem 
		\begin{equation}
			\label{eq: PME with absorption power}
			\begin{cases}
				\rho\partial_tv^{1/m}-\Div(\gamma\nabla   v)+q v^{\frac{1}{m}}=0   & \text{ in }\Omega\times (0,T),\\
				v=f  & \text{ in }\partial\Omega\times (0,T),\\
				v(0)=0 &\text{ in }\Omega,
			\end{cases}
		\end{equation}
		with $f=\varphi^m$, whenever $u$ is a nonnegative solution to \eqref{eq: PME with absorption}. The DN map $\Lambda^{red.1}_{\rho,\gamma,q}$ related to \eqref{eq: PME with absorption power} satisfies 
        \begin{equation}
            \Lambda_{\rho,\gamma,q}\varphi=\Lambda^{red.1}_{\rho,\gamma,q}\varphi^m.
        \end{equation}
        Next consider large, time homogeneous boundary values of the form $$f(x,t)=h t^mg(x)$$ with $g\geq 0$ on $\p \Omega$ and assume $h\gg 1$ is a fixed large parameter. Now the essential observation is that the time-integral transform 
		\[
		V(x)=\int_0^T(T-t)^{\alpha}v(x,t)\,dt,
		\]
		gives a solution to Dirichlet problem 
		\begin{equation}
			\label{eq: reduction 2}
			\begin{cases}
				\Div (\gamma\nabla V)=\mathcal{N}_t+\mathcal{N}_a & \text{ in }\quad\Omega,\\
				V=h T^{1+\alpha+m}\frac{\Gamma(1+\alpha)\Gamma(1+m)}{\Gamma(2+\alpha+m)}g & \text{ on }\quad\partial\Omega,
			\end{cases}
		\end{equation}
		where the parameters $T>0$ and $\alpha>0$ are determined later. The DN map of \eqref{eq: reduction 2} is given by 
        \begin{equation}
            \Lambda_{\rho,\gamma,q}^{red. 2}g=\left.h^{-1}\gamma\partial_{\nu}V\right|_{\partial\Omega}
        \end{equation}
        and can be computed from the DN map $\Lambda_{\rho,\gamma,q}^{red. 1}$. The approach  to establish the unique determination result can be briefly summarized as follows:
		\begin{enumerate}[(i)]
			\item\label{item: asymptotic 1} The Neumann data $\Lambda_{\rho,\gamma,q}^{red. 2}g$ converges (up to a constant) to $\Lambda_{\gamma}g=\left.\gamma\partial_{\nu}V_0\right|_{\partial\Omega}$ as $h\to\infty$, where $\Lambda_{\gamma}$ is the DN map of the conductivity equation and $V_0$ is defined via the expansion $$V(x)=h T^{1+\alpha+m}\frac{\Gamma(1+\alpha)\Gamma(1+m)}{\Gamma(2+\alpha+m)}V_0(x)+R_1(x).$$ 
			
			\item\label{item: asymptotic 2} In addition, making use of the more delicate asymptotic ansatz 
			$$
			V(x)=h V_0(x)+h^{1/m}V_t(x)+R_2(x),
			$$ 
			for suitable functions $V_t$ and $R_2$, one can see that the Neumann data $\Lambda_{\rho,\gamma,q}^{red. 2}g$ has the asymptotic expansion
			\begin{equation}
				\label{eq: asymptotic expansion 2}
				\begin{split}
					\Lambda_{\rho,\gamma,q}^{red. 2}g&=T^{1+\alpha+m}\frac{\Gamma(1+\alpha)\Gamma(1+m)}{\Gamma(2+\alpha+m)}V_0(x) \\
					&\quad +\left. h^{1/m-1}\gamma\partial_{\nu}V_2\right|_{\partial\Omega}+\mathcal{O}(h^{1/m^2-1})
				\end{split}
			\end{equation}
			as $h\to\infty$.
			
			\item Finally, if $\left\{\LC \rho_j,\gamma_j,q_j\RC\right\}_{j=1,2}$ is a triple of coefficients such that the related DN maps $\Lambda_{\rho_j,\gamma_j,q_j}$ coincide, then the property \ref{item: asymptotic 1} yields that $\gamma_1=\gamma_2$ and \ref{item: asymptotic 2} that $\rho_1=\rho_2$, $q_1=q_2$ in $\Omega$.
		\end{enumerate}
        In fact, based on the above ideas, we also prove Theorem \ref{thm: main} in Section \ref{sec: inverse}.
		
		\subsection{Structure of the paper}	The paper is organized as follows. In Section \ref{sec: Preliminaries}, we review several function spaces and integro-differential operators. In Section \ref{sec: forward problem}, we study the forward problem \eqref{eq: main}. More specifically, we prove the existence of a unique, nonnegative, bounded solution of \eqref{eq: main} under suitable sign and regularity assumptions on the data. Moreover, we demonstrate a novel comparison principle for NPMEs in Section \ref{sec: comparison principle}, which will be utilized to establish Theorem \ref{thm: main}. 
		In Section \ref{sec: DN maps and measurment equivalent operators}, we define the DN map rigorously, and provide some examples of measurement equivalent operators. 
		Our main result Theorem \ref{thm: main} will be proved in Section \ref{sec: inverse}. Finally, some compactness and density properties will be addressed in Appendix \ref{sec: appendix compact} and \ref{sec: appendix density}.
		
		\section{Preliminaries}
		\label{sec: Preliminaries}
		
		Let us emphasize that $\Omega\subset \R^n$ always denotes an open set and $\Omega_e\vcentcolon = \R^n\setminus\overline{\Omega}$ is called the exterior of $\Omega$ throughout this article. 
		
		\subsection{Sobolev and H\"older spaces}
		
		The classical Lebesgue and Sobolev spaces are denoted by $L^p(\Omega)$ and $W^{k,p}(\Omega)$, respectively. As usual we will write $H^k(\Omega)$ for $W^{k,p}(\Omega)$, whenever $p=2$.
		
		Next, we recall the definition of fractional Sobolev spaces. For $1\leq p < \infty$ and $0<s<1$, the \emph{fractional Sobolev space} $W^{s,p}(\Omega)$ is defined by
		\[
		W^{s,p}(\Omega)\vcentcolon =\left\{\,u\in L^p(\Omega)\,;\, [u]_{W^{s,p}(\Omega)}<\infty\, \right\}.
		\]
		which are naturally endowed with the norm
		\[
		\|u\|_{W^{s,p}(\Omega)}\vcentcolon =\left(\|u\|_{L^p(\Omega)}^p+[u]_{W^{s,p}(\Omega)}^p\right)^{1/p},
		\]
        where 
        \[
            [u]_{W^{s,p}(\Omega)}\vcentcolon =\left(\int_{\Omega}\int_{\Omega}\frac{|u(x)-u(y)|^p}{|x-y|^{n+s p}}\,dxdy\right)^{1/p}.
        \]
		Next, we introduce a closed subspace of $W^{s,p}(\R^n)$, which can be regarded as consisting of functions with zero exterior value, namely:
		\[
		\widetilde{W}^{s,p}(\Omega)\vcentcolon =\text{closure of }C_c^{\infty}(\Omega) \text{ with respect to } \|\cdot\|_{W^{s,p}(\R^n)}.
		\]
        Let us note that like the classical Sobolev spaces, the spaces $W^{s,p}(\R^n)$ or $\widetilde{W}^{s,p}(\Omega)$ are separable for $1\leq p<\infty$ and reflexive for $1<p<\infty$ (see~\cite[Section~7]{SobolevSpacesCompact}). 
        We remark that it is known that $\widetilde{W}^{s,p}(\Omega)$ coincides with the set of all functions $u\in W^{s,p}(\R^n)$ such that $u=0$ a.e. in $\Omega^c$, when $\partial\Omega\in C^0$, and with 
		\[
		W^{s,p}_0(\Omega)\vcentcolon =\text{closure of }C_c^{\infty}(\Omega) \text{ with respect to }\|\cdot\|_{W^{s,p}(\Omega)},
		\]
		whenever $\Omega\Subset\R^n$ has a Lipschitz boundary (see~\cite[Section~2]{WienerCriterion}).\\
		
		An important advantage of the spaces $\widetilde{W}^{s,p}(\Omega)$ is that they support a \emph{fractional Poincar\'e inequality}, that is, for any bounded set $\Omega\subset\R^n$, $0<s<1$ and $1<p<\infty$ there holds
		\begin{equation}
			\label{eq: Poincare int1}
			\|u\|_{L^p(\R^n)}\leq C[u]_{W^{s,p}(\R^n)}
		\end{equation}
		for all $u\in\widetilde{W}^{s,p}(\Omega)$. 
		
		As for the classical Sobolev spaces, we write $H^s(\R^n)$ and $\widetilde{H}^s(\Omega)$ instead of $W^{s,p}(\R^n)$ and $\widetilde{W}^{s,p}(\Omega)$, when $p=2$. Finally, let us point out that we denote the dual space of $\widetilde{H}^s(\Omega)$ by $H^{-s}(\Omega)$. This notation is justified by the fact that if one defines the latter via Fourier analytic methods, that is as a Bessel potential space, then they are isomorphic. Throughout the article, we will denote the duality pairing between $\widetilde{H}^s(\Omega)$ and $H^{-s}(\Omega)$ by $\langle\cdot, \cdot \rangle$.
		
		
		Now, we introduce the used notation for H\"older continuous functions. For all $0<\alpha\leq 1$, the space $C^{0,\alpha}(\Omega)$ consists of all continuous functions $u\in C(\Omega)$ such that the norm
		\[
		\|u\|_{C^{0,\alpha}(\Omega)}\vcentcolon = \|u\|_{L^{\infty}(\Omega)}+[u]_{C^{0,\alpha}(\Omega)}
		\]
		is finite, where
		\[
		[u]_{C^{0,\alpha}(\Omega)}\vcentcolon =\sup_{x\neq y\in\Omega}\frac{|u(x)-u(y)|}{|x-y|^{\alpha}}.
		\]  
		
		Finally, we recall some standard function spaces for time-dependent PDEs. If $X$ is a given Banach space and $(a,b)\subset\R$, then we write $L^p(a,b\,;X)$ ($1\leq p<\infty$) for the space of measurable functions $u\colon (a,b)\to X$ such that 
		\begin{equation}
			\label{eq: Bochner spaces}
			\|u\|_{L^p(a,b\,;X)}\vcentcolon = \left(\int_a^b\|u(t)\|_{X}^p\,dt\right)^{1/p}<\infty
		\end{equation} 
		and $L^{\infty}(a,b\,;X)$ for the space of measurable functions $u\colon (a,b)\to X$ such that 
		\begin{equation}
			\label{eq: infty spaces}
			\|u\|_{L^{\infty}(a,b\,;X)}\vcentcolon = \inf\{\,M\,;\,\|u(t)\|_X\leq M\,\text{a.e.}\,\}<\infty.
		\end{equation}
        Additionally to these Bochner--Lebesgue spaces, we will also make use of the Sobolev spaces $H^1(0,T;X)$ and generalized Sobolev spaces
        \[
            W_T(X,Y)=\left\{u\in L^2(0,T;X)\,;\,\partial_t u\in L^2(0,T;Y)\,\right\},
        \]
        where $X,Y$ are Banach spaces such that $X\hookrightarrow Y$, $T>0$ and $\partial_t$ denotes the distributional time derivative. These spaces carry the natural norms 
        \begin{equation}
            \|u\|_{H^1(0,T;X)}=\LC \|u\|_{L^2(0,T;X)}^2+\|\partial_t u\|_{L^2(0,T;X)}^2\RC^{1/2}
        \end{equation}
        and 
        \begin{equation}
            \|u\|_{W_T(X,Y)}=\LC \|u\|_{L^2(0,T;X)}^2+\|\partial_t u\|_{L^2(0,T;Y)}^2\RC^{1/2}.
        \end{equation}
        For more details, we refer to the monograph \cite{DautrayLionsVol5} or the article \cite{Simon}.


		\subsection{Nonlocal operators}
        \label{subsec: nonlocal operators}
		
		As explained in Section~\ref{sec: introduction}, in this article we consider uniformly elliptic integro-differential operators of order $2s$. That is, we assume our operators are given by
		\begin{equation}
			\label{eq: strong definition of operators}
			L_K u(x)=\mathrm{P.V.}\int_{\R^{n}}K(x,y)\frac{u(x)-u(y)}{|x-y|^{n+2s}}\,dxdy,
		\end{equation}
		whenever $u\colon \R^n\to\R$ is sufficiently regular and the function $K\colon\R^n\times\R^n\to\R$ satisfies the following properties
		\begin{enumerate}[(i)]
			\item\label{symmetry cond} $K(x,y)=K(y,x)$ for all $x,y\in\R^n$,
			\item\label{ellipticity cond} $\lambda\leq K(x,y)\leq\Lambda$ for some $0<\lambda \leq \Lambda <\infty$.
		\end{enumerate}
		Recall that if the kernel $K$ satisfies the above conditions, then we say $L_K$ belongs to the class $\mathcal{L}_0$. The natural bilinear form related to $L_K$ on $H^s(\R^n)$, will be denoted by $B_K$ and is given by
		\begin{equation}
			\label{eq: bilinear form kernel}
			B_K(u,v)=\int_{\R^{2n}}K(x,y)\frac{(u(x)-u(y))(v(x)-v(y))}{|x-y|^{n+2s}}\,dxdy
		\end{equation}
		for all $u,v\in H^s(\R^n)$. One easily sees that a nonlocal operator $L_K$ in the ellipticity class $\mathcal{L}_0$ defines via \eqref{eq: bilinear form kernel} a bounded linear operator from $\widetilde{H}^s(\Omega)$ to $H^{-s}(\Omega)$.
		
		Finally, let us point out that the following operators belong to the class $\mathcal{L}_0$ (see Section~\ref{sec: introduction} and \cite[eq: (2.15)-(2.16)]{GLX}):
		\begin{enumerate}[(i)]
			\item \emph{Fractional Laplacians} $(-\Delta)^s$,
			\item \emph{Fractional conductivity operators} $\mathcal{L}_{\gamma}$,
			\item \emph{Fractional powers of second order divergence form operators} $L=-\Div(A\nabla \cdot)$, where $A\in C^{\infty}_b(\R^n;\R^{n\times n})$ is symmetric and uniformly elliptic.
		\end{enumerate}

		\section{The forward problem}
		\label{sec: forward problem}

		In this section, we study the forward problem of the NPME. We begin with an auxiliary lemma, in order to prove the well-posedness of \eqref{eq: main}.

		\subsection{Auxiliary lemma}
		\label{sec: Auxiliary lemma}
		
		\begin{definition}
			\label{def: nonlinear function}
			Let $m>0$ be a fixed number, then we define the function $\Phi^m\colon\R\to\R$ by $\Phi^m(t)=|t|^{m-1}t$ for all $t\in\R$.
		\end{definition}
		
		\begin{lemma}
			\label{auxiliary lemma}
			Let $m\geq 1$ be an arbitrary number, then there exists a sequence $\Phi^m_{\vareps}\colon\R\to\R$, $\vareps >0$ sufficiently small, satisfying the following conditions
			\begin{enumerate}[(i)]
				\item\label{cond 1 auxiliary function} $\Phi^m_{\vareps} \in C^{1}(\R)$,
				\item\label{cond 2 auxiliary function} $\Phi_{\vareps}(t)=-\Phi_{\vareps}(-t)$,
				\item\label{cond 3 auxiliary function} $c_{\vareps}\leq (\Phi^m_{\vareps})'\leq C_{\vareps}$ for some constants $0<c_{\vareps}\leq C_{\vareps}<\infty$,
				\item\label{cond 4 auxiliary function} $\Phi_{\vareps}^m\to \Phi^m$ as $\vareps \to 0$ uniformly on any compact set.
			\end{enumerate}
		\end{lemma}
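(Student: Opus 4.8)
The plan is to obtain $\Phi^m_{\vareps}$ from $\Phi^m$ by two elementary modifications: first add the small linear term $\vareps t$, which removes the degeneracy of $(\Phi^m)'$ at the origin, and then replace the function by its tangent line outside a large symmetric interval, which makes the derivative globally bounded. Concretely, for $\vareps\in(0,1)$ I would set
\[
\Phi^m_{\vareps}(t)=\Phi^m(t)+\vareps t\quad\text{for }|t|\leq 1/\vareps,
\]
define $\Phi^m_{\vareps}$ on $\{t>1/\vareps\}$ to be the unique affine function agreeing with $t\mapsto\Phi^m(t)+\vareps t$ in value and first derivative at $t=1/\vareps$, and extend to $\{t<-1/\vareps\}$ by oddness. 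Since $\Phi^m(t)+\vareps t$ is odd, the resulting function is odd on all of $\R$, which gives property (ii).

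For property (i), recall that because $m\geq 1$ the map $\Phi^m(t)=|t|^{m-1}t$ is continuously differentiable on $\R$ with $(\Phi^m)'(t)=m|t|^{m-1}$; this is immediate for $m=1$, and for $m>1$ it follows since $m-1>0$, so that the difference quotient $|t|^{m-1}$ tends to $0$ as $t\to 0$. Hence $t\mapsto\Phi^m(t)+\vareps t$ is $C^1$ on $[-1/\vareps,1/\vareps]$, and since the two affine pieces are chosen to match value and first derivative at $\pm 1/\vareps$, we conclude $\Phi^m_{\vareps}\in C^1(\R)$. For property (iii), one computes $(\Phi^m_{\vareps})'(t)=m|t|^{m-1}+\vareps$ for $|t|\leq 1/\vareps$ and $(\Phi^m_{\vareps})'(t)=m\vareps^{1-m}+\vareps$ for $|t|>1/\vareps$. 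Since $0\leq m|t|^{m-1}\leq m\vareps^{1-m}$ on $[-1/\vareps,1/\vareps]$, this yields $\vareps\leq (\Phi^m_{\vareps})'\leq m\vareps^{1-m}+\vareps$, so property (iii) holds with $c_{\vareps}=\vareps$ and $C_{\vareps}=m\vareps^{1-m}+\vareps$ (finite for each fixed $\vareps>0$, although $C_{\vareps}\to\infty$ as $\vareps\to 0$ when $m>1$).

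Finally, for property (iv) let $Q\subset\R$ be compact and pick $R>0$ with $Q\subset[-R,R]$; then for every $\vareps<1/R$ one has $Q\subset[-1/\vareps,1/\vareps]$, whence
\[
\sup_{t\in Q}\,\bigl|\Phi^m_{\vareps}(t)-\Phi^m(t)\bigr|=\sup_{t\in Q}\,\vareps|t|\leq\vareps R\to 0\quad\text{as }\vareps\to 0 .
\]

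I do not expect a genuine obstacle here; the only two points deserving a moment's care are the $C^1$-regularity of $\Phi^m$ at the origin (which uses $m\geq 1$ and would fail for $m<1$) and carrying out the affine continuation so that the gluing at $\pm 1/\vareps$ is $C^1$ and not merely continuous, both of which are built into the construction. An equivalent route, avoiding the explicit affine pieces, is to set $(\Phi^m_{\vareps})'(t):=\min\{\,m|t|^{m-1}+\vareps,\ 1/\vareps\,\}$ and $\Phi^m_{\vareps}(t):=\int_0^t (\Phi^m_{\vareps})'(\tau)\,d\tau$: this is manifestly an odd $C^1$ function whose derivative lies in $[\vareps,1/\vareps]$ and which coincides with $\Phi^m(t)+\vareps t$ on any fixed compact set once $\vareps$ is small enough, so the same four properties follow.
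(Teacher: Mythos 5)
Your proof is correct, and it takes a genuinely different (and simpler) route than the paper. The paper keeps $\Phi^m$ unchanged on the annulus $\vareps\leq|t|\leq 1/\vareps$, replaces it on $[-\vareps,\vareps]$ by an odd polynomial $a_\vareps t^{M-1}+b_\vareps t$ matched to $\Phi^m$ in $C^1$ at $\pm\vareps$, and continues affinely for $|t|\geq 1/\vareps$; the bulk of its proof is the algebra checking the $C^1$-match at $\pm\vareps$. Your additive perturbation $\Phi^m+\vareps t$ on $[-1/\vareps,1/\vareps]$ (or, equivalently, the clipped-derivative version $\min\{m|t|^{m-1}+\vareps,1/\vareps\}$) removes the degeneracy at the origin in one stroke, and the verification of \ref{cond 1 auxiliary function}--\ref{cond 4 auxiliary function} is essentially immediate. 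The one thing the paper's construction buys that yours does not is the \emph{exact} identity $\Phi^m_\vareps=\Phi^m$ on $\vareps\leq|t|\leq 1/\vareps$, and hence $(\Phi^m_\vareps)^{-1}=(\Phi^m)^{-1}$ on $[\Phi^m(\vareps),\Phi^m(1/\vareps)]$. This extra feature is not among the four stated properties, so your proof of the lemma as written is complete; but the paper quietly relies on it later, namely in the bound \eqref{eq: uniform bound f epsilon} and in Section~\ref{subsec: asymptotic analysis} where it uniformly bounds $\Phi_\vareps^{-1}(V^{(0)})$. If you were to carry your construction through the rest of the paper, those two spots would need to be argued slightly differently, for instance using that $\Phi^m_\vareps\geq\Phi^m$ on $\R_{\geq 0}$ (so $(\Phi^m_\vareps)^{-1}\leq(\Phi^m)^{-1}$ on $\R_{\geq 0}$) together with the uniform convergence \ref{cond 4 auxiliary function}; but that is a remark about the paper's downstream use of the lemma, not a gap in your argument.
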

		
		\begin{proof}
			To simplify the notation, let us replace $m$ by $p-1$ and write $\Phi$ instead of $\Phi^{p-1}$. Note that in the simple case $m=1$ or equivalently $p=2$, one can take $\Phi_{\vareps}=\Phi$, which satisfies all conditions \ref{cond 1 auxiliary function}-\ref{cond 4 auxiliary function}. Hence, we can assume without loss of generality that $p>2$. Now choose $M\in 2\N$ satisfying $M>p\geq 2$. Next, we introduce the strictly positive coefficients
			\begin{equation}
				\label{eq: coefficients}
				a_{\vareps}=\frac{p-2}{M-2}\vareps^{p-M},\quad b_{\vareps}=\frac{M-p}{M-2}\vareps^{p-2}
			\end{equation}
			and define the family $\Phi_{\vareps}\colon\R\to\R$ by
			\[
			\Phi_{\vareps}(t)=\begin{cases}
				\Phi(t),\, &\text{for}\quad \vareps \leq |t|\leq 1/\vareps,\\
				a_{\vareps}t^{M-1}+b_{\vareps}t,\, &\text{for}\quad -\vareps \leq |t|\leq \vareps,\\
				(p-1)\vareps^{2-p}t+(2-p)\vareps^{1-p},\, &\text{for}\quad  t\geq 1/\vareps,\\
				(p-1)\vareps^{2-p}t-(2-p)\vareps^{1-p},\, &\text{for}\quad  t\leq -1/\vareps
			\end{cases}
			\]
			for $t\in\R$. We directly see that these functions satisfy \ref{cond 2 auxiliary function}.

			Next, we show the condition \ref{cond 1 auxiliary function}. Clearly, $\Phi_{\vareps}$ is a continuous function. Note that
			\begin{equation}
				\label{eq: derivative of nonlinearity}
				\Phi'(t)=(p-1)|t|^{p-2}\quad\text{for}\quad t\neq 0
			\end{equation}
			and hence $\Phi_{\vareps}'$ exists and is continuous for $|t|\geq \vareps$. Since $\left.\Phi_{\vareps}\right|_{[-\vareps,\vareps]}$ is clearly $C^1$ it remains to check that its derivative coincides with $\Phi_{\vareps}'$ at the endpoints. Let us define $\Psi\colon [-\vareps,\vareps]\to\R$ by
			\begin{equation}
				\label{eq: help function}
				\Psi(t)=\frac{d}{dt}\LC a_{\vareps}t^{M-1}+b_{\vareps}t \RC  =(M-1)a_{\vareps}t^{M-2}+b_{\vareps}, \text{ for } |t|\leq\vareps.
			\end{equation}
			Since $\Psi$ is symmetric it is enough to show that $\Psi(\vareps)=\Phi'(\vareps)$. We have
			\begin{equation}
				\begin{split}
					\Psi(\vareps)&=(M-1)a_{\vareps}\vareps^{M-2}+b_{\vareps}\\
					&=(M-1)\frac{p-2}{M-2}\vareps^{p-M}\vareps^{M-2}+\frac{M-p}{M-2}\vareps^{p-2}\\
					&=\frac{(Mp-2M-p+2)+(M-p)}{M-2}\vareps^{p-2}\\
					&=\frac{Mp-M-2p+2}{M-2}\vareps^{p-2}\\
					&=\frac{(M-2)p-(M-2)}{M-2}\vareps^{p-2}\\
					&=(p-1)\vareps^{p-2}\\
					&=\Phi'(\vareps).
				\end{split}
			\end{equation}
			This establishes the condition
			\ref{cond 1 auxiliary function}. The condition \ref{cond 3 auxiliary function} is easily seen from \eqref{eq: derivative of nonlinearity}, \eqref{eq: help function} and the fact that $\Phi_{\vareps}$ is for $|t|\geq 1/\vareps$ an affine function with strictly positive slope.

			Finally, let us prove the assertion \ref{cond 4 auxiliary function}. By the symmetry condition \ref{cond 2 auxiliary function}, which is also satisfied by $\Phi$, it is enough to prove that for any $K>0$ we have
			\[
			\left\|\Phi_{\vareps}-\Phi\right\|_{L^{\infty}([0,K])}\to 0\quad\text{as}\quad \vareps\to 0.
			\]
			For this purpose, let us fix $K>0$. Without loss of generality we can assume that $\vareps>0$ is sufficiently small that $K\leq 1/\vareps$. For any $0\leq t\leq \vareps$, we have
			\[
			\begin{split}
				\left|\Phi_{\vareps}(t)-\Phi(t)\right|&=\left|\frac{p-2}{M-2}\vareps^{p-M}t^{M-1}+\frac{M-p}{M-2}\vareps^{p-2}t-|t|^{p-2}t\right|\\
				&=\left|\left(\frac{p-2}{M-2}\vareps^{p-M}t^{M-2}-\frac{p-2}{M-2}\vareps^{p-2}\right)t+(\vareps^{p-2}-|t|^{p-2})t\right|\\
				&\leq\left(\frac{p-2}{M-2}(\vareps^{p-2}-\vareps^{p-M}t^{M-2})+(\vareps^{p-2}-t^{p-2})\right)\vareps\\
				&\overset{(\ast)}{\leq} \left(\frac{p-2}{M-2}\vareps^{p-2}+\vareps^{p-2}\right)\vareps\\
				&=\frac{M+p-4}{M-2}\vareps^{p-1}.        \end{split}
			\]
			In $(\ast)$ we used that both terms are decreasing in $t$ on $[0,\vareps]$. Hence, taking $\left.\Phi_{\vareps}\right|_{[\vareps,1/\vareps]}=\Phi$ into account, we have
			\[
			\left\|\Phi_{\vareps}-\Phi\right\|_{L^{\infty}([0,K])}\leq \frac{M+p-4}{M-2}\vareps^{p-1}, 
			\]
			which shows the claimed convergence.
		\end{proof}
		
		\subsection{Existence of solutions to the forward problem}
		\label{subsec: well-posedness}
		
		Let us begin with the definition of weak solutions.
		
		\begin{definition}[Weak solutions]
			\label{def: weak solutions}
			Let $\Omega\subset\R^n$ be a bounded Lipschitz domain, $T>0$, $0<s<1$, $m\geq 1$ and $L_K\in \mathcal{L}_0$. For given $u_0\in L^{\infty}(\Omega)$, $\varphi\in C_c([0,T]\times \Omega_e)$ with $\Phi^m(\varphi)\in L^2(0,T;H^s(\R^n))$ and $f\in L^2(0,T;H^{-s}(\Omega))$, we say that $u\colon\R^n_T\to\R$ is a \emph{(weak) solution} of
			\begin{equation}
				\label{eq: FPME without absorption}
				\begin{cases}
					\partial_t u+ L_K(\Phi^m(u))=f&\text{ in }\Omega_T,\\
					u=\varphi&\text{ in }(\Omega_e)_T,\\
					u(0)=u_0&\text{ in }\Omega,
				\end{cases}
			\end{equation}
			provided that  
			\begin{enumerate}[(i)]
				\item $\Phi^m(u)\in L^2(0,T;H^s(\R^n))$,
				\item $\Phi^m(u-\varphi)\in L^2(0,T;\widetilde{H}^s(\Omega))$,
				\item $u$ satisfies \eqref{eq: FPME without absorption} in the sense of distributions, that is there holds
				\begin{equation}
					-\int_{\Omega_T} u\partial_t\psi\,dxdt + \int_0^T B_K(\Phi^m(u),\psi)\,dt=\int_0^T\langle f,\psi\rangle\,dxdt+\int_{\Omega}u_0\psi(0)\,dx
				\end{equation}
				for all $\psi\in C_c^{\infty}([0,T)\times \Omega)$. 
			\end{enumerate}
			If a (weak) solutions $u$ of \eqref{eq: FPME without absorption} additionally satisfies $u\in L^{\infty}(\Omega_T)$, then it is called \emph{bounded solution}.
		\end{definition}
		\begin{remark}
			Let $u$ be a (weak) solution of \eqref{eq: FPME without absorption}. Observe that the preceding definition directly implies $\partial_t u\in L^2(0,T;H^{-s}(\Omega))$ and thus $u$ belongs to the same space. If we have an additional nonlinear absorption term this does not need to hold.
		\end{remark}
		
		\begin{theorem}[Basic existence result]
			\label{thm: basic existence}
			Let $\Omega\subset\R^n$ be a bounded Lipschitz domain, $T>0$, $0<s<1$, $m> 1$ and $L_K\in \mathcal{L}_0$. For any $0\leq u_0\in L^{\infty}(\Omega)\cap \widetilde{H}^s(\Omega)$ and $0\leq \varphi\in C_c([0,T]\times \Omega_e)$ with $\Phi^m(\varphi)\in L^2(0,T;H^s(\R^n))$, there exists a non-negative, bounded solution of
			\begin{equation}
				\label{eq: homogeneous FPME}
				\begin{cases}
					\partial_t u+ L_K(\Phi^m(u))=0 &\text{ in }\Omega_T,\\
					u=\varphi &\text{ in }(\Omega_e)_T,\\
					u(0)=u_0 & \text{ in }\Omega.
				\end{cases}
			\end{equation}
			Moreover, there holds
			\begin{equation}
				\label{eq: bound on solution}
				\|u\|_{L^{\infty}(\Omega_T)}\leq \sup_{x\in \Omega}u_0+\sup_{(x,t)\in [0,T]\times\Omega_e}\varphi.
			\end{equation}
		\end{theorem}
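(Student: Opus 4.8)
The plan is to establish existence via the Galerkin method after first regularizing the degenerate nonlinearity $\Phi^m$. Concretely, I would fix a small parameter $\vareps>0$ and replace $\Phi^m$ by the $C^1$ approximation $\Phi^m_{\vareps}$ from Lemma~\ref{auxiliary lemma}, so that the operator $L_K(\Phi^m_{\vareps}(\cdot))$ becomes non-degenerate (the derivative of $\Phi^m_{\vareps}$ is bounded between $c_{\vareps}$ and $C_{\vareps}$). Since $\varphi$ need not vanish in $\Omega$, I would work with the homogenized unknown $w = u-\varphi$, so that $w$ has zero exterior value and zero initial value, and the PDE for $w$ becomes $\partial_t w + L_K(\Phi^m_{\vareps}(w+\varphi)) = -\partial_t\varphi$ in $\Omega_T$. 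Here one uses that $\Phi^m_{\vareps}(\varphi)\in L^2(0,T;H^s(\R^n))$, which follows from $\Phi^m(\varphi)\in L^2(0,T;H^s(\R^n))$ together with the uniform-convergence and regularity properties in Lemma~\ref{auxiliary lemma}; the term $\partial_t\varphi$ lies in a suitable negative-order space because $\varphi$ is compactly supported in time and smooth in the time variable (or at worst is handled by a further mollification in time, then removed by a limiting argument).

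Next, I would carry out the Galerkin construction: pick a countable basis $\{e_k\}$ of $\widetilde{H}^s(\Omega)$ (e.g.\ eigenfunctions of $(-\Delta)^s$ with the zero exterior condition, which are dense and orthogonal in both $L^2(\Omega)$ and $\widetilde{H}^s(\Omega)$), seek approximate solutions $w_N(t) = \sum_{k=1}^N d_k^N(t)\,e_k$ solving the finite-dimensional ODE system obtained by testing the regularized equation against $e_1,\dots,e_N$, and invoke Carathéodory/Peano existence for the resulting system. The key energy estimate comes from testing with $w_N$ itself: using the ellipticity of $K$ and the monotonicity $\bigl(\Phi^m_{\vareps}(a)-\Phi^m_{\vareps}(b)\bigr)(a-b)\geq 0$ one controls $\|w_N\|_{L^\infty(0,T;L^2(\Omega))}$ and $\|\Phi^m_{\vareps}(w_N+\varphi)\|_{L^2(0,T;H^s(\R^n))}$, and from the equation one then gets $\partial_t w_N$ bounded in $L^2(0,T;H^{-s}(\Omega))$. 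By Aubin--Lions compactness (Appendix~\ref{sec: appendix compact}) we extract a subsequence converging strongly in $L^2(\Omega_T)$ and a.e., which lets us pass to the limit in the (now Lipschitz, hence continuous) nonlinear term; this produces a weak solution $u_\vareps$ of the regularized problem with $u_\vareps(0)=u_0$.

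The nonnegativity and the $L^\infty$ bound \eqref{eq: bound on solution} are proved at the regularized level and are stable under the limit $\vareps\to0$. For nonnegativity, test the equation for $u_\vareps$ with the negative part $(u_\vareps)_-\in L^2(0,T;\widetilde{H}^s(\Omega))$ (legitimate since $u_0\geq0$ and $\varphi\geq0$ force $(u_\vareps)_-$ to have zero exterior trace): the time term gives $\tfrac12\tfrac{d}{dt}\|(u_\vareps)_-\|_{L^2(\Omega)}^2$, while the bilinear form $B_K\bigl(\Phi^m_{\vareps}(u_\vareps),(u_\vareps)_-\bigr)$ has a favourable sign because $\Phi^m_{\vareps}$ is odd and increasing, so $(u_\vareps)_-\equiv0$. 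For the sup bound, set $M=\sup_\Omega u_0+\sup\varphi$ and test with $(u_\vareps-M)_+$, which again has zero exterior trace since $\varphi\leq M$ on $\Omega_e$; the bilinear form term is $\geq0$ and $u_0\leq M$ kills the initial contribution, giving $u_\vareps\leq M$ a.e. An analogous comparison from below is not needed once nonnegativity is in hand.

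Finally I would remove the regularization. The uniform bounds $\|\Phi^m_{\vareps}(u_\vareps)\|_{L^2(0,T;H^s(\R^n))}\leq C$ (using $\|\Phi^m_{\vareps}(u_\vareps)\|_{L^\infty}\leq \Phi^m(M)$ plus a Gagliardo seminorm estimate exploiting the Lipschitz-type control of $\Phi^m_{\vareps}$ on $[-M,M]$ with a constant depending only on $m$ and $M$, \emph{not} on $\vareps$) and $\|\partial_t u_\vareps\|_{L^2(0,T;H^{-s}(\Omega))}\leq C$ allow, via Aubin--Lions again, to extract $u_\vareps\to u$ strongly in $L^2(\Omega_T)$ and a.e.; then $\Phi^m_{\vareps}(u_\vareps)\to\Phi^m(u)$ a.e.\ (combining a.e.\ convergence of $u_\vareps$ with condition~\ref{cond 4 auxiliary function} of Lemma~\ref{auxiliary lemma}) and weakly in $L^2(0,T;H^s(\R^n))$, which suffices to pass to the limit in the weak formulation. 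The boundary and initial conditions survive because $\Phi^m(u_\vareps-\varphi)$ stays in $L^2(0,T;\widetilde{H}^s(\Omega))$ and the trace $u_\vareps(0)=u_0$ is preserved under the $C([0,T];H^{-s}(\Omega))$ convergence coming from the $\partial_t$ bound.

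I expect the main obstacle to be the \emph{$\vareps$-uniform} control of the Gagliardo seminorm of $\Phi^m_{\vareps}(u_\vareps)$, i.e.\ verifying that the approximating nonlinearities are uniformly Lipschitz on the fixed interval $[-M,M]$ with a constant independent of $\vareps$, so that $[\Phi^m_{\vareps}(u_\vareps)]_{H^s(\R^n)}\lesssim [u_\vareps]_{H^s(\R^n)}$ holds uniformly; once that is secured, together with the a.e.\ convergence, the passage to the limit in both the degenerate limit $\vareps\to0$ and the Galerkin limit $N\to\infty$ is routine. A secondary technical point is the correct functional setting for $\partial_t\varphi$ and the lift $\varphi$ (ensuring $\Phi^m_{\vareps}(w_N+\varphi)$ makes sense and the time-derivative term is admissible as a test-pairing), which is handled by the compact-support-in-time hypothesis on $\varphi$ and, if needed, an intermediate mollification.
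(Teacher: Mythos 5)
Your overall blueprint (regularize $\Phi^m$ to $\Phi^m_\vareps$, homogenize the exterior data, Galerkin in $\widetilde H^s(\Omega)$, maximum principle by testing with truncations, compactness, then $\vareps\to 0$) is the same as the paper's, and the Galerkin and maximum-principle steps are essentially correct. However, the passage $\vareps\to 0$ as you describe it has two genuine gaps, both stemming from the same misconception about which Sobolev quantity is uniformly controlled.

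First, you want to bound $\|\Phi^m_\vareps(u_\vareps)\|_{L^2(0,T;H^s)}$ by proving a uniform-in-$\vareps$ Lipschitz estimate $[\Phi^m_\vareps(u_\vareps)]_{H^s}\lesssim [u_\vareps]_{H^s}$. This chain only yields a useful bound if $[u_\vareps]_{L^2(0,T;H^s)}$ is itself $\vareps$-uniformly bounded, and it is not: testing the regularized equation with $v_\vareps$ gives $B_K(\Phi_\vareps(v_\vareps),v_\vareps)\gtrsim c_\vareps [v_\vareps]_{H^s}^2$ with a constant $c_\vareps\to 0$ (the regularization necessarily degenerates at the origin, so the lower ellipticity constant in Lemma~\ref{auxiliary lemma}\ref{cond 3 auxiliary function} vanishes as $\vareps\to 0$). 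Indeed, in the degenerate limit one should not expect $u\in L^2(0,T;H^s(\R^n))$; the natural regularity is $\Phi^m(u)\in L^2(0,T;H^s(\R^n))$, as in Definition~\ref{def: weak solutions}. The correct way to obtain the $\vareps$-uniform seminorm bound on $\Phi_\vareps(v_\vareps)$ is to test the equation with $\Phi_\vareps(v_\vareps)$ itself, using the antiderivative $\Psi_\vareps$ so that $\int_\Omega(\partial_t v_\vareps)\Phi_\vareps(v_\vareps)=\partial_t\int_\Omega\Psi_\vareps(v_\vareps)$ (this in turn requires the additional regularity $\partial_t v_\vareps\in L^2(\Omega_T)$ and $\Phi_\vareps(v_\vareps)\in L^2(0,T;\widetilde H^s(\Omega))$, i.e.\ the analogue of Claim~\ref{claim further regularity of solution v epsilon}). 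That gives $\|\Phi_\vareps(v_\vareps)\|_{L^2(0,T;H^s)}\leq C$ uniformly, without ever controlling $[v_\vareps]_{H^s}$.

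Second, and related: you claim ``Aubin--Lions again, to extract $u_\vareps\to u$ strongly in $L^2(\Omega_T)$ and a.e.,'' but the only $\vareps$-uniform spatial bound you have on $u_\vareps$ (equivalently $v_\vareps$) is $L^\infty(\Omega_T)$, which embeds into $L^2(\Omega)$ but not compactly. Aubin--Lions with $X=L^2(\Omega)$, $Y=H^{-s}(\Omega)$ only gives $v_\vareps\to\overline v$ strongly in $L^2(0,T;H^{-s}(\Omega))$, not in $L^2(\Omega_T)$, and in particular no a.e.\ convergence of $v_\vareps$. Consequently, ``$\Phi_\vareps(u_\vareps)\to\Phi^m(u)$ a.e.'' does not follow, and identifying the weak $L^2(0,T;\widetilde H^s)$ limit of $\Phi_\vareps(v_\vareps)$ with $\Phi^m(\overline v)$ requires a genuine monotonicity (Minty) argument, pairing the strong $L^2(0,T;H^{-s})$ convergence of $v_\vareps$ against the weak $L^2(0,T;\widetilde H^s)$ convergence of $\Phi_\vareps(v_\vareps)$ and exploiting the monotonicity of $\Phi_\vareps$. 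This is precisely the content of Claim~\ref{claim: form of nonlinearity}, which your proposal skips.

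A minor remark: when homogenizing, you retain the coupled nonlinearity $\Phi^m_\vareps(w+\varphi)$ and the source $-\partial_t\varphi$, and worry about the regularity of the latter. Since $\supp w\subset\Omega_T$, $\supp\varphi\subset[0,T]\times\Omega_e$ are disjoint and $\Phi^m_\vareps(0)=0$, one has $\Phi^m_\vareps(w+\varphi)=\Phi^m_\vareps(w)+\Phi^m_\vareps(\varphi)$ and $\partial_t\varphi\equiv 0$ in $\Omega_T$, so the equation for $w$ decouples into $\partial_t w+L_K(\Phi^m_\vareps(w))=-L_K(\Phi^m_\vareps(\varphi))$, and the time-regularity worry disappears.
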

		
		\begin{proof}
			Throughout the proof we will write $\Phi, \Phi_{\vareps}, L,B$ instead of $\Phi^m,\Phi^m_{\vareps},L_K,B_K$, where $\Phi_{\vareps}^m$ is the family of functions constructed in Lemma~\ref{auxiliary lemma}. 
			
			We prove existence of a solution $u$ by the Galerkin method. For this purpose, let us first observe that since $\varphi$ is compactly supported in $[0,T]\times\Omega_e$, $u$ is a solution of \eqref{eq: homogeneous FPME} if and only if $v=u-\varphi$ solves
			\begin{equation}
				\label{eq: homogeneous FPME without exterior cond}
				\begin{cases}
					\partial_t v+ L(\Phi(v))=f &\text{ in }\Omega_T,\\
					v=0&\text{ in }(\Omega_e)_T,\\
					v(0)=u_0 &\text{ in }\Omega,
				\end{cases}
			\end{equation}
			with $f=\p_t \varphi -L(\Phi(\varphi))=-L(\Phi(\varphi))$ in $\Omega_T$ by using $\varphi\equiv 0$ in $\Omega_T$. Note that for any $(x,t)\in \Omega_T$, we have
			\begin{equation}
				\label{eq: equivalent source}
				\begin{split}
					f(x,t)&=-\int_{\R^n}K(x,y)\frac{\Phi(\varphi)(x,t)-\Phi(\varphi)(y,t)}{|x-y|^{n+2s}}dy=\int_{\Omega_e}K(x,y)\frac{\Phi(\varphi)(y,t)}{|x-y|^{n+2s}}\,dy,
				\end{split}
			\end{equation}
			where we used that $\Phi(0)=0$ and $\varphi|_{\Omega_T}=0$. The compact support of $\varphi$ in $[0,T]\times\Omega_e$, $\Phi(0)=0$, the uniform ellipticity of $K$, $\Phi(t)\geq 0$ for $t\geq 0$ and $\varphi\geq 0$ clearly implies that $f$ has the following properties:
			\begin{enumerate}[(a)]
				\item\label{cond 1 on f} $0\leq f(x,t)\leq C\int_{\Omega_e}\Phi(\varphi)(y,t)\,dy<\infty$ for $(x,t)\in\Omega_T$,
				\item\label{cond 2 on f} $f\in C([0,T];L^2(\Omega))$.
			\end{enumerate}
			
			Next recall that $\widetilde{H}^s(\Omega)$ is a separable Hilbert space and hence the finite dimensional subspaces 
			\[
			E _m =\mathrm{span}\left\{w_1,\ldots,w_m\right\},\quad m\in\N,
			\]
			form a Galerkin approximation for $\widetilde{H}^s(\Omega)$. Here $\LC w_j\RC_{j\in\N}\subset \widetilde{H}^s(\Omega)$ is a priori any orthogonal basis of $\widetilde{H}^s(\Omega)$. Since $\Omega$ is bounded, we will take in the following $\LC  w_j\RC _{j\in\N}$ to be the eigenfunctions to the positive discrete eigenvalues $\LC \lambda_j\RC _{j\in\N}$ of the positive, symmetric, nonlocal operator $L$. This is possible as the embedding $\widetilde{H}^s(\Omega)\hookrightarrow L^2(\Omega)$ is by the Rellich--Kondrachov theorem compact (see~\cite[Section~6.5, Theorem~1]{EvansPDE}, \cite[Chapter~XVIII, Section~2.1-2.2]{DautrayLionsVol5} and \cite[Theorem~3.2]{KRZ-2023}). Observe by density of $\widetilde{H}^s(\Omega)$ in $L^2(\Omega)$ the family $\LC E_m\RC_{m\in\N}$ is also a Galerkin approximation for $L^2(\Omega)$. Moreover, we will assume in the following that $\LC w_j\RC_{j\in\N}$ are normalized in $L^2(\Omega)$.

			Let us note that if 
			\begin{align}\label{psi_N}
				\psi_N=\sum_{j=1}^N \left\langle \psi,w_j \right\rangle_{L^2}w_j,
			\end{align}
			for some $\psi\in\widetilde{H}^s(\Omega)$, then we have 
			\begin{equation}
				\label{eq: compatibility of approx}
				\left[\psi_N\right]_{H^{s}(\R^n)}\leq C [\psi]_{H^s(\R^n)},
			\end{equation}
			for some constant $C>0$ independent of $\psi$ and $N \in \N$.
			Here and in the following $\langle\cdot,\cdot\rangle_{L^2}$ always denotes the inner product in $L^2(\Omega)$. In fact, by assumption we have
			\begin{align}\label{bilinear eigenexpand}
				B(w_j,\psi)=\lambda_j\left\langle w_j,\psi\right\rangle_{L^2}
			\end{align}
			for all $j\in\N$. Thus, \eqref{psi_N} and \eqref{bilinear eigenexpand} yield that 
			\[
			\psi_N= \sum_{j=1}^{N}\left\langle \psi,w_j\right\rangle_{L^2}w_j=\sum_{j=1}^N B(\psi,w_j/\sqrt{\lambda_j})(w_j/\sqrt{\lambda_j})
			\]
			and $w_j/\sqrt{\lambda_j}$ for $j\in\N$, is an orthonormal basis of $\widetilde{H}^s(\Omega)$ with equivalent inner product induced by $B$. The last part follows from the symmetry and uniform ellipticity of $K$ and the fractional Poincar\'e inequality. But then by Parseval's identity we have
			\[
			B(\psi,\psi)=\sum_{j\in\N}\left| B(\psi,w_j/\sqrt{\lambda_j})\right|^2\geq \sum_{j=1}^N \left| B(\psi,w_j/\sqrt{\lambda_j})\right|^2=B(\psi_N,\psi_N).
			\]
			Now, the uniform ellipticity of $K$ gives \eqref{eq: compatibility of approx}. The rest of the proof is divided into three steps. \\

			\noindent \textit{Step 1: Construction of approximate solutions.} \\

			Let $\vareps>0$, $v_0\in L^2(\Omega)$, $F\in L^2(0,T;H^{-s}(\Omega))$ and consider the initial-exterior value problem
			\begin{equation}
				\label{eq: approximate problem}
				\begin{cases}
					\partial_t v+ L(\Phi_{\vareps}(v))=F &\text{ in }\Omega_T,\\
					v=0&\text{ in }(\Omega_e)_T,\\
					v(0)=v_0&\text{ in }\Omega.
				\end{cases}
			\end{equation}
			Next, choose a sequence $F_N\in C([0,T];H^{-s}(\Omega))$, $N\in\N$,  converging to $F$ in $L^2(0,T;H^{-s}(\Omega))$. This can be achieved by setting $F(t)=0$ for $t\notin [0,T]$ and then mollify in time.  As usual in the Galerkin approximation, in a first step we will be looking for functions $v_{\vareps,N}\in C^1([0,T],E_N)$, that is they can be written as
			\[
			v_{\vareps,N}(x,t)=\sum_{j=1}^N c^j_{N,\vareps}(t)w_j(x)
			\]
			for $N\in\N$, and solve in $E_N$ the problem
			\begin{equation}
				\label{eq: second approximate problem}
				\begin{cases}
					\partial_t v+ L(\Phi_{\vareps}(v))=F_N &\text{ in }\Omega_T,\\
					v=0&\text{ in }(\Omega_e)_T,\\
					v(0)=v_{0,N}&\text{ in }\Omega,
				\end{cases}
			\end{equation}
			where $v_{0,N}=\sum_{j=1}^N \left\langle v_0,w_j\right\rangle_{L^2}w_j$. If $v_{\vareps,N}\in C^1([0,T];E_N)$ solves \eqref{eq: second approximate problem}, then we have 
			\begin{equation}
				\label{eq: weak formulation approximate solutions}
				\left\langle \partial_t v_{\vareps,N},\psi \right\rangle_{L^2}+B(\Phi_{\vareps}(v_{\vareps,N}),\psi)=\left\langle F_N, \psi \right\rangle
			\end{equation}
			for a.e. $t\in (0,T)$ and $\psi\in E_N$. In particular, choosing $\psi=w_j$ for $1\leq j\leq N$ as a test function, we get
			\begin{equation}
				\label{eq: approx sol 2}
				\left\langle \partial_t v_{\vareps,N},w_j \right\rangle_{L^2}+B(\Phi_{\vareps}(v_{\vareps,N}),w_j)=\left\langle F_N, w_j \right\rangle
			\end{equation}
			for all $1\leq j\leq N$ and $0<t<T$. Expanding everything, we see that $$\hat{c}_{\vareps,N}:=\LC c^1_{\vareps,N},\ldots,c^N_{\vareps,N}\RC \in C^1([0,T];\R^N)$$ is a solution of the ordinary differential equation (ODE)
			\begin{equation}
				\label{eq: IVP for coeff}
				\begin{cases}
					\partial_t \hat{c}_{\vareps,N}+ b(\hat{c}_{\vareps,N})=G_N(\hat{c}_{\vareps,N}),\quad\text{for}\quad 0<t<T\\
					\hat{c}_{\vareps,N}(0)=\hat{c}_{0,N},
				\end{cases}
			\end{equation}
			where 
			\begin{equation}
				\label{eq: initial condition}
				\hat{c}_{0,N}=\left( \left\langle v_0,w_1\right\rangle_{L^2},\ldots,\left\langle v_0,w_N\right\rangle_{L^2}\right) ,\quad G_N(\hat{c}_{\vareps,N})=\left\langle F_N, \sum_{j=1}^N c_{\vareps,N}^jw_j\right\rangle
			\end{equation}
			and for $1\leq j\leq N$ the components $b_j\colon\R^N\to\R$ of $b=(b_1,\ldots,b_N)$ are defined by 
			\begin{equation}
				\label{eq: def coeff b}
				\begin{split}
					b_j(c)=B\left(\Phi_{\vareps}\left(\sum_{k=1}^Nc_k w_k\right),w_j\right)
				\end{split}
			\end{equation}
			for $c=(c_1,\ldots,c_N)\in\R^N$.
			
			\begin{claim}
				\label{claim: continuity of coefficients}
				The functions $G_N\colon\R^N\to\R$ and $b\colon\R^N\to\R^N$ are continuous.
			\end{claim}
			\begin{proof}[Proof of Claim \ref{claim: continuity of coefficients}]
				The continuity of $G_N$ is immediate since $F_N\in C([0,T],H^{-s}(\Omega))$. To see the continuity of $b$, consider a sequence $\LC \hat{c}_k\RC _{k\in \N}=\LC \hat{c}_k^1,\ldots,\hat{c}_k^N\RC_{k\in\N}\subset\R^N$, which converges to $\hat{c}=\LC \hat{c}^1,\ldots,\hat{c}^N\RC \in\R^N$ as $k\to \infty$. Note that by uniform ellipticity of $K$ and H\"older's inequality we have
				\begin{align}\label{ineq in claim conti coeff}
					\begin{split}
						\left|b(\hat{c}_k)-b(\hat{c})\right|&\leq \max_{1\leq j\leq N}\left|B\left(\Phi_{\vareps}\left(\sum_{\ell=1}^N \hat{c}^{\ell}_k w_{\ell}\right)-\Phi_{\vareps}\left(\sum_{\ell=1}^N \hat{c}^{\ell} w_{\ell}\right),w_j\right)\right|\\
						&\leq C     \left[\Phi_{\vareps}\left(\sum_{\ell=1}^N \hat{c}^{\ell}_k w_{\ell}\right)-\Phi_{\vareps}\left(\sum_{\ell=1}^N \hat{c}^{\ell} w_{\ell}\right)\right]_{H^s(\R^n)}
					\end{split}
				\end{align}
				Using the Lipschitz continuity of $\Phi_{\vareps}$ and Lebesgue's dominated convergence theorem, one easily sees that the last expression in \eqref{ineq in claim conti coeff} goes to zero as $k\to\infty$ and hence showing the continuity of $b$. This proves the claim.
			\end{proof}
			
			Hence, by Peano's existence theorem for ODEs there exist a solution $\hat{c}_{N,\vareps}\in C^1([0,\delta];\R^N)$ of \eqref{eq: IVP for coeff} for possibly a small $\delta>0$. Later, we show that the solution actually extends to $[0,T]$. Let us denote the corresponding approximate solution of \eqref{eq: second approximate problem} by $v_{\vareps,N}\in C^1([0,\delta],E_N)$. Multiplying \eqref{eq: approx sol 2} by $c_{N,\vareps}^j$ and summing from $j=1$ to $N$ gives
			\begin{equation}
				\label{eq: energy identity approx sol}
				\left\langle \partial_t v_{\vareps,N},v_{\vareps,N}\right\rangle_{L^2}+B(\Phi_{\vareps}(v_{\vareps,N}),v_{\vareps,N})=\left\langle F_N,v_{\vareps,N}\right\rangle.
			\end{equation}
			We have
			\begin{equation}
				\label{eq: energy identity approx sol 2}
				\begin{split}
					&\left\langle \partial_t v_{\vareps,N},v_{\vareps,N}\right\rangle_{L^2}=\frac{d}{dt}\frac{\|v_{\vareps,N}\|_{L^2}^2}{2},\\
					&\left|\langle F_N,v_{\vareps,N}\rangle\right| \leq \|F_N\|_{H^{-s}(\Omega)}\|v_{\vareps,N}\|_{H^s(\R^n)}.
				\end{split}
			\end{equation}
			Moreover, using the fundamental theorem of calculus, the uniform ellipticity of $\Phi'_{\vareps}$ and $K$ the second term in \eqref{eq: energy identity approx sol} can be lower bounded as (suppressing the $t$ dependence)
			\begin{equation}
				\label{eq: energy identity approx sol 3}
				\begin{split}
					&\quad B(\Phi_{\vareps}(v_{\vareps,N}),v_{\vareps,N})\\
					&=\int_{\R^{2n}}K(x,y)\frac{(\Phi_{\vareps}(v_{\vareps,N})(x)-\Phi_{\vareps}(v_{\vareps,N})(y))(v_{\vareps,N}(x)-v_{\vareps,N}(y))}{|x-y|^{n+2s}}\,dxdy\\
					&=\int_{\R^{2n}}K(x,y)\frac{ \int_0^1 \frac{d}{d\tau} \left[ \Phi_{\vareps}(v_{\vareps,N}(y)+\tau(v_{\vareps,N}(x)-v_{\vareps,N}(y)))\right] d\tau }{|x-y|^{n+2s}} \\
					&\qquad \qquad \cdot \LC v_{\vareps,N}(x)-v_{\vareps,N}(y)\RC  \,dxdy\\
					&=\int_{\R^{2n}}K(x,y)\frac{\int_0^1 \Phi'_{\vareps}(v_{\vareps,N}(y)+\tau(v_{\vareps,N}(x)-v_{\vareps,N}(y)))\,d\tau}{|x-y|^{n+2s}} \\
					&\qquad \qquad \cdot 
					\LC  v_{\vareps,N}(x)-v_{\vareps,N}(y)\RC ^2\,dxdy\\
					&\geq C\left[v_{\vareps,N}\right]^2_{H^s(\R^n)},
				\end{split}
			\end{equation}
			where the constant $C>0$ is independent of $N$ and $t$ but depends generally on $\vareps$. Thus, by the fractional Poincar\'e inequality we have
			\begin{equation}
				\label{eq: energy identity approx sol 4}
				B(\Phi_{\vareps}(v_{\vareps,N}),v_{\vareps,N})\geq C\left\|v_{\vareps,N}\right\|_{H^s(\R^n)}^2.
			\end{equation}
			Thus, an integration of \eqref{eq: energy identity approx sol} over $[0,T_0]$ with $0<T_0\leq \delta$ and using \eqref{eq: energy identity approx sol 2} and \eqref{eq: energy identity approx sol 4}, we obtain
			\begin{equation}
				\label{eq: energy identity approx sol 5}
				\begin{split}
					&\quad \left\|v_{\vareps,N}(T_0)\right\|_{L^2(\Omega)}^2+\left\|v_{\vareps,N}\right\|_{L^2(0,T_0;H^s(\R^n))}^2\\
					& \leq C\LC \left\|F_N \right\|_{L^2(0,T_0;H^{-s}(\Omega))}\left\|v_{\vareps,N}\right\|_{L^2(0,T_0;H^s(\R^n))}+\left\|v_{0,N} \right\|_{L^2(\Omega)}^2\RC, 
				\end{split}
			\end{equation}
			for some constant $C>0$ independent of $N$ and $0<T_0\leq \delta$. By applying Young's inequality we can absorb the factor $\left\|v_{\vareps,N}\right\|_{L^2(0,T_0;H^s(\R^n))}$, appearing on the right of \eqref{eq: energy identity approx sol 5}, on the left hand side to obtain
			\begin{equation}
				\label{eq: energy identity approx sol 5.1}
				\begin{split}
					&\quad \left\|v_{\vareps,N}(T_0)\right\|_{L^2(\Omega)}^2+ \left\|v_{\vareps,N}\right\|_{L^2(0,T_0;H^s(\R^n))}^2 \\
     &\leq  C\LC \left\|F_N \right\|^2_{L^2(0,T_0;H^{-s}(\Omega))}+ \left\|v_{0,N} \right\|_{L^2(\Omega)}^2\RC,
				\end{split}
			\end{equation}
   for some constant $C>0$ independent of $N$ and $0<T_0\leq \delta$.

			Notice that by the convergence $F_N\to F$ in $L^2(0,T;H^{-s}(\Omega))$, then for $N\in \N$, each term $\|F_N\|_{L^2(0,T_0;H^{-s}(\Omega))}$ is uniformly bounded in $N\in \N$ and $T_0$ and additionally by Parseval's identity, one has 
			\begin{equation}
				\label{eq: Parseval}
				\left\|v_{0,N} \right\|_{L^2(\Omega)}\leq \left\|v_0\right\|_{L^2(\Omega)}.
			\end{equation}
			Thus, after taking the supremum in $T_0\in [0,\delta]$ we get
			\begin{equation}
				\label{eq: energy identity approx sol 6}
				\left\|v_{\vareps,N} \right\|_{L^{\infty}(0,\delta;L^2(\Omega))}+\left\|v_{\vareps,N} \right\|_{L^2(0,\delta;H^s(\R^n))}\leq C
			\end{equation}
			uniformly in $N\in\N$, for some constant $C>0$ independent of $N$. But this means $v_{\vareps,N}$ remains in $L^2(\Omega)$ as $t\to \delta$ and hence we can repeat our local existence result finitely many times to conclude that $v_{\vareps,N}\in C^1([0,T];E_N)$ solves \eqref{eq: second approximate problem} on $[0,T]$. 
			
			Next observe that \eqref{eq: energy identity approx sol 5.1} and \eqref{eq: Parseval} gives us by standard arguments a useful energy estimate for $v_{\vareps,N}$, namely
			\begin{equation}
				\label{eq: energy identity approx sol 7}
				\begin{split}
					&\quad \left\|v_{\vareps,N} \right\|_{L^{\infty}(0,T;L^2(\Omega))}+\left\|v_{\vareps,N}\right\|_{L^2(0,T;H^s(\R^n))}\\
					&\leq C\LC \left\|F_N\right\|_{L^2(0,T\,;H^{-s}(\Omega))}+\left\|v_0 \right\|_{L^2(\Omega)}\RC \\
					&\leq C\LC 1+\left\|F \right\|_{L^2(0,T\,;H^{-s}(\Omega))}+\left\|v_0\right\|_{L^2(\Omega)}\RC 
				\end{split}
			\end{equation}
			uniformly in $N\in\N$. In addition, we want to control $\partial_t v_{\vareps,N}$ in $L^2(0,T;H^{-s}(\Omega))$. Note that by \eqref{eq: second approximate problem}, H\"older's inequality, the uniform ellipticity of $\Phi'_{\vareps}$ and $K$, \eqref{eq: energy identity approx sol 7}, the convergence $F_N\to F$ in $L^2(0,T;H^{-s}(\Omega))$ and \eqref{eq: compatibility of approx}, for any $\psi\in L^2(0,T;\widetilde{H}^s(\Omega))$ we have
			\begin{equation}
				\label{eq: time regularity for each N}
				\begin{split}
					&\quad \left|\int_0^T \left\langle \partial_t v_{\vareps,N}, \psi \right\rangle_{L^2}\,dt\right| \\
					&=\left|\int_0^T \left\langle \partial_t v_{\vareps,N}, \psi_N \right\rangle_{L^2}\,dt\right|\\
					&=\left|-\int_0^T B(\Phi_{\vareps}(v_{\vareps,N}),\psi_N)+ \left\langle F_N,v_N \right\rangle\,dt \right|\\
					&\leq C\left[\Phi_{\vareps}(v_{\vareps,N})\right]_{L^2(0,T;H^s(\R^n))}\left[\psi_N\right]_{L^2(0,T;H^s(\R^n))} \\
					&\quad +\|F_N\|_{L^2(0,T;H^{-s}(\Omega))}\|v_N\|_{L^2(0,T;H^s(\R^n))}\\
					&\leq C \LC \left[v_{\vareps,N}\right]_{L^2(0,T;H^s(\R^n))}+\left\|F_N \right\|_{L^2(0,T;H^{-s}(\Omega))}\RC \left\|\psi_N \right\|_{L^2(0,T;H^s(\R^n))}\\
					&\leq C \left\|\psi_N \right\|_{L^2(0,T;H^s(\R^n))} \\
                    &\leq C \left\|\psi \right\|_{L^2(0,T;H^s(\R^n))},
				\end{split}
			\end{equation}
			for some constant $C$ independent of $N$, where we set $\psi_N=\sum_{j=1}^N\left\langle \psi,w_j\right\rangle_{L^2}w_j$. Finally, using $\langle U,V\rangle_{L^2}=\langle U,V\rangle$ whenever $U,V\in\widetilde{H}^s(\Omega)$, we see that 
			\begin{equation}
				\label{eq: regularity time derivative}
				\partial_t v_{\vareps,N}\in L^2(0,T;H^{-s}(\Omega))\quad\text{with}\quad \left\|\partial_t v_{\vareps,N}\right\|_{L^2(0,T;H^{-s}(\Omega))}\leq C,
			\end{equation}
			for some $C>0$ independent of $N$.\\

			\noindent\textit{Step 2: Passing to the limit $N\to\infty$}. \\
			
			By \eqref{eq: energy identity approx sol 7} and  \eqref{eq: regularity time derivative}, we see that $ \LC v_{\vareps,N}\RC_{N\in\N}\subset X$ is uniformly bounded, where 
			\[
			X \vcentcolon =L^2(0,T;\widetilde{H}^s(\Omega))\cap H^1(0,T;H^{-s}(\Omega)).
			\]
			Since $X$ is a reflexive Banach space, there exists $v_{\vareps}\in X$ such that
			\begin{equation}
				\label{eq: weak convergence in N}
				v_{\vareps,N}\weak v_{\vareps}\ \text{in}\ X\ \text{as}\ N\to\infty
			\end{equation}
			(up to extracting a subsequence). Next recall that by the Rellich--Kondrachov theorem $\widetilde{H}^s(\Omega)\hookrightarrow L^2(\Omega)$ is compact and thus Theorem~\ref{Aubin-Lions lemma} implies that $\LC v_{\vareps,N}\RC_{N\in\N}$ is precompact in $L^2(\Omega_T)$ and hence up to extracting a subsequence we have
			\begin{equation}
				\label{eq: strong convergence in L2 in N}
				v_{\vareps,N}\to v_{\vareps}\ \text{in}\ L^2(\Omega_T)\ \text{and a.e. in}\ \Omega_T\ \text{as}\ N\to\infty.
			\end{equation}
			Next, we claim that:
			\begin{claim}
				\label{claim convergence as N to infty of bilinear forms}
				There holds
				\begin{equation}
					\label{eq: convergence of bilinear form as N to infty}
					\int_0^T B(\Phi_{\vareps}(v_{\vareps,N}),\psi)\,dt\to \int_0^T B(\Phi_{\vareps}(v_{\vareps}),\psi)\,dt \text{ as }N\to \infty,
				\end{equation}
				for all $\psi\in L^2(0,T;\widetilde{H}^s(\Omega))$.
			\end{claim}
			
			\begin{proof}[Proof of Claim \ref{claim convergence as N to infty of bilinear forms}]
				Using a similar method as in \eqref{eq: energy identity approx sol 3}, the fundamental theorem of calculus yields that 
				\[
				\begin{split}
					&\quad \left|\int_0^T B(\Phi_{\vareps}(v_{\vareps,N}),\psi)\,dt- \int_0^T B(\Phi_{\vareps}(v_{\vareps}),\psi)\,dt \right|\\
					&=\left|\int_{\R^{2n}_T}K(x,y)\left(\int_0^1\Phi'_{\vareps}(v_{\vareps,N}(y)+\tau (v_{\vareps,N}(x)-v_{\vareps,N}(y)))\,d\tau\right) \right. \\
					& \qquad \qquad \qquad \cdot \left. \frac{(v_{\vareps,N}(x)-v_{\vareps,N}(y))(\psi(x)-\psi(y))}{|x-y|^{n+2s}}\,dxdydt\right.\\
					&\qquad \quad -\int_{\R^{2n}_T}K(x,y)\left(\int_0^1\Phi'_{\vareps}(v_{\vareps}(y)+\tau (v_{\vareps}(x)-v_{\vareps}(y)))\,d\tau\right)  \\
					&\qquad \qquad \qquad \cdot \left. \frac{(v_{\vareps}(x)-v_{\vareps}(y))(\psi(x)-\psi(y))}{|x-y|^{n+2s}}\,dxdydt\right|\\
					&\leq \left|\int_{\R^{2n}_T}K(x,y)\Psi_{\vareps,N}(x,y)\frac{(v_{\vareps,N}(x)-v_{\vareps,N}(y))(\psi(x)-\psi(y))}{|x-y|^{n+2s}}\,dxdydt\right|\\
					&\qquad +\left|\int_{\R^{2n}_T}K(x,y)\left(\int_0^1\Phi'_{\vareps}(v_{\vareps}(y)+\tau (v_{\vareps}(x)-v_{\vareps}(y)))\,d\tau\right) \right. \\
					&\quad \qquad \qquad  \left. \cdot \frac{((v_{\vareps,N}-v_{\vareps})(x)-(v_{\vareps,N}-v_{\vareps})(y))(\psi(x)-\psi(y))}{|x-y|^{n+2s}}\,dxdydt\right|\\
					&:=I_1^N+I_2^N,
				\end{split}
				\]
				where we have set
				\[
				\begin{split}
					&\Psi_{\vareps,N}(x,y)\\
					&:= \int_0^1(\Phi'_{\vareps}(v_{\vareps,N}(y)+\tau (v_{\vareps,N}(x)-v_{\vareps,N}(y)))-\Phi'_{\vareps}(v_{\vareps}(y)+\tau (v_{\vareps}(x)-v_{\vareps}(y))))\,d\tau.  
				\end{split}
				\]
				It is clear that the operator in $I_2^N$ defines an element of $L^2(0,T;H^{-s}(\Omega))$ for fixed $\psi$ and $v_{\vareps}$. Thus, the weak convergence $v_{\vareps,N}\weak v_{\vareps}$ in $L^2(0,T;\widetilde{H}^s(\Omega))$ (see  \eqref{eq: weak convergence in N}), implies that $I_2^N\to 0$ as $N\to\infty$. Hence, we need only to show that $I_1^N\to 0$ as $N\to\infty$.

                As $\Phi_{\vareps}\in C^1(\R)$,  the convergence \eqref{eq: strong convergence in L2 in N} implies 
				\[
				\Phi'_{\vareps}(v_{\vareps,N}(y)+\tau (v_{\vareps,N}(x)-v_{\vareps,N}(y)))-\Phi'_{\vareps}(v_{\vareps}(y)+\tau (v_{\vareps}(x)-v_{\vareps}(y)))\to 0
				\]
				for a.e. $x,y\in\R^n$ and $t\in (0,T)$ as $N\to\infty$. Hence, the uniform ellipticity of $\Phi'_{\vareps}$ and Lebesgue's dominated convergence theorem implies 
				\begin{equation}
					\label{eq: psi vanishs in limit}
					\Psi_N(x,y)\to 0\ \text{as}\ N\to\infty,
				\end{equation}
				for a.e. $x,y\in\R^n$. Now, let us define
				\[
				W_{\vareps,N}(x,y):=K(x,y)\Psi_{\vareps,N}(x,y)\frac{v_{\vareps,N}(x)-v_{\vareps,N}(y)}{|x-y|^{n/2+s}}\in L^2(\R^{2n}_T).
				\]
				By \eqref{eq: energy identity approx sol 7} and the uniform ellipticity of $K$, this sequence is uniformly bounded in $L^2(\R^{2n}_T)$. Thus up to subsequences, we know that $W_{\vareps,N}\weak W_{\vareps}$ in $L^2(\R^{2n}_T)$ for some $W_{\vareps}\in L^2(\R^{2n}_T)$. Additionally, we know that $W_{\vareps,N}\to 0$ a.e. in $\R^{2n}_T$ by \eqref{eq: psi vanishs in limit}. As a consequence of Egoroff's theorem (see~\cite[Theorem~1.16]{evans-mt}), we deduce that $W_{\vareps}=0$ a.e. in $\R^{2n}_T$. Now, testing the weak convergence $W_{\vareps,N}\weak 0$ in $L^2(\R^{2n}_T)$ with $\frac{\psi(x)-\psi(y)}{|x-y|^{n/2+s}}\in L^2(\R^{2n}_T)$ establishes $I_1^N\to 0$ as $N\to\infty$ and we can conclude the proof of the claim.
			\end{proof} 
			
			Now, we return back to our proof of Theorem \ref{thm: basic existence}. Let $N\geq M$, use $\psi\in C^1([0,T];E_M)$ satisfying $\psi(T)=0$ in \eqref{eq: weak formulation approximate solutions} as a test function and integrate by parts in time the resulting equation to obtain
			\begin{equation}
				\label{eq: relevant equation to pass to the limit in N}
				\begin{split}
					-\int_{\Omega_T}v_{\vareps,N}\partial_t \psi\,dxdt+\int_0^T B(\Phi_{\vareps}(v_{\vareps,N}),\psi)\,dt
					=\int_0^T \langle F_N,\psi\rangle\,dt+\int_{\Omega}v_{0,N}\psi(0)\,dx.
				\end{split}   
			\end{equation}
			By \eqref{eq: weak convergence in N}, \eqref{eq: strong convergence in L2 in N}, Claim~\ref{claim convergence as N to infty of bilinear forms}, $F_N\to F$ in $L^2(0,T;H^{-s}(\Omega))$ and $v_{0,N}\to v_0$ in $L^2(\Omega)$, we can pass to the limit in the above equation \eqref{eq: relevant equation to pass to the limit in N} and get
			\[
			\begin{split}
				-\int_{\Omega_T}v_{\vareps}\partial_t \psi\,dxdt+\int_0^T B(\Phi_{\vareps}(v_{\vareps}),\psi)\,dt
				=\int_0^T \langle F,\psi\rangle\,dt+\int_{\Omega}v_{0}\psi(0)\,dx
			\end{split}
			\]
			for all $\psi\in C^1([0,T];E_M)$ with $\psi(T)=0$ and $M\geq 0$. As $\widetilde{H}^s(\Omega)=\overline{\bigcup_{M\in\N}E_M}$, this actually holds for all $\psi\in C^1([0,T];\widetilde{H}^s(\Omega))$ with $\psi(T)=0$. Hence, in particular it holds for all $\psi \in C_c^{\infty}([0,T)\times \Omega)$. Thus, for any $\vareps>0$, $v_0\in L^2(\Omega)$, $F\in L^2(0,T;H^{-s}(\Omega))$ we have found a solution $v_{\vareps}\in L^2(0,T;\widetilde{H}^s(\Omega))\cap H^1(0,T;H^{-s}(\Omega))$ of 
			\begin{equation}
				\label{eq: solution for every epsilon}
				\begin{cases}
					\partial_t v+ L(\Phi_{\vareps}(v))=F&\text{ in }\Omega_T,\\
					v=0&\text{ in }(\Omega_e)_T,\\
					v(0)=v_0&\text{ in }\Omega.
				\end{cases}
			\end{equation}

            \medskip
   
			\noindent\textit{Step 3: Passing to the limit $\vareps\to 0$}.\\

			Now, let $0\leq u_0\in L^{\infty}(\Omega)\cap \widetilde{H}^s(\Omega)$ and $\varphi\in C_c^{\infty}((\Omega_e)_T)$ or $\varphi\in C_c([0,T]\times\Omega_e)$ with $\Phi(\varphi)\in L^2(0,T;H^s(\R^n))$ as in the assumptions of Theorem~\ref{thm: basic existence}. The same we said at the beginning of the proof for the solvability of \eqref{eq: homogeneous FPME} is true when we replace $\Phi$ by $\Phi_{\vareps}$. More precisely, $u_{\vareps}$ solves 
			\begin{equation}
				\label{eq: original eq for approximate sols}
				\begin{cases}
					\partial_t u+ L(\Phi_{\vareps}(u))=0& \text{ in }\Omega_T,\\
					u=\varphi&\text{ in }(\Omega_e)_T,\\
					u(0)=u_0&\text{ in }\Omega
				\end{cases}
			\end{equation}
			if and only if $v_{\vareps}=u_{\vareps}-\varphi$ solves
			\begin{equation}
				\label{eq: homogeneous problem for approximate problem}
				\begin{cases}
					\partial_t v+ L(\Phi_{\vareps}(v))=f_{\vareps} &\text{ in }\Omega_T,\\
					v=0&\text{ in }(\Omega_e)_T,\\
					v(0)=u_0& \text{ in }\Omega,
				\end{cases}
			\end{equation}
			where $f_{\vareps}=-L(\Phi_{\vareps}(\varphi))$. The functions $f_{\vareps}$ satisfy
			\begin{enumerate}[(A)]
				\item\label{cond 1 on f eps} $0\leq f_{\vareps}(x,t)\leq C\int_{\Omega_e}\Phi_{\vareps}(\varphi)(y,t)\,dy$ for $(x,t)\in\Omega_T$,
				\item\label{cond 2 on f eps} $f_{\vareps}\in C([0,T];L^2(\Omega))$.
			\end{enumerate}
			By \textit{Step 1} and \textit{Step 2}, we know that a solution $v_{\vareps}$ of \eqref{eq: homogeneous problem for approximate problem} exists such that $v_{\vareps}\in L^2(0,T;\widetilde{H}^s(\Omega))\cap H^1(0,T;H^{-s}(\Omega))$. Note that the additional approximation of the inhomogenity is here not needed by \ref{cond 2 on f eps}. Next, we need to establish the following claim:
			\begin{claim}
				\label{claim further regularity of solution v epsilon}
				For any $\vareps>0$, there holds
				\begin{enumerate}[(i)]
					\item\label{time regularity} $\partial_t v_{\vareps}\in L^2(\Omega_T)$,
					\item\label{nonlinear regularity} $\Phi_{\vareps}(v_{\vareps})\in L^2(0,T;\widetilde{H}^s(\Omega))$,
					\item\label{time regularity nonlinearity} $\partial_t \LC \Phi_{\vareps}(v_{\vareps})\RC =\Phi'_{\vareps}(v_{\vareps})\partial_t v_{\vareps}$ a.e. in $\Omega_T$.
				\end{enumerate}
			\end{claim}
			\begin{proof}[Proof of Claim \ref{claim further regularity of solution v epsilon}]
				Since $\Phi_{\vareps}\in C^1(\R)$ and $v_{\vareps,N}\in C^1([0,T];E_N)$, we may compute
				\begin{equation}
					\label{eq: time derivative nonlinearity with N}
					\partial_t \LC \Phi_{\vareps}(v_{\vareps,N})\RC =\Phi'_{\vareps}(v_{\vareps,N})\partial_t v_{\vareps,N}.
				\end{equation}
				Hence, $\partial_t \Phi_{\vareps}(v_{\vareps,N})\in E_N$ for a.e. $t\in (0,T)$ and therefore using this function in \eqref{eq: second approximate problem}, we obtain
				\[
				\begin{split}
					\int_{\Omega}\Phi'_{\vareps}(v_{\vareps,N})\left|\partial_t v_{\vareps,N}\right|^2\,dx+\frac{1}{2}\partial_t B(\Phi_{\vareps}(v_{\vareps,N}),\Phi_{\vareps}(v_{\vareps,N}))=\left\langle f_{\vareps},\partial_t \Phi_{\vareps}(v_{\vareps,N})\right\rangle.
				\end{split}
				\]
				An integration over $[0,t]\subset [0,T]$, using the uniform ellipticity of $\Phi'_{\vareps}$ and $K$ and the fractional Poincar\'e inequality, we get 
				\[
				\begin{split}
					&\quad \int_0^t\int_{\Omega}|\partial_t v_{\vareps,N}|^2\,dx ds+\|\Phi_{\vareps}(v_{\vareps,N}(t))\|^2_{H^s(\R^n)}\\
					&\leq C\LC B(\Phi_{\vareps}(v_{0,N}),\Phi_{\vareps}(v_{0,N}))+\|f_{\vareps}\|_{L^2(\Omega_T)}\|\partial_t \Phi_{\vareps}(v_{\vareps,N})\|_{L^2(\Omega_T)}\RC .
				\end{split}
				\]
				By standard arguments this gives
				\[
				\begin{split}
					&\quad \|\partial_t v_{\vareps,N}\|_{L^2(\Omega_T)}+\|\Phi_{\vareps}(v_{\vareps,N})\|_{L^{\infty}(0,T;H^s(\R^n))}\\
					&\leq C\LC B(\Phi_{\vareps}(u_{0,N}),\Phi_{\vareps}(u_{0,N}))+\|f_{\vareps}\|_{L^2(\Omega_T)}\RC .
				\end{split}
				\]
				Finally, using the uniform ellipticity of $K$ and $\Phi_{\vareps}$, $u_0\in \widetilde{H}^s(\Omega)$ and \eqref{eq: compatibility of approx}, we get
				\begin{equation}
					\label{eq: time derivative estimate indep of N}
					\begin{split}
						&\quad \left\|\partial_t v_{\vareps,N}\right\|_{L^2(\Omega_T)}+\left\|\Phi_{\vareps}(v_{\vareps,N})\right\|_{L^{\infty}(0,T;H^s(\R^n))}\\
						&\leq  C\LC [\Phi_{\vareps}(u_{0,N})]_{H^s(\R^n)}+\|f_{\vareps}\|_{L^2(\Omega_T)}\RC \\
						&\leq C\LC [u_{0,N}]_{H^s(\R^n)}+\|f_{\vareps}\|_{L^2(\Omega_T)}\RC\\
						&\leq C\LC [u_{0}]_{H^s(\R^n)}+\|f_{\vareps}\|_{L^2(\Omega_T)}\RC .
					\end{split}
				\end{equation}
				Hence, $\partial_t \Phi_{\vareps}(v_{\vareps,N})$ is uniformly bounded in $L^2(\Omega_T)$ in $N$. Therefore, we conclude that
				\begin{equation}
					\label{eq: time regularity for each epsilon}
					\partial_t v_{\vareps,N}\weak \partial_t v_{\vareps}\ \text{in} \ L^2(\Omega_T) \text{ as }N\to \infty
				\end{equation}
                as we wish (see \eqref{eq: strong convergence in L2 in N}).  Hence, we have established \ref{time regularity}.

                Additionally, we see that $\Phi_{\vareps}(v_{\vareps,N})$ is uniformly bounded in $L^{\infty}(0,T;H^s(\R^n))$ and hence $\Phi_{\vareps}(v_{\vareps,N})\weak w_{\vareps}$ in $L^2(0,T;\widetilde{H}^s(\Omega))$ as $N\to\infty$ for some $w_{\vareps}\in L^2(0,T;\widetilde{H}^s(\Omega))$. Again by the Aubin--Lions lemma (Theorem~\ref{Aubin-Lions lemma}), we have $\Phi_{\vareps}(v_{\vareps,N})\to w_{\vareps}$ in $L^2(\Omega_T)$ as $N\to\infty$. Furthermore, the Lipschitz continuity of $\Phi_{\vareps}$ and \eqref{eq: strong convergence in L2 in N} imply $w_{\vareps}=\Phi_{\vareps}(v_{\vareps})$. Hence, we can conclude
				\begin{equation}
					\label{eq: space regularity v epsilon}
					\Phi_{\vareps}(v_{\vareps,N})\weak \Phi_{\vareps}(v_{\vareps})\, \text{in} \ L^2(0,T;\widetilde{H}^s(\Omega)) \text{ as }N\to\infty.
				\end{equation}
		This shows \ref{nonlinear regularity}. By \eqref{eq: strong convergence in L2 in N} and $\Phi'_{\vareps}\in C^0$, we know
				\[
				\Phi'_{\vareps}(v_{\vareps,N})\to \Phi'_{\vareps}(v_{\vareps})\ \text{a.e. in}\ \Omega_T \text{ as }N\to \infty,
				\]
			but the uniform boundedness of $\Phi'_{\vareps}$ and Lebesgue's dominated convergence theorem give rise to 
				\begin{equation}
					\label{eq: strong convergence of derivatives}
					\Phi'_{\vareps}(v_{\vareps,N})\to \Phi'_{\vareps}(v_{\vareps})\ \text{in}\ L^2(\Omega_T) \text{ as }N\to \infty.
				\end{equation}
				Therefore, we can deduce
				\[
				\partial_t\Phi_{\vareps}(v_{\vareps,N})=\Phi'_{\vareps,N}(v_{\vareps,N})\partial_t v_{\vareps,N}\to \Phi'_{\vareps}(v_{\vareps})\partial_t v_{\vareps}\ \text{in}\ L^2(\Omega_T) \text{ as }N\to\infty,
				\]
				 since it is a product of a weakly and strongly converging sequence in $L^2(\Omega_T)$ (see \eqref{eq: time regularity for each epsilon} and \eqref{eq: strong convergence of derivatives}). On the other hand, by \eqref{eq: space regularity v epsilon} we know 
                 \[
                 \partial_t \Phi_{\vareps}(v_{\vareps,N})\to \partial_t \Phi_{\vareps}(v_{\vareps}) \text{ in }\distr(\Omega_T)\text{ as }N\to\infty,
                 \]
                  and hence
				\begin{equation}
					\label{eq: convergence of time derivative of nonlinearity}
					\partial_t \Phi_{\vareps}(v_{\vareps})=\Phi'_{\vareps}(v_{\vareps})\partial_t v_{\vareps}\ \text{a.e. in} \ \Omega_T.
				\end{equation}
				This finally shows \ref{time regularity nonlinearity} and we can conclude the proof of Claim \ref{claim further regularity of solution v epsilon}.
			\end{proof}

			In order to complete the proof of Theorem \ref{thm: basic existence}, we next prove a maximum principle for problem \eqref{eq: homogeneous problem for approximate problem}.
			\begin{claim}
				\label{claim: maximum principle}
				There exists $M>0$ such that
				\begin{equation}
					\label{eq: maximum princple}
					0\leq v_{\vareps}(x,t)\leq M\ \text{a.e. in}\ \Omega_T.
				\end{equation}
			\end{claim}
			\begin{proof}[Proof of Claim \ref{claim: maximum principle}]
				First note that by construction the function $u_{\vareps}\vcentcolon = v_{\vareps}+\varphi\in L^2(0,T;H^s(\R^n))$ solves
				
				\begin{equation}
					\label{eq: original eq for approximate sols max princ}
					\begin{cases}
						\partial_t u+ L(\Phi_{\vareps}(u))=0&\text{ in }\Omega_T,\\
						u=\varphi&\text{ in }(\Omega_e)_T,\\
						u(0)=u_0&\text{ in }\Omega.
					\end{cases}
				\end{equation}
				Here, we used that $\supp\LC v_{\vareps}\RC \subset\Omega_T$, $\supp \LC \varphi\RC \subset [0,T]\times\Omega_e$ are disjoint, and hence
				\[
				\Phi_{\vareps}(u_{\vareps})=\Phi_{\vareps}(v_{\vareps})+\Phi_{\vareps}(\varphi).
				\]
				Moreover, observe by Claim~\ref{claim further regularity of solution v epsilon}, there holds
				\begin{equation}
					\label{eq: PDE for max principle}
					\int_{\Omega_T} \LC \partial_t u_{\vareps}\RC \psi \,dxdt+\int_0^T B(\Phi_{\vareps}(u_{\vareps}),\psi)\,dt=0
				\end{equation}
				for all $\psi \in L^2(0,T;\widetilde{H}^s(\Omega))$ and thus 
				\begin{equation}
					\label{eq: PDE for max principle 2}
					\int_{\Omega} \LC \partial_t u_{\vareps}\RC \psi \,dx+ B(\Phi_{\vareps}(u_{\vareps}),\psi)=0
				\end{equation}
				for a.e. $0<t<T$ and all $\psi\in\widetilde{H}^s(\Omega)$.

                Next, let us define $w_{\vareps}\in L^2(\R^n_T)$ by
				\begin{equation}
					\label{eq: auxiliary test function}
					w_{\vareps}=\LC u_{\vareps}-M\RC_+ \quad \text{with} \quad  M\vcentcolon =\sup_{x\in\Omega}u_0+\sup_{(x,t)\in [0,T]\times\Omega_e}\varphi.
				\end{equation}
				Here and later, we write a subscript $+$ for the positive part and $-$ for the negative part of a given function. We assert that $w_{\vareps}\in H^1(0,T;L^2(\Omega))\cap L^2(0,T;\widetilde{H}^s(\Omega))$. In fact, by \eqref{eq: auxiliary test function} we known $\partial_t u_{\vareps}\in L^2(\Omega_T)$ and hence $\partial_t w_{\vareps}\in L^2(\Omega_T)$, then it is known that 
				\begin{equation}
					\label{eq: time derivative max principle}
					\partial_t w_{\vareps}=\LC \partial_t u_{\vareps}\RC\chi_{\{ u_{\vareps}\geq M\} }\in L^2(\Omega_T),
				\end{equation}
				where $\chi_A$ denotes the characteristic function of a given set $A$. On the other hand, we may estimate (suppressing the $t$ dependence)
				\[
				\left[w_{\vareps}\right]_{H^s(\R^n)}\leq \left[u_{\vareps}\right]_{H^s(\R^n)}.
				\]
				where we used that $t\mapsto t_+=\max(t,0)$ is Lipschitz with Lipschitz constant $1$. Taking into account
				\[    \left\|w_{\vareps}\right\|_{L^2(\R^n)}=\left\|w_{\vareps}\right\|_{L^2(\Omega)}\leq \left\|u_{\vareps}\right\|_{L^2(\Omega)}+M|\Omega|^{1/2},
				\]
				where $|\Omega|$ denotes the Lebesgue measure of $\Omega$, and this implies $w_{\vareps}\in L^2(0,T;H^s(\R^n))$. Moreover, by assumption there holds $w_{\vareps}=0$ in $(\Omega_e)_T$ and thus the Lipschitz continuity gives $w_{\vareps}\in L^2(0,T;\widetilde{H}^s(\Omega))$. This completes the proof of the fact $w_{\vareps}\in H^1(0,T;L^2(\Omega))\cap L^2(0,T;\widetilde{H}^s(\Omega))$.
				
				Using $\psi=w_{\vareps}$ as a test function in \eqref{eq: PDE for max principle 2} and \eqref{eq: time derivative max principle}, we have
				\begin{equation}
					\label{eq: first chain of estimates}
					\begin{split}
						0&=\int_{\Omega} (\partial_t u_{\vareps})w_{\vareps} \,dx+ B(\Phi_{\vareps}(u_{\vareps}),w_{\vareps})\\
						&=\int_{\Omega} (\partial_t w_{\vareps})w_{\vareps} \,dx+ B(\Phi_{\vareps}(u_{\vareps}),w_{\vareps})\\
						&=\partial_t\int_{\Omega} \frac{|w_{\vareps}|^2}{2} \,dx+ B(\Phi_{\vareps}(u_{\vareps}),w_{\vareps}).
					\end{split}
				\end{equation}
				Next, we write the bilinear form as
				\begin{equation}
					\label{eq: for sign estimate bilinear}
					\begin{split}
						B(\Phi_{\vareps}(u_{\vareps}),w_{\vareps})
						=&\int_{\R^{2n}}K(x,y) \LC \int_0^1 \Phi'_{\vareps}(u_{\vareps}(y)+\tau(u_{\vareps}(x)-u_{\vareps}(y))\,d\tau\RC  \\
						&\qquad \qquad \cdot \frac{(u_{\vareps}(x)-u_{\vareps}(y))(w_{\vareps}(x)-w_{\vareps}(y))}{|x-y|^{n+2s}}\,dxdy
					\end{split}
				\end{equation}
				and observe
				\[
				\begin{split}
					&\quad \LC u_{\vareps}(x)-u_{\vareps}(y)\RC \LC w_{\vareps}(x)-w_{\vareps}(y)\RC\\
					&=\left|w_{\vareps}(x)-w_{\vareps}(y)\right|^2-\LC(u_{\vareps}-M)_{-}(x)-(u_{\vareps}-M)_{-}(y)\RC \LC w_{\vareps}(x)-w_{\vareps}(y)\RC\\
					&=\left|w_{\vareps}(x)-w_{\vareps}(y)\right|^2+\LC u_{\vareps}-M\RC _{-}(x)w_{\vareps}(y)+\LC u_{\vareps}-M\RC _{-}(y)w_{\vareps}(x)\\
					&\geq 0.
				\end{split}
				\]
				This guarantees 
				\begin{equation}
					\label{eq: nonneg bilinear form}
					\int_0^t B(\Phi_{\vareps}(u_{\vareps}),w_{\vareps})\,d\tau\geq 0
				\end{equation}
				for all $0\leq t\leq T$. Integration of \eqref{eq: first chain of estimates} over $[0,t]\subset [0,T]$ gives
				\[
				\int_{\Omega} \frac{\left|w_{\vareps}(t)\right|^2}{2} \,dx+ \int_0^t B(\Phi_{\vareps}(u_{\vareps}(\tau)),w_{\vareps}(\tau))=\int_{\Omega} \frac{\left|w_{\vareps}(0)\right|^2}{2} \,dx.
				\]
				By the fact that $w_{\vareps}(0)=0$ and \eqref{eq: nonneg bilinear form}, we deduce $w_{\vareps}=0$ a.e. in $\Omega_T$ and hence $u_{\vareps}\leq M$ a.e. in $\Omega_T$. This establishes the upper bound.
				
				On the other hand, we want to show that $u_{\vareps}\geq 0$ a.e. in $\Omega_T$. The proof is exactly the same but this time we take 
				\[
				w_{\vareps}=\LC u_{\vareps}\RC_{-}\in L^2(0,T;\widetilde{H}^s(\Omega))
				\]
				as a test function in \eqref{eq: PDE for max principle 2}. This regularity follows precisely the same lines as before. Taking into account
				\[
				\partial_t w_{\vareps}= \LC -\partial_t u_{\vareps}\RC \chi_{\{  u_{\vareps}\leq  0\}},
				\]
				we deduce
				\begin{equation}
					\label{eq: for lower bound}
					-\partial_t\int_{\Omega}\frac{\left|w_{\vareps}\right|^2}{2}\,dx+B(\Phi_{\vareps}(u_{\vareps}),w_{\vareps})=0
				\end{equation}
				for a.e. $0<t< T$. Next, note that we have
				\begin{equation}
					\label{eq: negativity}
					\begin{split}
						&\quad (u_{\vareps}(x)-u_{\vareps}(y))(w_{\vareps}(x)-w_{\vareps}(y))\\
						&=-(w_{\vareps}(x)-w_{\vareps}(y))^2-(u_{\vareps})_+(x)w_{\vareps}(y)-(u_{\vareps})_+(y)w_{\vareps}(x) \\
						&\leq 0.
					\end{split}
				\end{equation}
				Hence, combining \eqref{eq: negativity} with   \eqref{eq: for sign estimate bilinear} we get
				\[
				\int_0^tB(\Phi_{\vareps}(u_{\vareps}),w_{\vareps}) \,d\tau\leq 0
				\]
				for $0<t<T$. Hence, integrating \eqref{eq: for lower bound} over $[0,t]\subset [0,T]$, we obtain
				\begin{equation}
                \label{eq: eq for nonneg}
				\int_{\Omega}\frac{\left|w_{\vareps}(t)\right|^2}{2}\,dx-\int_0^t B(\Phi_{\vareps}(u_{\vareps}),w_{\vareps}) \,d\tau=\int_{\Omega}\frac{\left|w_{\vareps}(0)\right|^2}{2}\,dx.
				\end{equation}
				By the fact that $w_{\vareps}(0)=0$ as $u_0\geq 0$, the right hand side vanishes. From \eqref{eq: negativity}, we deduce that terms on the left hand side of \eqref{eq: eq for nonneg} are nonnegative and thus it follows that $u_{\vareps}\geq 0$  a.e. in $\Omega_T$. Since $\varphi=0$ in $\Omega_T$, this shows the desired inequality \eqref{eq: maximum princple} for $v_{\vareps}$, which establishes Claim \ref{claim: maximum principle}.
			\end{proof}
			As an immediate consequence of Claim~\ref{claim: maximum principle}, we see that there exists $\mathcal{M}>0$ such that
			\begin{equation}
				\label{eq: uniform estimate nonlinearity}
				0\leq \mathcal{V}_{\vareps}\vcentcolon = \Phi_{\vareps}(v_{\vareps})\leq \mathcal{M}\ \text{a.e. in}\ \Omega_T
			\end{equation}
			for all sufficiently small $\vareps>0$. The lower bound follows from the fact that $v_{\vareps}\geq 0$ and $\Phi_{\vareps}(t)\geq 0$ for $t\geq 0$. On the other hand, $0\leq v_{\vareps}\leq M$ and \ref{cond 4 auxiliary function} of Lemma~\ref{auxiliary lemma}, establishes
			\[
			\begin{split}
				\mathcal{V}_{\vareps}&=\Phi_{\vareps}(v_{\vareps})\\
      &\leq  |\Phi_{\vareps}(v_{\vareps})-\Phi(v_{\vareps})|+\Phi(v_{\vareps})\\
			&	\leq  \|\Phi_{\vareps}-\Phi\|_{L^{\infty}([0,M])}+\Phi(v_{\vareps}) \\
    &\leq 2\Phi(M),
			\end{split}
			\]
			where we used that $\Phi(M)<\infty$, $M>0$. Hence, $v_{\vareps},\mathcal{V}_{\vareps}\in L^{\infty}(\Omega_T)$ are uniformly bounded in $\vareps$ in this space and thus there exist $\overline{v},\overline{\mathcal{V}}\in L^{\infty}(\Omega_T)$ such that
			\begin{equation}
				\label{eq: weak star convergence in Linfty}
				v_{\vareps}\weakstar \overline{v}\quad  \mathcal{V}_{\vareps}\weakstar \overline{\mathcal{V}}\quad  \text{in}\quad  L^{\infty}(\Omega_T), \text{ as }\vareps \to 0,
			\end{equation}
		and by lower semi-continuity of $\|\cdot\|_{L^{\infty}(\Omega_T)}$ under weak-$\ast$ convergence in $L^{\infty}(\Omega_T)$ we have
			\begin{equation}
				\label{eq: uniform bound}
				\left\|\overline{v} \right\|_{L^{\infty}(\Omega_T)}\leq M,\quad \text{and} \quad    \left\|\overline{\mathcal{V}}\right\|_{L^{\infty}(\Omega_T)}\leq \mathcal{M}.
			\end{equation}
			a.e. in $\Omega_T$. By boundedness of $\Omega_T$ and \eqref{eq: weak star convergence in Linfty}, we have
			\begin{equation}
				\label{eq: weak convergence in any Lp}
				v_{\vareps}\weak \overline{v},\ \mathcal{V}_{\vareps}\weak \overline{\mathcal{V}}\ \text{in}\ L^{p}(\Omega_T)
			\end{equation}
			for any $1\leq p<\infty$.

          Next, let us denote by $\Psi_{\vareps}\colon\R\to\R$ to be the antiderivative of $\Phi_{\vareps}$ satisfying $\Psi_{\vareps}(0)=0$.
			Since $\mathcal{V}_{\vareps}\in L^2(0,T;\widetilde{H}^s(\Omega))$ and $\partial_t v_{\varepsilon}\in L^2(\Omega_T)$ (see Claim~\ref{claim further regularity of solution v epsilon}), we can test the equation for $v_{\vareps}$ with $\mathcal{V}_{\vareps}$ to obtain
			\begin{equation}
				\label{eq: test with Phi v}
				\int_{\Omega_t}\LC \partial_t v_{\vareps}\RC \mathcal{V}_{\vareps}\,dxd\tau+\int_0^t B(\Phi_{\vareps}(v_{\vareps}),\mathcal{V}_{\vareps})\,d\tau=\int_{\Omega_t}f_{\vareps}\mathcal{V}_{\vareps}\,dxd\tau
			\end{equation}
			for $0<t\leq T$. Clearly, there holds
			\[
			\partial_t \LC \Psi_{\vareps}(v_{\vareps})\RC =\Psi'_{\vareps}(v_{\vareps})\partial_t v_{\vareps}=\mathcal{V}_{\vareps}\partial_t v_{\vareps}.
			\]
			Hence, using the uniform ellipticity of $K$, the fractional Poincar\'e inequality and H\"older's inequality, we obtain
			\[
			\begin{split}
				& \quad \int_{\Omega} \Psi_{\vareps}(v_{\vareps})(t)\,dx +\int_0^t\|\Phi_{\vareps}(v_{\vareps})\|_{H^s(\R^n)}^2\,d\tau\\
			&	\leq  C\left(\|f_{\vareps}\|_{L^2(\Omega_t)}\|\Phi_{\vareps}\|_{L^2(\Omega_t)}+\int_{\Omega}\Psi_{\vareps}(u_0)\,dx\right)
			\end{split}
			\]
			By the usual argument this gives
			\begin{equation}
				\label{eq: useful estimate for passing epsilon to 0}
				\begin{split}
					\sup_{0\leq t\leq T}\int_{\Omega} \Psi_{\vareps}(v_{\vareps})(t)\,dx+\|\Phi_{\vareps}(v_{\vareps})\|_{L^2(0,T;H^s(\R^n))}^2
					\leq  C\left(\|f_{\vareps}\|_{L^2(\Omega_T)}^2+\int_{\Omega}\Psi_{\vareps}(u_0)\,dx\right).
				\end{split}
			\end{equation}
			Assuming $\vareps$ is sufficient small (i.e. $\left\|u_0\right\|_{L^{\infty}(\Omega)}<1/\vareps$), then
			\begin{equation}
				\label{eq: uniform bound last integral}
				\int_{\Omega}\Psi_{\vareps}(u_0)\,dx\leq C
			\end{equation}
			for all $\vareps>0$ sufficiently small.

                On the other hand, let $\mathcal{K}\subset [0,T]\times \Omega_e$ be a compact set such that $\supp(\varphi)\subset \mathcal{K}$ and denote its projection onto $\R^n$ by $\mathcal{K}'\subset\Omega_e$, then using \ref{cond 1 on f}, $\Phi_{\vareps}(0)=0$, the monotonicity of $\Phi_{\vareps}$ and $\Phi_{\vareps}|_{[\vareps,1/\vareps]}=\Phi$ (see Lemma~\ref{auxiliary lemma}), we deduce
			\begin{equation}
				\label{eq: uniform bound f epsilon}
				\begin{split}
					\left\|f_{\vareps}\right\|_{L^2(\Omega_T)}^2&\leq C \int_{\Omega_T}\left(\int_{\mathcal{K}'}\frac{\Phi_{\vareps}(\varphi(y,t))}{|x-y|^{n+2s}}\,dy\right)^2\,dxdt\\
					&\leq \frac{C}{\dist(\partial\Omega,\mathcal{K}')^{2n+4s}}|\Omega|\int_0^T\left(\int_{\mathcal{K}'}\Phi(\max(1,\|\varphi\|_{L^{\infty}((\Omega_e)_T)}))\,dy\right)^2\\
                  &\leq C,
				\end{split}
			\end{equation}
			for all $\vareps >0$ sufficiently small, where $C>0$ is a constant independent of $\vareps>0$. Therefore, \eqref{eq: useful estimate for passing epsilon to 0}, \eqref{eq: uniform bound last integral} and \eqref{eq: uniform bound f epsilon}, show that $\mathcal{V}_{\vareps}$ is uniformly bounded in $L^2(0,T;\widetilde{H}^s(\Omega))$. Hence, we have
			\begin{equation}
				\label{eq: convergence nonlinearity}
				\overline{\mathcal{V}}\in L^2(0,T;\widetilde{H}^s(\Omega))\quad  \text{and}\quad  \mathcal{V}_{\vareps}\weak \overline{\mathcal{V}}\ \text{in}\ L^2(0,T;\widetilde{H}^s(\Omega)) \text{ as }\vareps\to 0.
			\end{equation} 
            Furthermore, we know that
			$L(\Phi_{\vareps}(\varphi))\to L(\Phi(\varphi))$ a.e. in $\Omega_T$ as $\vareps\to 0$. The convergence $\Phi_{\vareps}(\varphi)\to \Phi(\varphi)$ a.e. in $\R^n_T$ follows from the fact that $\varphi\in C_c([0,T]\times \Omega_e)$ and $\Phi_{\vareps}\to \Phi$ uniformly on compact sets. This pointwise convergence, the uniform ellipticity of $K$ and $\varphi\in C_c([0,T]\times \Omega)$, then imply the asserted convergence. But then \eqref{eq: uniform bound f epsilon} and Lebesgue's dominated convergence theorem justifies
			\begin{equation}
				\label{eq: convergence of right hand side}
				f_{\vareps}\to f\vcentcolon = - L(\Phi(\varphi))\ \text{in}\ L^2(\Omega_T) \text{ as }\vareps\to 0.
			\end{equation}
		Now, recall that $v_{\vareps}$ satisfies
			\begin{equation}
				\label{eq: almost finished PDE}
				-\int_{\Omega_T}v_{\vareps}\partial_t\psi\,dxdt+\int_0^T B(\Phi_{\vareps}(v_{\vareps}),\psi)\,dt=\int_{\Omega_T}f_{\vareps}\psi\,dxdt+\int_{\Omega}u_0\psi(0)\,dx,
			\end{equation}
			for all $\psi\in C_c^{\infty}([0,T)\times \Omega)$. Using \eqref{eq: weak convergence in any Lp}, \eqref{eq: convergence nonlinearity} and \eqref{eq: convergence of right hand side}, we can pass to the limit $\vareps\to 0$ and obtain
			\begin{equation}
				\label{eq: limit equation}
				-\int_{\Omega_T}\overline{v}\partial_t\psi\,dxdt+\int_0^T B(\overline{\mathcal{V}},\psi)\,dt=\int_{\Omega_T}f\psi\,dxdt+\int_{\Omega}u_0\psi(0)\,dx,
			\end{equation}
			for all $\psi\in C_c^{\infty}([0,T)\times \Omega)$. Next, we show:
			\begin{claim}
				\label{claim: form of nonlinearity}
				We have $\overline{\mathcal{V}}=\Phi(\overline{v})$.
			\end{claim}
			\begin{proof}[Proof of Claim \ref{claim: form of nonlinearity}]
				We first want to show that $\partial_t v_{\vareps}$ is uniformly bounded in $L^2(0,T;H^{-s}(\Omega))$. For this observe that we have
				\[
				\partial_t v_{\vareps}=-L(\mathcal{V}_{\vareps})+f_{\vareps}
				\]
				as distributions on $\Omega_T$. Now, as $\mathcal{V}_{\vareps}$ is uniformly bounded in $L^2(0,T;\widetilde{H}^s(\Omega))$ and $f_{\vareps}$ in $L^2(\Omega_T)$, we see that $\partial_t v_{\vareps}$ is uniformly bounded in $L^2(0,T;H^{-s}(\Omega))$. Additionally by Claim~\ref{claim: maximum principle}, $v_{\vareps}$ is uniformly bounded in $L^2(\Omega_T)$. Since the compact embedding $\widetilde{H}^s(\Omega)\hookrightarrow L^2(\Omega)$ and Schauder's theorem tells us that $L^2(\Omega)\hookrightarrow H^{-s}(\Omega)$ is compact, we see that all conditions of Theorem~\ref{Aubin-Lions-Simon lemma} with
				\[
				X=L^2(\Omega), \ B=Y=H^{-s}(\Omega), \text{ for } p=r=2, \text{ and } s=1
				\]
				are satisfied. Hence, we can conclude that up to extracting a subsequence we have 
				\begin{equation}
					\label{eq: strong convergence of v epsilon}
					v_{\vareps}\to \overline{v}\ \text{in}\ L^2(0,T;H^{-s}(\Omega)) \text{ as }\vareps \to 0.
				\end{equation}
				But then taking into account \eqref{eq: convergence nonlinearity}, we get
				\begin{equation}
					\label{eq: convergence of product}
			  \int_{\Omega_T}v_{\vareps}\mathcal{V}_{\vareps}\,dxdt\to \int_{\Omega_T} \overline{v}\overline{\mathcal{V}}\,dxdt \text{ as }\vareps\to 0,
				\end{equation}
			   since its the product of a strongly converging sequence in $L^2(0,T;H^{-s}(\Omega))$ and a weakly converging sequence in $L^2(0,T;H^{-s}(\Omega))$. 
				
				Next choose $w\in C_c^{\infty}(\Omega_T)$ and observe that the monotonicity of $\Phi_{\vareps}$ implies
				\begin{equation}
					\label{eq: monotonicity estimate}
					\begin{split}
						0&\leq \int_{\Omega_T}\LC \mathcal{V}_{\vareps}-\Phi_{\vareps}(w)\RC \LC v_{\vareps}-w \RC dxdt\\
						&=\int_{\Omega_T}(\mathcal{V}_{\vareps}v_{\vareps}-\mathcal{V}_{\vareps}w-\Phi_{\vareps}(w)v_{\vareps}+\Phi_{\vareps}(w)w)\,dxdt.
					\end{split}
				\end{equation}
				As $v_{\vareps}\weak \overline{v}$ in $L^2(\Omega_T)$, $\mathcal{V}_{\vareps}\weak \overline{\mathcal{V}}$ in $L^2(0,T;\widetilde{H}^s(\Omega))$, $\Phi_{\vareps}\to\Phi$ uniformly on compact sets and \eqref{eq: convergence of product}, we obtain in the limit $\vareps\to 0$:
				\begin{equation}
					\label{eq: montonocity estimate}
					\begin{split}
						0&\leq \int_{\Omega_T}\LC \overline{\mathcal{V}}\overline{v}-\overline{\mathcal{V}}w-\Phi(w)\overline{v}+\Phi(w)w\RC dxdt\\
						&= \int_{\Omega_T}\LC \overline{\mathcal{V}}-\Phi(w)\RC (\overline{v}-w)\,dxdt,
					\end{split}
				\end{equation}
				for all $w\in C_c^{\infty}(\Omega_T)$. Now, since $v_{\vareps},\overline{v}\in L^{\infty}(\Omega_T)$, we can change $\Phi$ (and accordingly $\Phi_{\vareps}$) outside a compact set without affecting our established convergence results. In fact, if we make $\Phi$ outside a sufficiently large compact set linear, then we see by an approximation argument that \eqref{eq: montonocity estimate} is true for all $L^2(\Omega_T)$. 
				
				But then we have for all $\eta \in L^2(\Omega_T)$ and $0\leq \lambda\leq 1$ the estimate
				\begin{equation}
					\label{eq: useful id}
					\left\langle \overline{\mathcal{V}}-\widetilde{\Phi}((1-\lambda)\overline{v}+\lambda \eta),\overline{v}-\eta\right\rangle_{L^2}\geq 0.
				\end{equation}
				Next, we observe that $L^2(\Omega_T)\ni v\mapsto \langle \widetilde{\Phi}(v),u\rangle_{L^2}$ is continuous for every fixed $u\in L^2(\Omega_T)$. In fact, if $v_k\to v$ in $L^2(\Omega_T)$, then up to extracting a subsequence we have $\widetilde{\Phi}(v_k)\to \widetilde{\Phi}(v)$ a.e. in $\Omega_T$. But then the linearity assumption outside a compact set guarantees that 
				\[
				\left|\widetilde{\Phi}(v_k)u\right|\leq \max \LC C,c|v_k|\RC |u|\in L^1(\Omega_T).
				\]
				By \cite[Theorem~4.9]{Brezis}, $|v_k|\leq |h|$ for some $h\in L^2(\Omega_T)$ and thus $|\widetilde{\Phi}(v_k)u|\leq g$ for some $g\in L^1(\Omega_T)$. But then the dominated convergence theorem guarantees 
				\[
				\left\langle\widetilde{\Phi}(v_k),u\right\rangle_{L^2}\to \left\langle \widetilde{\Phi}(v),u\right\rangle_{L^2} \text{ as }k\to\infty. 
				\]
			 Hence, we can pass to the limit $\lambda\to 0$ in \eqref{eq: useful id} and get
				\[
				\left\langle \overline{\mathcal{V}}-\widetilde{\Phi}(\overline{v}),\overline{v}-\eta\right\rangle_{L^2(\Omega_T)}\geq 0.
				\]
				This can only hold if $\overline{\mathcal{V}}=\widetilde{\Phi}(\overline{v})$ and we can conclude the proof of Claim \ref{claim: form of nonlinearity}.
			\end{proof}
			Next, we may observe by the compactnes of the embedding $\widetilde{H}^s(\Omega)\hookrightarrow L^2(\Omega)$ that $\mathcal{V}_{\vareps}\weak \overline{\mathcal{V}}$ in $L^2(0,T;\widetilde{H}^s(\Omega))$ implies $\mathcal{V}_{\vareps}\to \mathcal{V}$ in $L^2(\Omega_T)$ and a.e. in $\Omega_T$ as $\vareps\to 0$, but then $\mathcal{V}_{\vareps}\geq 0$ ensures $\overline{\mathcal{V}}\geq 0$. Since by Claim~\ref{claim: form of nonlinearity} we have $\overline{\mathcal{V}}=\Phi(\overline{v})$, this guarantees $\Phi(\overline{v})\geq 0$ and thus $\overline{v}\geq 0$ as $\Phi(t)< 0$ for $t<0$. Therefore, we have found a solution of \eqref{eq: homogeneous FPME without exterior cond}. Using the observation at the beginning of the proof, we see that $u_{\vareps}=v_{\vareps}+\varphi$ is a solution of the original problem \eqref{eq: homogeneous FPME} and the estimate \eqref{eq: bound on solution} follows from \eqref{eq: uniform bound}. Let us note that $\Phi(u)=\Phi(u-\varphi)+\Phi(\varphi)$ belongs to $L^2(0,T;H^s(\R^n))$, as $\Phi(\varphi)\in L^2(0,T;H^s(\R^n))$. This proves the assertion of Theorem \ref{thm: basic existence}. 
		\end{proof}

            We next define the used notion of weak solutions to NPMEs with a linear absorption term.
		
		\begin{definition}[Weak solutions with absorption term]
			\label{def: weak solutions 2}
			Let $\Omega\subset\R^n$ be a bounded Lipschitz domain, $T>0$, $0<s<\alpha\leq 1$, $m\geq 1$ and $L_K\in \mathcal{L}_0$. Assume additionally that we have given $\rho,q\in C^{1,\alpha}_+(\R^n)$ with $\rho$ uniformly elliptic. For given $u_0\in L^{\infty}(\Omega)$, $\varphi\in C_c([0,T]\times \Omega_e)$ with $\Phi^m(\varphi)\in L^2(0,T;H^s(\R^n))$ and $f\in L^2(0,T;H^{-s}(\Omega))$, we say that $u\colon\R^n_T\to\R$ is a \emph{(weak) solution} of
			\begin{equation}
				\label{eq: FPME}
				\begin{cases}
					\rho\partial_t u+ L_K(\Phi^m(u))+qu=f &\text{ in }\Omega_T,\\
					u=\varphi&\text{ in }(\Omega_e)_T,\\
					u(0)=u_0&\text { in }\Omega,
				\end{cases}
			\end{equation}
			provided that 
			\begin{enumerate}[(i)]
				\item $\Phi^m(u)\in L^2(0,T;H^s(\R^n))$,
				\item $\Phi^m(u-\varphi)\in L^2(0,T;\widetilde{H}^s(\Omega))$,
				\item $u$ satisfies \eqref{eq: FPME} in the sense of distributions, that is there holds
				\begin{equation}
					\label{eq: weak formulation}
					\begin{split}
						&-\int_{\Omega_T}\rho u\partial_t\psi\,dxdt + \int_0^T B_K(\Phi^m(u),\psi)\,dt+\int_{\Omega_T}qu\psi\,dxdt\\
						=&\int_0^T\langle f,\psi\rangle\,dxdt+\int_{\Omega}\rho u_0\psi(0)\,dx
					\end{split}
				\end{equation}
				for all $\psi\in C_c^{\infty}([0,T)\times \Omega)$. 
			\end{enumerate}
			If it additionally satisfies $u\in L^{\infty}(\Omega_T)$, then it is called \emph{bounded solution}.
		\end{definition}
		
		\begin{remark}
			\label{remark: hoelder coeff estimate}
			Note that by \cite[Lemma~3.1]{CRTZ-2022} we have
			\begin{equation}
				\label{eq: continuity estimate hoelder coeff}
				\|Qw\|_{H^s(\R^n)}\leq C\|Q\|_{C^{0,\alpha}}\|w\|_{H^s(\R^n)}
			\end{equation}
			for $Q\in C^{1,\alpha}(\R^n)$ and $u\in H^s(\R^n)$, where $0<s<\alpha\leq 1$ and $C>0$ only depends on $s,\alpha$ and $n$. In particular, if $w\in \widetilde{H}^s(\Omega)$ and $\partial\Omega\in C^{0,1}$, then $Qw\in \widetilde{H}^s(\Omega)$. This in turn guarantees $Qv\in L^2(0,T;H^{-s}(\Omega))$, whenever $v\in L^2(0,T;H^{-s}(\Omega))$.
		\end{remark}
		
		\begin{theorem}[Existence result with linear absorption term]
			\label{Existence result with linear absorption term}
			Let $\Omega\subset\R^n$ be a bounded Lipschitz domain, $T>0$, $0<s<\alpha\leq 1$, $m> 1$ and $L_K\in \mathcal{L}_0$. Assume additionally that we have given $\rho,q\in C^{1,\alpha}_+(\R^n)$ with $\rho$ uniformly elliptic. For any $0\leq u_0\in L^{\infty}(\Omega)\cap \widetilde{H}^s(\Omega)$ and $0\leq \varphi\in C_c([0,T]\times \Omega_e)$ with $\Phi^m(\varphi)\in L^2(0,T;H^s(\R^n))$ there exists a non-negative, bounded solution of
			\begin{equation}
				\label{eq: homogeneous FPME general}
				\begin{cases}
					\rho\partial_t u+ L_K(\Phi^m(u))+qu=0&\text{ in }\Omega_T,\\
					u=\varphi&\text{ in }(\Omega_e)_T,\\
					u(0)=u_0&\text{ in }\Omega.
				\end{cases}
			\end{equation}
			Moreover, there holds
			\begin{equation}
				\label{eq: bound on solution existence 2}
				\|u\|_{L^{\infty}(\Omega_T)}\leq \sup_{x\in \Omega}u_0+\sup_{(x,t)\in [0,T]\times \Omega_e}\varphi.
			\end{equation}
		\end{theorem}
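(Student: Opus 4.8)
The plan is to mimic the proof of Theorem~\ref{thm: basic existence}, treating the uniformly elliptic weight $\rho$ in front of $\partial_t$ and the linear absorption term $qu$ as harmless lower order perturbations. As a preliminary reduction, exactly as in the proof of Theorem~\ref{thm: basic existence} (using that $\varphi\equiv 0$ in $\Omega_T$ and that $\supp(\varphi)$ has positive distance to $\overline{\Omega}$), a function $u$ solves \eqref{eq: homogeneous FPME general} if and only if $v\vcentcolon= u-\varphi$ solves
\begin{equation}
	\begin{cases}
		\rho\partial_t v+ L_K(\Phi^m(v))+qv=f &\text{ in }\Omega_T,\\
		v=0&\text{ in }(\Omega_e)_T,\\
		v(0)=u_0 &\text{ in }\Omega,
	\end{cases}
\end{equation}
with the \emph{same} right hand side $f=-L_K(\Phi^m(\varphi))$ and the \emph{same} approximations $f_{\vareps}=-L_K(\Phi^m_{\vareps}(\varphi))$ as there; in particular $f_{\vareps}$ satisfies \ref{cond 1 on f eps}--\ref{cond 2 on f eps} and is uniformly bounded in $L^2(\Omega_T)$ (cf.\ \eqref{eq: uniform bound f epsilon}). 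I would then run the same three step Galerkin scheme with the regularized nonlinearity $\Phi^m_{\vareps}$ from Lemma~\ref{auxiliary lemma}, indicating only where the two new terms enter.

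In \emph{Step~1}, the only structural change is to take the Galerkin basis $\LC w_j\RC$ to be eigenfunctions of $L_K$ with respect to the weighted inner product $\langle\rho\,\cdot,\cdot\rangle_{L^2(\Omega)}$ (equivalent to the standard one by uniform ellipticity of $\rho$); then the $w_j$ are $L^2(\Omega;\rho\,dx)$-orthonormal and $B_K$-orthogonal, the Galerkin ODE has identity mass matrix and merely acquires a continuous linear term with matrix $\langle q w_k,w_j\rangle_{L^2}$, so Peano's theorem still applies and \eqref{eq: compatibility of approx} is unaffected. Testing with $v_{\vareps,N}$ gives $\langle\rho\partial_t v_{\vareps,N},v_{\vareps,N}\rangle_{L^2}=\tfrac12\tfrac{d}{dt}\int_{\Omega}\rho|v_{\vareps,N}|^2\,dx$ plus the nonnegative term $\int_{\Omega}q|v_{\vareps,N}|^2\,dx$, which is simply discarded, so the energy estimate \eqref{eq: energy identity approx sol 7} and global existence on $[0,T]$ are recovered; the $L^2(0,T;H^{-s}(\Omega))$ bound on $\partial_t v_{\vareps,N}$ follows as in \eqref{eq: time regularity for each N} after writing $\rho\partial_t v_{\vareps,N}=-L_K(\Phi^m_{\vareps}(v_{\vareps,N}))-qv_{\vareps,N}+F_N$ in the projected sense and multiplying by $\rho^{-1}\in C^{1,\alpha}(\R^n)$, using the multiplier bound of Remark~\ref{remark: hoelder coeff estimate} (and its dual) on $H^{-s}(\Omega)$. \emph{Step~2} (the limit $N\to\infty$ for fixed $\vareps$) then goes through verbatim: weak limits in $L^2(0,T;\widetilde{H}^s(\Omega))\cap H^1(0,T;H^{-s}(\Omega))$, the Aubin--Lions lemma (Theorem~\ref{Aubin-Lions lemma}) for strong $L^2(\Omega_T)$ convergence, Claim~\ref{claim convergence as N to infty of bilinear forms} for the nonlinear term, and the linear $\rho$- and $q$-terms pass by the $L^2(\Omega_T)$ convergence, producing a solution $v_{\vareps}\in L^2(0,T;\widetilde{H}^s(\Omega))\cap H^1(0,T;H^{-s}(\Omega))$ of the $\vareps$-regularized problem.

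For \emph{Step~3} (the limit $\vareps\to 0$) I would first reprove the analogue of Claim~\ref{claim further regularity of solution v epsilon} by testing the Galerkin equation with $\partial_t\Phi^m_{\vareps}(v_{\vareps,N})\in E_N$: the new contribution is $\int_{\Omega}q v_{\vareps,N}\,\partial_t\Phi^m_{\vareps}(v_{\vareps,N})\,dx=\tfrac{d}{dt}\int_{\Omega}q\,\Theta_{\vareps}(v_{\vareps,N})\,dx$ with $\Theta_{\vareps}(t)\vcentcolon=\int_0^t\tau\,(\Phi^m_{\vareps})'(\tau)\,d\tau\ge 0$, which moves to the left with the favourable sign while its (fixed-$\vareps$) initial counterpart stays bounded, so one still gets, for each $\vareps>0$, $\partial_t v_{\vareps}\in L^2(\Omega_T)$, $\Phi^m_{\vareps}(v_{\vareps})\in L^2(0,T;\widetilde{H}^s(\Omega))$ and the pointwise chain rule for $\partial_t\Phi^m_{\vareps}(v_{\vareps})$. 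Next is the maximum principle: testing the equation for $u_{\vareps}=v_{\vareps}+\varphi$ with $(u_{\vareps}-M)_+$ and with $(u_{\vareps})_-$, where $M$ is the constant in \eqref{eq: bound on solution existence 2}, one has $\int_{\Omega}\rho(\partial_t u_{\vareps})w\,dx=\pm\tfrac12\partial_t\int_{\Omega}\rho w^2\,dx$ for the respective test functions $w$, and the absorption terms $\int_{\Omega}q u_{\vareps}(u_{\vareps}-M)_+\,dx\ge 0$ (since $u_{\vareps}\ge M\ge 0$ on the support of the integrand) and $\int_{\Omega}q u_{\vareps}(u_{\vareps})_-\,dx=-\int_{\Omega}q(u_{\vareps})_-^2\,dx\le 0$ carry exactly the sign that, together with the sign of $B_K$ computed in the proof of Claim~\ref{claim: maximum principle}, forces $0\le u_{\vareps}\le M$ a.e.\ in $\Omega_T$. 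The rest is as in the proof of Theorem~\ref{thm: basic existence}: the uniform bounds $0\le\Phi^m_{\vareps}(v_{\vareps})\le\mathcal{M}$, a uniform $L^2(0,T;\widetilde{H}^s(\Omega))$ bound on $\Phi^m_{\vareps}(v_{\vareps})$ obtained by testing with $\mathcal{V}_{\vareps}\vcentcolon=\Phi^m_{\vareps}(v_{\vareps})$ (the extra term $\int q v_{\vareps}\mathcal{V}_{\vareps}\ge0$ is dropped and $\int_\Omega\rho(\partial_t v_{\vareps})\mathcal{V}_{\vareps}\,dx=\tfrac{d}{dt}\int_\Omega\rho\Psi_{\vareps}(v_{\vareps})\,dx$, with $\Psi_{\vareps}$ the primitive of $\Phi^m_{\vareps}$ vanishing at $0$), the bounds \eqref{eq: uniform bound last integral}--\eqref{eq: uniform bound f epsilon}, and the identification $\overline{\mathcal{V}}=\Phi^m(\overline v)$ as in Claim~\ref{claim: form of nonlinearity}; moreover, since $f_{\vareps}$ is uniformly bounded in $L^2(\Omega_T)$, $v_{\vareps}$ is in fact uniformly bounded in $L^2(0,T;\widetilde{H}^s(\Omega))$, so $\rho\partial_t v_{\vareps}=-L_K(\mathcal{V}_{\vareps})-qv_{\vareps}+f_{\vareps}$ and hence $\partial_t v_{\vareps}$ are uniformly bounded in $L^2(0,T;H^{-s}(\Omega))$ and Aubin--Lions gives $v_{\vareps}\to\overline v$ in $L^2(\Omega_T)$ and a.e. Passing to the limit in the weak formulation and undoing the reduction, $u=\overline v+\varphi$ is the desired nonnegative bounded solution and \eqref{eq: bound on solution existence 2} comes from the maximum principle.

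I expect the only genuinely nonroutine point to be keeping \emph{all} the estimates of Steps~1--3 uniform in $\vareps$ in the presence of the two new terms: the $\rho$-weight necessitates the $\rho$-adapted Galerkin basis and the multiplier bound of Remark~\ref{remark: hoelder coeff estimate} for $\rho^{-1}$ in the time-derivative estimates, whereas the absorption term $qu$ is benign --- since $q\ge 0$ it enters every energy identity and both halves of the maximum principle with the favourable sign, and appears in the higher-order estimate only through the nonnegative primitive $\Theta_{\vareps}$.
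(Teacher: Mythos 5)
Your proposal is correct and follows the same three-step Galerkin scheme as the paper, with two local variations. For the Galerkin basis, you adapt to the weight $\rho$ by taking eigenfunctions of $L_K$ in $L^2(\Omega;\rho\,dx)$, making the mass matrix the identity; the paper instead retains the unweighted $L^2(\Omega)$-eigenbasis from Theorem~\ref{thm: basic existence} and verifies that the mass matrix $A_{i,j}=\int_\Omega\rho w_i w_j\,dx$ is symmetric positive definite, then inverts it to bring the Galerkin ODE back into standard form. Both are equivalent, and as you note the $B_K$-orthogonality of the $w_j$ (hence the uniform projection bound \eqref{eq: compatibility of approx}) is preserved either way. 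In the higher-order a~priori estimate (the analogue of Claim~\ref{claim further regularity of solution v epsilon}), you move the absorption contribution to the left as a total time derivative $\partial_t\int_\Omega q\,\Theta_\vareps(v_{\vareps,N})\,dx$ with $\Theta_\vareps\geq 0$, exploiting its favourable sign; the paper's terser remark instead controls this term by H\"older and Young's inequality, relying on $q\in L^\infty$ and \eqref{eq: energy identity approx sol 7}. Your sign observation is a slicker way to the same uniform bound. The remaining modifications --- the weighted identity $\int_\Omega\rho(\partial_t u_\vareps)w\,dx=\pm\tfrac12\partial_t\int_\Omega\rho w^2\,dx$ and the sign computations $qu_\vareps(u_\vareps-M)_+\geq 0$, $qu_\vareps(u_\vareps)_-\leq 0$ in the maximum principle, dropping $\int_\Omega qv_\vareps\mathcal V_\vareps\,dx\geq 0$ when testing with $\mathcal V_\vareps$, the transfer from $\rho\partial_t v_\vareps$ to $\partial_t v_\vareps$ in $L^2(0,T;H^{-s}(\Omega))$ via Remark~\ref{remark: hoelder coeff estimate}, and the identification $\overline{\mathcal V}=\Phi^m(\overline v)$ --- match the paper's proof precisely.
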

		
		\begin{proof}
			We use the same conventions as in the proof of Theorem~\ref{thm: basic existence}. Instead of repeating the whole proof, we will only highlight the main differences. \\
			
			\noindent\textit{Modifications in Step 1.} As in Theorem~\ref{thm: basic existence} we look for solutions of the form
			\[
			v_{\vareps,N}(x,t)=\sum_{j=1}^N c^j_{N,\vareps}(t)w_j(x)
			\]
			for $N\in\N$. Next note that in the current situation the ODE to solve in the Galerkin approximation are:
			\begin{equation}
				\label{eq: approx sol general}
				\left\langle \rho \partial_t v_{\vareps,N},w_j \right\rangle_{L^2}+B(\Phi_{\vareps}(v_{\vareps,N}),w_j)+\left\langle q v_{\vareps,N} , w_j\right\rangle_{L^2}=\left\langle F_N, w_j \right\rangle
			\end{equation}
			for all $1\leq j\leq N$ and $0<t<T$. Expanding everything, we see that $\hat{c}_{\vareps,N}=\LC c^1_{\vareps,N},\ldots,c^N_{\vareps,N}\RC \in C^1([0,T];\R^N)$ solves
			\begin{equation}
				\label{eq: IVP for coeff general}
				\begin{cases}
					A\partial_t \hat{c}_{\vareps,N}+ b(\hat{c}_{\vareps,N})+Q\hat{c}_{\vareps,N}=G_N(\hat{c}_{\vareps,N}),\quad\text{for}\quad 0<t<T\\
					\hat{c}_{\vareps,N}(0)=\hat{c}_{0,N},
				\end{cases}
			\end{equation}
			where 
			\begin{equation}
				\begin{cases}
					A=(A_{i,j})_{1\leq i,j\leq N}\quad  \text{with} \quad  A_{i,j}=\int_{\Omega}\rho w_i w_j\,dx,\\
					Q=(Q_{i,j})_{1\leq i,j\leq N}\quad  \text{with} \quad  Q_{i,j}=\left\langle q w_i,w_j\right\rangle_{L^2},\\
					\hat{c}_{0,N}=\LC \left\langle v_0,w_1 \right\rangle_{L^2},\ldots,\left\langle v_0,w_N\right\rangle_{L^2}\RC,\\ 
					G_N(\hat{c}_{\vareps,N})=\left\langle F_N, \sum_{j=1}^N c_{\vareps,N}^jw_j\right\rangle,
				\end{cases}
			\end{equation}
			and for $1\leq j\leq N$ the components $b_j\colon\R^N\to\R$ of $b=(b_1,\ldots,b_N)$ are defined by 
			\begin{equation}
				\label{eq: def coeff b general}
				\begin{split}
					b_j(c)=B\left(\Phi_{\vareps}\left(\sum_{k=1}^Nc_k w_k\right),w_j\right)
				\end{split}
			\end{equation}
			for $c=\LC c_1,\ldots,c_N\RC\in\R^N$.

            We observe that the matrix $A$ is invertible. First note that for all $\xi\in \R^N$, we have
			\[
			\begin{split}
				\xi\cdot A\xi=& \xi_i A_{ij}\xi_j=\int_{\Omega}\rho (\xi_i w_i)(\xi_j w_j)\,dx \\
				=&\left\langle \sqrt{\rho}\xi_iw_i,\sqrt{\rho}\xi_jw_j\right\rangle_{L^2}=\|\sqrt{\rho}\xi_i w_i\|_{L^2}^2\geq 0,
			\end{split}
			\]
			where we used the Einstein summation convention. Now, if $\xi\cdot A\xi=0$ for some $\xi\in\R^N$, then the above calculation shows
			\[
			\sum_{i=1}^N\xi_i\sqrt{\rho}w_i=0\ \text{a.e. in}\ \Omega.
			\]
			Since $\rho>0$, this implies 
			\[
			\sum_{i=1}^N\xi_iw_i=0\ \text{a.e. in}\ \Omega
			\]
			and hence the linear independence of $w_i$, $1\leq i\leq N$, shows $\xi=0$.
			Therefore, $A$ is positive definite. As $A$ is additionally symmetric, we see that $A$ is invertible. Therefore, finding a solution of \eqref{eq: IVP for coeff general} is equivalent to find a solution of
			\begin{equation}
				\label{eq: IVP for coeff general 2}
				\begin{cases}
					\partial_t \hat{c}_{\vareps,N}+ \widetilde{b}(\hat{c}_{\vareps,N})+\widetilde{Q}\hat{c}_{\vareps,N}=\widetilde{G}_N(\hat{c}_{\vareps,N}),\quad\text{for}\quad 0<t<T\\
					\hat{c}_{\vareps,N}(0)=\hat{c}_{0,N},
				\end{cases}
			\end{equation}
			where $\widetilde{b}=A^{-1}b$, $\widetilde{Q}=A^{-1}Q$ and $\widetilde{G}_N=A^{-1}G_N$. Clearly, for this new problem we can again find by Peano's existence theorem for ODEs a solution on possibly a small interval $[0,\delta]$, $\delta>0$. Due to the fact that $\rho$ is uniformly elliptic and $q\geq 0$, we again arrive at equation \eqref{eq: energy identity approx sol 7}. Hence, also in this setting we can extend $v_{\vareps,N}$ to a solution on $[0,T]$. Doing the computation in \eqref{eq: time regularity for each N} with $\psi/\rho$ instead of $\psi$ and using Remark~\ref{remark: hoelder coeff estimate}, we again see that $\partial_t v_{\vareps,N}\in L^2(0,T;H^{-s}(\Omega))$ with 
			\begin{equation}
				\label{eq: uniform bound in N time derivative general}
				\left\|\partial_t v_{\vareps,N}\right\|_{L^2(0,T;H^{-s}(\Omega))} \leq C
			\end{equation}
			for some $C>0$ independent of $N$.\\
			
			\noindent\textit{Modifications in Step 2.} All computations in this step hold without any further insights. \\
			
			\noindent\textit{Modifications in Step 3.} First note that the uniform ellipticity of $\rho$, $q\in L^{\infty}(\Omega)$ and \eqref{eq: energy identity approx sol 7} guarantee that \eqref{eq: time derivative estimate indep of N} still holds (after usings Young's inequality). Hence, we can again deduce property \ref{time regularity} of Claim~\ref{claim further regularity of solution v epsilon}. The proof of the other two properties of Claim~\ref{claim further regularity of solution v epsilon} remain the same. Also the proof of Claim~\ref{claim: maximum principle} is a simple modification after noting
			\[
			\begin{split}
				&qu_{\vareps}\LC u_{\vareps}-M\RC_+=q\LC u_{\vareps}-M\RC _+^2+qM\LC u_{\vareps}-M\RC_{+} \geq 0,\\
				&qu_{\vareps}(u_{\vareps})_{-}=-q(u_{\vareps})_{-}^2\leq 0.
			\end{split}
			\]
			Next, recall that to uniformly bound $\mathcal{V}_{\vareps}=\Phi_{\vareps}(v_{\vareps})$ independently of $\vareps$, we computed \eqref{eq: test with Phi v}. We see that the additional absorption term does not cause any problems since we have 
			\[
			qv_{\vareps}\mathcal{V}_{\vareps}\geq 0,
			\]
			by the monotonicity of $\Phi_{\vareps}$ and $\Phi_{\vareps}(0)=0$. Using the $v_{\vareps}\geq 0$, $\Psi_{\vareps}(t)\geq 0$ for $t\geq 0$ and the uniform ellipticity of $\rho$, we have again an estimate of the form \eqref{eq: useful estimate for passing epsilon to 0}. Then arguing the same way as above, we again see that we can pass to the limit $\vareps\to 0$ to obtain
			\begin{equation}
				\label{eq: almost finished PDE general}
				\begin{split}
					&-\int_{\Omega_T}\rho \overline{v}\partial_t\psi\,dxdt+\int_0^T B(\overline{\mathcal{V}},\psi)\,dt+\int_{\Omega_T}q\overline{v}\psi\,dxdt\\
					=&\int_{\Omega_T}f\psi\,dxdt+\int_{\Omega}\rho u_0\psi(0)\,dx,
				\end{split}
			\end{equation}
			for all $\psi\in C_c^{\infty}([0,T)\times \Omega)$. 
   
            Hence, it remains only to check that $\mathcal{V}=\Phi(\overline{v})$. Similarly as in the beginning of the proof of Claim~\ref{claim: form of nonlinearity}, we see that $\partial_t v_{\vareps}$ is uniformly bounded in $L^2(0,T;H^{-s}(\Omega))$. The only difference one has to notice is that taking into account the uniform bound of $v_{\vareps}$ in $L^2(\Omega_T)$ one gets from the PDE that $\rho \partial_t v_{\vareps}$ is uniformly bounded in $L^2(0,T;H^{-s}(\Omega))$, but then using Remark~\ref{eq: continuity estimate hoelder coeff} one obtains the uniform boundedness of $\partial_t v_{\vareps}$ in $L^2(0,T;H^{-s}(\Omega))$. The rest of the proof of Claim~\ref{claim: form of nonlinearity} is unaffected. Now, we can finish the proof of Theorem~\ref{Existence result with linear absorption term} completely analogous to the proof of Theorem~\ref{thm: basic existence}. 
		\end{proof}
		
		From the proof, we directly obtain the following corollary.
		
		\begin{corollary}
			\label{cor: approximation}
			Assume the conditions of Theorem~\ref{Existence result with linear absorption term} are satisfied and let $u$ be the constructed solution of \eqref{eq: homogeneous FPME general}. Then there exist a sequence $\LC \vareps_k\RC _{k\in\N}\subset (0,\infty)$ with $\vareps_k\to 0$ as $k\to\infty$, and a sequence of solutions $\LC u_k\RC_{k\in \N}$ of
			\begin{equation}
				\label{eq: approximate homogeneous FPME}
				\begin{cases}
					\rho\partial_t u+ L_K(\Phi^m_{\vareps_k}(u))+qu=0 &\text{ in }\Omega_T,\\
					u=\varphi&\text{ in }(\Omega_e)_T,\\
					u(0)=u_0&\text{ in }\Omega.
				\end{cases}
			\end{equation}
			Moreover, there holds 
			\begin{enumerate}[(i)]
				\item $\Phi_{\vareps_k}(u_k-\varphi)\in H^1(0,T; L^2(\Omega))\cap L^2(0,T;\widetilde{H}^s(\Omega))$,
				\item $u_{k}\in L^{\infty}(\Omega_T)$,
				\item $u_k\in L^2(0,T;H^s(\R^n))$ with $\partial_t u_k\in L^2(\Omega_T)$,
				\item $u_k\weak u$ in $L^2(\Omega_T)$ as $k\to \infty$.
			\end{enumerate}
		\end{corollary}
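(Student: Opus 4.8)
The plan is simply to read the statement off from the construction performed in the proof of Theorem~\ref{Existence result with linear absorption term}. Recall that there, for each sufficiently small $\varepsilon>0$ one produces a function $v_\varepsilon\in L^2(0,T;\widetilde{H}^s(\Omega))\cap H^1(0,T;H^{-s}(\Omega))$ solving the regularized problem \eqref{eq: homogeneous problem for approximate problem} (with the absorption term included and with $\Phi_\varepsilon=\Phi^m_\varepsilon$ from Lemma~\ref{auxiliary lemma} in place of $\Phi^m$), and then extracts a subsequence $\varepsilon_k\to 0$ along which $v_{\varepsilon_k}\weakstar\overline{v}$ in $L^\infty(\Omega_T)$ -- hence $v_{\varepsilon_k}\weak\overline{v}$ in every $L^p(\Omega_T)$, $1\le p<\infty$ -- the solution of \eqref{eq: homogeneous FPME general} being $u=\overline{v}+\varphi$. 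I would therefore set $u_k:=v_{\varepsilon_k}+\varphi$; by the reduction recalled in Step~3 of that proof (namely that a function $u$ solves \eqref{eq: approximate homogeneous FPME} for the parameter $\varepsilon_k$ if and only if $v=u-\varphi$ solves the corresponding regularized homogeneous problem), each $u_k$ is a (weak) solution of \eqref{eq: approximate homogeneous FPME}.

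It then remains to check properties (i)--(iv), all of which were already obtained inside the cited proof. Property (iv) is immediate from $v_{\varepsilon_k}\weak\overline{v}$ in $L^2(\Omega_T)$, since $u_k-u=v_{\varepsilon_k}-\overline{v}\weak 0$. Property (ii) is the maximum principle of Claim~\ref{claim: maximum principle}, which gives $0\le v_{\varepsilon_k}\le M$ a.e.\ in $\Omega_T$, so that $u_k\in L^\infty(\Omega_T)$ because $\varphi\in L^\infty$. For (iii) I would use property~\ref{time regularity} of Claim~\ref{claim further regularity of solution v epsilon}, i.e.\ $\partial_t v_{\varepsilon_k}\in L^2(\Omega_T)$, together with $v_{\varepsilon_k}\in L^2(0,T;\widetilde{H}^s(\Omega))$ and the regularity of the exterior datum $\varphi$, to obtain $u_k\in L^2(0,T;H^s(\R^n))$ with $\partial_t u_k\in L^2(\Omega_T)$. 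Finally, for (i) observe that $u_k-\varphi=v_{\varepsilon_k}$, so $\Phi_{\varepsilon_k}(u_k-\varphi)=\mathcal{V}_{\varepsilon_k}$ in the notation of the proof; property~\ref{nonlinear regularity} of Claim~\ref{claim further regularity of solution v epsilon} gives $\mathcal{V}_{\varepsilon_k}\in L^2(0,T;\widetilde{H}^s(\Omega))$, and property~\ref{time regularity nonlinearity} gives $\partial_t\mathcal{V}_{\varepsilon_k}=\Phi'_{\varepsilon_k}(v_{\varepsilon_k})\partial_t v_{\varepsilon_k}$ a.e.; since $\Phi'_{\varepsilon_k}$ is bounded by Lemma~\ref{auxiliary lemma}\ref{cond 3 auxiliary function} and $\partial_t v_{\varepsilon_k}\in L^2(\Omega_T)$, this product lies in $L^2(\Omega_T)$, whence $\mathcal{V}_{\varepsilon_k}\in H^1(0,T;L^2(\Omega))$.

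The only point needing a little care is the bookkeeping of subsequences: the weak-$\ast$ convergence in $L^\infty$, the a.e.\ convergence of $v_{\varepsilon_k}$, and the strong convergence in $L^2(0,T;H^{-s}(\Omega))$ used to identify $\overline{\mathcal{V}}=\Phi(\overline{v})$ were all extracted through successive passages to subsequences, so I would fix once and for all a single subsequence $(\varepsilon_k)_{k\in\N}$ along which all of these limits hold and relabel accordingly. Beyond this there is no genuine obstacle: the corollary is a repackaging of the intermediate estimates already established in the proof of Theorem~\ref{Existence result with linear absorption term}, and no new argument is required.
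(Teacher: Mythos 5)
Your proposal is correct and matches the paper's intent exactly: the paper offers no separate proof of the corollary (it merely remarks that it ``directly'' follows from the proof of Theorem~\ref{Existence result with linear absorption term}), and your explicit unpacking—taking $u_k=v_{\vareps_k}+\varphi$ along a single diagonal subsequence, then reading off (i)–(iv) from Claim~\ref{claim further regularity of solution v epsilon}, Claim~\ref{claim: maximum principle}, and the weak $L^p$ convergence \eqref{eq: weak convergence in any Lp}—is precisely what that remark asks the reader to supply. Your bookkeeping point about fixing one subsequence along which the weak-$\ast$, a.e.\ and $L^2(0,T;H^{-s}(\Omega))$ limits hold simultaneously is the right thing to flag and is handled correctly.
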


		\subsection{Uniqueness of solutions to the forward problem}

         In the end of this section, we show the uniqueness of solutions to NPMEs with absorption term.
         
		\begin{theorem}[Basic uniqueness result]
			\label{thm: basic uniqueness}
			Let $\Omega\subset\R^n$ be a bounded Lipschitz domain, $T>0$, $0<s<1$, $m> 1$ and $L_K\in \mathcal{L}_0$. If for $j=1,2$, we have given initial data $u_{0,j}\in L^{\infty}(\Omega)\cap \widetilde{H}^s(\Omega)$ and exterior conditions $\varphi_j\in C_c([0,T]\times \Omega_e)$ with $\Phi^m(\varphi_j)\in L^2(0,T;H^s(\R^n))$ and $u_j$ is a solution of
			\begin{equation}
				\label{eq: uniqueness FPME}
				\begin{cases}
					\partial_t u+ L_K(\Phi^m(u))=0 &\text{ in }\Omega_T,\\
					u=\varphi_j  & \text{ in }(\Omega_e)_T,\\
					u(0)=u_{0,j} &\text{ in }\Omega,
				\end{cases}
			\end{equation}
			then there holds
			\begin{equation}
				\label{eq: continuity estimate}
				\begin{split}
					&\quad \left\|u_1-u_2\right\|_{L^{\infty}(0,T;H^{-s}(\Omega))}+\left\|\mathcal{U}_1-\mathcal{U}_2\right\|_{L^{\infty}(0,T;H^s(\R^n))}\\
					&\leq C\LC \left\|u_{0,1}-u_{0,2}\right\|_{H^{-s}(\Omega)}+ \left\|\Phi^m(\varphi_1)-\Phi^m(\varphi_2)\right\|_{L^1(0,T;H^s(\R^n))}\RC ,
				\end{split}
			\end{equation}
			where $\mathcal{U}_j(x,t)=\int_0^t u_j(x,\tau)\,d\tau$. In particular, the constructed solutions in Theorem~\ref{thm: basic existence} are unique. 
		\end{theorem}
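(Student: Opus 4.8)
The inequality \eqref{eq: continuity estimate} is an $H^{-s}$-contraction estimate of the kind familiar from the porous medium equation, with the monotonicity of $\Phi^m$ as the driving mechanism. First I would set $w=u_1-u_2$ and $g=\Phi^m(u_1)-\Phi^m(u_2)$; subtracting the two weak formulations of \eqref{eq: uniqueness FPME} (in the sense of Definition~\ref{def: weak solutions}) shows that $w$ solves $\partial_t w+L_Kg=0$ in $\Omega_T$ with $w=\varphi_1-\varphi_2$ in $(\Omega_e)_T$ and $w(0)=u_{0,1}-u_{0,2}$, and in particular $\langle\partial_t w(t),\psi\rangle=-B_K(g(t),\psi)$ for a.e.\ $t$ and all $\psi\in\widetilde H^s(\Omega)$. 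Since $\varphi_j$ and $u_j-\varphi_j$ have disjoint supports, $\Phi^m(u_j)=\Phi^m(u_j-\varphi_j)+\Phi^m(\varphi_j)$, so $g=g_0+\gamma$ with $g_0:=\Phi^m(u_1-\varphi_1)-\Phi^m(u_2-\varphi_2)\in L^2(0,T;\widetilde H^s(\Omega))$ and $\gamma:=\Phi^m(\varphi_1)-\Phi^m(\varphi_2)\in L^2(0,T;H^s(\R^n))$ supported in $(\Omega_e)_T$. Writing $z_j:=u_j-\varphi_j$ and using that $\Phi^m$ is nondecreasing, the pairing $\langle w,g_0\rangle=\int_\Omega(z_1-z_2)(\Phi^m(z_1)-\Phi^m(z_2))\,dx\geq0$; this is legitimate because $z_j\in L^2(\Omega_T)$ for any weak solution, as $|z_j|^2\leq1+|\Phi^m(z_j)|^2\in L^1(\Omega_T)$ when $m\geq1$.

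Next I would introduce the solution operator $G\colon H^{-s}(\Omega)\to\widetilde H^s(\Omega)$ of $L_K$ with vanishing exterior condition, i.e.\ $GF$ is the unique element of $\widetilde H^s(\Omega)$ with $B_K(GF,\psi)=\langle F,\psi\rangle$ for all $\psi\in\widetilde H^s(\Omega)$; this is well defined by the Lax--Milgram theorem, the fractional Poincar\'e inequality \eqref{eq: Poincare int1} and the uniform ellipticity of $K$, and $\|F\|_\ast:=B_K(GF,GF)^{1/2}$ is a norm on $H^{-s}(\Omega)$ equivalent to the usual one. Since $\partial_t w=-L_Kg$ with $g\in L^2(0,T;H^s(\R^n))$ we have $w\in H^1(0,T;H^{-s}(\Omega))$, hence $\phi:=Gw\in H^1(0,T;\widetilde H^s(\Omega))$ with $\partial_t\phi=G\partial_t w$, and by symmetry of $B_K$ the map $t\mapsto\|w(t)\|_\ast^2=B_K(\phi(t),\phi(t))$ is absolutely continuous with $\tfrac12\tfrac{d}{dt}\|w(t)\|_\ast^2=\langle\partial_t w(t),\phi(t)\rangle=-B_K(g,\phi)=-\langle w,g_0\rangle-B_K(\gamma,\phi)\leq|B_K(\gamma,\phi)|\leq C\|\gamma(t)\|_{H^s(\R^n)}\|w(t)\|_\ast$, using \eqref{eq: Poincare int1} once more together with the boundedness of $B_K$. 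Dividing by $\|w(t)\|_\ast$ (via the usual $\sqrt{\|w(t)\|_\ast^2+\delta}$ regularisation to handle its zeros) and integrating in $t$ gives $\|w(t)\|_\ast\leq\|w(0)\|_\ast+C\int_0^t\|\gamma(\tau)\|_{H^s(\R^n)}\,d\tau$, which after passing to the equivalent norm and using $w(0)=u_{0,1}-u_{0,2}$ yields the first term on the left of \eqref{eq: continuity estimate}.

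For the time-integral term, integrating $\partial_t w=-L_Kg$ from $0$ to $t$ and testing with $\psi\in\widetilde H^s(\Omega)$ gives $\langle w(t)-w(0),\psi\rangle+B_K\bigl(\textstyle\int_0^t g\,d\tau,\psi\bigr)=0$. I would decompose $\int_0^t g\,d\tau=\Gamma(t)+\widetilde V(t)$ with $\Gamma(t):=\int_0^t\gamma\,d\tau$ supported in $\Omega_e$ and $\widetilde V(t):=\int_0^t g_0\,d\tau\in\widetilde H^s(\Omega)$; testing the resulting identity with $\psi=\widetilde V(t)$ and using duality, \eqref{eq: Poincare int1} and the boundedness of $B_K$ gives $\|\widetilde V(t)\|_{H^s(\R^n)}\leq C\bigl(\|u_{0,1}-u_{0,2}\|_{H^{-s}(\Omega)}+\|w(t)\|_{H^{-s}(\Omega)}+\|\Gamma(t)\|_{H^s(\R^n)}\bigr)$. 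Since $\|\Gamma(t)\|_{H^s(\R^n)}\leq\|\Phi^m(\varphi_1)-\Phi^m(\varphi_2)\|_{L^1(0,T;H^s(\R^n))}$ and $\|w(t)\|_{H^{-s}(\Omega)}$ is already controlled by the previous step, this bounds $\int_0^t\bigl(\Phi^m(u_1)-\Phi^m(u_2)\bigr)\,d\tau=\Gamma(t)+\widetilde V(t)$ in $L^\infty(0,T;H^s(\R^n))$, i.e.\ the remaining time-integral term in \eqref{eq: continuity estimate}. Taking the supremum over $t\in[0,T]$ completes the estimate, and uniqueness of the solutions constructed in Theorem~\ref{thm: basic existence} follows by applying \eqref{eq: continuity estimate} with $u_{0,1}=u_{0,2}$ and $\varphi_1=\varphi_2$.

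I expect the main obstacle to be essentially organisational: arranging the exterior contribution so that it enters the right-hand side only through $\Phi^m(\varphi_1)-\Phi^m(\varphi_2)$ (this is precisely what the splitting $g=g_0+\gamma$ and the identity $\Phi^m(u_j)=\Phi^m(u_j-\varphi_j)+\Phi^m(\varphi_j)$ accomplish), and justifying the formal energy computation — in particular the differentiation of $t\mapsto B_K(Gw(t),Gw(t))$ and the nonnegativity of $\langle w,g_0\rangle$ — at the regularity actually available for weak solutions. Should one wish to avoid these technicalities altogether, the same estimate follows by first running the argument for the regularised approximations of Theorem~\ref{thm: basic existence} (cf.\ Corollary~\ref{cor: approximation}), with $\Phi^m$ replaced by the bi-Lipschitz $\Phi^m_{\vareps}$ of Lemma~\ref{auxiliary lemma} — still monotone, so $\langle w_{\vareps},g_{0,\vareps}\rangle\geq0$ persists — obtaining constants independent of $\vareps$ (they involve only $G$ and $B_K$), and then letting $\vareps\to0$ using the weak $L^2(\Omega_T)$-convergence of the approximations and the weak lower semicontinuity of the norms on the left of \eqref{eq: continuity estimate}.
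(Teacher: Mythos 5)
Your proof is correct, but it takes a genuinely different route from the paper's; both hinge on the monotonicity of $\Phi^m$, but the order of the two estimates, and the energy functional that does the work, are inverted. The paper passes to $v_j=u_j-\varphi_j$ and tests the subtracted weak formulations against $\Psi(x,t)=\int_t^{t_0}(\Phi^m(v_1)-\Phi^m(v_2))(x,\tau)\,d\tau$ (justified via Proposition~\ref{prop: density}); integrating by parts in time, this simultaneously produces the sign-definite monotonicity term $\int_{\Omega_{t_0}}(v_1-v_2)(\Phi^m(v_1)-\Phi^m(v_2))\,dxdt\geq0$ and the boundary term $\tfrac12 B_K((\mathcal{V}_1-\mathcal{V}_2)(t_0),(\mathcal{V}_1-\mathcal{V}_2)(t_0))$, so the $L^{\infty}(0,T;H^s)$-bound on the time-integrated nonlinearity is obtained first, and the $L^{\infty}(0,T;H^{-s})$-bound on $u_1-u_2$ is afterwards read off from the PDE by the fundamental theorem of calculus. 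You instead introduce the solution operator $G$ of $L_K$ and the equivalent norm $\|\cdot\|_\ast=B_K(G\cdot,G\cdot)^{1/2}$ on $H^{-s}(\Omega)$, prove the $L^{\infty}(0,T;H^{-s})$ contraction first by differentiating $t\mapsto\|w(t)\|_\ast^2$ along the flow and discarding the nonnegative $\langle w,g_0\rangle$, and only then extract the $H^s$-bound on $\int_0^t(\Phi^m(u_1)-\Phi^m(u_2))\,d\tau$ by testing the time-integrated equation against $\widetilde{V}(t)$. This is essentially the classical Br\'ezis--Crandall $H^{-1}$-contraction argument for the porous medium equation transplanted to the nonlocal setting; it replaces the paper's test-function bookkeeping with the single identity $\langle\partial_t w,Gw\rangle=\tfrac12\tfrac{d}{dt}B_K(Gw,Gw)$, at the cost of having to justify $Gw\in H^1(0,T;\widetilde{H}^s(\Omega))$ at the regularity available for weak solutions (your observation $|z_j|^2\leq1+|\Phi^m(z_j)|^2$ correctly supplies the missing $L^2(\Omega_T)$-regularity of $z_j$ that makes $\langle w,g_0\rangle$ an honest nonnegative integral). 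One minor caveat on your closing remark: arguing via the regularized $\Phi^m_\vareps$-problems and letting $\vareps\to0$ would establish the estimate only for solutions that admit such approximating sequences, and Corollary~\ref{cor: approximation} provides these a priori only for the solutions \emph{constructed} in Theorem~\ref{thm: basic existence}, not for an arbitrary weak solution; the direct argument you give first is the one actually needed to prove the theorem as stated.
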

		
		\begin{remark}
			Note that the same proof works if we have additional sources $F_j\in L^2(0,T;H^{-s}(\Omega))$ in \eqref{eq: uniqueness FPME}, but then we would have an additional contribution $\|F_1-F_2\|_{L^1(0,T;H^{-s}(\Omega))}$ in the continuity estimate \eqref{eq: continuity estimate}.
		\end{remark}
		
		\begin{proof}[Proof of Theorem \ref{thm: basic uniqueness}]
			As in the previous proofs, we write $\Phi, \Phi_{\vareps}, L,B$ instead of $\Phi^m,\Phi^m_{\vareps},L_K,B_K$. By assumption the functions $v_j=u_j-\varphi_j$ solve 
			\begin{equation}
				\label{eq: uniqueness homogeneous FPME}
				\begin{cases}
					\partial_t v+ L(\Phi(v))=f_j &\text{ in }\Omega_T,\\
					u=0&\text{ in }(\Omega_e)_T,\\
					u(0)=u_{0,j}&\text{ in }\Omega,
				\end{cases}
			\end{equation}
			where $f_j\vcentcolon = -L(\Phi(\varphi_j))$, for $j=1,2$. Hence, by subtracting the weak formulations that there holds
			\begin{equation}
				\label{eq: weak form useful uniqueness}
				\begin{split}
					&-\int_{\Omega_T}(v_1-v_2)\partial_t\psi\,dxdt+\int_0^TB(\Phi(v_1)-\Phi(v_2),\psi)\,dt\\
					=&\int_{\Omega_T}(f_1-f_2)\psi\,dxdt+\int_{\Omega}(u_{0,1}-u_{0,2})\psi(0)\,dx
				\end{split}
			\end{equation}
			for all $\psi\in C_c^{\infty}([0,T)\times \Omega)$. Next, we define for fixed $0\leq t_0\leq T$ the function
			\begin{equation}
				\label{eq: help function 2}
				\Psi(x,t)\vcentcolon = \begin{cases}
					\int_t^{t_0}(\Phi(v_1)-\Phi(v_2))(x,\tau)\,d\tau&\quad\text{if}\ 0\leq t\leq t_0,\\
					0&\quad\text{otherwise}.
				\end{cases}
			\end{equation}
			Using $\Phi(v_j)\in L^2(0,T;\widetilde{H}^s(\Omega))$ and the fundamental theorem of calculus, one can check that $\Psi\in H^1(0,T;\widetilde{H}^s(\Omega))$ with $\Psi(T)=0$. Recalling Proposition~\ref{prop: density}, we see that $\Psi$ can be used as a test function in \eqref{eq: weak form useful uniqueness}. This gives
			\begin{equation}
				\label{eq: weak form useful uniqueness 2}
				\begin{split}
					&\int_{\Omega_{t_0}}(v_1-v_2)(\Phi(v_1)-\Phi(v_2))\,dxdt \\
					&+\int_0^{t_0}B\LC \Phi(v_1)-\Phi(v_2),\int_t^{t_0}(\Phi(v_1)-\Phi(v_2))(x,\tau)\,d\tau\RC dt\\
					=&\int_{\Omega_T}(f_1-f_2)\int_t^{t_0}(\Phi(v_1)-\Phi(v_2))(x,\tau)\,d\tau\,dxdt \\
					&+\int_{\Omega}\LC u_{0,1}-u_{0,2}\RC \int_0^{t_0}(\Phi(v_1)-\Phi(v_2))(x,\tau)\,d\tau\,dx.
				\end{split}
			\end{equation}
			Next, using Proposition~\ref{prop: density}, take a sequence $\psi_k\in C_c^{\infty}([0,T)\times\Omega)$ such that $\psi_k\to \Psi$ in $H^1(0,T;\widetilde{H}^s(\Omega))$. Since $\Psi(t_0)=0$, we can assume that $\psi_k(t)=0$ for $t_0\leq t\leq T$, then by dominated convergence, we have
			\begin{equation}
				\label{eq: bilinear form uniqueness}
				\begin{split}
					& \quad \int_0^{t_0}B \LC \Phi(v_1)-\Phi(v_2),\int_t^{t_0}(\Phi(v_1)-\Phi(v_2))(x,\tau)\,d\tau\RC dt \\
					&=-\int_0^{t_0}B(\partial_t \Psi,\Psi)\,dt =-\lim_{k\to\infty}\int_0^{t_0}B(\partial_t \psi_k,\psi_k)\,dt \\
					&=-\frac{1}{2}\lim_{k\to\infty}\int_0^{t_0}\partial_tB(\psi_k,\psi_k)\,dt= -\frac{1}{2}\lim_{k\to\infty}\left.B(\psi_k,\psi_k)\right|_{t=0}^{t=t_0}\\
					&=\frac{1}{2}\lim_{k\to\infty}B(\psi_k(0),\psi_k(0))=\frac{1}{2}B(\Psi(0),\Psi(0))\\
					&=\frac{1}{2}B\LC \int_0^{t_0}(\Phi(v_1)-\Phi(v_2))\,d\tau,\int_0^{t_0}(\Phi(v_1)-\Phi(v_2))\,d\tau\RC ,
				\end{split}
			\end{equation}
			where we used the Sobolev embeddding $H^1(0,T;\widetilde{H}^s(\Omega))\hookrightarrow C([0,T];\widetilde{H}^s(\Omega))$. Additionally, by the monotonicity of $\Phi$ we have
			\begin{equation}
				\label{eq: monotonicity for uniqueness}
				\int_{\Omega_{t_0}}(\Phi(v_1)-\Phi(v_2))(v_1-v_2)\,dxdt\geq 0.
			\end{equation}
			From now on we use the notation
			\[
			\mathcal{V}_j(x,t):=\int_0^t \Phi(v_j(x,\tau))\,d\tau,
			\]
			for $j=1,2$.
			Hence, using \eqref{eq: bilinear form uniqueness}, \eqref{eq: monotonicity for uniqueness}, uniform ellipticity of $K$ and the fractional Poincar\'e inequality, we deduce from \eqref{eq: weak form useful uniqueness 2} the estimate
			\begin{equation}
				\label{eq: estimate uniqueness nonlin}
				\begin{split}
					&\quad \|(\mathcal{V}_1-\mathcal{V}_2)(t_0)\|_{H^s(\R^n)}^2 \\
					&=\left\|\int_0^{t_0}(\Phi(v_1)-\Phi(v_2))\,d\tau\right\|_{H^s(\R^n)}^2\\
					&\leq CB \LC \int_0^{t_0}(\Phi(v_1)-\Phi(v_2))\,d\tau,\int_0^{t_0}(\Phi(v_1)-\Phi(v_2))\,d\tau\RC \\
					&\leq C\left\{\int_{\Omega_{t_0}}(v_1-v_2)(\Phi(v_1)-\Phi(v_2))\,dxdt \right. \\
                    &\quad  \qquad \left.+\int_0^{t_0}B\LC \Phi(v_1)-\Phi(v_2),\int_t^{t_0}(\Phi(v_1)-\Phi(v_2))(x,\tau)\,d\tau\RC dt\right\}\\
					&=C\left\{\int_{\Omega_T}(f_1-f_2)\LC \int_t^{t_0}(\Phi(v_1)-\Phi(v_2))(x,\tau)\,d\tau\RC dxdt\right. \\
					&\quad \qquad \left. +\int_{\Omega}(u_{0,1}-u_{0,2})\LC \int_0^{t_0}(\Phi(v_1)-\Phi(v_2))(x,\tau)\,d\tau \RC dx\right\}\\
					&\leq  C\left\{\|f_1-f_2\|_{L^1(0,T;H^{-s}(\Omega))}\left\|\int_t^{t_0}(\Phi(v_1)-\Phi(v_2))\,d\tau\right\|_{L^{\infty}(0,t_0;H^s(\R^n))}\right.\\
					&\quad \qquad \left. + \|u_{0,1}-u_{0,2}\|_{H^{-s}(\Omega))}\left\|\int_0^{t_0}(\Phi(v_1)-\Phi(v_2))\,d\tau\right\|_{H^s(\R^n)}\right\}.
				\end{split}
			\end{equation}
			Observe that we have
			\begin{equation}
				\label{eq: time segment integral}
				\begin{split}
					&\quad \left\|\int_t^{t_0}(\Phi(v_1)-\Phi(v_2))\,d\tau\right\|_{L^{\infty}(0,t_0;H^s(\R^n))}\\
					&=\|(\mathcal{V}_1-\mathcal{V}_2)(t_0)-(\mathcal{V}_1-\mathcal{V}_2)(t)\|_{L^{\infty}(0,t_0;H^s(\R^n))}\\
					&\leq \|(\mathcal{V}_1-\mathcal{V}_2)(t_0)\|_{H^s(\R^n)}+\|\mathcal{V}_1-\mathcal{V}_2\|_{L^{\infty}(0,t_0;H^s(\R^n))}\\
					&\leq 2\|\mathcal{V}_1-\mathcal{V}_2\|_{L^{\infty}(0,t_0;H^s(\R^n))}.
				\end{split}
			\end{equation}
			Thus, \eqref{eq: estimate uniqueness nonlin} simplifies to
			\[
			\begin{split}
				&\quad \left\|(\mathcal{V}_1-\mathcal{V}_2)(t_0) \right\|_{H^s(\R^n)}^2\\
				&\leq C \LC \|f_1-f_2\|_{L^1(0,t_0;H^{-s}(\Omega))}+ \|u_{0,1}-u_{0,2}\|_{H^{-s}(\Omega))}\RC \left\|\mathcal{V}_1-\mathcal{V}_2 \right\|_{L^{\infty}(0,t_0;H^s(\R^n))}\\
				&\leq C \LC \|f_1-f_2\|_{L^1(0,T;H^{-s}(\Omega))}+ \|u_{0,1}-u_{0,2}\|_{H^{-s}(\Omega))}\RC \left\|\mathcal{V}_1-\mathcal{V}_2\right\|_{L^{\infty}(0,T;H^s(\R^n))}.
			\end{split}
			\]
			Taking the supremum in $t_0\in (0,T)$ and absorbing the last factor on the left hand side, we obtain
			\begin{equation}
				\label{eq: estimate nonlinear term}
    \begin{split}
       & \quad  \|\mathcal{V}_1-\mathcal{V}_2\|_{L^{\infty}(0,T;H^s(\R^n))}\\
       &\leq  C\LC \|f_1-f_2\|_{L^1(0,T;H^{-s}(\Omega))}+ \|u_{0,1}-u_{0,2}\|_{H^{-s}(\Omega))}\RC .
    \end{split}
			\end{equation}
			Let us point out that the PDE \eqref{eq: uniqueness homogeneous FPME} guarantees $\partial_t v_j\in L^2(0,T;H^{-s}(\Omega))$ and $v_j\in L^2(0,T;H^{-s}(\Omega))$. The first assertion is immediate.

            To see the second one, let $\eta\in C^{\infty}_c(\Omega_T)$ and insert $\psi(x,t)=-\int_t^T\eta(x,\tau)\,d\tau$ into the weak formulation of \eqref{eq: uniqueness homogeneous FPME} to obtain
			\[
			\begin{split}
				\left|\int_{\Omega_T}v_j\eta\,dxdt\right|&=\left|-\int_{\Omega_T}v_j\partial_t \psi\,dxdt\right|\\
				&=\left|-\int_0^T B(\Phi(v_j),\psi)\,dt+\int_0^T f_j\psi\,dxdt+\int_{\Omega}u_{0,j}\psi(0)\,dx\right|\\
				&\leq C\left( [\Phi(v_j)]_{L^2(0,T;H^s(\R^n))}[\psi]_{L^2(0,T;H^s(\R^n))}\right. \\
				& \quad \qquad  +\|f_j\|_{L^2(0,T;H^{-s}(\Omega))}\|\psi\|_{L^2(0,T;H^s(\R^n))}\\
                &\left.\quad \qquad +\|u_{0,j}\|_{L^2(\Omega)}\|\psi(0)\|_{L^2(\Omega)}\right).
			\end{split}
			\]
			By Jensen's inequality, we have 
            \[
                \|\psi\|_{L^2(\R^n_T)}\leq  C\|\eta\|_{L^2(\Omega_T)}, \quad \|\psi(0)\|_{L^2(\Omega)}\leq C\|\eta\|_{L^2(\Omega_T)}
            \]
            and
			\[
			\begin{split}
				[\psi]^2_{H^s(\R^n)}&\leq \int_{\R^{2n}}\frac{\left|\int_t^T(\eta(x,\tau)-\eta(y,\tau))\,d\tau\right|^2}{|x-y|^{n+2s}}\,dxdy\leq C[\eta]_{L^2(0,T;H^s(\R^n))}^2.
			\end{split}
			\]
			Thus, we get 
			\[
			\left|\int_{\Omega_T}v_j\eta\,dxdt\right|\leq C\|\eta\|_{L^2(0,T;H^s(\R^n))},
			\]
			but this means nothing else than $v_j\in L^2(0,T;H^{-s}(\Omega))$ as $C_c^{\infty}(\Omega_T)$ is dense in $L^2(0,T;\widetilde{H}^s(\Omega))$. By the Sobolev embedding we have $v_j\in C([0,T];H^{-s}(\Omega))$ and hence the fundamental theorem of calculus, \eqref{eq: uniqueness homogeneous FPME}, the uniform ellipticity of $K$ and the estimate \eqref{eq: estimate nonlinear term} give
			\begin{equation}
				\label{eq: control of function}
				\begin{split}
					&\quad \|(v_1-v_2)(t)\|_{H^{-s}(\Omega)} \\
					&\leq \left\|\int_0^t\partial_t (v_1-v_2)(\tau)\,d\tau\right\|_{H^{-s}(\Omega)}+\|(v_1-v_2)(0)\|_{H^{-s}(\Omega)}\\
					&\leq  \sup_{\psi\in\widetilde{H}^s(\Omega):\,\|\psi\|_{H^s(\R^n)}\leq 1}\left|\int_0^t\langle \partial_t(v_1-v_2)(\tau),\psi\rangle\,d\tau\right|+\|u_{0,1}-u_{0,2}\|_{H^{-s}(\Omega)}\\
					&\leq  \sup_{\psi\in\widetilde{H}^s(\Omega):\,\|\psi\|_{H^s(\R^n)}\leq 1}\left|-\int_0^t B(\Phi(v_1)-\Phi(v_2),\psi)\,d\tau+\int_{\Omega_t}(f_1-f_2)\psi\,dxd\tau\right|\\
					&\qquad +\|u_{0,1}-u_{0,2}\|_{H^{-s}(\Omega)}\\
					&\leq  \sup_{\psi\in\widetilde{H}^s(\Omega):\,\|\psi\|_{H^s(\R^n)}\leq 1} \left|B\LC (\mathcal{V}_1-\mathcal{V}_2)(t),\psi\RC \right|\\
					& \quad +\|f_1-f_2\|_{L^1(0,T;H^{-s}(\Omega))}+\|u_{0,1}-u_{0,2}\|_{H^{-s}(\Omega)}\\
					&\leq  C\LC \|f_1-f_2\|_{L^1(0,T;H^{-s}(\Omega))}+ \|u_{0,1}-u_{0,2}\|_{H^{-s}(\Omega))}\RC, 
				\end{split}
			\end{equation}
			for all $0<t\leq T$. Next, let us observe that by uniform ellipticity of $K$ there holds
			\begin{equation}
				\label{eq: estimate source}
				\begin{split}
					\|f_1-f_2\|_{L^1(0,T;H^{-s}(\Omega))}&\leq \|L(\Phi(\varphi_1)-\Phi(\varphi_2))\|_{L^1(0,T;H^{-s}(\Omega))}\\
					&=\int_0^T\sup_{\psi\in \widetilde{H}^s(\Omega):\ \|\psi\|_{H^s(\R^n)}\leq 1}|B(\Phi(\varphi_1)-\Phi(\varphi_2),\psi)|\,dt\\
					&\leq C\|\Phi(\varphi_1)-\Phi(\varphi_2)\|_{L^1(0,T;H^s(\R^n))}.
				\end{split}
			\end{equation}
			Combining \eqref{eq: estimate source} with \eqref{eq: control of function} and \eqref{eq: estimate nonlinear term}, we have proved
			\begin{equation}
				\label{eq: for homogeneous equation required estimate}
				\begin{split}
					&\quad \|v_1-v_2\|_{L^{\infty}(0,T;H^{-s}(\Omega))}+\|\mathcal{V}_1-\mathcal{V}_2\|_{L^{\infty}(0,T;H^s(\R^n))}\\
					&\leq C\LC \|\Phi(\varphi_1)-\Phi(\varphi_2)\|_{L^1(0,T;H^s(\R^n))}+\|u_{0,1}-u_{0,2}\|_{H^{-s}(\Omega))}\RC.
				\end{split}
			\end{equation}
			Next, we go back to the variables $u_j=v_j+\varphi_j$. Let us note that
			\begin{equation}
				\label{eq: going back to usual variables}
				\begin{split}
					\mathcal{U}_1-\mathcal{U}_2&=\int_0^t(\Phi(u_1)-\Phi(u_2))\,d\tau\\
					&=\int_0^t(\Phi(v_1+\varphi_1)-\Phi(v_2+\varphi_2))\,d\tau\\
					&=\mathcal{V}_1-\mathcal{V}_2+\int_0^t(\Phi(\varphi_1)-\Phi(\varphi_2))\,d\tau,
				\end{split}
			\end{equation}
			where we used that $v_j$ and $\varphi_j$ have disjoint supports. Hence, inserting \eqref{eq: going back to usual variables} into \eqref{eq: for homogeneous equation required estimate} and $\left. \varphi_j\right|_{\Omega_T}=0$, we get 
			\begin{equation}
				\label{eq: for nonhomogeneous equation required estimate}
				\begin{split}
					&\quad \|u_1-u_2\|_{L^{\infty}(0,T;H^{-s}(\Omega))}+\|\mathcal{U}_1-\mathcal{U}_2\|_{L^{\infty}(0,T;H^s(\R^n))}\\
					&\leq  C\bigg(\|\Phi(\varphi_1)-\Phi(\varphi_2)\|_{L^1(0,T;H^s(\R^n))}+\|u_{0,1}-u_{0,2}\|_{H^{-s}(\Omega))}\\
					& \quad \qquad +\|\varphi_1-\varphi_2\|_{L^{\infty}(0,T;H^{-s}(\Omega))}+\left\|\int_0^t(\Phi(\varphi_1)-\Phi(\varphi_2))\,d\tau\right\|_{L^{\infty}(0,T;H^s(\R^n))}\bigg)\\
					&\leq  C\LC \|\Phi(\varphi_1)-\Phi(\varphi_2)\|_{L^1(0,T;H^s(\R^n))}+\|u_{0,1}-u_{0,2}\|_{H^{-s}(\Omega))}\RC .
				\end{split}
			\end{equation}
			Hence, we can conclude the proof.
		\end{proof}
		
		\begin{theorem}[Uniqueness with linear absorption term]
			\label{Uniquenerss with lin absorption term}
			Let $\Omega\subset\R^n$ be a bounded Lipschitz domain, $T>0$, $0<s<\alpha\leq 1$, $m> 1$ and $L_K\in \mathcal{L}_0$. Assume additionally that we have given $\rho,q\in C^{1,\alpha}_+(\R^n)$ with $\rho$ uniformly elliptic. Suppose that $u_{0}\in L^{\infty}(\Omega)\cap \widetilde{H}^s(\Omega)$, $\varphi\in C_c([0,T]\times \Omega_e)$ with $\Phi^m(\varphi)\in L^2(0,T;H^s(\R^n))$ and $u_1,u_2$ solve
			\begin{equation}
				\label{eq: uniqueness FPME0}
				\begin{cases}
					\rho\partial_t u+ L_K(\Phi^m(u))+qu=0 &\text{ in }\Omega_T,\\
					u=\varphi  & \text{ in }(\Omega_e)_T,\\
					u(0)=u_{0} &\text{ in }\Omega,
				\end{cases}
			\end{equation}
			then there holds $u_1=u_2$. 
		\end{theorem}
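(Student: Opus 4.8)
The plan is to follow the proof of Theorem~\ref{thm: basic uniqueness}, the only genuinely new features being the elliptic weight $\rho$ in front of $\partial_t u$ and the linear zeroth order term $qu$; both will be controlled by the Hölder multiplier estimate of Remark~\ref{remark: hoelder coeff estimate} together with a Grönwall argument at the end. We use throughout that, since $\rho\in C^{1,\alpha}_+(\R^n)$ is uniformly elliptic, also $1/\rho\in C^{1,\alpha}(\R^n)$, so that multiplication by $\rho$ and by $1/\rho$ are bounded operators on $\widetilde{H}^s(\Omega)$ and, by duality, on $H^{-s}(\Omega)$.

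First I would reduce to vanishing exterior data: writing $v_j=u_j-\varphi$ and using $\varphi\equiv 0$ in $\Omega_T$ together with the disjointness of $\supp(v_j)$ and $\supp(\varphi)$, one checks exactly as in the proof of Theorem~\ref{thm: basic existence} that $v_j$ solves
\[
\rho\partial_t v_j+L_K(\Phi^m(v_j))+qv_j=f:=-L_K(\Phi^m(\varphi))\ \text{in}\ \Omega_T,\quad v_j=0\ \text{in}\ (\Omega_e)_T,\quad v_j(0)=u_0,
\]
with the \emph{same} source $f$ and initial datum $u_0$ for $j=1,2$. Next I would record the regularity used below: from $\Phi^m(v_j)\in L^2(0,T;\widetilde{H}^s(\Omega))\subset L^2(\Omega_T)$ one gets $v_j\in L^{2m}(\Omega_T)\subset L^2(\Omega_T)\hookrightarrow L^2(0,T;H^{-s}(\Omega))$, and then from the equation, the boundedness of $L_K\colon H^s(\R^n)\to H^{-s}(\Omega)$ and Remark~\ref{remark: hoelder coeff estimate} applied to $q$ and to $1/\rho$ one obtains $\partial_t v_j\in L^2(0,T;H^{-s}(\Omega))$; hence $v_j\in C([0,T];H^{-s}(\Omega))$ and $w:=v_1-v_2$ satisfies $w(0)=0$ in $H^{-s}(\Omega)$.

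The core is an energy estimate. Fix $0\le t_0\le T$, put $W(t):=\int_0^t(\Phi^m(v_1)-\Phi^m(v_2))\,d\tau$, and use $\Psi(x,t):=W(x,t_0)-W(x,t)$ for $0\le t\le t_0$ and $\Psi:=0$ otherwise, which lies in $H^1(0,T;\widetilde{H}^s(\Omega))$ with $\Psi(T)=0$ and hence is an admissible test function by Proposition~\ref{prop: density}. Subtracting the weak formulations for $u_1$ and $u_2$ and using disjointness of supports, the computations of \eqref{eq: weak form useful uniqueness 2}--\eqref{eq: bilinear form uniqueness} give
\[
\int_{\Omega_{t_0}}\rho\,w\,\LC\Phi^m(v_1)-\Phi^m(v_2)\RC \,dxdt+\tfrac12 B_K(W(t_0),W(t_0))=-\int_{\Omega_{t_0}}q\,w\,\Psi\,dxdt,
\]
where the first term is $\ge 0$ by positivity of $\rho$ and monotonicity of $\Phi^m$, and $\tfrac12 B_K(W(t_0),W(t_0))\ge c\|W(t_0)\|_{H^s(\R^n)}^2$ by uniform ellipticity of $K$ and the fractional Poincaré inequality (the $f$- and $u_0$-contributions cancel). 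Estimating the right-hand side via $\langle qw,\Psi\rangle=\langle w,q\Psi\rangle$, Remark~\ref{remark: hoelder coeff estimate}, $\|\Psi(t)\|_{H^s(\R^n)}\le\|W(t_0)\|_{H^s(\R^n)}+\|W(t)\|_{H^s(\R^n)}$, and then Young's and the Cauchy--Schwarz inequalities, I would arrive at
\[
\|W(t_0)\|_{H^s(\R^n)}^2\le C\int_0^{t_0}\LC\|w(t)\|_{H^{-s}(\Omega)}^2+\|W(t)\|_{H^s(\R^n)}^2\RC dt.
\]
A companion estimate bounds $w$ by $W$: integrating the equation in time against $\psi\in\widetilde{H}^s(\Omega)$, using $w(0)=0$, bilinearity of $B_K$ to turn $\int_0^t B_K(\Phi^m(v_1)-\Phi^m(v_2),\psi)\,d\tau$ into $B_K(W(t),\psi)$, and the multiplier bounds for $q$ and $1/\rho$, one gets $\|w(t)\|_{H^{-s}(\Omega)}\le C\|W(t)\|_{H^s(\R^n)}+C\int_0^t\|w(\tau)\|_{H^{-s}(\Omega)}\,d\tau$, whence $\|w\|_{L^\infty(0,t_0;H^{-s}(\Omega))}\le C_T\|W\|_{L^\infty(0,t_0;H^s(\R^n))}$ by Grönwall. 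Inserting this into the previous display and setting $g(t_0):=\|W\|_{L^\infty(0,t_0;H^s(\R^n))}^2$ yields $g(t_0)\le C_T\int_0^{t_0}g(t)\,dt$ with $g(0)=0$, so $g\equiv 0$; consequently $W\equiv 0$, then $w\equiv 0$ in $H^{-s}(\Omega)$, and finally — using $v_j(t)\in L^2(\Omega)\hookrightarrow H^{-s}(\Omega)$ and $v_j=0$ in $(\Omega_e)_T$ — we get $v_1=v_2$, hence $u_1=u_2$ a.e.\ in $\R^n_T$.

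I expect the main obstacle to be purely technical bookkeeping around the weight $\rho$: one must consistently pass between $\langle\rho\,\cdot\,,\cdot\rangle$ and $\langle\cdot,\rho\,\cdot\rangle$ and between $w$ and $\rho w$ viewed in $H^{-s}(\Omega)$, which is exactly where $\rho\in C^{1,\alpha}_+(\R^n)$ uniformly elliptic (so that $1/\rho\in C^{1,\alpha}$) and Remark~\ref{remark: hoelder coeff estimate} enter. By contrast the absorption term is benign: after pairing it either has a favourable sign or is estimated in $H^{-s}$--$H^s$ duality and swallowed by the Grönwall loop, affecting only the constants.
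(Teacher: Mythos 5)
Your argument is correct and follows the paper's proof of Theorem~\ref{Uniquenerss with lin absorption term} essentially step for step: the same reduction $v_j=u_j-\varphi$, the same test function $\Psi(x,t)=\int_t^{t_0}(\Phi^m(v_1)-\Phi^m(v_2))\,d\tau$, the same key identity, the same positivity argument for the $\rho$-weighted monotone term, and the same companion $H^{-s}$-estimate for $w=v_1-v_2$ obtained by dividing through by $\rho$ and invoking Remark~\ref{remark: hoelder coeff estimate}. The only (harmless) difference is at the very end: the paper keeps the estimate in product form, absorbs after passing to $L^\infty(0,\widetilde T)$ for $\widetilde T$ small, and iterates over short time intervals; you instead apply Young's inequality to reach an integral inequality $g(t_0)\le C\int_0^{t_0}g\,dt$ and close with Gr\"onwall on all of $[0,T]$ — an equivalent way to finish.
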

		
		\begin{proof}
			Like in the proofs above we write $\Phi,B,L$ for $\Phi^m,B_K,L_K$. As in the proof of Theorem~\ref{thm: basic uniqueness}, we go over to the variables $v_j=u_j-\varphi$. First observe that $v_1-v_2$ satisfies 
			\begin{equation}
				\label{eq: uniqueness FPME 2}
				\begin{cases}
					\rho\partial_t (v_1-v_2)+ L_K(\Phi(v_1)-\Phi(v_2))+q(v_1-v_2)=0 &\text{ in }\Omega_T,\\
					v_1-v_2=0  & \text{ in }(\Omega_e)_T,\\
					(v_1-v_2)(0)=0 &\text{ in }\Omega
				\end{cases}
			\end{equation}
			and hence there holds
			\begin{equation}
				\label{eq: weak form uniqueness lin abs}
				-\int_{\Omega_T}(v_1-v_2)\partial_t \psi\,dxdt+\int_0^T B(\Phi(v_1)-\Phi(v_2),\psi)\,dt+\int_{\Omega_T}q(v_1-v_2)\,dxdt=0
			\end{equation}
			for all $\psi\in C_c^{\infty}([0,T)\times \Omega)$. We can again use the function $\Psi$, as defined in \eqref{eq: help function 2}, as a test function in \eqref{eq: weak form uniqueness lin abs}. Using \eqref{eq: bilinear form uniqueness}, we obtain
			\begin{equation}
				\label{eq: usefuld eq uniqueness linear abs}
				\begin{split}
					&\int_{\Omega_{t_0}}\rho(v_1-v_2)(\Phi(v_1)-\Phi(v_2))\,dxdt+\frac{1}{2}B((\mathcal{V}_1-\mathcal{V}_2)(t_0),(\mathcal{V}_1-\mathcal{V}_2)(t_0))\\
					&+\int_{\Omega_{t_0}}q(v_1-v_2)\int_t^{t_0}\Phi(v_1)-\Phi(v_2)\,d\tau\,dxdt=0,
				\end{split}
			\end{equation}
			where we used the notation $\mathcal{V}_j(t)=\int_0^t \Phi(v_j)\,d\tau$. Next, note that the monotonicity of $\Phi$ and $\rho\geq 0$ implies
			\begin{equation}
				\label{eq: nonneg uniqueness linear abs}
				\int_{\Omega_{t_0}}\rho(\Phi(v_1)-\Phi(v_2))(v_1-v_2)\,dxdt\geq 0.
			\end{equation}
			Now by using \eqref{eq: nonneg uniqueness linear abs}, the fractional Poincar\'e inequality, the uniform ellipticity of $K$, $q\in C^{0,\alpha}(\R^n)$, \eqref{eq: time segment integral} and Remark~\ref{remark: hoelder coeff estimate}, we deduce that
			\begin{equation}
				\begin{split}
					&\quad \|(\mathcal{V}_1-\mathcal{V}_2)(t_0)\|_{H^s(\R^n)}^2 \\
					&=\left\|\int_0^{t_0}(\Phi(v_1)-\Phi(v_2))\,d\tau\right\|_{H^s(\R^n)}^2\\
					&\leq  CB\left(\int_0^{t_0}(\Phi(v_1)-\Phi(v_2))\,d\tau,\int_0^{t_0}(\Phi(v_1)-\Phi(v_2))\,d\tau\right)\\
					&\leq  C\left(\int_{\Omega_{t_0}}\rho (v_1-v_2)(\Phi(v_1)-\Phi(v_2))\,dxdt \right. \\ &\quad \qquad \left. +\frac{1}{2}\int_0^{t_0}B(\Phi(v_1)-\Phi(v_2),\int_t^{t_0}(\Phi(v_1)-\Phi(v_2))(x,\tau)\,d\tau)\,dt\right)\\
					&=-C\int_{\Omega_{t_0}}q(v_1-v_2)\int_{t}^{t_0}(\Phi(v_1)-\Phi(v_2))\,d\tau dxdt\\
					&\leq C\|q\|_{C^{0,\alpha}(\R^n)}\|v_1-v_2\|_{L^1(0,t_0;H^{-s}(\Omega))}\left\|\int_{t}^{t_0}(\Phi(v_1)-\Phi(v_2))\,d\tau\right\|_{L^{\infty}(0,t_0;H^s(\R^n))}\\
					&\leq C\|q\|_{C^{0,\alpha}(\R^n)}\|v_1-v_2\|_{L^1(0,t_0;H^{-s}(\Omega))}\|\mathcal{V}_1-\mathcal{V}_2\|_{L^{\infty}(0,t_0;H^s(\R^n))}.
				\end{split}
			\end{equation}
			Next, let $0<\widetilde{T}\leq T$ be a given constant, which we will fix later. The previous estimate then shows
			\begin{equation}
				\label{eq: first estimate uniqueness lin abs}
				\|\mathcal{V}_1-\mathcal{V}_2\|_{L^{\infty}(0,\widetilde{T},H^s(\R^n))}\leq C\widetilde{T}\|q\|_{C^{0,\alpha}(\R^n)}\|v_1-v_2\|_{L^{\infty}(0,\widetilde{T};H^{-s}(\Omega))}.
			\end{equation}
			On the other hand, following the computation in \eqref{eq: control of function} and use $\rho\in C^{0,\alpha}(\R^n)$, Remark~\ref{remark: hoelder coeff estimate}, \cite[Lemma~4.1]{CRTZ-2022} and the uniform ellipticity of $K$, we find
			\[
			\begin{split}
				&\quad \|(v_1-v_2)(t_0)\|_{H^{-s}(\Omega)} \\
				&\leq \left\|\int_0^{t_0}\partial_t (v_1-v_2)(\tau)\,d\tau\right\|_{H^{-s}(\Omega)}\\
				&\leq \sup_{\psi\in\widetilde{H}^s(\Omega):\,\|\psi\|_{H^s(\R^n)}\leq 1}\left|\int_0^{t_0}\left\langle \partial_t(v_1-v_2)(\tau),\psi\right\rangle 
 d\tau\right|\\
				&\leq \sup_{\psi\in\widetilde{H}^s(\Omega):\,\|\psi\|_{H^s(\R^n)}\leq 1}\left|\int_0^{t_0}\left\langle \rho\partial_t(v_1-v_2)(\tau),\psi/\rho \right\rangle d\tau\right|\\
				&=\sup_{\psi\in\widetilde{H}^s(\Omega):\,\|\psi\|_{H^s(\R^n)}\leq 1}\left|\int_0^{t_0}B(\Phi(v_1)-\Phi(v_2),\psi/\rho)\,d\tau +\int_{\Omega_{t_0}}q(v_1-v_2)\psi/\rho\,dxd\tau\right|\\
				&\leq C\left(\left\|(\mathcal{V}_1-\mathcal{V}_2)(t_0)\right\|_{H^s(\R^n)}+\|q\|_{C^{0,\alpha}(\R^n)}\|v_1-v_2\|_{L^{1}(0,t_0;H^{-s}(\Omega))}\right),
			\end{split}
			\]
			where the constant $C>0$ only depends on $\|K\|_{L^{\infty}(\R^n)}$, the lower bound of $\rho$ and $\|\rho\|_{C^{0,\alpha}(\R^n)}$. Arguing as above this gives
			\begin{equation}
				\label{eq: second estimate uniqueness lin abs}
				\begin{split}
					\|v_1-v_2\|_{L^{\infty}(0,\widetilde{T};H^{-s}(\Omega))}&\leq C\left\|\mathcal{V}_1-\mathcal{V}_2\right\|_{L^{\infty}(0,\widetilde{T},H^s(\R^n))}\\
					&\quad +C\widetilde{T}\|q\|_{C^{0,\alpha}(\R^n)}\|v_1-v_2\|_{L^{\infty}(0,\widetilde{T};H^{-s}(\Omega))}.
				\end{split}
			\end{equation}
			Inserting \eqref{eq: first estimate uniqueness lin abs} into \eqref{eq: second estimate uniqueness lin abs}, we get
			\begin{equation}
				\label{eq: third estimate uniqueness lin abs}
				\|v_1-v_2\|_{L^{\infty}(0,\widetilde{T};H^{-s}(\Omega))}\leq C\widetilde{T}\|q\|_{C^{0,\alpha}(\R^n)}\|v_1-v_2\|_{L^{\infty}(0,\widetilde{T};H^{-s}(\Omega))}.
			\end{equation}
			Hence, by choosing $\widetilde{T}$ sufficiently small such that $C\widetilde{T}\|q\|_{C^{0,\alpha}(\R^n)}\leq 1/2$, \eqref{eq: third estimate uniqueness lin abs} shows $v_1=v_2$ for a.e. $0\leq t\leq \widetilde{T}$. Since $v_1-v_2\in C([0,T];H^{-s}(\Omega))$ is continuous, we know $(v_1-v_2)(\widetilde{T})=0$. Hence, we can repeat our local uniqueness result and finally find $v_1=v_2$ on $[0,T]$. This completes the proof.
		\end{proof}

		\section{Comparison principle}
		\label{sec: comparison principle}
		
		In this section, we show the comparison principle for NPMEs.

		\begin{theorem}[Basic comparison principle]
			\label{thm: basic comparison}
			Let $\Omega\subset\R^n$ be a bounded Lipschitz domain, $T>0$, $0<s<1$, $m> 1$ and $L_K\in \mathcal{L}_0$. Assume that for $j=1,2$ we have given initial data $u_{0,j}\in L^{\infty}(\Omega)\cap\widetilde{H}^s(\Omega)$, exterior conditions $\varphi_j\in C_c([0,T]\times \Omega_e)$ with $\Phi^m(\varphi_j)\in L^2(0,T;H^s(\R^n))$, sources $F_j\in L^2(\Omega_T)$ and $u_j$ solves
			\begin{equation}
				\label{eq: comparison FPME}
				\begin{cases}
					\partial_t u+ L_K(\Phi^m(u))=F_j&\text{ in }\Omega_T,\\
					u=\varphi_j&\text{ in }(\Omega_e)_T,\\
					u(0)=u_{0,j}&\text{ in }\Omega.
				\end{cases}
			\end{equation}
			Additionally, suppose that there exist sequences $\LC F_j^{\vareps}\RC _{\vareps>0}\subset L^2(\Omega_T)$, $\LC \varphi_j^{\vareps}\RC _{\vareps>0}\subset C_c([0,T]\times \Omega_e)$ with $\Phi^m_{\vareps}(\varphi_j^{\vareps})\in L^2(0,T;H^s(\R^n))$, $\LC u_{0,j}^{\vareps}\RC _{\vareps>0}\subset L^{\infty}(\Omega)\cap\widetilde{H}^s(\Omega)$ and $\LC u_{j,\vareps}\RC _{\vareps>0}\subset H^1(0,T;L^2(\Omega))\cap L^2(0,T;H^s(\R^n))$ satisfying
            \begin{enumerate}[(i)]
                \item\label{prop 1 comparison} $\liminf_{\vareps\to 0}\int_{\Omega_T}\LC F_1^{\vareps}-F_2^{\vareps}\RC_+ \, dxdt\leq \int_{\Omega_T}\LC F_1-F_2\RC _+\,dxdt$,
                \item\label{prop 2 comparison} $\liminf_{\vareps\to 0}\int_{\Omega_{T}}\int_{\Omega_e}\frac{\left(\Phi^m_{\vareps}(\varphi_1^{\vareps})-\Phi^m_{\vareps}(\varphi_2^{\vareps})\right)_+(x,t)}{|x-y|^{n+2s}}\,dxdydt$\\
                $\leq $ $\int_{\Omega_{T}}\int_{\Omega_e}\frac{\left(\Phi^m(\varphi_1)-\Phi^m(\varphi_2)\right)_+(x,t)}{|x-y|^{n+2s}}\,dxdydt$,
                \item\label{prop 3 comparison} $u_{0,j}^{\vareps}\to u_{0,j}$ in $L^1(\Omega)$ as $\vareps\to 0$,
                \item\label{prop 4 comparison} $u_{j,\vareps}\weak u_j\ \text{in}\ L^1(\Omega_T)$ as $\vareps\to 0$
                \item\label{prop 5 comparison} and $u_{j,\vareps}$ are solutions of
			\begin{equation}
				\begin{cases}
					\partial_t u+ L_K(\Phi_{\vareps}^m(u))=F^{\vareps}_j&\text{ in }\Omega_T,\\
					u=\varphi_j^{\vareps}&\text{ in }(\Omega_e)_T,\\
					u(0)=u_{0,j}^{\vareps}&\text{ in }\Omega.
				\end{cases}
			\end{equation}
            \end{enumerate}
			Then there exists a constant $C>0$ independent of these solutions, initial data, boundary data and sources such that 
			\begin{equation}
				\label{eq: comparison}
				\begin{split}
					\int_{\Omega}\left(u_{1}-u_{2} \right)_+(x,t_0)\,dx&\leq  C\left( \int_{\Omega_T}\left(F_1-F_2\right)_+\,dxdt+\int_{\Omega}\left(u_{0,1}-u_{0,2}\right)_+\,dx\right.\\
					&\quad \qquad +\left.\int_{\Omega_{T}}\int_{\Omega_e}\frac{\left(\Phi^m(\varphi_1)-\Phi^m(\varphi_2)\right)_+(x,t)}{|x-y|^{n+2s}}\,dxdydt\right).
				\end{split}
			\end{equation}
		\end{theorem}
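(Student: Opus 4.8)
The strategy is a regularized Kato-type inequality argument: prove the comparison estimate first at the level of the smooth approximations $u_{j,\vareps}$ (where $\Phi^m_\vareps$ is Lipschitz with strictly positive derivative, $u_{j,\vareps}\in H^1(0,T;L^2(\Omega))\cap L^2(0,T;H^s(\R^n))$, so all manipulations are legitimate), and then pass to the limit $\vareps\to 0$ using hypotheses \ref{prop 1 comparison}--\ref{prop 5 comparison}. Set $w_\vareps=u_{1,\vareps}-u_{2,\vareps}$ and $\mathcal{W}_\vareps=\Phi^m_\vareps(u_{1,\vareps})-\Phi^m_\vareps(u_{2,\vareps})$. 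Subtracting the two equations in \ref{prop 5 comparison}, $w_\vareps$ solves $\partial_t w_\vareps+L_K(\mathcal{W}_\vareps)=F_1^\vareps-F_2^\vareps$ in $\Omega_T$ with $w_\vareps=\varphi_1^\vareps-\varphi_2^\vareps$ in $(\Omega_e)_T$ and $w_\vareps(0)=u_{0,1}^\vareps-u_{0,2}^\vareps$. The key point is that, because $\Phi^m_\vareps$ is strictly increasing, $\mathrm{sign}(w_\vareps)=\mathrm{sign}(\mathcal{W}_\vareps)$ pointwise, so a cutoff of $w_\vareps$ from above corresponds to a cutoff of $\mathcal{W}_\vareps$ from above.

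First I would test the weak formulation of the equation for $w_\vareps$ against a smooth increasing approximation $p_\delta$ of the Heaviside function applied to $\mathcal{W}_\vareps$ (equivalently, differentiate $\int_\Omega j_\delta(w_\vareps)\,dx$ in time, where $j_\delta$ is a smooth convex approximation of $t\mapsto t_+$). The time term gives $\frac{d}{dt}\int_\Omega j_\delta(w_\vareps)\,dx$. For the nonlocal term one must show
\[
B_K\big(\mathcal{W}_\vareps,\,p_\delta(\mathcal{W}_\vareps)\big)\ +\ (\text{exterior contribution})\ \geq\ -\!\int_{\Omega}\!\int_{\Omega_e}\!K(x,y)\frac{\big(\Phi^m_\vareps(\varphi_1^\vareps)-\Phi^m_\vareps(\varphi_2^\vareps)\big)_+(y,t)}{|x-y|^{n+2s}}\,dy\,dx.
\]
Here one uses that $\frac{(\mathcal{W}_\vareps(x)-\mathcal{W}_\vareps(y))(p_\delta(\mathcal{W}_\vareps(x))-p_\delta(\mathcal{W}_\vareps(y)))}{|x-y|^{n+2s}}\geq 0$ on $\R^{2n}$ by monotonicity of $p_\delta$, so the only sign-indefinite piece comes from the region where one argument is in $\Omega$ and the other in $\Omega_e$ (where $\mathcal{W}_\vareps$ equals $\Phi^m_\vareps(\varphi_1^\vareps)-\Phi^m_\vareps(\varphi_2^\vareps)$); estimating $p_\delta\le 1$ and $K\le\Lambda$ and letting $\delta\to0$ yields the stated exterior bound. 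The source term contributes at most $\int_{\Omega}(F_1^\vareps-F_2^\vareps)_+\,dx$ after $\delta\to0$. Integrating in time over $[0,t_0]$ and using $j_\delta(w_\vareps)\to (w_\vareps)_+$ gives, after $\delta\to0$,
\[
\int_\Omega (w_\vareps)_+(x,t_0)\,dx\ \le\ C\Big(\int_{\Omega_{t_0}}(F_1^\vareps-F_2^\vareps)_+\,dxdt+\int_\Omega(u_{0,1}^\vareps-u_{0,2}^\vareps)_+\,dx+\int_{\Omega_{t_0}}\!\int_{\Omega_e}\tfrac{(\Phi^m_\vareps(\varphi_1^\vareps)-\Phi^m_\vareps(\varphi_2^\vareps))_+(y,t)}{|x-y|^{n+2s}}\,dydxdt\Big).
\]

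Finally I would pass to the limit $\vareps\to 0$. By \ref{prop 4 comparison} and weak lower semicontinuity of the convex functional $v\mapsto\int_\Omega v_+\,dx$ under weak $L^1$ convergence (using $t\mapsto t_+$ is convex, continuous and sublinear), $\int_\Omega(u_1-u_2)_+(t_0)\,dx\le\liminf_{\vareps\to0}\int_\Omega(w_\vareps)_+(t_0)\,dx$; the right-hand side is controlled using \ref{prop 1 comparison}, \ref{prop 2 comparison}, \ref{prop 3 comparison} (and $(\cdot)_+$-Lipschitz on the initial data). The main obstacle is the nonlocal/exterior bookkeeping in the Kato inequality step: unlike the local case where $\int_\Omega \Delta(\text{something})\,p_\delta(\cdot)$ integrates by parts cleanly, here one has to carefully split $\R^{2n}=(\Omega\times\Omega)\cup(\Omega\times\Omega_e)\cup(\Omega_e\times\Omega)\cup(\Omega_e\times\Omega_e)$, discard the $\Omega_e\times\Omega_e$ part (it does not see the test function), keep the nonnegative $\Omega\times\Omega$ part, and bound the two mixed pieces by the exterior integral — all while justifying that $p_\delta(\mathcal{W}_\vareps)$ is an admissible test function in $L^2(0,T;\widetilde H^s(\Omega))$ (it vanishes in $\Omega_e$ when we subtract off the exterior data appropriately) and that the chain rule $\partial_t j_\delta(w_\vareps)=j_\delta'(w_\vareps)\partial_t w_\vareps$ holds, which is where the regularity $\partial_t u_{j,\vareps}\in L^2(\Omega_T)$ from Corollary~\ref{cor: approximation} is essential.
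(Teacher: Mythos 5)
Your plan is essentially the one the paper follows: regularize, test the weak formulation with a smooth Heaviside approximation applied to the difference of the $\Phi_{\vareps}$'s, sign the bilinear form by monotonicity, extract the exterior contribution, send $\delta\to 0$, and then pass $\vareps\to 0$ using hypotheses (i)--(iv) together with weak lower semicontinuity of $v\mapsto\int v_+\,dx$. Two points in your description are imprecise and would need to be tightened in a full write-up.

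First, the parenthetical ``equivalently, differentiate $\int_\Omega j_\delta(w_\vareps)\,dx$ in time'' is not accurate at fixed $\delta>0$. To make the bilinear term nonnegative you must test with a monotone function of $\mathcal{W}_\vareps$ \emph{itself}, i.e.\ $p_\delta(\mathcal{W}_\vareps)$ (the paper uses $\chi_\delta$ applied to $\Phi_\vareps(v_{1,\vareps})-\Phi_\vareps(v_{2,\vareps})$). It is only \emph{after} sending $\delta\to 0$ that, pointwise, $p_\delta(\mathcal{W}_\vareps)\to\chi(\mathcal{W}_\vareps)=\chi(w_\vareps)$ by strict monotonicity of $\Phi_\vareps$, and only then does the time term collapse to $\partial_t\int_\Omega(w_\vareps)_+\,dx$. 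Testing directly with $j_\delta'(w_\vareps)$ would give the right time term for every $\delta$, but it destroys the sign of $B_K(\mathcal{W}_\vareps,\cdot)$, since $j_\delta'(w_\vareps)$ is not, at fixed $\delta$, a monotone function of $\mathcal{W}_\vareps$.

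Second, your account of where the exterior integral comes from is confused. The quantity $B_K\bigl(\mathcal{W}_\vareps,p_\delta(\mathcal{W}_\vareps)\bigr)$ is nonnegative over \emph{all} of $\R^{2n}$; the mixed pieces $\Omega\times\Omega_e$ are not sign-indefinite for this particular integrand. The exterior term enters because the test function must lie in $L^2(0,T;\widetilde{H}^s(\Omega))$ and $p_\delta(\mathcal{W}_\vareps)$ does not (it equals $p_\delta(\Phi_\vareps(\varphi_1^\vareps)-\Phi_\vareps(\varphi_2^\vareps))\not\equiv 0$ in $\Omega_e$). The clean repair, which the paper adopts, is to pass to $v_{j,\vareps}=u_{j,\vareps}-\varphi_j^\vareps$ at the outset, so that $\chi_\delta(\Phi_\vareps(v_{1,\vareps})-\Phi_\vareps(v_{2,\vareps}))\eta_k(t)$ is automatically admissible, the exterior data is carried by the source $f_{j,\vareps}=-L_K(\Phi_\vareps(\varphi_j^\vareps))$, and the mixed integral over $\Omega\times\Omega_e$ surfaces when one estimates $\int_{\Omega_{t_0}}G_\vareps\,\chi(v_{1,\vareps}-v_{2,\vareps})\,dxdt$ via the support disjointness, not from signing the bilinear form. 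You gesture at this (``subtract off the exterior data appropriately''), but the decomposition-of-$\R^{2n}$ narrative as stated does not by itself produce the exterior bound; after the homogenization step it does.
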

		\begin{remark}
			If one can know that $u_{j,\vareps}\to u_j$ strongly in $L^2(\Omega)$ as $\vareps\to 0$ for a.e. $0<t<T$, one can replace the left hand side of \eqref{eq: comparison} by $\int_{\Omega}\LC u_1(x,t)-u_2(x,t)\RC dx$.
		\end{remark}
		
		\begin{proof}[Proof of Theorem \ref{thm: basic comparison}]
			As in the existence and uniqueness proofs above, we write $\Phi, \Phi_{\vareps}, L,B$ instead of $\Phi^m,\Phi^m_{\vareps},L_K,B_K$. Next, let us note that $v_{j,\vareps}\vcentcolon = u_{j,\vareps}-\varphi_j^{\vareps}$ solves
			\begin{equation}
				\begin{cases}
					\partial_t v+ L(\Phi_{\vareps}(v))=f_{j,\vareps}+F_j^{\vareps}&\text{ in }\Omega_T,\\
					v=0&\text{ in }(\Omega_e)_T,\\
					v(0)=u_{0,j}^{\vareps}&\text{ in }\Omega,
				\end{cases}
			\end{equation}
			where we set $f_{j,\vareps}=-L(\Phi_{\vareps}(\varphi_j^{\vareps}))\in L^2(\Omega_T)$. Hence, $v_{1,\vareps}-v_{2,\vareps}$ solves
			\begin{equation}
				\label{eq: difference equation for comparison}
				\begin{cases}
					\partial_t (v_{1,\vareps}-v_{2,\vareps}) + L(\Phi_{\vareps}(v_{1,\vareps})-\Phi_{\vareps}(v_{2,\vareps}))=(f_{1,\vareps}-f_{2,\vareps})+(F_1^{\vareps}-F_2^{\vareps})&\text{ in }\Omega_T,\\
					v_{1,\vareps}-v_{2,\vareps}=0&\text{ in }(\Omega_e)_T,\\
					(v_{1,\vareps}-v_{2,\vareps})(0)=u_{0,1}^{\vareps}-u_{0,2}^{\vareps}&\text{ in }\Omega.
				\end{cases}
			\end{equation}
			By assumption, the definition of $\Phi_{\vareps}$ and $\partial\Omega\in C^{0,1}$, we have 
			\[
			v_{j,\vareps}\in H^1(0,T;L^2(\Omega))\cap L^2(0,T;\widetilde{H}^s(\Omega)).
			\]
			Thus, there holds
			\begin{equation}
				\label{eq: weak formulation comparison difference}
				\begin{split}
					\int_{\Omega_T}\partial_t \LC v_{1,\vareps}-v_{2,\vareps}\RC\psi\,dxdt+\int_0^TB(\Phi_{\vareps}(v_{1,\vareps})-\Phi_{\vareps}(v_{2,\vareps}),\psi)\,dt=\int_{\Omega_T}G_{\vareps}\psi\,dxdt,
				\end{split}
			\end{equation}
			for all $\psi\in L^2(0,T;\widetilde{H}^s(\Omega))$, where we defined $G_{\vareps}\vcentcolon = f_{1,\vareps}-f_{2,\vareps}+F_1^{\vareps}-F_2^{\vareps}$.

             Let $\chi\colon\R\to\R$ be the step function
			\[
			\chi(t)=\begin{cases}
				1&\ \text{if}\ t>0,\\
				0& \ \text{if} \ t\leq 0.
			\end{cases}
			\]
			We want to use $\psi=\chi \LC \Phi_{\vareps}(v_{1,\vareps})-\Phi_{\vareps}(v_{2,\vareps})\RC $ as a test function  in the weak formulation \eqref{eq: weak formulation comparison difference}. Unfortunately, the test function $\psi$ does not have the right regularity properties in \eqref{eq: weak formulation comparison difference}. Hence, in order to remedy this, let us introduce the auxiliary functions $\chi_{\delta}\in C^{\infty}(\R)$ satisfying
			\[
			0\leq \chi'_{\delta}\leq \frac{2}{\delta}\quad \text{and}\quad
			\chi_{\delta}(t)=\begin{cases}
				1,&\ \text{if}\ t\geq \delta\\
				0,& \ \text{if} \ t\leq 0
			\end{cases}
			\]
			for $\delta>0$. One can easily verify that $\chi_{\delta}\LC \Phi_{\vareps}(v_{1,\vareps})-\Phi_{\vareps}(v_{2,\vareps})\RC \in L^2(0,T;\widetilde{H}^s(\Omega))$ by the Lipschitz continuity of $\chi_\delta$, then we get 
			\begin{equation}
				\label{eq: space regularity}
				\begin{split}
					&\left\|\Phi_{\vareps}(v_{1,\vareps})-\Phi_{\vareps}(v_{2,\vareps})\right\|_{L^2(0,T;H^s(\R^n))}\leq C_{\delta} \left\|\Phi_{\vareps}(v_{1,\vareps})-\Phi_{\vareps}(v_{2,\vareps})\right\|_{L^2(0,T;H^s(\R^n))},
				\end{split}
			\end{equation}
			for some constant $C_{\delta}>0$ depending on $\delta>0$. Let us define
			\[
			\psi_{\delta,k}(x,t):=\chi_{\delta}\LC \Phi_{\vareps}(v_{1,\vareps})-\Phi_{\vareps}(v_{2,\vareps})\RC (x,t)\eta_k(t),
			\]
			where $\eta_k\in C_c^{\infty}([0,T))$ satisfies
			\[
			0\leq \eta_k\leq 1, \ \left. \eta_k\right|_{[0,t_0]}=1,\ \left. \eta_{k}\right|_{[t_0+1/k,T)}=0,
			\]
			for $0<t_0<T$ and $k\in\N$ sufficiently large. Then $\psi_{\delta,k}$ can be viewed as a test function in \eqref{eq: weak formulation comparison difference} and we get
			\begin{equation}
				\label{eq: useful estimate comparison}
				\begin{split}
					&\int_{\Omega_T}\partial_t(v_{1,\vareps}-v_{2,\vareps})\chi_{\delta}\LC \Phi_{\vareps}(v_{1,\vareps})-\Phi_{\vareps}(v_{2,\vareps})\RC \eta_k\,dxdt\\
					=&-\int_0^TB \LC \Phi_{\vareps}(v_{1,\vareps})-\Phi_{\vareps}(v_{2,\vareps}),\chi_{\delta}(\Phi_{\vareps}(v_{1,\vareps})-\Phi_{\vareps}(v_{2,\vareps}))\RC \eta_k\,dt\\
					&+\int_{\Omega_T}G_{\vareps}\chi_{\delta}\LC \Phi_{\vareps}(v_{1,\vareps})-\Phi_{\vareps}(v_{2,\vareps})\RC\eta_k\,dxdt.
				\end{split}
			\end{equation}

           In the rest of the proof, let us define
			\[
			\mathcal{V}_{\vareps}:=\Phi_{\vareps}(v_{1,\vareps})-\Phi_{\vareps}(v_{2,\vareps})
			\]
           to simplify the notation. We want to show that the term involving the bilinear form in \eqref{eq: useful estimate comparison} is nonnegative. In fact, by the usual trick as used before, we have 
			\begin{equation}
				\label{eq: nonnegativity of bilinear form}
				\begin{split}
					&\quad B\LC \mathcal{V}_{\vareps},\chi_{\delta}\mathcal{V}_{\vareps}\RC \\
					&=\int_{\R^{2n}}K(x,y)\frac{\LC \mathcal{V}_{\vareps}(x)-\mathcal{V}_{\vareps}(y)\RC \LC\chi_{\delta}(\mathcal{V}_{\vareps}(x))-\chi_{\delta}(\mathcal{V}_{\vareps}(y))\RC }{|x-y|^{n+2s}}\,dxdy\\
					&=\int_{\R^{2n}}K(x,y)\left(\int_{0}^1\chi'_{\delta}(\mathcal{V}_{\vareps}(y)+\tau(\mathcal{V}_{\vareps}(x)-\mathcal{V}_{\vareps}(y)))\,d\tau\right)\frac{(\mathcal{V}_{\vareps}(x)-\mathcal{V}_{\vareps}(y))^2}{|x-y|^{n+2s}}\,dxdy\\
					&\geq  0.
				\end{split}
			\end{equation}
			Plugging \eqref{eq: nonnegativity of bilinear form} into \eqref{eq: useful estimate comparison}, we arrive at the estimate
			\begin{equation}
				\label{eq: useful estimate in comparison principle0}
				\begin{split}
					&\quad \int_{\Omega_T}\partial_t(v_{1,\vareps}-v_{2,\vareps})\chi_{\delta}\LC \Phi_{\vareps}(v_{1,\vareps})-\Phi_{\vareps}(v_{2,\vareps})\RC \eta_k\,dxdt\\
					&\leq \int_{\Omega_T}G_{\vareps}\chi_{\delta}\LC \Phi_{\vareps}(v_{1,\vareps})-\Phi_{\vareps}(v_{2,\vareps})\RC\eta_k\,dxdt.
				\end{split}
			\end{equation}
			Hence, passing to the limit $\delta\to 0$ by Lebesgue's dominated convergence theorem on \eqref{eq: useful estimate in comparison principle0}, this shows
			\begin{equation}
				\label{eq: useful estimate in comparison principle}
				\begin{split}
					&\quad \int_{\Omega_T}\partial_t(v_{1,\vareps}-v_{2,\vareps})\chi(\Phi_{\vareps}(v_{1,\vareps})-\Phi_{\vareps}(v_{2,\vareps}))\eta_k\,dxdt\\
					&\leq \int_{\Omega_T}G_{\vareps}\chi(\Phi_{\vareps}(v_{1,\vareps})-\Phi_{\vareps}(v_{2,\vareps}))\eta_k\,dxdt.
				\end{split}
			\end{equation}
			Note that by monotonicity of $\Phi_{\vareps}$ and $\Phi_{\vareps}^{-1}$ there holds
			\[
			\chi\LC \Phi_{\vareps}(v_{1,\vareps})-\Phi_{\vareps}(v_{2,\vareps})\RC =\chi \LC v_{1,\vareps}-v_{2,\vareps}\RC.
			\]
               Hence, we can deduce from \eqref{eq: useful estimate in comparison principle} the estimate
			\begin{equation}
				\label{eq: estimate for integration}
				\int_{\Omega_T}\partial_t\LC v_{1,\vareps}-v_{2,\vareps}\RC \chi\LC v_{1,\vareps}-v_{2,\vareps}\RC \eta_k\,dxdt\leq \int_{\Omega_T}G_{\vareps}\chi \LC v_{1,\vareps}-v_{2,\vareps}\RC \eta_k\,dxdt.
			\end{equation}
			Using $v_{j,\vareps}\in H^1(0,T;L^2(\Omega))$ for $j=1,2$ and passing to the limit $k\to\infty$, this ensures $\eta_k\to \chi_{[0,t_0]}$ as $k\to \infty$, and 
			\begin{equation}
				\label{eq: almost final estimate}
				\begin{split}
					\int_{\Omega_{t_0}}\partial_t \LC v_{1,\vareps}-v_{2,\vareps}\RC _+\,dxdt &\leq \int_{\Omega_{t_0}}G_{\vareps}\chi(v_{1,\vareps}-v_{2,\vareps})\,dxdt\\
					&= -\int_{\Omega_{t_0}}L(\Phi_{\vareps}(\varphi_1^{\vareps})-\Phi_{\vareps}(\varphi_2^{\vareps}))\chi(v_{1,\vareps}-v_{2,\vareps})\,dxdt\\
					 &\quad +\int_{\Omega_{t_0}}(F_1^{\vareps}-F_2^{\vareps})\chi(v_{1,\vareps}-v_{2,\vareps})\,dxdt.
				\end{split}
			\end{equation}
			Clearly, we have
			\begin{equation}
				\label{eq: 1st estimate comparison}
				\int_{\Omega_{t_0}}(F_1^{\vareps}-F_2^{\vareps})\chi(v_{1,\vareps}-v_{2,\vareps})\,dxdt\leq \int_{\Omega_{t_0}}\LC F_1^{\vareps}-F_2^{\vareps}\RC _+\,dxdt.
			\end{equation}

			On the other hand, we note that by the uniform ellipticity of $K$ and the support conditions on $\varphi_j$ and $v_{j,\vareps}$, then there holds
			\begin{equation}
				\label{eq: 2n estimate comparison}
				\begin{split}
					&\quad -\int_{\Omega_{t_0}}L(\Phi_{\vareps}(\varphi_1^{\vareps})-\Phi_{\vareps}(\varphi_2^{\vareps}))\chi(v_{1,\vareps}-v_{2,\vareps})\,dxdt\\
					&=-\int_{0}^{t_0}\int_{\R^{2n}}K(x,y) \\
                     &\quad \qquad \quad  \cdot\frac{(\mathcal{W}_{\vareps}(x)-\mathcal{W}_{\vareps}(y))(\chi(v_{1,\vareps}-v_{2,\vareps})(x)-\chi(v_{1,\vareps}-v_{2,\vareps})(y))}{|x-y|^{n+2s}}\,dxdydt\\
					&=2\int_{0}^{t_0}\int_{\Omega_e\times\Omega}K(x,y)\frac{\mathcal{W}_{\vareps}(x)\chi(v_{1,\vareps}-v_{2,\vareps})(y)}{|x-y|^{n+2s}}\,dxdydt\\
					&\leq 2\int_{0}^{t_0}\int_{\Omega_e\times\Omega}K(x,y)\frac{(\mathcal{W}_{\vareps})_+(x)\chi(v_{1,\vareps}-v_{2,\vareps})(y)}{|x-y|^{n+2s}}\,dxdydt\\
					&\leq C\int_0^{t_0}\int_{\Omega}\int_{\Omega_e}\frac{(\mathcal{W}_{\vareps})_+(x)}{|x-y|^{n+2s}}\,dxdydt\\
					&=C\int_0^{t_0}\int_{\Omega}\int_{\Omega_e}\frac{(\Phi_{\vareps}(\varphi_1^{\vareps})-\Phi_{\vareps}(\varphi_2^{\vareps}))_+(x)}{|x-y|^{n+2s}}\,dxdydt,
				\end{split}
			\end{equation}
			where we set
			\[
			\mathcal{W}_{\vareps}=\Phi_{\vareps}(\varphi_1^{\vareps})-\Phi_{\vareps}(\varphi_2^{\vareps}).
			\]
			Combining \eqref{eq: estimate for integration}, \eqref{eq: 1st estimate comparison} and \eqref{eq: 2n estimate comparison}, we obtain
			\begin{equation}
				\label{eq: final estimate hom}
				\begin{split}
					\int_{\Omega}\LC v_{1,\vareps}-v_{2,\vareps}\RC _+(x,t_0)\,dx&\leq C\left( \int_{\Omega_T}\LC F_1^{\vareps}-F_2^{\vareps}\RC_+\,dxdt+\int_{\Omega}\LC u_{0,1}^{\vareps}-u_{0,2}^{\vareps}\RC _+\,dx\right.\\
					&\quad \qquad +\left.\int_{\Omega_{T}}\int_{\Omega_e}\frac{\LC \Phi_{\vareps}(\varphi_1^{\vareps})-\Phi_{\vareps}(\varphi_2^{\vareps})\RC_+(x)}{|x-y|^{n+2s}}\,dxdydt\right)
				\end{split}
			\end{equation}
			for a.e. $0<t_0<T$. Going back to $u_{j,\vareps}$ and recalling that $\left. \varphi_j^{\vareps}\right|_{\Omega_T}=0$, shows
			\begin{equation}
				\label{eq: final estimate inhom}
				\begin{split}
					\int_{\Omega} \LC u_{1,\vareps}-u_{2,\vareps}\RC _+(x,t_0)\,dx&\leq  C\left( \int_{\Omega_T}\LC F_1^{\vareps}-F_2^{\vareps}\RC_+\,dxdt+\int_{\Omega}\LC u_{0,1}^{\vareps}-u_{0,2}^{\vareps}\RC_+\,dx\right.\\
					& \quad \qquad +\left.\int_{\Omega_{T}}\int_{\Omega_e}\frac{\LC \Phi_{\vareps}(\varphi_1^{\vareps})-\Phi_{\vareps}(\varphi_2^{\vareps})\RC_+(x)}{|x-y|^{n+2s}}\,dxdydt\right)
				\end{split}
			\end{equation}
			for a.e. $0<t_0<T$.

			Finally, the proof can be accomplished as follows: 
			as $\vareps\to 0$.
			Note that the weak convergence $u_{j,\vareps}\weak u_j$ in $L^1(\Omega_T)$ and the convexity of $t\mapsto t_+$ guarantees
			\[
			\int_{\Omega_T}\LC u_{1}-u_{2}\RC _+\,dxdt\leq \liminf_{\vareps\to 0}\int_{\Omega_T}\LC u_{1,\vareps}-u_{2,\vareps}\RC_+\,dxdt.
			\]
			Thus, the result follows from \eqref{eq: final estimate inhom} after an integration over $0\leq t_0\leq T$, using the convergence assumptions \ref{prop 1 comparison}--\ref{prop 4 comparison} and the Lipschitz continuity of $t\mapsto t_+$. This proves the assertion.
		\end{proof}

  We next prove a similar result for NPME with linear absorption term.

		\begin{theorem}[Comparison principle with absorption term]\label{thm: comparison absorption}
			Let $\Omega\subset\R^n$ be a bounded Lipschitz domain, $T>0$, $0<s<\alpha\leq 1$, $m> 1$ and $L_K\in \mathcal{L}_0$. Assume additionally that we have given $\rho,q\in C^{1,\alpha}_+(\R^n)$ with $\rho$ uniformly elliptic. Suppose that for $j=1,2$ we have given $u_{0,j}\in L^{\infty}(\Omega)\cap \widetilde{H}^s(\Omega)$, $\varphi_j\in C_c([0,T]\times \Omega_e)$ with $\Phi^m(\varphi_j)\in L^2(0,T;H^s(\R^n))$, $F_j\in L^2(\Omega_T)$ and $u_j$ solves 
			\begin{equation}
				\label{eq: comparison FPME linear absorption}
				\begin{cases}
					\rho\partial_t u+ L_K(\Phi^m(u))+qu=F_j & \text{ in }\Omega_T,\\
					u=\varphi_j&\text{ in }(\Omega_e)_T,\\
					u(0)=u_{0,j}&\text { in }\Omega.
				\end{cases}
			\end{equation}
			Additionally, suppose that there exist sequences $\LC F_j^{\vareps}\RC _{\vareps>0}\subset L^2(\Omega_T)$, $\LC \varphi_j^{\vareps}\RC _{\vareps>0}\subset C_c([0,T]\times \Omega_e)$ with $\Phi^m_{\vareps}(\varphi_j^{\vareps})\in L^2(0,T;H^s(\R^n))$, $\LC u_{0,j}^{\vareps}\RC _{\vareps>0}\subset L^{\infty}(\Omega)\cap\widetilde{H}^s(\Omega)$ and $\LC u_{j,\vareps}\RC _{\vareps>0}\subset H^1(0,T;L^2(\Omega))\cap L^2(0,T;H^s(\R^n))$ satisfying
            \begin{enumerate}[(i)]
                \item\label{prop 1 comparison abs} $\liminf_{\vareps\to 0}\int_{\Omega_T}(F_1^{\vareps}-F_2^{\vareps})_+dxdt\leq \int_{\Omega_T}(F_1-F_2)_+\,dxdt$,
                \item\label{prop 2 comparison abs} $\liminf_{\vareps\to 0}\int_{\Omega_{T}}\int_{\Omega_e}\frac{\left(\Phi^m_{\vareps}(\varphi_1^{\vareps})-\Phi^m_{\vareps}(\varphi_2^{\vareps})\right)_+(x,t)}{|x-y|^{n+2s}}\,dxdydt$\\
                $\leq $ $\int_{\Omega_{T}}\int_{\Omega_e}\frac{\left(\Phi^m(\varphi_1)-\Phi^m(\varphi_2)\right)_+(x,t)}{|x-y|^{n+2s}}\,dxdydt$,
                \item\label{prop 3 comparison abs} $u_{0,j}^{\vareps}\to u_{0,j}$ in $L^1(\Omega)$ as $\vareps\to 0$,
                \item\label{prop 4 comparison abs} $u_{j,\vareps}\weak u_j\ \text{in}\ L^1(\Omega_T)$ as $\vareps\to 0$
                \item\label{prop 5 comparison abs} and $u_{j,\vareps}$ are solutions of
			\begin{equation}
				\begin{cases}
					\partial_t u+ L_K(\Phi_{\vareps}^m(u))+qu=F^{\vareps}_j&\text{ in }\Omega_T,\\
					u=\varphi_j^{\vareps}&\text{ in }(\Omega_e)_T,\\
					u(0)=u_{0,j}^{\vareps}&\text{ in }\Omega.
				\end{cases}
			\end{equation}
            \end{enumerate}
			Then there exists a constant $C>0$ independent of these solutions, initial data, boundary data and sources such that 
			\begin{equation}
				\label{eq: comparison linear absorption}
				\begin{split}
					\int_{\Omega}\LC u_{1}-u_{2}\RC _+(x,t_0)\,dx&\leq C\left( \int_{\Omega_T}\LC F_1-F_2\RC _+\,dxdt+\int_{\Omega}\LC u_{0,1}-u_{0,2}\RC _+\,dx\right.\\
					&\qquad \quad +\left.\int_{\Omega_{T}}\int_{\Omega_e}\frac{\LC \Phi(\varphi_1)-\Phi(\varphi_2)\RC _+(x)}{|x-y|^{n+2s}}\,dxdydt\right).
				\end{split}
			\end{equation}
		\end{theorem}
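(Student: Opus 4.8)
The plan is to run the proof of Theorem~\ref{thm: basic comparison} essentially verbatim, inserting the coefficients $\rho$ and $q$ at the appropriate places and exploiting that both of them contribute with a favourable sign. First I would pass to the shifted functions $v_{j,\vareps}\vcentcolon= u_{j,\vareps}-\varphi_j^{\vareps}$, which by the support conditions solve
\[
\rho\partial_t v_{j,\vareps}+L(\Phi_{\vareps}(v_{j,\vareps}))+qv_{j,\vareps}=f_{j,\vareps}+F_j^{\vareps}\ \text{in}\ \Omega_T,\quad v_{j,\vareps}=0\ \text{in}\ (\Omega_e)_T,\quad v_{j,\vareps}(0)=u_{0,j}^{\vareps},
\]
with $f_{j,\vareps}=-L(\Phi_{\vareps}(\varphi_j^{\vareps}))\in L^2(\Omega_T)$; here I abbreviate $\Phi=\Phi^m$, $\Phi_{\vareps}=\Phi^m_{\vareps}$, $L=L_K$, $B=B_K$ as before. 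Since $v_{j,\vareps}\in H^1(0,T;L^2(\Omega))\cap L^2(0,T;\widetilde{H}^s(\Omega))$ and, by Remark~\ref{remark: hoelder coeff estimate}, multiplication by $q\in C^{1,\alpha}(\R^n)$ maps $\widetilde{H}^s(\Omega)$ into itself (because $\partial\Omega\in C^{0,1}$), the weak formulation of the difference equation for $v_{1,\vareps}-v_{2,\vareps}$ is valid against every $\psi\in L^2(0,T;\widetilde{H}^s(\Omega))$.

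The core step is to test this weak formulation with $\psi_{\delta,k}=\chi_\delta(\mathcal V_\vareps)\eta_k$, where $\mathcal V_\vareps\vcentcolon=\Phi_\vareps(v_{1,\vareps})-\Phi_\vareps(v_{2,\vareps})$ and $\chi_\delta,\eta_k$ are the regularisations of the Heaviside function and of $\chi_{[0,t_0]}$ used in the proof of Theorem~\ref{thm: basic comparison}. The bilinear contribution $\int_0^T B(\mathcal V_\vareps,\chi_\delta(\mathcal V_\vareps))\eta_k\,dt$ is nonnegative exactly as in \eqref{eq: nonnegativity of bilinear form}, and the new absorption contribution $\int_{\Omega_T}q(v_{1,\vareps}-v_{2,\vareps})\chi_\delta(\mathcal V_\vareps)\eta_k\,dxdt$ tends, as $\delta\to0$, to $\int_{\Omega_T}q(v_{1,\vareps}-v_{2,\vareps})_+\eta_k\,dxdt\ge0$, using $\chi(\mathcal V_\vareps)=\chi(v_{1,\vareps}-v_{2,\vareps})$ by strict monotonicity of $\Phi_\vareps$ and $q\ge0$. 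Discarding both nonnegative terms and using that $\rho$ is time-independent, so that $\rho\,\partial_t(v_{1,\vareps}-v_{2,\vareps})\chi(v_{1,\vareps}-v_{2,\vareps})=\partial_t\big(\rho(v_{1,\vareps}-v_{2,\vareps})_+\big)$, I would let $\delta\to0$ and then $k\to\infty$ (as $v_{j,\vareps}\in H^1(0,T;L^2(\Omega))$) to obtain
\[
\int_\Omega\rho\,(v_{1,\vareps}-v_{2,\vareps})_+(x,t_0)\,dx-\int_\Omega\rho\,(u_{0,1}^\vareps-u_{0,2}^\vareps)_+\,dx\le\int_{\Omega_{t_0}}G_\vareps\,\chi(v_{1,\vareps}-v_{2,\vareps})\,dxdt
\]
for a.e.\ $t_0\in(0,T)$, with $G_\vareps=f_{1,\vareps}-f_{2,\vareps}+F_1^\vareps-F_2^\vareps$.

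The right-hand side is then treated exactly as in \eqref{eq: 1st estimate comparison}--\eqref{eq: 2n estimate comparison}: splitting $G_\vareps$, using $0\le\chi\le1$, the uniform ellipticity of $K$ and the support conditions on $\varphi_j^\vareps$ and $v_{j,\vareps}$, one bounds it by $\int_{\Omega_T}(F_1^\vareps-F_2^\vareps)_+\,dxdt+C\int_{\Omega_T}\int_{\Omega_e}\frac{(\Phi_\vareps(\varphi_1^\vareps)-\Phi_\vareps(\varphi_2^\vareps))_+(x,t)}{|x-y|^{n+2s}}\,dxdydt$. Dividing by the ellipticity constant of $\rho$, estimating $\rho(u_{0,1}^\vareps-u_{0,2}^\vareps)_+\le\|\rho\|_{L^\infty}(u_{0,1}^\vareps-u_{0,2}^\vareps)_+$, and returning to $u_{j,\vareps}=v_{j,\vareps}+\varphi_j^\vareps$ (so that $(v_{1,\vareps}-v_{2,\vareps})_+=(u_{1,\vareps}-u_{2,\vareps})_+$ in $\Omega_T$, since $\varphi_j^\vareps|_{\Omega_T}=0$) yields an $\vareps$-level version of \eqref{eq: comparison linear absorption}. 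Finally I would pass to the limit $\vareps\to0$ precisely as at the end of the proof of Theorem~\ref{thm: basic comparison}: integrating in $t_0$ over $[0,T]$, the weak convergence $u_{j,\vareps}\weak u_j$ in $L^1(\Omega_T)$ together with convexity of $t\mapsto t_+$ controls the left-hand side from below, while assumptions \ref{prop 1 comparison abs}--\ref{prop 4 comparison abs} and the Lipschitz continuity of $t\mapsto t_+$ handle the right-hand side, giving \eqref{eq: comparison linear absorption}.

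The only genuinely new points compared with Theorem~\ref{thm: basic comparison} are (i) checking that the $\rho$- and $q$-terms do not destroy admissibility of the test functions, which is where Remark~\ref{remark: hoelder coeff estimate} and the structure of $B_K$ are used, and (ii) the sign bookkeeping: the $q$-term is dropped because $q\ge0$, and the $\rho$-weighted time derivative is rewritten as $\partial_t\big(\rho(v_{1,\vareps}-v_{2,\vareps})_+\big)$ using that $\rho$ is independent of time, after which the ellipticity of $\rho$ is used once more to recover the unweighted left-hand side. I expect the careful verification of (ii), rather than any new estimate, to be the main thing to get right.
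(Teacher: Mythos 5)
Your proposal is correct and mirrors the paper's own proof step by step: same shift to $v_{j,\vareps}$, same test function $\chi_\delta(\mathcal V_\vareps)\eta_k$, same discarding of the nonnegative bilinear and $q$-absorption contributions, and the same passage to the limits $\delta\to0$, $k\to\infty$, $\vareps\to0$. Your explicit rewriting $\rho\,\partial_t(v_{1,\vareps}-v_{2,\vareps})\chi(v_{1,\vareps}-v_{2,\vareps})=\partial_t\bigl(\rho(v_{1,\vareps}-v_{2,\vareps})_+\bigr)$ using time-independence of $\rho$, followed by division by the ellipticity constant, is precisely the detail the paper compresses into ``use the uniform ellipticity of $\rho$,'' so nothing is missing.
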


        \begin{remark}
        \label{remark: comparison holds for solutions}
            Suppose we have given $0\leq u_{0}\in L^{\infty}(\Omega)\cap \widetilde{H}^s(\Omega)$, $0\leq \varphi\in C_c([0,T]\times \Omega_e)$ with $\Phi^m(\varphi)\in L^2(0,T;H^s(\R^n))$, then Theorem~\ref{Uniquenerss with lin absorption term} and Corollary~\ref{cor: approximation} show that the unique solution $u$ of 
            \begin{equation}
            \label{eq: eq for remark comparison}
				\begin{cases}
					\rho\partial_t u+ L_K(\Phi^m(u))+qu=0 & \text{ in }\Omega_T,\\
					u=\varphi&\text{ in }(\Omega_e)_T,\\
					u(0)=0&\text { in }\Omega.
				\end{cases}
			\end{equation}
            can be approximated weakly in $L^1(\Omega_T)$ by a sequence of solutions 
            $$\LC u_{\vareps}\RC_{\vareps>0} \subset H^1(0,T;L^2(\Omega))\cap L^2(0,T;H^s(\R^n)) $$ to  
             \begin{equation}
				\begin{cases}
					\rho\partial_t v+ L_K(\Phi_{\vareps}^m(v))+qv=0 & \text{ in }\Omega_T,\\
					v=\varphi&\text{ in }(\Omega_e)_T,\\
					v(0)=0&\text { in }\Omega.
				\end{cases}
			\end{equation}
            Hence, we see that the conditions \ref{prop 3 comparison abs}, \ref{prop 4 comparison abs}, \ref{prop 5 comparison abs} of Theorem~\ref{thm: comparison absorption} are satisfied when we take $\varphi^{\vareps}=\varphi$ and $u_0^{\vareps}=u_0=0$ and $F^{\vareps}=F=0$. 
            Additionally, using that $\Phi^m_{\vareps}\to \Phi^m$ uniformly on compact sets and $\varphi\in C_c([0,T]\times \Omega)$, we see that
            \begin{equation}
            \label{eq: uniform convergence of nonlinear ext}
                \int_{\Omega_{T}}\int_{\Omega_e}\frac{|\Phi^m_{\vareps}(\varphi^{\vareps})-\Phi^m(\varphi)|(x,t)}{|x-y|^{n+2s}}\,dxdydt\to 0
            \end{equation}
            as $\vareps\to 0$. Therefore, we see that in particular Theorem~\ref{thm: comparison absorption} can be used to compare solutions to equations of the form \eqref{eq: eq for remark comparison}.
        \end{remark}
		
		\begin{proof}[Proof of Theorem \ref{thm: comparison absorption}]
			As in the existence and uniqueness proofs above, we write $\Phi, \Phi_{\vareps}, L,B$ instead of $\Phi^m,\Phi^m_{\vareps},L_K,B_K$. As in the proof of Theorem~\ref{thm: basic comparison} we introduce the new functions $v_{j,\vareps}=u_{j,\vareps}-\varphi_j^{\vareps}$, which satisfy
			\begin{equation}
				\begin{cases}
					\rho\partial_t v+ L(\Phi_{\vareps}(v))+qv=f_{j,\vareps}+F_j^{\vareps}& \text{ in }\Omega_T,\\
					v=0&\text{ in }(\Omega_e)_T,\\
					v(0)=u_{0,j}^{\vareps}&\text{ in }\quad\Omega,
				\end{cases}
			\end{equation}
			where we set $f_{j,\vareps}=-L(\Phi_{\vareps}(\varphi_j^{\vareps}))\in L^2(\Omega_T)$. Therefore, $v_{1,\vareps}-v_{2,\vareps}$ solves
			\begin{equation}
				\label{eq: difference equation for comparison lin abs}
				\begin{cases}
					\rho\partial_t (v_{1,\vareps}-v_{2,\vareps}) + L(\Phi_{\vareps}(v_{1,\vareps})-\Phi_{\vareps}(v_{2,\vareps}))+q(v_{1,\vareps}-v_{2,\vareps})=G_{\vareps}&\text{ in }\Omega_T,\\
					v_{1,\vareps}-v_{2,\vareps}=0&\text{ in }(\Omega_e)_T,\\
					(v_{1,\vareps}-v_{2,\vareps})(0)=u_{0,1}-u_{0,2}&\text{ in }\Omega.
				\end{cases}
			\end{equation}
			where $G_{\vareps}\vcentcolon = f_{1,\vareps}-f_{2,\vareps}+F_1^{\vareps}-F_2^{\vareps}$.
			Arguing as in the proof of Theorem~\ref{thm: basic comparison}, one has
			\begin{equation}
				\label{eq: useful estimate comparison lin abs}
				\begin{split}
					&\int_{\Omega_T}\rho\partial_t(v_{1,\vareps}-v_{2,\vareps})\chi_{\delta}(\Phi_{\vareps}(v_{1,\vareps})-\Phi_{\vareps}(v_{2,\vareps}))\eta_k\,dxdt\\
					=&-\int_0^TB(\Phi_{\vareps}(v_{1,\vareps})-\Phi_{\vareps}(v_{2,\vareps}),\chi_{\delta}(\Phi_{\vareps}(v_{1,\vareps})-\Phi_{\vareps}(v_{2,\vareps})))\eta_k\,dt\\
					&-\int_{\Omega_T}q(v_{1,\vareps}-v_{2,\vareps})\chi_{\delta}(\Phi_{\vareps}(v_{1,\vareps})-\Phi_{\vareps}(v_{2,\vareps}))\eta_k\,dxdt\\
					&+\int_{\Omega_T}G_{\vareps}\chi_{\delta}(\Phi_{\vareps}(v_{1,\vareps})-\Phi_{\vareps}(v_{2,\vareps}))\eta_k\,dxdt.
				\end{split}
			\end{equation}
			Using \eqref{eq: nonnegativity of bilinear form} one obtains
			\[
			\begin{split}
				&\int_{\Omega_T}\rho\partial_t(v_{1,\vareps}-v_{2,\vareps})\chi_{\delta}(\Phi_{\vareps}(v_{1,\vareps})-\Phi_{\vareps}(v_{2,\vareps}))\eta_k\,dxdt\\
				\leq &-\int_{\Omega_T}q(v_{1,\vareps}-v_{2,\vareps})\chi_{\delta}(\Phi_{\vareps}(v_{1,\vareps})-\Phi_{\vareps}(v_{2,\vareps}))\eta_k\,dxdt\\
				&+\int_{\Omega_T}G_{\vareps}\chi_{\delta}(\Phi_{\vareps}(v_{1,\vareps})-\Phi_{\vareps}(v_{2,\vareps}))\eta_k\,dxdt.
			\end{split}
			\]
			Passing to the limit $\delta\to 0$ by Lebesgue's dominated convergence theorem, one can observe by using
			\[
			\chi(\Phi_{\vareps}(v_{1,\vareps})-\Phi_{\vareps}(v_{2,\vareps}))=\chi(v_{1,\vareps}-v_{2,\vareps})
			\] 
			and $q\geq 0$ that there holds
			\begin{equation}
				\label{eq: useful estimate in comparison principle lin abs}
				\begin{split}
					&\int_{\Omega_T}\rho\partial_t(v_{1,\vareps}-v_{2,\vareps})\chi(\Phi_{\vareps}(v_{1,\vareps})-\Phi_{\vareps}(v_{2,\vareps}))\eta_k\,dxdt\\
					\leq &\int_{\Omega_T}G_{\vareps}\chi(\Phi_{\vareps}(v_{1,\vareps})-\Phi_{\vareps}(v_{2,\vareps}))\eta_k\,dxdt.
				\end{split}
			\end{equation}
			Now, we proceed as in the proof of Theorem~\ref{thm: basic comparison} and use the uniform ellipticity of $\rho$ to find
			\begin{equation}
				\begin{split}
					\int_{\Omega}\LC v_{1,\vareps}-v_{2,\vareps}\RC _+(x,t_0)\,dx&\leq C\left( \int_{\Omega_T}\LC F_1^{\vareps}-F_2^{\vareps}\RC _+\,dxdt+\int_{\Omega}\LC u_{0,1}^{\vareps}-u_{0,2}^{\vareps}\RC _+\,dx\right.\\
					&\qquad \quad +\left.\int_{\Omega_{T}}\int_{\Omega_e}\frac{\LC \Phi_{\vareps}(\varphi_1^{\vareps})-\Phi_{\vareps}(\varphi_2^{\vareps})\RC_+(x)}{|x-y|^{n+2s}}\,dxdydt\right).
				\end{split}
			\end{equation}
			Again by a limit argument one can conclude the proof.
		\end{proof}
		
		\section{DN maps and measurement equivalent operators}
		\label{sec: DN maps and measurment equivalent operators}

   With the well-posedness of \eqref{eq: main}, we are able to define the DN map rigorously.

		\subsection{DN maps}
		\label{subsec: DN maps}

           From now on, since $m>1$ is a fixed number, we write $\Phi,\Phi_{\vareps}$ in place of $\Phi^m$ and $\Phi^m_{\vareps}$. Moreover, for $0<s<1$ and $W\subset\Omega_e$ we set
		\begin{align}
			\label{eq: test space}
			\begin{split}
                X_s(W)\vcentcolon = \left\{ \varphi \in C_c([0,T]\times W)\,;\,\varphi\geq 0\ \text{and}\ \Phi(\varphi)\in L^2(0,T;H^s(\R^n))\, \right\}
			\end{split}
		\end{align}
		Additionally, when its notationally convenient we write for a function space $X$ a subscript $+$ to refer to the nonnegative functions in that space. For example, 
        $$ 
        \test_+([0,T]\times \Omega_e) :=\left\{ \,  \varphi\in  C_c^{\infty}([0,T]\times \Omega_e) ;\,  \varphi\geq 0 \, \right\}.
        $$		
	Next, we define the DN map for the fractional porous medium equation.
		
		\begin{definition}[The DN map]
			Let $\Omega\subset\R^n$ be a bounded Lipschitz domain, $T>0$, $0<s<\alpha\leq 1$, $m> 1$ and $L_K\in \mathcal{L}_0$. Assume additionally that we have given $\rho,q\in C^{1,\alpha}_+(\R^n)$ with $\rho$ being uniformly elliptic. Now, we define the DN map $\Lambda_{\rho,K,q}\colon X_s\to L^2(0,T;H^{-s}(\Omega_e))$ by
			\begin{equation}
				\label{eq: DN map}
				\left\langle\Lambda_{\rho,K,q} \varphi,\psi \right\rangle =\int_0^T B_K(\Phi(u),\psi)\,dt
			\end{equation}
			for all $\varphi \in X_s$ and $\psi\in L^2(0,T;\widetilde{H}^s(\Omega_e))$, where $u$ is the unique, nonnegative, bounded solution of
			\begin{equation}
				\label{eq: homogeneous FPME general for def of DN map}
				\begin{cases}
					\rho\partial_t u+ L_K(\Phi(u))+qu=0,&\text{ in }\Omega_T,\\
					u=\varphi&\text{ in }(\Omega_e)_T,\\
					u(0)=0&\text{ in }\Omega,
				\end{cases}
			\end{equation}
			(see~Theorem~\ref{Existence result with linear absorption term} and \ref{Uniquenerss with lin absorption term}).
		\end{definition}
		
		\begin{remark}
			We refer the reader to \cite[Appendix~A]{LRZ2022calder} for a discussion on an alternative DN map in nonlocal diffusion models and its relation to the one used in this article.
		\end{remark}
		
		For $\varphi\in X_s$, let $u$ be the unique, bounded, nonnegative solution to  \eqref{eq: homogeneous FPME general for def of DN map}. Since $\Phi$ is invertible on $\R_+$, the function 
		$$
		v(x,t):=\Phi(u)(x,t) \in L_+^{\infty}(\R^n_T)
		$$
		satisfies 
		\begin{align}
			\label{transferred equation}
			\begin{cases}
				\rho \p_t \Phi^{-1}(v) +L_K(v) + q \Phi^{-1}(v)=0 &\text{ in }\Omega_T, \\
				v=\widetilde{\varphi} & \text{ in } (\Omega_e)_T,\\
				v(0)=0 &\text{ in }\Omega,
			\end{cases}
		\end{align}
		where $\widetilde{\varphi}=\Phi(\varphi)\in L^2(0,T;H^s(\R^n))$. That $v$ solves the above PDE and initial condition is a direct consequence of Definition~\ref{def: weak solutions 2}. To see that $v=\widetilde{\varphi}$ observe that $u=\varphi$ if and only if $\Phi(u-\varphi)\in L^2(0,T;\widetilde{H}^s(\Omega))$. Now, since $\partial\Omega\in C^{0,1}$ and $\Phi(t)=0$ only for $t=0$ this is equivalent to $u=\varphi$ and therefore $v=\widetilde{\varphi}$ a.e. or in $L^2(0,T;\widetilde{H}^s(\Omega))$ sense.

		Next, note that using Theorem~\ref{Existence result with linear absorption term} and \ref{Uniquenerss with lin absorption term}, we deduce that the problem \eqref{transferred equation} is also well-posed for $\widetilde{\varphi}\in \test_+([0,T]\times \Omega_e)$ and $\rho,q\in C_+^{1,\alpha}(\R^n)$ with $\rho$ being uniformly elliptic. In fact, if $\widetilde{\varphi}\in \test_+([0,T]\times \Omega_e)$, then $\varphi\vcentcolon= \Phi^{-1}(\widetilde{\varphi})\in C_c([0,T]\times \Omega_e)$ and $\Phi(\varphi)=\widetilde{\varphi}\in C_c^{\infty}([0,T]\times \Omega_e)\subset L^2(0,T;H^s(\R^n))$ and hence $\varphi \in  X_s$. Now, by Theorem~\ref{Existence result with linear absorption term} and \ref{Uniquenerss with lin absorption term} there is a unique, nonnegative, bounded solution $u$ of \eqref{eq: homogeneous FPME general for def of DN map} and hence $v=\Phi(u)$ is the unique, nonnegative, bounded solution of \eqref{transferred equation}. 
		
		Hence, we can define the DN map of \eqref{transferred equation}, denoted by $\Lambda^{\Phi}_{\rho,K,q}\colon \test_+([0,T]\times \Omega_e)\to L^2(0,T;H^{-s}(\Omega_e))$, via the formula
		\begin{equation}
			\label{eq: DN map transferred equation}
			\left\langle \Lambda^{\Phi}_{\rho,K,q} \widetilde{\varphi},\psi \right\rangle=\int_0^TB_K(v,\psi)\,dt
		\end{equation}
		with $\widetilde{\varphi} \in \test_+([0,T]\times \Omega_e)$ and $\psi\in L^2(0,T;\widetilde{H}^{s}(\Omega_e))$, where $v$ is the unique, non-negative, bounded solution of \eqref{transferred equation}. 
           It is immediate from our definitions that
		\begin{equation}
			\label{eq: Relation DN maps}
			\langle\Lambda_{\rho,K,q}\varphi,\psi\rangle = \langle\Lambda^{\Phi}_{\rho,K,q}\Phi(\varphi),\psi\rangle
		\end{equation}
		for all $\varphi\in X_s$ with $\Phi(\varphi)\in \test_+([0,T]\times \Omega_e)$ and $\psi\in L^2(0,T;\widetilde{H}^s(\Omega_e))$. Hence, if $(\rho_j,K_j,q_j)$ for $j=1,2$ are coefficients satisfying the usual assumptions and $W_1,W_2\subset\Omega_e$ two generic measurement sets, then
        \begin{equation}
        \label{eq: assumption on DN maps}
            \left. \Lambda_{\rho_1,K_1,q_1}\varphi\right|_{(W_2)_T}=\left.\Lambda_{\rho_2,K_2,q_2}\varphi\right|_{(W_2)_T},\quad \text{for all }  \varphi\in X_s(W_1)
        \end{equation}
        implies
        \begin{equation}
        \label{eq: consequence on transferred DN maps}
            \left.\Lambda^{\Phi}_{\rho_1,K_1,q_1}\widetilde{\varphi}\right|_{(W_2)_T}=\left.\Lambda^{\Phi}_{\rho_2,K_2,q_2}\widetilde{\varphi}\right|_{(W_2)_T},\quad \text{for all }\widetilde{\varphi}\in \test_+\LC [0,T]\times W_1\RC.
        \end{equation}
		Let us note that these restrictions are meant in the sense that we test against $\psi\in L^2(0,T;\widetilde{H}^s(W_2))$.

		Hence, if we can show that the DN maps \eqref{eq: DN map transferred equation} uniquely determine the coefficients $\rho,K,q$, then this will also give a positive answer to the ~Question~\ref{inverse problem}. 
		
		\subsection{Measurement equivalent nonlocal operators}\label{subsec: meas equivalence}

		Recalling Definition \ref{def: nonlocal op for inverse problem}, the next two propositions provide examples of measurement equivalent nonlocal opeartors. The first example deals with nonlocal operators whose kernel are real analytic and separable.
		
		\begin{proposition}[Real analytic, separable coefficients]
			\label{prop: real analytic coefficients}
			Let $\Omega \subset \mathbb{R}^n$ be a bounded open set and $0<s<1$. Then uniformly elliptic nonlocal operators $L_K$ of order $2s$ such that $K(x,y)$ is of the form 
			\begin{equation}
				\label{eq: real analytic}
				K(x,y)=F(\gamma(x))F(\gamma(y)),
			\end{equation}
			for some real analytic functions $\gamma\colon\R^n\to\R$, $F\colon\R_+\to\R_+$ satisfying
			\begin{enumerate}[(i)] 
				\item $\gamma$ is uniformly elliptic,
				\item $F$ is injective
				\item and for any compact interval $[a,b]\subset \R_+$ there exists $c>0$ such that $F(\xi)\geq c$ for all $\xi\in [a,b]$
			\end{enumerate} 
			are measurement equivalent.
		\end{proposition}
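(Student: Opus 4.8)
Let $L_{K_1},L_{K_2}$ be two operators in the stated class and suppose $\left.\Lambda_{K_1}\varphi\right|_{W_2}=\left.\Lambda_{K_2}\varphi\right|_{W_2}$ for all $0\le\varphi\in C_c^\infty(W_1)$; we must show $K_1=K_2$ on $\R^{2n}$. The plan is to exploit the multiplicative (``conductivity-type'') structure of the kernels, to perform an exterior determination at a point of $W_1\cap W_2$, and to use real-analyticity to propagate the resulting equality to all of $\R^n$. Write $a_j\vcentcolon= F_j\circ\gamma_j\colon\R^n\to\R_+$, so that $K_j(x,y)=a_j(x)a_j(y)$, $j=1,2$. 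By the uniform ellipticity of $\gamma_j$ its range is a compact subinterval of $\R_+$, on which $F_j$ is continuous and, by hypothesis~(iii), bounded below by a positive constant; hence $a_j$ is uniformly elliptic and $L_{K_j}\in\mathcal{L}_0$. Being a composition of real-analytic maps, $a_j$ is real-analytic on $\R^n$. Finally, putting $y=x$ shows that $K_1=K_2$ on $\R^{2n}$ is equivalent to $a_1^2=a_2^2$, i.e.\ (both being positive) to $a_1=a_2$ on $\R^n$; and since $a_1-a_2$ is real-analytic on the connected set $\R^n$, by the identity theorem it suffices to prove $a_1=a_2$ on the nonempty open set $W\vcentcolon= W_1\cap W_2\subset\Omega_e$.

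To this end, I would fix $x_0\in W$ and set $\delta\vcentcolon=\dist(x_0,\overline\Omega)$, which is positive since $x_0\notin\overline\Omega$; for $r>0$ small enough, $B_r(x_0)\subset W$. Pick a nontrivial $0\le\Psi\in C_c^\infty(B_{1/2}(0))$, so that $[\Psi]_{H^s(\R^n)}>0$, and set $\varphi_r(x)\vcentcolon=\Psi((x-x_0)/r)$. Then $\varphi_r\in C_c^\infty(W_1)$, $\varphi_r\ge0$, and $\varphi_r\in\widetilde{H}^s(W_2)$, so $\varphi_r$ is admissible both as an exterior datum in $W_1$ and as a test function in $W_2$; the hypothesis therefore gives $\langle\Lambda_{K_1}\varphi_r,\varphi_r\rangle=\langle\Lambda_{K_2}\varphi_r,\varphi_r\rangle$. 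Let $u_r^{(j)}=\varphi_r+w_r^{(j)}$, with $w_r^{(j)}\in\widetilde{H}^s(\Omega)$, solve $L_{K_j}u=0$ in $\Omega$, $u=\varphi_r$ in $\Omega_e$. From the weak formulation $B_{K_j}(u_r^{(j)},w_r^{(j)})=0$ one gets
\[
\langle\Lambda_{K_j}\varphi_r,\varphi_r\rangle=B_{K_j}(u_r^{(j)},\varphi_r)=B_{K_j}(\varphi_r,\varphi_r)-B_{K_j}(w_r^{(j)},w_r^{(j)}).
\]
Since $\supp\varphi_r$ and $\supp w_r^{(j)}\subset\overline\Omega$ are disjoint and separated by a distance $\ge\delta/2$, expanding the bilinear form and using $\varphi_r v\equiv0$ gives
\[
B_{K_j}(\varphi_r,v)=-2\int_{\R^{2n}}K_j(x,y)\frac{\varphi_r(x)v(y)}{|x-y|^{n+2s}}\,dx\,dy\qquad(v\in\widetilde{H}^s(\Omega)),
\]
whence $|B_{K_j}(\varphi_r,v)|\le C_\delta\|\varphi_r\|_{L^1(\R^n)}\|v\|_{L^2(\Omega)}\le C_\delta\,r^n\,\|v\|_{H^s(\R^n)}$ (using $\|\varphi_r\|_{L^1(\R^n)}=r^n\|\Psi\|_{L^1(\R^n)}$). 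Testing with $v=w_r^{(j)}$ and using the uniform ellipticity of $K_j$ together with the fractional Poincar\'e inequality yields $\|w_r^{(j)}\|_{H^s(\R^n)}\le C_\delta\,r^n$, and therefore $B_{K_j}(w_r^{(j)},w_r^{(j)})=\Order(r^{2n})$.

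For the leading term, write $B_{K_j}(\varphi_r,\varphi_r)=\int_{\R^{2n}}a_j(x)a_j(y)\frac{(\varphi_r(x)-\varphi_r(y))^2}{|x-y|^{n+2s}}\,dx\,dy$ and substitute $x=x_0+r\xi$, $y=x_0+r\eta$; this produces a factor $r^{n-2s}$ and the integrand $a_j(x_0+r\xi)a_j(x_0+r\eta)\frac{(\Psi(\xi)-\Psi(\eta))^2}{|\xi-\eta|^{n+2s}}$. Since $a_j$ is continuous and bounded and $\frac{(\Psi(\xi)-\Psi(\eta))^2}{|\xi-\eta|^{n+2s}}\in L^1(\R^{2n})$, dominated convergence gives
\[
B_{K_j}(\varphi_r,\varphi_r)=r^{n-2s}\big(a_j(x_0)^2\,[\Psi]_{H^s(\R^n)}^2+\order(1)\big)\qquad\text{as }r\to0.
\]
Because $2n>n-2s$, dividing the identity $\langle\Lambda_{K_1}\varphi_r,\varphi_r\rangle=\langle\Lambda_{K_2}\varphi_r,\varphi_r\rangle$ by $r^{n-2s}$ and letting $r\to0$ gives $a_1(x_0)^2[\Psi]_{H^s(\R^n)}^2=a_2(x_0)^2[\Psi]_{H^s(\R^n)}^2$, hence $a_1(x_0)=a_2(x_0)$. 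Since $x_0\in W$ was arbitrary, $a_1=a_2$ on $W$, and by analytic continuation $a_1\equiv a_2$ on $\R^n$; thus $K_1=K_2$ on $\R^{2n}$, i.e.\ $L_{K_1}\sim L_{K_2}$, and the proposition follows as $L_{K_1},L_{K_2}$ were arbitrary in the stated class.

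I expect the exterior determination step to be the main point, and the key quantitative input to be the bound $\|w_r^{(j)}\|_{H^s(\R^n)}\le C_\delta\,r^n$: it is much sharper than the naive energy estimate $\|w_r^{(j)}\|_{H^s(\R^n)}\lesssim\|\varphi_r\|_{H^s(\R^n)}\sim r^{(n-2s)/2}$, the improvement stemming from the fact that $\supp\varphi_r$ stays at a fixed positive distance from $\Omega$; it is precisely this gain that makes $B_{K_j}(w_r^{(j)},w_r^{(j)})$ negligible compared with the $r^{n-2s}$ leading asymptotics of $B_{K_j}(\varphi_r,\varphi_r)$. The concentration computation itself is a standard boundary/exterior determination argument, in the spirit of \cite{KV84,Salo:Zhong:2012,CRZ2022global,RZ-low-2022}. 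Real-analyticity of $\gamma$ and $F$ enters only in the concluding propagation step, while injectivity of $F$ is not needed for $K_1=K_2$; it would be used to additionally recover $\gamma_j$ under the gauge condition $F_1=F_2$.
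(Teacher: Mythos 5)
Your proof is correct and self-contained, whereas the paper disposes of this proposition in a single sentence by citing \cite[Proposition~1.4]{KLZ-2022} together with the remark that the concentration sequence $\LC\Phi_N\RC$ from \cite[Theorem~1.1]{KLZ-2022} may be taken nonnegative. What you have written is essentially a reconstruction of that Kohn--Vogelius-style exterior determination argument alluded to in the introduction: concentrate nonnegative exterior data $\varphi_r$ at a point $x_0 \in W_1\cap W_2$; exploit the positive separation $\dist(\supp\varphi_r,\Omega)\ge\delta/2$ to obtain the sharpened bound $[w_r^{(j)}]_{H^s(\R^n)}\lesssim r^n$ for the interior correction, so that $B_{K_j}(w_r^{(j)},w_r^{(j)})=\mathcal O(r^{2n})$ is negligible against the leading $r^{n-2s}$ scaling; extract $a_j(x_0)^2[\Psi]^2_{H^s(\R^n)}$ from the quadratic form of the DN map by dominated convergence; and then propagate $a_1=a_2$ from the open set $W_1\cap W_2$ to all of $\R^n$ by the identity theorem for real-analytic functions. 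All of these steps check out: the bilinear identity $\langle\Lambda_{K_j}\varphi_r,\varphi_r\rangle = B_{K_j}(\varphi_r,\varphi_r)-B_{K_j}(w_r^{(j)},w_r^{(j)})$ is correctly derived from $B_{K_j}(u_r^{(j)},w_r^{(j)})=0$; the disjoint-support reduction $B_{K_j}(\varphi_r,v)=-2\int K_j(x,y)\varphi_r(x)v(y)|x-y|^{-n-2s}\,dxdy$ is right; and $2n > n-2s$ trivially since $n+2s>0$. So your route is the same as the paper's, only spelled out rather than outsourced. Your closing remark is also accurate: hypothesis~(ii) (injectivity of $F$) is not used anywhere in the argument, since Definition~\ref{def: nonlocal op for inverse problem} only requires recovering the kernel $K$ and not the pair $(F,\gamma)$; it only becomes relevant if one fixes a gauge $F_1=F_2$ and wishes to further conclude $\gamma_1=\gamma_2$.
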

		
		\begin{proof}
			The proof is a consequence of \cite[Proposition~1.4]{KLZ-2022} and the observation that the sequence $\LC \Phi_N\RC _{N\in\N}$ in \cite[Theorem~1.1]{KLZ-2022} can be constructed in such a way that $\Phi_N\geq 0$.
		\end{proof}
		
		Next, we state another example of measurement equivalent operators.
		
		\begin{proposition}[Fractional conductivity operators]
			\label{prop: fractional conductivity operator}
			Let $\Omega \subset \mathbb{R}^n$ be a bounded open set and $0<s<\min(1,n/2)$. Let $\gamma\in L^{\infty}(\R^n)$ be uniformly elliptic such that
			\begin{enumerate}[(i)]
				\item the background deviation $m_{\gamma}\vcentcolon = \gamma^{1/2}-1$ belongs to the Bessel potential space $H^{s,n/s}(\R^n)$\footnote{Recall that the Bessel potential space $H^{s,p}(\R^n)$ with $1\leq p<\infty$ and $s\in\R$ is the space of tempered distributions $u\in\tempered(\R^n)$ such that $\|u\|_{H^{s,p}(\R^n)}=\|\langle D\rangle^s u\|_{L^p(\R^n)}<\infty$, where $\langle D\rangle^s$ is the Fourier multiplier with symbol $\langle \xi\rangle^s=(1+|\xi|^2)^{s/2}$.},
				\item $\gamma|_{\Omega_e}=\Gamma$ for some fixed $\Gamma\in L^{\infty}(\R^n)$ with $m_{\Gamma}\in H^{s,n/s}(\R^n)$,
				\item and $q_{\gamma}\vcentcolon = -(-\Delta)^s m_{\gamma}/\gamma^{1/2}\in C(\overline{\Omega})$.
			\end{enumerate}
			Then the nonlocal operator $L_{K}\in \mathcal{L}_0$ with
			\[
			K(x,y)=C_{n,s}\gamma^{1/2}(x)\gamma^{1/2}(y),
			\]
			where $C_{n,s}>0$ is the normalization constant from the fractional Laplacian, is measurement equivalent. 
		\end{proposition}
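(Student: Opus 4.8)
The plan is to prove this via the \emph{fractional Liouville reduction}, transferring the statement to a uniqueness question for the fractional Schr\"odinger equation, along the lines of \cite{zimmermann2023inverse,CRZ2022global,RZ-low-2022}. Let $\gamma_1,\gamma_2$ be two conductivities satisfying (i)--(iii) and let $L_{K_j}$, $j=1,2$, be the associated operators with $K_j(x,y)=C_{n,s}\gamma_j^{1/2}(x)\gamma_j^{1/2}(y)$. Set $m_j\vcentcolon=\gamma_j^{1/2}-1\in H^{s,n/s}(\R^n)$ and $q_j\vcentcolon=-(-\Delta)^sm_j/\gamma_j^{1/2}\in C(\overline{\Omega})$. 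Using $m_j\in H^{s,n/s}(\R^n)$ and the fractional Leibniz rule (cf.\ \cite[Lemma~3.1]{CRTZ-2022}), multiplication by $\gamma_j^{1/2}$ is a bounded, boundedly invertible operation on $H^s(\R^n)$ that preserves $\widetilde{H}^s(\Omega)$; the substitution $w=\gamma_j^{1/2}u$ then maps weak solutions of $L_{K_j}u=0$ in $\Omega$, $u=\varphi$ in $\Omega_e$, to weak solutions of $((-\Delta)^s+q_j)w=0$ in $\Omega$, $w=\gamma_j^{1/2}\varphi=\Gamma^{1/2}\varphi$ in $\Omega_e$ (the last equality by (ii)), and conjugates $B_{K_j}$ into the Schr\"odinger bilinear form associated with $(-\Delta)^s+q_j$ on $\Omega$. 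In particular the latter inherits coercivity on $\widetilde{H}^s(\Omega)$ from the uniform ellipticity of $L_{K_j}$, so $0$ is not a Dirichlet eigenvalue of $(-\Delta)^s+q_j$ in $\Omega$ and the Schr\"odinger DN map $\Lambda_{q_j}$ is well defined; the continuity $q_j\in C(\overline{\Omega})$ is what will let us quote the available fractional Calder\'on uniqueness results.

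Next I would fix nonempty open $W_1,W_2\Subset\Omega_e$ with $W_1\cap W_2\neq\emptyset$ and assume $\left.\Lambda_{K_1}\varphi\right|_{W_2}=\left.\Lambda_{K_2}\varphi\right|_{W_2}$ for all $0\leq\varphi\in C_c^\infty(W_1)$. Since the elliptic conductivity problem is \emph{linear}, any $\varphi\in C_c^\infty(W_1)$ is a difference of two nonnegative elements of $C_c^\infty(W_1)$, so the identity extends to all such $\varphi$ and the nonnegativity restriction is harmless. Subtracting the two integral identities for $\langle(\Lambda_{K_1}-\Lambda_{K_2})\varphi_1,\varphi_2\rangle$, inserting the conjugation from the first step and using $\gamma_1=\gamma_2=\Gamma$ on $\Omega_e$, one arrives at $\int_{\Omega}(q_1-q_2)w_1w_2\,dx=0$, where $w_j$ solves $((-\Delta)^s+q_j)w_j=0$ in $\Omega$ with exterior data $\Gamma^{1/2}\varphi_1$ and $\Gamma^{1/2}\varphi_2$ respectively, $\varphi_1\in C_c^\infty(W_1)$, $\varphi_2\in C_c^\infty(W_2)$. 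In carrying this out one has to control a correction term supported on $W_2$ and built from $q_j|_{W_2}$, which is disentangled by also testing against data supported in $W_1\cap W_2$; this is precisely the point where non-disjointness of $W_1,W_2$ is indispensable, in view of the counterexamples of \cite{RZ2022FracCondCounter,RZ-low-2022}.

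Then, by Runge approximation for $(-\Delta)^s+q_j$ --- a consequence of the UCP for $(-\Delta)^s$ on $H^t(\R^n)$, again compatible with the nonnegativity constraint by linearity --- the identity $\int_{\Omega}(q_1-q_2)w_1w_2\,dx=0$ forces $q_1=q_2$ in $\Omega$. To recover the conductivity itself, put $w\vcentcolon=m_1-m_2$. Then $w=0$ in $\Omega_e$ by (ii), and from $(-\Delta)^sm_j=-q_j\gamma_j^{1/2}=-q_j(1+m_j)$ together with $q_1=q_2$ in $\Omega$, writing $q$ for this common value, we obtain $((-\Delta)^s+q)w=0$ in $\Omega$. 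Since $0$ is not a Dirichlet eigenvalue of $(-\Delta)^s+q$ in $\Omega$ (first step), this yields $w\equiv0$, hence $m_1=m_2$, $\gamma_1=\gamma_2$, and therefore $K_1=K_2$, which is the assertion.

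The main obstacle is the rigorous execution of the first two steps at the low regularity level $m_\gamma\in H^{s,n/s}(\R^n)$: controlling the commutator $[(-\Delta)^{s/2},\gamma^{1/2}]$ and the product $q_\gamma\gamma^{1/2}$ in the appropriate Bessel potential and fractional Sobolev spaces, and keeping track of the exterior boundary terms accurately enough that only test functions supported in $W_1\cap W_2$ are required. These estimates are the technical heart of \cite{CRZ2022global,RZ-low-2022}; granting them, the proposition follows by combining the fractional Liouville reduction with the (elementary) observation that linearity of the conductivity equation renders the restriction to nonnegative exterior data immaterial.
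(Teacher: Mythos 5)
Your plan shares the core strategy of the paper --- Liouville reduction to $(-\Delta)^s+q_\gamma$, recovery of the potential, then recovery of $\gamma$ --- and your first two steps parallel the paper's. In fact your observation that the linearity of the elliptic problem lets one write any $\varphi\in C_c^\infty(W_1)$ as a difference $\psi_1-\psi_2$ of nonnegative elements of $C_c^\infty(W_1)$ is a clean way to discharge the nonnegativity constraint; the paper keeps the nonnegativity and instead proves a density result (nonnegative elements of $\widetilde{H}^s(W)$ can be approximated by nonnegative $C_c^\infty(W)$ functions) plus a polarization identity at the end, but both approaches handle the constraint correctly for the linear problem. One small point: the multiplier bound you invoke is not \cite[Lemma~3.1]{CRTZ-2022} (that is a H\"older-coefficient estimate); at the regularity $m_\gamma\in H^{s,n/s}(\R^n)$ the relevant multiplication estimates are those of \cite{CRZ2022global,RZ-low-2022}, which you do cite implicitly later.

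The genuine divergence, and where your argument has a gap, is the final step. After obtaining $q_1=q_2$ in $\Omega$, you argue that $w:=m_{\gamma_1}-m_{\gamma_2}$ solves $((-\Delta)^s+q)w=0$ in $\Omega$ with $w=0$ in $\Omega_e$ and invoke uniqueness of the Dirichlet problem to conclude $w\equiv0$. For this you implicitly need $w\in\widetilde{H}^s(\Omega)$, and that is not given. The hypotheses yield only $w\in H^{s,n/s}(\R^n)\cap L^\infty(\R^n)$ with $w=0$ in $\Omega_e$; at the exponent $n/s>2$ (recall $s<n/2$), passing to $H^{s,2}$ for a compactly supported function already requires a nontrivial localization estimate for $\langle D\rangle^s$, and even granting $w\in H^s(\R^n)$, the proposition assumes $\Omega$ to be merely a bounded open set, so the identification of $\widetilde{H}^s(\Omega)$ with $\{u\in H^s(\R^n):u=0\text{ a.e.\ in }\Omega^c\}$ is not available. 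The paper avoids precisely this issue: after $q_1=q_2$ it does not go ``inward'' into the Dirichlet problem on $\Omega$ at all. Instead, it extracts from the equal DN maps the additional exterior integral identity
\[
\int_{\Omega_e}(-\Delta)^{s/2}\LC m_{\gamma_1}-m_{\gamma_2}\RC (-\Delta)^{s/2}(\varphi\psi)\,dx=0
\]
for all nonnegative $\varphi\in C_c^\infty(W_1)$, $\psi\in C_c^\infty(W_2)$ (following \cite[Lemma~8.15]{RZ2022unboundedFracCald}), then uses polarization and cut-offs to localize this to $\omega'\Subset W_1\cap W_2$, and finally applies the unique continuation principle for $(-\Delta)^s$ in the Bessel potential space $H^{s,n/s}(\R^n)$ (\cite[Theorem~2.2]{KRZ-2023}), where $m_{\gamma_1}-m_{\gamma_2}$ actually lives, with no regularity upgrade and no assumption on $\partial\Omega$. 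Unless you supply the missing regularity argument placing $m_{\gamma_1}-m_{\gamma_2}$ in $\widetilde{H}^s(\Omega)$ --- which would require at least Lipschitz boundary and is not part of the stated hypotheses --- your last step does not close, and the paper's UCP-in-the-exterior route is the correct way to finish.
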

		
		\begin{remark}
			Let us recall that the operators $L_{K}$ of Proposition~\ref{prop: fractional conductivity operator} are called \emph{fractional conductivity operator} as they converge to the conductivity operator $-\Div(\gamma\nabla u)$ as $s\uparrow 1$ for sufficiently regular functions $\gamma$ and $u$.
		\end{remark}
		
		\begin{proof}[Proof of Proposition \ref{prop: fractional conductivity operator}]
			We only give here the needed modifications in the proofs of the results in \cite{RZ-low-2022} and refer for further details to this article. First, let us observe that any nonnegative $\widetilde{H}^s(W)$ with $W\subset\Omega_e$ can approximated by nonnegative test functions $\varphi\in C_c^{\infty}(W)$. In fact, let $\varphi\in \widetilde{H}^s(W)$ and choose $\LC \varphi_k\RC_{k\in\N}\subset C_c^{\infty}(W)$ such that $\varphi_k\to \varphi$ in $H^s(\R^n)$ as $k\to\infty$. Then we define
			\[
			\psi_k(x)=
			\sqrt{(\varphi_k(x))^2+\vareps_k^2}-\vareps_k
			\]
			for some sequence $\vareps_k\downarrow 0$ as $k\to\infty$. We have $\psi_k\in C_c^{\infty}(W)$ and $\psi_k\geq 0$. Clearly, up to extracting a subsequence we can assume $\varphi_k\to \varphi$ as $k\to\infty$ a.e. in $\R^n$ and hence $\psi_k\to \varphi$ a.e. in $\R^n$ as $k\to \infty$. Since $\varphi_k\to \varphi$ in $L^2(\R^n)$ as $k\to \infty$, there exists $v\in L^2(\R^n)$ such that $|\varphi_k|\leq v$ uniformly in $k$ (see~\cite[Theorem~4.9]{Brezis}). Therefore, Lebesgue's dominated convergence theorem gives $\psi_k\to \varphi$ in $L^2(\R^n)$ as $k\to \infty$. Additionally one has
			\[
			\frac{\psi_k(x)-\psi_k(y)}{|x-y|^{n/2+s}}\to \frac{\varphi(x)-\varphi(y)}{|x-y|^{n/2+s}}\ \text{a.e. in}\ \R^{2n}, \text{ as }k\to \infty,
			\]
			and using the Lipschitz continuity of the function $\zeta(t):= \sqrt{t^2+\vareps^2}-\vareps$, we can get $\left| \zeta'(t) \right|= \left| \frac{t}{\sqrt{t^2+\vareps^2}}\right| \leq 1$, so that 
			\[
			\left|\frac{\psi_k(x)-\psi_k(y)}{|x-y|^{n/2+s}}\right|\leq \frac{|\varphi_k(x)-\varphi_k(y)|}{|x-y|^{n/2+s}} \leq V(x,y)\in L^2(\R^{2n}).
			\]
			The existence of such a $V\in L^2(\R^{2n})$ follows from $\varphi_k\to \varphi$ in $H^s(\R^n)$ as $k\to\infty$ and \cite[Theorem~4.9]{Brezis}. The Lebesgue's dominated convergence theorem implies 
			\[
			\begin{split}
				\left[\psi_k-\varphi\right]^2_{H^s(\R^n)}&=\int_{\R^2n}\frac{\left|((\psi_k-\varphi)(x)-(\psi_k-\varphi)(y)\right|^2}{|x-y|^{n+2s}}\,dxdy\\
				&=\int_{\R^{2n}}\left|\frac{(\psi_k(x)-\psi_k(y))-(\varphi(x)-\varphi(y))}{|x-y|^{n/2+s}}\right|^2\,dxdy\to 0
			\end{split}
			\]
			as $k\to\infty$. Hence, we can conclude the assertion.
			
			Next, let $\gamma_j\in L^{\infty}(\R^n)$ be two functions with associated coefficients $K_j(x,y)=C_{n,s}\gamma_j^{1/2}(x)\gamma_j^{1/2}(y)$ for $j=1,2$, and operators $L_{K_1},L_{K_2}$ such that the conditions in Proposition~\ref{prop: fractional conductivity operator} for two non-disjoint, nonempty, open sets $W_1,W_2\subset \Omega$ hold. Moreover, assume that we have \eqref{eq: for measurement equivalent} for these data. Now one can follow the same arguments as in \cite[Lemma~4.1]{RZ-low-2022} or \cite[Lemma~4.1]{CRZ2022global} to deduce that there holds
			\[
			\left.\Lambda_{q_1}\varphi\right|_{W_2}= \left.\Lambda_{q_2}\varphi\right|_{W_2}
			\]
			for all nonnegative $\varphi \in C_c^{\infty}(W_1)$. Here for $j=1,2$ the potentials $q_j$ are given by 
			\[
			q_j=-\frac{(-\Delta)^s m_{\gamma_j}}{\gamma_j^{1/2}}
			\]
			and $\Lambda_{q_j}$ are the DN maps related to the Dirichlet problem
			\begin{equation}
				\begin{cases}
					\LC (-\Delta)^s+q_j\RC u=0& \text{ in } \Omega\\
					u=\varphi& \text{ in } \Omega_e,
				\end{cases}
			\end{equation}
			which is well-posed by \cite[Lemma~3.11]{RZ-low-2022}. Using \cite[Theorem~1]{GRSU20}, we deduce that $q_1=q_2$ in $\Omega$. To apply this theorem we make the measurement sets if needed smaller.

            Now, we can follow the proof of \cite[Lemma~8.15]{RZ2022unboundedFracCald} to see that
			\[
			\int_{\Omega_e}(-\Delta)^{s/2}\LC m_{\gamma_1}-m_{\gamma_2}\RC (-\Delta)^{s/2}(\varphi\psi)\,dx=0
			\]
			for all nonnegative functions $\varphi\in C_c^{\infty}(W_1)$, $\psi\in C_c^{\infty}(W_2)$. Using a cut-off function this implies
			\[
			\int_{\Omega_e}(-\Delta)^{s/2}\LC m_{\gamma_1}-m_{\gamma_2}\RC (-\Delta)^{s/2}\varphi\,dx=0
			\]
			for all nonnegative functions $\varphi\in C_c^{\infty}(\omega)$ with $\omega\Subset W_1\cap W_2$. But now since 
			\[
			\varphi \psi=\frac{(\varphi+\psi)^2-\varphi^2-\psi^2}{2},
			\]
			we find
			\[
			\int_{\Omega_e}(-\Delta)^{s/2}\LC m_{\gamma_1}-m_{\gamma_2}\RC (-\Delta)^{s/2}(\varphi\psi)\,dx=0
			\]
			for all $\varphi,\psi\in C_c^{\infty}(\omega)$. By using again a cut-off function this implies 
			\[
			\int_{\Omega_e}(-\Delta)^{s/2}\LC m_{\gamma_1}-m_{\gamma_2}\RC (-\Delta)^{s/2}\varphi\,dx=0
			\]
			for all $\varphi\in C_c^{\infty}(\omega')$, where $\omega'\Subset \omega$. Therefore, we can conclude that 
           $$
           (-\Delta)^s \LC m_{\gamma_1}-m_{\gamma_2}\RC=0 \text{ in } \omega'.
           $$ 
           Taking into account $\left.\gamma_1\right|_{\Omega_e}=\left.\gamma_2 \right|_{\Omega_e}$, we deduce from the unique continuation principle for the fractional Laplacian in Bessel potential spaces (see \cite[Theorem~2.2]{KRZ-2023}) that $\gamma_1=\gamma_2$ in $\R^n$ and therefore $K_1=K_2$ in $\R^{2n}$ as asserted.
		\end{proof}
		
          \begin{remark}
          \label{remark: measurement equiv op}
              We have shown that the fractional conductivity operator is a measurement equivalent nonlocal operator.
              It is also natural to ask a question whether two nonlocal operators of the form $\LC -\nabla \cdot A \nabla \RC ^s$ are under certain conditions measurement equivalent or not. In fact, this is still an open problem. Very recently, the works 
              \cite{GU2021calder,CGRU2023reduction} demonstrated that if the nonlocal DN maps agree for $\LC -\nabla \cdot A_1 \nabla \RC ^s$ and $\LC -\nabla \cdot A_2 \nabla \RC^s$, under appropriate conditions, then the corresponding DN maps agree for their local counterparts. This connects nonlocal and local inverse problems by using approaches of either heat semigroup or Caffarelli-Silvestre extension. In addition, similar question has been addressed for nonlocal parabolic operators, and we refer readers to \cite{LLU2022calder} for more details.
          \end{remark}

		\section{Uniqueness of the inverse problem}\label{sec: inverse}
		
		In order to prove Theorem \ref{thm: main}, we need a simple property about the nonlocal operator $L_K\in \mathcal{L}_0$.
		
		\subsection{Dirichlet problem for $L_K\in \mathcal{L}_0$}

		\begin{lemma}\label{cor: exterior value}
			Let $\Omega \subset \R^n$ be a bounded domain, $0<s<1$, $L_K\in\mathcal{L}_0$, $F\in H^{-s}(\Omega)$ and $f\in H^s(\R^n)$. Then the Dirichlet problem
			\begin{align}
            \label{eq: Dirichlet problem for LK}
				\begin{cases}
					L_K u= F &\text{ in }\Omega, \\
					u = f &\text{ in }\Omega_e.
				\end{cases}
			\end{align}
			has a unique solution $u\in H^s(\R^n)$ satisfying
			\begin{align}\label{elliptic estimate nonlocal}
				\norm{u}_{H^s(\R^n)}\leq C \LC  \norm{F}_{H^{-s}(\Omega)}+ \norm{f}_{H^s(\R^n)} \RC,
			\end{align}
			for some constant $C>0$ independent of $V$, $F$ and $f$.
		\end{lemma}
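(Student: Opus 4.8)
The plan is to reduce to a problem with homogeneous exterior datum and then invoke the Lax--Milgram theorem, exactly as in the well-posedness statement recalled around \eqref{eq: Dirichelt problem}. First I would write $u=v+f$ with the new unknown $v$ sought in $\widetilde{H}^s(\Omega)$; then $u=f$ in $\Omega_e$ holds automatically and the equation $L_Ku=F$ in $\Omega$ is equivalent, in the weak sense, to
\[
B_K(v,w)=\langle F,w\rangle-B_K(f,w)\qquad\text{for all }w\in\widetilde{H}^s(\Omega),
\]
where $B_K$ is the bilinear form \eqref{eq: bilinear form kernel}. The right-hand side is a bounded linear functional on $\widetilde{H}^s(\Omega)$: the term $\langle F,w\rangle$ is bounded by $\|F\|_{H^{-s}(\Omega)}\|w\|_{H^s(\R^n)}$ since $H^{-s}(\Omega)$ is the dual of $\widetilde{H}^s(\Omega)$, while $|B_K(f,w)|\le\Lambda[f]_{H^s(\R^n)}[w]_{H^s(\R^n)}\le\Lambda\|f\|_{H^s(\R^n)}\|w\|_{H^s(\R^n)}$ by the upper bound $K\le\Lambda$ (in particular $w\mapsto B_K(f,w)$ defines an element of $H^{-s}(\Omega)$, so that the right-hand side indeed makes sense).

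Next I would check the two hypotheses of Lax--Milgram for $B_K$ on $\widetilde{H}^s(\Omega)$. Boundedness follows again from $K\le\Lambda$. Coercivity is the only place where the class $\mathcal{L}_0$ and the boundedness of $\Omega$ are used: the lower ellipticity bound gives $B_K(v,v)\ge\lambda[v]_{H^s(\R^n)}^2$, and since $v\in\widetilde{H}^s(\Omega)$ with $\Omega$ bounded, the fractional Poincar\'e inequality \eqref{eq: Poincare int1} upgrades this to $B_K(v,v)\ge c\|v\|_{H^s(\R^n)}^2$ with $c=c(\lambda,s,n,\Omega)>0$. The Lax--Milgram theorem then produces a unique $v\in\widetilde{H}^s(\Omega)$ solving the weak formulation, together with $\|v\|_{H^s(\R^n)}\le C(\|F\|_{H^{-s}(\Omega)}+\|f\|_{H^s(\R^n)})$; setting $u=v+f$ and applying the triangle inequality yields \eqref{elliptic estimate nonlocal}. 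Uniqueness of $u$ in $H^s(\R^n)$ is immediate: the difference of two solutions lies in $\widetilde{H}^s(\Omega)$ and solves the homogeneous problem, so coercivity forces it to vanish.

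The argument is entirely routine, so I do not expect any genuine obstacle; the only point that deserves a line of care is checking that $L_Kf$ restricted to $\Omega$ belongs to $H^{-s}(\Omega)$, which is precisely the mapping property $\widetilde{H}^s(\Omega)\to H^{-s}(\Omega)$ of operators in $\mathcal{L}_0$ noted right after \eqref{eq: bilinear form kernel}.
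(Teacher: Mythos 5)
Your proposal is correct and takes essentially the same route as the paper: both reduce to the homogeneous exterior problem via $v=u-f$, verify that $B_K$ is bounded and coercive on $\widetilde{H}^s(\Omega)$ (the latter via uniform ellipticity and the fractional Poincar\'e inequality), and invoke Lax--Milgram. The paper is merely more terse, pointing back to the earlier observation that $B_K$ induces an equivalent inner product on $\widetilde{H}^s(\Omega)$, whereas you spell out the coercivity and the membership of $w\mapsto B_K(f,w)$ in $H^{-s}(\Omega)$ explicitly.
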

		
		\begin{proof}
			As noted in the proof of Theorem~\ref{thm: basic existence} the bilinear form $B_K$ defined via \eqref{eq: bilinear form kernel} induces an equivalent inner product on $\widetilde{H}^s(\Omega)$ and hence the Lax--Milgram theorem guarantees the existence of a unique solution $v\in \widetilde{H}^s(\Omega)$ to 
            \begin{align}
             \label{eq: Dirichlet problem for LK 2}
				\begin{cases}
					L_K v= G &\text{ in }\Omega, \\
					v = 0 &\text{ in }\Omega_e
				\end{cases}
			\end{align}
            for any $G\in H^{-s}(\Omega)$ and $v$ satisfies the continuity estimate
            \[
                    \|v\|_{H^s(\R^n)}\leq C\|G\|_{H^{-s}(\Omega)}
            \]
            for some $C>0$. Now, the assertions of Lemma~\ref{cor: exterior value} follow by observing that $u\in H^s(\R^n)$ uniquely solves \eqref{eq: Dirichlet problem for LK} if and only if $v=u-f$ uniquely solves \eqref{eq: Dirichlet problem for LK 2} with $G=F-L_K f\in H^{-s}(\Omega)$. Here we are using the fact that $L_K\colon\widetilde{H}^s(\Omega)\to H^{-s}(\Omega)$ is a bounded linear operator (see Section~\ref{subsec: nonlocal operators}).
		\end{proof}

		\subsection{An integral-time transformation}
        \label{subsec: integral time transform}
		
		For given $0<T_0\leq T$, $\beta>1$ and $h>1$, let us consider exterior data $\widetilde{\varphi}\in\test_+([0,T]\times W_1)$ of the form
		\begin{align}\label{exterior data in the proof}
			\widetilde{\varphi}(x,t)=t^mh\widetilde{\varphi}_0(x) \quad \text{with}\quad \widetilde{\varphi}_0 \in \test_+(W_1)\setminus\{0\}.
		\end{align}
		The parameters $T_0>0$ and $\beta>0$ will be specified later. Now, let $v$ be the unique solution to \eqref{transferred equation} (with $\widetilde{\varphi}$ as in \eqref{exterior data in the proof}) and define $V\in H^s(\R^n)$ by the time-integral
		\begin{align}\label{V=V_h}
			V(x):=\int_0^{T_0}(T_0-t)^\beta v(x,t)\, dt.
		\end{align}
        The regularity of $V$ follows from the fact that $v\in L^2(0,T;H^s(\R^n))$. Note that to define $V$ we only need to observe the nonlinear effects of the porous medium up to a possibly small time $T_0$. Next, we assert that $V$ solves 
		\begin{align}
        \label{equation for time-transf.}
			\begin{cases}
				L_KV=-( \mathcal{M}+\mathcal{N}) &\text{ in }\Omega, \\
				V =h\mathcal{B}(\beta+1,m+1)T_0^{\beta+m+1} \widetilde{\varphi}_0  &\text{ in }\Omega_e,  
			\end{cases}
		\end{align}
		where $\mathcal{M}=\mathcal{M}(x)$ and $\mathcal{N}=\mathcal{N}(x)$ are given by
        \begin{align}
			\begin{split}
				\mathcal{M}&= \beta\rho\int_0^{T_0} (T_0-t)^{\beta-1}\Phi^{-1}(v)\, dt, \\
				\mathcal{N}&= q\int_0^{T_0} (T_0-t)^\beta \Phi^{-1}(v)\, dt
			\end{split}
		\end{align}
        and $\mathcal{B}$ is the Euler beta function defined as 
        \[
            \mathcal{B}(a,b)=\int_0^1(1-t)^{a-1}t^{b-1}\,dt,
        \]
        for $a,b>0$. That $V\in H^s(\R^n)$ attains the prescribed exterior values is easily seen from
		\begin{equation}
        \label{eq euler beta function}
		\int_0^{T_0} (T_0-t)^\beta t^m \, dt =\mathcal{B}(\beta+1,m+1)T_0^{\beta+m+1},
		\end{equation}
        and the fact that the exterior values in \eqref{transferred equation} can be considered in an a.e. sense due to the Lipschitz regularity of $\partial\Omega$. The identity \eqref{eq euler beta function} follows by a change of variables and the definition of $\mathcal{B}$. In fact, we may calculate 
        \begin{equation}
        \label{eq: calc bdry val}
            \begin{split}
                \int_0^{T_0}(T_0-t)^{\beta}t^m\,dt&=T_0^{\beta+m+1}\int_0^1 (1-t)^{(\beta+1)-1}t^{(m+1)-1}\,dt\\
                &=T_0^{\beta+m+1}\mathcal{B}(\beta+1,m+1).
            \end{split}
        \end{equation}
        Next, observe that the condition $\beta>1$ shows that the function
        \[
            \psi_w=(T_0-t)^{\beta} w \ \text{with}\ w\in C_c^{\infty}(\Omega)
        \]
        satisfies $\psi_w \in H^1(0,T_0;\widetilde{H}^s(\Omega))$ and $\psi_w(T_0)=0$.
        Hence, using that $v$ solves \eqref{transferred equation}, $\Phi^{-1}(v)=0$, $\rho\in C^{1,\alpha}_+(\R^n)$ is uniformly elliptic, Remark~\ref{eq: continuity estimate hoelder coeff}, Proposition~\ref{prop: density} and $\Phi^{-1}(v)(0)=0$, we may compute
        \[
        \begin{split}
            &\left\langle L_K \LC \int_0^{T_0}(T_0-t)^{\beta}v\,dt \RC ,w \right\rangle \\
            =&\int_0^{T_0}(T_0-t)^{\beta}B_K(v,w)\,dt\\
            =&\int_{\Omega_{T_0}}\Phi^{-1}(v)\partial_t(\rho \psi_w)\,dxdt -\int_{\Omega_{T_0}}q\Phi^{-1}(v)\psi_w\,dxdt\\
            =&\left\langle -\beta\rho\int_0^{T_0}(T_0-t)^{\beta-1}\Phi^{-1}(v)\,dt-q\int_0^{T_0}(T_0-t)^{\beta}\Phi^{-1}(v)\,dt, w\right\rangle
        \end{split}
        \]
        for all $w\in C_c^{\infty}(\Omega)$. This establishes that $V$ solves \eqref{equation for time-transf.}.
        Before, proceeding let us remark that since the exterior data $v|_{(\Omega_e)_T}$ depends on $h$, the quantities $V$, $\mathcal{M}$ and $\mathcal{N}$ also depend on the parameter $h>1$.

		Next, we derive fundamental estimates for  $\mathcal{M}$ and $\mathcal{N}$. Before doing this observe that if $\beta>1$, $m>1$ and the H\"older conjugate $m'$ of $m$ satisfies $\beta-m'>-1$, then we have
        \begin{equation}
        \label{eq: easy integration}
           \begin{split}
               \int_0^{T_0}(T_0-t)^{\beta-m'}\,dt&=T_0^{\beta-m'+1}\int_0^1t^{\beta-m'}\,dt=\frac{T_0^{\beta-m'+1}}{\beta-m'+1}.
           \end{split}
        \end{equation}
        From now on we assume that 
        \begin{equation}
        \label{eq: assump on beta}
            \beta>1\ \text{and}\ \beta>m'-1=\frac{1}{m-1}.
        \end{equation}
        Hence, by using H\"older's inequality, $v\geq 0$, $\Phi^{-1}(t)=t^{1/m}$ for $t\geq 0$, that $\rho$ is uniformly elliptic and \eqref{eq: easy integration}, we get
		\begin{align}\label{point esti M}
			\begin{split}
				0\leq \mathcal M(x)&=\beta \rho(x)\int_0^{T_0} (T_0-t)^{\frac{\beta-m'}{m'}}\LC(T_0-t)^{\beta}v(x,t) \RC^{\frac{1}{m}} \, dt \\
                &\leq  C T_0^{\frac{\beta-m'+1}{m'}}(V(x))^{\frac{1}{m}} \\
                &=  C T_0^{\frac{\beta}{m'}-\frac{1}{m}}(V(x))^{\frac{1}{m}},
			\end{split}
		\end{align}
		for some constant $C>0$ only depending on $\beta$, $m$ and $\|\rho\|_{L^{\infty}(\R^n)}$. Noting that \eqref{eq: easy integration} remains true for $m'=0$, we obtain by H\"older's inequality and $0\leq q\in L^{\infty}(\R^n)$, the pointwise estimate
		\begin{align}\label{point esti N}
			\begin{split}
				0\leq\mathcal{N}(x)\leq C T_0^{\frac{\beta+1}{m'}} (V(x))^{\frac{1}{m}},
			\end{split}
		\end{align}
		for some constant $C>0$ only depending on $\beta$ and $\|q\|_{L^{\infty}(\R^n)}$. By applying \eqref{point esti M}, \eqref{point esti N} and Jensen's inequality, we can obtain the $L^2$ estimates 
		\begin{align}\label{L2 est for M N}
			\begin{split}
				\norm{\mathcal{M}}_{L^2(\Omega)} & \leq C T_0^{\frac{\beta}{m'}-\frac{1}{m}} \norm{V}_{L^2(\Omega)}^{\frac{1}{m}}, \\
				\norm{\mathcal{N}}_{L^2(\Omega)}& \leq  C  T_0^{\frac{\beta+1}{m'}} \norm{ V}_{L^2(\Omega)}^{\frac{1}{m}}
			\end{split}
		\end{align}
		for some constant $C>0$ only depending on $\beta, m$, on the norms $\|\rho\|_{L^{\infty}(\R^n)}, \|q\|_{L^{\infty}(\R^n)}$ and $|\Omega|$. On the other hand, applying the estimate \eqref{elliptic estimate nonlocal} to the equation \eqref{equation for time-transf.}, one has 
		\begin{align}
        \label{esti of V}
			\begin{split}
				\norm{V}_{H^s(\R^n)} &\leq  C \LC \norm{\mathcal{M}+\mathcal{N}}_{H^{-s}(\Omega)} +\mathcal{B}(\beta+1,m+1) T_0^{\beta+m+1}h\norm{\wt \varphi_0}_{H^s(\R^n)}  \RC \\
			&	\leq C \left\{\LC  T_0^{\frac{\beta}{m'}-\frac{1}{m}}+ T_0^{(\beta+1)/m'}\RC \norm{ V}_{L^2(\Omega)}^{\frac{1}{m}}+T_0^{\beta+m+1}h\norm{\wt \varphi_0}_{H^s(\R^n)}   \right\},
			\end{split}
		\end{align}
		for some constant $C>0$ independent of $V$, $\wt \varphi_0$, $T_0$ and $h$. Since, $h>1$ and $m>1$ this implies
        \begin{equation}
        \label{esti of V 2}
            \begin{split}
                &\quad \max(\norm{V}_{H^s(\R^n)},h)\\
                &\leq  C \left\{\LC  T_0^{\frac{\beta}{m'}-\frac{1}{m}}+ T_0^{(\beta+1)/m'}\RC (\max(\norm{ V}_{L^2(\Omega)},h))^{\frac{1}{m}}+T_0^{\beta+m+1}h\norm{\wt \varphi_0}_{H^s(\R^n)} +h  \right\}\\
                &\leq C \left\{\LC  T_0^{\frac{\beta}{m'}-\frac{1}{m}}+ T_0^{(\beta+1)/m'}\RC \max(\norm{ V}_{L^2(\Omega)},h)+T_0^{\beta+m+1}h\norm{\wt \varphi_0}_{H^s(\R^n)} +h  \right\}\\
                &=C \left\{\LC  T_0^{\frac{\beta}{m'}-\frac{1}{m}}+ T_0^{(\beta+1)/m'}\RC \max(\norm{ V}_{L^2(\Omega)},h)+h\left(T_0^{\beta+m+1}\norm{\wt \varphi_0}_{H^s(\R^n)} +1\right)  \right\}
            \end{split}
        \end{equation}

        Next, observe that in the estimate \eqref{esti of V 2} all exponents of $T_0$ are strictly positive by our choice of $\beta$ (see \eqref{eq: assump on beta}) and hence we can absorb the first term in the last line of \eqref{esti of V 2} on the left hand side after choosing $T_0$ sufficiently small. This shows
		\begin{align}\label{estimate for V}
			\norm{V}_{H^s(\R^n)} \leq Ch\left(T_0^{\beta+m+1}\norm{\wt \varphi_0}_{H^s(\R^n)} +1\right)
		\end{align}
        for some $C>0$.

		\subsection{Asymptotic analysis}
        \label{subsec: asymptotic analysis}

		First, let us consider the ansatz 
		\begin{align}\label{ansatz 1}
			V=V_h=\mathcal{B}(\beta+1,m+1)T_0^{\beta+m+1}h V^{(0)} + R_1,
		\end{align}
		where $V^{(0)}$ and $R_1$ are the solutions to 
		\begin{align}\label{V_0 equation}
			\begin{cases}
				L_K V^{(0)}=0 &\text{ in }\Omega, \\
				V^{(0)}=\wt\varphi_0 & \text{ in }\Omega_e
			\end{cases}
		\end{align}
		and 
		\begin{align}\label{remainder eq}
			\begin{cases}
				L_K R_1= -(\mathcal{M}+\mathcal{N}) &\text{ in }\Omega, \\
				R_1=0 & \text{ in }\Omega_e,
			\end{cases}
		\end{align}
		respectively. By using Lemma~\ref{cor: exterior value}, \eqref{L2 est for M N} and \eqref{estimate for V}, we get 
		\begin{align}\label{R_1 estimate}
			\begin{split}
				\left\| R_1\right\|_{H^{s}(\R^n)}&\leq  C   \norm{\mathcal{M}+\mathcal{N}}_{H^{-s}(\Omega)}\\
    &\leq  C \norm{ V}_{L^2(\Omega)}^{\frac{1}{m}}\\
    &\leq  C h^{\frac{1}{m}}(\norm{\wt \varphi_0}_{H^s(\R^n)}+1)^{1/m},
			\end{split}
		\end{align}
		and 
		\begin{align} \label{L of remainder estimate}
			\left\|L_K R_1\right\|_{H^{-s}(\Omega)} =\|\mathcal{M}+\mathcal{N}\|_{H^{-s}(\Omega)} \leq C h^{\frac{1}{m}}\LC \norm{\wt \varphi_0}_{H^s(\R^n)}+1\RC ^{1/m}.
		\end{align}
		This implies that 
		\begin{align}
			h^{-1}L_K R_1 =\mathcal{O}\LC h^{\frac{1}{m}-1} \RC  \text{ as }h\to \infty,
		\end{align}
		in $H^{-s}(\Omega)$. Thus, we have the asymptotic behavior of 
		\begin{align}\label{asymp for L V_h}
			L_K\LC h^{-1} V_h \RC =	h^{-1}L_K R_1 =\mathcal{O}\LC h^{\frac{1}{m}-1} \RC \text{ as }h\to \infty,
		\end{align}
		in $H^{-s}(\Omega)$. For the sake of convenience, let us introduce the following function
		\begin{align}\label{v_0 spacetime}
			v_0(x,t) := h t^m V^{(0)}(x)\in H^1(0,T_0;\widetilde{H}^s(\Omega))
		\end{align}	
		and observe that 
        \begin{equation}
        \label{eq: properties V0}
            0\leq v_0(x,t)\leq M\ \text{for a.e.}\ (x,t)\in\R^n_{T_0}
        \end{equation}
        for some $M>0$. In fact, by definition $V^{(0)}$ has exterior value $\widetilde{\varphi}_0\geq 0$. Arguing as in the proof of \cite[Proposition~4.1]{survey-xavier} one sees that the maximum principle for the equation \eqref{V_0 equation} still holds and guarantees the nonnegativity
        \begin{equation}
        \label{eq: nonnega of V0}
            V^{(0)}(x)\geq 0\ \text{for a.e.}\ x\in\Omega.
        \end{equation}
        Moreover, that by linearity the maximum principle directly implies the comparison principle for equation \eqref{V_0 equation}. Furthermore, \cite[Lemma~5.1]{survey-xavier} remains true in our setting, since $K$ is uniformly elliptic and the function $w\in C_c^{\infty}(\R^n)$ constructed in that result has its maximum in $\Omega$. Thus, in our case one can still establish \cite[Corollary~5.2]{survey-xavier} and hence conclude that there holds 
        \begin{equation}
        \label{eq: upper bound V0}
            \left\|V^{(0)} \right\|_{L^{\infty}(\Omega)}\leq C \left\|\widetilde{\varphi}_0\right\|_{L^{\infty}(\Omega_e)}.
        \end{equation}
        Therefore, we have shown the estimate \eqref{eq: properties V0}.
        
        Next, let us observe that \eqref{eq: calc bdry val} implies
		\begin{align}
			\int_0^{T_0} (T_0-t)^{\beta} v_0(x,t)\, dt =h\mathcal{B}(\beta+1,m+1) T_0^{\beta+m+1} V^{(0)}(x).
		\end{align}

		Motivated by the asymptotic behaviour of the remainder $R_1$ (see \eqref{R_1 estimate} and \eqref{L of remainder estimate}), in a next step we refine the ansatz for $V$ as
		\begin{align}\label{ansatz 2}
			V=\widetilde{V}_h=h \mathcal{B}(\beta+1,m+1)T_0^{\beta+m+1} V ^{(0)} + h^{\frac{1}{m}}V ^{(1)} +R_2,
		\end{align}
		where $V^{(0)}$ is the solution of \eqref{V_0 equation} and $V^{(1)}$ is the solution to 
		\begin{align}\label{V_1 equation}
			\begin{cases}
				L_K V^{(1)}= - h^{-\frac{1}{m}}\LC \mathcal{M}^{(1)}+\mathcal{N}^{(1)}  \RC&\text{ in }\Omega, \\
				V^{(1)}=0 & \text{ in }\Omega_e,
			\end{cases}
		\end{align}
		where 
		\begin{align}\label{M_1}
			\begin{split}
				0\leq \mathcal{M}^{(1)}(x)&=\beta \rho (x)\int_0^{T_0} (T_0-t)^{\beta-1}v_0^{\frac{1}{m}}(x,t)\, dt \\
				&=h^{\frac{1}{m}}\rho(x) \frac{2T_0^{\beta+1}}{\beta(\beta+1)}\LC V^{(0)}(x)\RC^{\frac{1}{m}},
			\end{split} 
		\end{align}
		and 
		\begin{align}\label{N_1}
			\begin{split}
				0\leq \mathcal{N}^{(1)}(x)&=q(x)\int_0^{T_0} (T_0-t)^{\beta}v_0^{\frac{1}{m}}(x,t)\, dt \\
				&=h^{\frac{1}{m}} q(x) \frac{2T_0^{\beta+2}}{(\beta+1)(\beta+2)}\LC V^{(0)}(x)\RC^{\frac{1}{m}}.
			\end{split}
		\end{align}
        The computations in \eqref{M_1} and \eqref{N_1} are easily justified by using \eqref{eq: properties V0}, \eqref{eq: calc bdry val}, $\mathcal{B}(a,b)=\frac{\Gamma(a)\Gamma(b)}{\Gamma(a+b)}$, $\Gamma(a+1)=a\Gamma(a)$ and $\Gamma(n)=n !$ for $a,b>0$ and $n\in\N$.
		Hence, the remainder term $R_2$ needs to satisfy
		\begin{align}\label{R_2 equation}
			\begin{cases}
				L_KR_2=-\left[\LC \mathcal{M}-\mathcal{M}_1 \RC + \LC \mathcal{N}-\mathcal{N}_1 \RC\right] &\text{ in }\Omega, \\
				R_2=0 &\text{ in }\Omega_e.
			\end{cases}
		\end{align}
        
		Thus, we see that $v_0\in H^1(0,T;\widetilde{H}^s(\Omega))$ solves
		\begin{align}
			\begin{cases}
				\rho \p_t v_0^{\frac{1}{m}}+L_K v_0 =h^{\frac{1}{m}}\rho \LC V^{(0)}\RC^{\frac{1}{m}}\geq 0 & \text{ in }\Omega_{T_0}, \\
				v_0 =ht^m \widetilde{\varphi}_0 &\text{ in } (\Omega_e)_{T_0}, \\
				v_0(0)=0 &\text{ in }\Omega.
			\end{cases} 
		\end{align}
		Next, we want to show that by our comparison principle (Theorem \ref{thm: comparison absorption}) there holds
        \begin{equation}
        \label{eq: cons comp principle}
            v_0(x,t)\geq v(x,t)\ \text{for a.e.}\ (x,t)\in \R^n_{T_0}.
        \end{equation}
        For this purpose let $\LC \Phi_{\vareps}\RC _{\vareps>0}$ be the functions constructed in Lemma~\ref{auxiliary lemma}. First notice that 
        \[
            v_0=ht^mV^{(0)}=\LC h^{1/m}t (V^{(0)})^{1/m}\RC^m\in H^1(0,T;\widetilde{H}^s(\Omega))
        \]
        and thus the function $u_0=h^{1/m}t (V^{(0)})^{1/m}$ satisfies
        \begin{equation}
        \label{eq: eq for u_0}
				\begin{cases}
					\rho \partial_t u_0+ L_K(\Phi(u_0))=F_0&\text{ in }\Omega_T,\\
					u_0=\varphi_0&\text{ in }(\Omega_e)_T,\\
					u_0(0)=0&\text{ in }\Omega
				\end{cases}
			\end{equation}
        with $F_0=\Phi^{-1}(h)\rho \Phi^{-1}( V^{(0)})$ and $\varphi_0=\Phi^{-1}(h)t \Phi^{-1}(\widetilde{\varphi}_0)$.
        Next, we introduce the functions $u_{0,\vareps}=\Phi_{\vareps}^{-1}(h)t\Phi^{-1}_{\vareps}(V^{(0)})$.
        Note that the uniform ellipticity of $\Phi'_{\vareps}$ implies that $\Phi_{\vareps}$ is bi-Lipschitz and hence by Remark~\ref{remark: hoelder coeff estimate} we have $\Phi^{-1}_{\vareps}(V^{(0)})\in H^s(\R^n)$. Thus, $u_{0,\vareps}\in H^1(0,T;H^s(\R^n))$. 
        
        Let us assert that 
        \begin{equation}
        \label{eq: convergence of inverse}
            \Phi_{\vareps}^{-1}(t)\to \Phi^{-1}(t)\ \text{for all}\ t\in\R
        \end{equation}
        as $\vareps\to 0$. For completeness we give a proof of this fact. Let $t\in\R$ and $\sigma>0$. By continuity of $\Phi^{-1}$, there exists $\delta>0$ such that
        \[
            \left|\Phi^{-1}(\tau)-\Phi^{-1}(t)\right|\leq \sigma
        \]
        for all $\tau\in \overline{B_{\delta}(t)}$. This in particular implies
        \[
            \left|\Phi^{-1}(t+\delta)-\Phi^{-1}(t)\right|\leq \sigma\quad  \text{and}\quad  \left|\Phi^{-1}(t-\delta)-\Phi^{-1}(t)\right|\leq \sigma.
        \]
        Using the monotonicity of $\Phi^{-1}$, we obtain
        \begin{equation}
        \label{eq: monotonicity estimate for cont of inv}
            \Phi^{-1}(t+\delta)\leq \Phi^{-1}(t)+\sigma\quad  \text{and}\quad   \Phi^{-1}(t)-\sigma\leq \Phi^{-1}(t-\delta).
        \end{equation}
        On the other hand, as $\Phi_{\vareps}\to \Phi$ as $\vareps \to 0$ on compact sets and $\Phi^{-1}$ is continuous, we conclude that there exists $\vareps_0>0$ such that
        \[
            \left|\Phi_{\vareps}(z)-\Phi(z)\right|<\delta,  \text{ for all }   z\in \Phi^{-1}(\overline{B_{\delta}(t)}) \text{ and }  0<\vareps <\vareps_0,
        \]
        and in particular,
        \[
            \left|\Phi_{\vareps}(\Phi^{-1}(t+\delta))-(t+\delta)\right|<\delta\quad \text{and}\quad \left|\Phi_{\vareps}(\Phi^{-1}(t-\delta))-(t-\delta)\right|<\delta
        \]
        for all $\vareps<\vareps_0$. Now, this implies
        \[
            t\leq \Phi_{\vareps}(\Phi^{-1}(t+\delta))\quad  \text{and}\quad  \Phi_{\vareps}(\Phi^{-1}(t-\delta))\leq t,
        \]
        for all  $0<\vareps<\vareps_0$. Hence, by the monotonicity of $\Phi_{\vareps}^{-1}$ we deduce 
        \[
                \Phi^{-1}(t-\delta)\leq \Phi_{\vareps}^{-1}(t)\leq \Phi^{-1}(t+\delta),
        \]
        for all  $0<\vareps<\vareps_0$. Recalling \eqref{eq: monotonicity estimate for cont of inv}, we find
        \[
            \Phi^{-1}(t)-\sigma\leq \Phi_{\vareps}^{-1}(t)\leq \Phi^{-1}(t)+\sigma
        \]
        and thus
        \[
            \left|\Phi^{-1}_{\vareps}(t)-\Phi^{-1}(t)\right|\leq\sigma
        \]
        for all  $0<\vareps<\vareps_0$. This completes the proof of \eqref{eq: convergence of inverse}.
        
        Therefore, using  \eqref{eq: convergence of inverse} we deduce that
        \[
            \Phi_{\vareps}^{-1}(V^{(0)})\to \Phi^{-1}(V^{(0)})\text{ as }\vareps\to 0, \text{ for a.e. } x\in\Omega.
        \]
       Recalling that by \eqref{eq: nonnega of V0} and \eqref{eq: upper bound V0} there holds $0\leq V^{(0)}(x,t)\leq M$, we see from the monotonicity of $\Phi_{\vareps}^{-1}$, $\Phi_{\vareps}^{-1}(t)\geq 0$ for $t\geq 0$ and $\Phi_{\vareps}^{-1}(t)=\Phi^{-1}(t)$ for $t\in [\Phi(\vareps),\Phi(1/\vareps)]$ that $\Phi_{\vareps}^{-1}(V^{(0)})$ is uniformly bounded in $\vareps$. In fact,
        \[
            0\leq \Phi_{\vareps}^{-1}(V^{(0)}))\leq \Phi_{\vareps}^{-1}(\max(1,M))=\Phi^{-1}(\max(1,M))
        \]
        for $\vareps>0$ sufficiently small. Thus, by Lebesgue's dominated convergence theorem we deduce that
        \begin{equation}
        \label{eq: uniform and L1 conv}
            \Phi_{\vareps}^{-1}(V^{(0)})\to \Phi^{-1}(V^{(0)})\ \text{in}\ L^1(\Omega_{T_0}) \text{ as }\vareps\to 0.
        \end{equation}
        Then clearly the same holds for $u_{0,\vareps}$. Furthermore, note that the functions $u_{0,\vareps}$ satisfy
        \begin{equation}
        \label{eq: approximate problem for u0}
				\begin{cases}
					\rho \partial_t u_{0,\vareps}+ L_K(\Phi_{\vareps}(u_{0,\vareps}))=F_0^{\vareps}&\text{ in }\Omega_T,\\
					u_{0,\vareps}=\varphi_0^{\vareps}&\text{ in }(\Omega_e)_T,\\
					u_{0,\vareps}(0)=0&\text{ in }\Omega.
				\end{cases}
			\end{equation}
        with $F_0^{\vareps}=\rho\Phi_{\vareps}^{-1}(h)t\Phi^{-1}_{\vareps}(V^{(0)})$ and $\varphi_0^{\vareps}=\Phi_{\vareps}^{-1}(h)t\Phi^{-1}_{\vareps}(\widetilde{\varphi}_0)$.
        Meanwhile, let us recall that by construction $u=\Phi^{-1}(v)\geq 0$ solves  
        \begin{align}
			\label{transferred equation v}
			\begin{cases}
				\rho \p_t u +L_K(\Phi(u)) =F_1 &\text{ in }\Omega_T, \\
				u=\varphi_1 & \text{ in } (\Omega_e)_T,\\
				u(0)=0 &\text{ in }\Omega
			\end{cases}
		\end{align}
        with $F_1=-qu$ and $\varphi_1=\Phi^{-1}(h) t \Phi^{-1}(\widetilde{\varphi}_0)$.
        Moreover, by Remark~\ref{remark: comparison holds for solutions} there is a sequence $0\leq u_{\vareps}\in H^1(0,T;L^2(\Omega))\cap L^2(0,T;H^s(\R^n))$, $\vareps>0$, satisfying 
             \begin{equation}
             \label{transferred equation v 2}
				\begin{cases}
					\rho\partial_t u_{\vareps}+ L_K(\Phi_{\vareps}(u_{\vareps}))=F_1^{\vareps} & \text{ in }\Omega_T,\\
					u_{\vareps}=\varphi_1^{\eps}&\text{ in }(\Omega_e)_T,\\
					u_{\vareps}(0)=0&\text { in }\Omega.
				\end{cases}
			\end{equation}
        with $F_1^{\vareps}=-q u_{\vareps}$ and $\varphi_1^{\vareps}=\Phi^{-1}(h) t \Phi^{-1}(\widetilde{\varphi}_0)$.

        As a matter of fact, we can show that the functions $u_0,u_{0,\vareps}, u$ and $u_{\vareps}$ fulfill  all conditions in Theorem~\ref{thm: comparison absorption} without absorption term and zero initial condition\footnote{Hence, Theorem~\ref{thm: basic comparison} is enough for our purposes.}. First, let us observe that $q\geq 0$, $\rho >0$, \eqref{eq: nonnega of V0} and $\Phi^{-1}(t), \Phi_{\vareps}^{-1}(t)\geq $ for $t\geq 0$ imply
        \begin{equation}
        \label{eq: nonnegative of F0}
            F_1-F_0\leq 0\ \text{and} \ F_1^{\vareps}-F_0^{\vareps}\leq 0.
        \end{equation}
        This shows that property \ref{prop 1 comparison abs} of Theorem~\ref{thm: comparison absorption} holds. Next, using \eqref{eq: convergence of inverse} we observe that
        \[
            \begin{split}
                \varphi_0^{\vareps}&=\Phi_{\vareps}^{-1}(h)t\Phi^{-1}_{\vareps}(\widetilde{\varphi}_0)\to \Phi^{-1}(h)t \Phi^{-1}(\widetilde{\varphi}_0)=\varphi_0 \text{ as }\vareps\to 0.
            \end{split}
        \]
         This pointwise convergence, $\Phi_{\vareps}\to \Phi$ uniformly on compact sets as $\vareps\to 0$ and $\Phi$ is continuous, we have
        \[
        \begin{split}
            \Phi_{\vareps}(\varphi_1^{\vareps})-\Phi_{\vareps}(\varphi_0^{\vareps})&=\Phi_{\vareps}(\Phi^{-1}(h) t \Phi^{-1}(\widetilde{\varphi}_0))-\Phi_{\vareps}(\Phi_{\vareps}^{-1}(h)t\Phi^{-1}_{\vareps}(\widetilde{\varphi}_0))\to 0
        \end{split}
        \]
        as $\vareps\to 0$. Thus, we have in particular
        \[
            \LC \Phi_{\vareps}(\varphi_1^{\vareps})-\Phi_{\vareps}(\varphi_0^{\vareps})\RC_+\to 0=\LC \varphi_1-\varphi_0\RC_+\ \text{as}\ \vareps\to 0,
        \]
        where we used that $\varphi_0=\varphi_1$. Recall that the support of $\Phi_{\vareps}(\varphi_1^{\vareps})-\Phi_{\vareps}(\varphi_0^{\vareps})$ is compactly contained in $[0,T]\times W_1$. Hence, by Lebesgue's dominated convergence theorem we can conclude the property \ref{prop 2 comparison abs} of Theorem~\ref{thm: comparison absorption} with equality and right hand side equal to zero. Thus, all assumptions of Theorem~\ref{thm: comparison absorption} are satisfied and we can deduce 
        \[
                \int_{\Omega_{T_0}}\LC u-u_{0}\RC _+(x,t)\,dxdt=0.
        \]
        This give $u\leq u_0$ and thus by the monotonicity of $\Phi^{-1}$ the desired estimate \eqref{eq: cons comp principle}.
        

		Furthermore, by $\rho\geq 0$, \eqref{eq: cons comp principle}, the triangle inequality, H\"older's inequality, \eqref{eq: calc bdry val}, \eqref{eq: easy integration} and \eqref{ansatz 1}, we can estimate
		\begin{align}\label{M_1-M estimate}
			\begin{split}
				0&\leq \mathcal{M}^{(1)}-\mathcal{M} \\
				&=\beta \rho\int_0^{T_0} (T_0-t)^{\beta-1} \LC v_0^{1/m}-v^{1/m}\RC dt  \\
				&\leq  \beta \rho \int_0^{T_0} (T_0-t)^{\beta-1} \LC v_0-v\RC^{1/m}\, dt \\
				&\leq T_0^{\frac{\beta}{m'}-\frac{1}{m}}\frac{\beta \rho}{(\beta-m'+1)^{\frac{1}{m'}}} \LC \mathcal{B}(\beta+1,m+1)T_0^{\beta+m+1} h V^{(0)} -V \RC^{1/m} \\
				&= T_0^{\frac{\beta}{m'}-\frac{1}{m}}\frac{\beta \rho}{(\beta-m'+1)^{\frac{1}{m'}}}\LC -R_1\RC ^{1/m}.
			\end{split}
		\end{align}
		Similarly, using $q\geq 0$, \eqref{eq: calc bdry val} and \eqref{ansatz 1}, we obtain
		\begin{align}\label{N_1-N estimate}
			\begin{split}
				0&\leq \mathcal{N}^{(1)}-\mathcal{N} \\
				&=q\int_0^{T_0} (T_0-t)^{\beta} \LC v_0^{1/m}-v^{1/m}\RC dt \\
                &\leq  q\int_0^{T_0} (T_0-t)^{\beta} \LC v_0-v\RC^{1/m}\, dt \\
                &\leq  q\frac{T_0^{(\beta+ 1)/m'}}{(\beta+1)^{1/m'}}\left(T_0^{\beta+m+1}\mathcal{B}(\beta+1,m+1)hV^{(0)}-V\right)^{1/m}\\
				&\leq  q\frac{T_0^{(\beta+ 1)/m'}}{(\beta+1)^{1/m'}}\left(-R_1\right)^{1/m}.
			\end{split}
		\end{align}
		Now, we can deduce the asymptotic behaviour\footnote{We use the Landau asymptotic notation that $A=\mathcal{O}(B)$ stands for that $B$ is nonnegative, and there is a positive constant $C$ such that $|A|\leq C B$ for large $h$.}
		\begin{align}
			\left\| \LC \mathcal{M}^{(1)}-\mathcal{M} \RC +\LC \mathcal{N}^{(1)}-\mathcal{N} \RC \right\|_{L^2(\Omega)}=\mathcal{O}\LC h^{\frac{1}{m^2}}\RC \text{ as } h\to \infty.
		\end{align}
        Here, we used \eqref{M_1-M estimate}, \eqref{N_1-N estimate}, Jensen's inequality, $\rho,q\in L^{\infty}(\R^n)$ and \eqref{R_1 estimate}.
		Combining this with \eqref{R_2 equation} and Lemma~\ref{cor: exterior value}, we infer
		\begin{align}
        \label{eq: asymp vanishing of R2}
			\left\| R_2 \right\|_{H^s(\R^n)}=\mathcal{O}\LC h^{\frac{1}{m^2}}\RC \text{ as } h\to \infty.
		\end{align}

        Next, let us denote by $\psi_{w,h}\in L^2(0,T;\widetilde{H}^s(W_2))$, the function
        \begin{equation}
        \label{eq: help function uniqueness ip}
            \psi_{w,h}(x,t)=\begin{cases}
                h^{-1}\LC T_0-t\RC ^{\beta}w(x),& \text{ if }0\leq t\leq T_0,\\
                0,& \text{ otherwise},
            \end{cases}
        \end{equation}
        where $w\in C_c^{\infty}(W_2)$. Using \eqref{exterior data in the proof}, \ref{ansatz 2} and \eqref{R_2 equation}, we obtain
        \begin{equation}
        \label{eq: asymptotic expansion}
            \begin{split}
                \left\langle \Lambda^{\Phi}_{\rho,K,q}\widetilde{\varphi},\psi_{w,h}\right\rangle &=\int_0^{T_0}(T_0-t)^{\beta}h^{-1}B_K(v,w)\,dt\\
                &=B_K(V,w)\\
                &=h^{-1}B_K(h \mathcal{B}(\beta+1,m+1)T_0^{\beta+m+1} V ^{(0)} + h^{\frac{1}{m}}V ^{(1)} +R_2,w)\\
                &= \mathcal{B}(\beta+1,m+1)T_0^{\beta+m+1}B_K(V ^{(0)},w) + h^{\frac{1}{m}-1}B_K(V ^{(1)},w).
            \end{split}
        \end{equation}
        In the limit $h\to\infty$, we obtain
        \begin{equation}
        \label{eq: asymp expansion of DN map for K}
            \lim_{h\to\infty}\left\langle \Lambda^{\Phi}_{\rho,K,q}\widetilde{\varphi},\psi_{w,h}\right\rangle=\mathcal{B}(\beta+1,m+1)T_0^{\beta+m+1}B_K(V ^{(0)},w)
        \end{equation}
        for all $w\in C_c^{\infty}(W_2)$. Recall that $V^{(0)}\in H^s(\R^n)$ is the unique solution of \eqref{V_0 equation} with exterior value $\widetilde{\varphi}_0$. Hence, if we denote the DN map of this equation by $\Lambda_K\colon \widetilde{H}^s(\Omega_e)\to H^{-s}(\Omega_e)$, then \eqref{eq: asymp expansion of DN map for K} means nothing else than 
        \begin{equation}
        \label{eq: asymp expansion of DN map for K 2}
            \lim_{h\to\infty}\langle \Lambda^{\Phi}_{\rho,K,q}\widetilde{\varphi},\psi_{w,h}\rangle=\mathcal{B}(\beta+1,m+1)T_0^{\beta+m+1}\langle \Lambda_K\widetilde{\varphi}_0,w\rangle
        \end{equation}
        for all $w\in C_c^{\infty}(W_2)$ and $\widetilde{\varphi}_0\in \test_+(W_1)$.

		\subsection{Proof  of Theorem \ref{thm: main}}

		\begin{proof}[Proof of Theorem \ref{thm: main}]
			The proof consists of two steps. In the first step we determine the kernel $K$ and then the coefficients $\rho$ and $q$. Let us start by recalling that the assumptions on the DN maps $\Lambda_{\rho_j,K_j,q_j}$ for $j=1,2$ guarantee that the transferred DN maps $\Lambda^{\Phi}_{\rho_j,K_j,q_j}$, $j=1,2$, satisfy
            \begin{equation}
              \label{eq: equal transferred DN maps}  \langle\Lambda^{\Phi}_{\rho_1,K_1,q_1}\widetilde{\varphi},\psi\rangle=\langle\Lambda^{\Phi}_{\rho_2,K_2,q_2}\widetilde{\varphi},\psi\rangle,
            \end{equation}
            for all $\widetilde{\varphi}\in \test_+\LC [0,T]\times W_1\RC$ and $\psi \in L^2(0,T;\widetilde{H}^s(W_2))$ (see~\eqref{eq: DN map transferred equation} and \eqref{eq: consequence on transferred DN maps}). Moreover, let $T_0>0$ be sufficiently small such that the results of Section~\ref{subsec: integral time transform} and \ref{subsec: asymptotic analysis} hold, but otherwise be arbitrary.\\
			
			\noindent\textit{Step 1: Unique determination of the kernel}. \\
   
          First, let us recall that from the asymptotic expansion of the DN maps \eqref{eq: asymp expansion of DN map for K 2}, we deduce
            \begin{equation}
                \left\langle \Lambda_{K_1}\widetilde{\varphi}_0,w \right\rangle=\left\langle \Lambda_{K_2}\widetilde{\varphi}_0,w\right\rangle
            \end{equation}
            for all $w\in C_c^{\infty}(W_2)$ and $\widetilde{\varphi}_0\in \test_+(W_1)$.
        Since $L_{K_1}$ and $L_{K_2}$ are measurement equivalent, we can deduce that there holds
			\begin{align}\label{K1=K2}
				K(x,y):=K_1(x,y)=K_2(x,y), \text{ for }x,y\in \R^n
			\end{align}
			as desired (see Definition \ref{def: nonlocal op for inverse problem}). 
			This proves the first step. \\
	
			\noindent\textit{Step 2: Unique determination of coefficients}.\\
   
         We now prove the unique determination result for both $\rho_1 =\rho_2$ and $q_1=q_2$ in $\overline{\Omega}$. Now, let $\psi_{w,h^{\sigma}}$ be the function from  \eqref{eq: help function uniqueness ip} with $h$ replaced by $h^{\sigma}$ and $w\in C_c^{\infty}(W_1)$.  From the results of Section~\ref{subsec: asymptotic analysis} and in particular \eqref{eq: asymptotic expansion}, we known that for any $\sigma>0$ and $j=1,2$ there holds
            \begin{equation}
            \label{eq: asymp for uniqueness rho q}
            \begin{split}
                \langle \Lambda^{\Phi}_{\rho_j,K,q_j}\widetilde{\varphi},\psi_{w,h^{\sigma}}\rangle &=h^{-\sigma}\int_0^{T_0}(T_0-t)^{\beta}B_K(v,w)\,dt\\
                &=B_K(V_j,w)\\
                &=h^{-\sigma}B_K(h \mathcal{B}(\beta+1,m+1)T_0^{\beta+m+1} V ^{(0)} + h^{\frac{1}{m}}V ^{(1)}_j +R_{2,j},w),
            \end{split}
            \end{equation}
            where $V_j\in H^s(\R^n)$ is given by
            \[
                V_j=\int_0^{T_0}(T_0-t)^{\beta}v_j\,dt
            \]
            with $v_j$ denoting the unique solution of \begin{align}
			\label{transferred equation unique coef rho q}
			\begin{cases}
				\rho_j \p_t \Phi^{-1}(v) +L_K(v) + q_j \Phi^{-1}(v)=0 &\text{ in }\Omega_T, \\
				v=\widetilde{\varphi} & \text{ in } (\Omega_e)_T,\\
				v(0)=0 &\text{ in }\Omega
			\end{cases}
		\end{align}
            for $\widetilde{\varphi}$ as in \eqref{exterior data in the proof}.

            Moreover, the asymptotic expansions of $V_j$ are denoted as
            \[
                V=\widetilde{V}_h=h \mathcal{B}(\beta+1,m+1)T_0^{\beta+m+1} V ^{(0)} + h^{\frac{1}{m}}V ^{(1)}_j +R_{2,j},
            \]
            where the $V^{(0)}$ are the same as they are solutions to \eqref{V_0 equation} with the same kernel. Subtraction of the expansions \eqref{eq: asymp for uniqueness rho q} for $j=1$ and $j=2$ gives
            \begin{equation}
            \label{eq: vanishing limit of R}
                \left\langle (\Lambda^{\Phi}_{\rho_1,K,q_1}-\Lambda^{\Phi}_{\rho_2,K,q_2})\widetilde{\varphi},\psi_{w,h^{\sigma}}\right\rangle=h^{-\sigma}B_K\LC h^{\frac{1}{m}}(V ^{(1)}_1-V ^{(1)}_2)+(R_{2,1}-R_{2,2}),w\RC .
            \end{equation}
            Now, we take $\sigma=h^{1/m}$ and pass to the limit $h\to \infty$ to obtain
            \begin{equation}
            \label{eq: asymp expansion 2 for uniqueness rho q}
                \lim_{h\to \infty} \left\langle (\Lambda^{\Phi}_{\rho_1,K,q_1}-\Lambda^{\Phi}_{\rho_2,K,q_2})\widetilde{\varphi},\psi_{w,h^{\sigma}}\right\rangle=B_K(V_1^{(1)}-V^{(1)}_2,w)
            \end{equation}
            for all $w\in C_c^{\infty}(W_2)$. Here, we are using that by \eqref{eq: asymp vanishing of R2} for $j=1,2$ we have $R_{2,j}=\mathcal{O}(h^{1/m^2})$ in $H^s(\R^n)$ and hence $h^{-1/m}R_{2,j} \to 0$ in $H^{s}(\R^n)$ as $h\to\infty$. The last limit vanishes as $1/m^2-1/m<0$. This convergence and the uniform ellipticity of $K$, now implies that the second term in \eqref{eq: vanishing limit of R} goes to zero in the limit $h\to\infty$. From \eqref{eq: asymp expansion 2 for uniqueness rho q} and the definition of $V^{(1)}_j$, $j=1,2$, we deduce that the function $V^{(1)}_1-V^{(1)}_2\in H^s(\R^n)$ satisfies
            \[
                L_K\LC V^{(1)}_1-V^{(1)}_2\RC =V^{(1)}_1-V^{(1)}_2=0\ \text{in}\ W_2
            \]
            (see \eqref{V_1 equation}). As $L_K$ satisfies the UCP on $H^s(\R^n)$ as we assumed, this implies $V^{(1)}_1=V^{(1)}_2$ in $\R^n$.

            Now, subtracting the Dirichlet problems \eqref{V_1 equation} for $V^{(1)}_1$ and $V^{(1)}_2$, gives
            \[
                \int_{\Omega} \left[\LC \mathcal{M}^{(1)}_1-\mathcal{M}^{(1)}_2\RC +\LC \mathcal{N}^{(1)}_1-\mathcal{N}^{(1)}_2\RC \right]\psi\,dx=0
            \]
            for any $\psi\in \widetilde{H}^s(\Omega)$, where for $j=1,2$ the quantities $\mathcal{M}^{(1)}_j,\mathcal{N}^{(1)}_j$ are given by
            \begin{equation}
                \begin{split}
                    \mathcal{M}^{(1)}_j&=h^{\frac{1}{m}}\rho_j \frac{2T_0^{\beta+1}}{\beta(\beta+1)}\LC V^{(0)}\RC^{\frac{1}{m}},\\
                    \mathcal{N}^{(1)}_j&=h^{\frac{1}{m}} q_j \frac{2T_0^{\beta+2}}{(\beta+1)(\beta+2)}\LC V^{(0)}\RC^{\frac{1}{m}}
                \end{split}
            \end{equation}
            (see~\eqref{M_1} and \eqref{N_1}). This implies
            \begin{equation}
            \label{eq: characteristic equation for uniqueness}
                \int_{\Omega}\left[\beta^{-1}\left(\rho_1 -\rho_2 \right) +\frac{T_0}{\beta+2}\left(q_1-q_2 \right)\right]\LC V^{(0)}\RC^{\frac{1}{m}} \psi\,dx=0
            \end{equation}
            for any $\psi\in \widetilde{H}^s(\Omega)$. Passing to the limit $T_0\to 0$ shows 
            \[
                \int_{\Omega}(\rho_1-\rho_2)\LC V^{(0)}\RC^{\frac{1}{m}} \psi\,dx=0
            \]
            for all $\psi\in \widetilde{H}^s(\Omega)$. This implies
            \begin{equation}
            \label{eq: char eq for rho}
                (\rho_1-\rho_2)\LC V^{(0)}\RC^{\frac{1}{m}}=0\ \text{a.e. in}\ \Omega.
            \end{equation}
            Let us assert that this implies $\rho_1=\rho_2$. For contradiction assume that $\rho_1(x_0)\neq \rho_2(x_0)$, then by continuity of $\rho_1,\rho_2$ implies that there would be an $r>0$ such that $\rho_1\neq \rho_2$ on $B_r(x_0)\subset\Omega$. Now, \eqref{eq: char eq for rho} would imply $V^{(0)}=0$ on $B_r(x_0)$. Hence, the UCP on $H^s(\R^n)$ for $L_K$ and \eqref{V_0 equation} implies $V^{(0)}=0$ in $\R^n$, which is impossible as $\widetilde{\varphi}_0\neq 0$. Thus, we conclude that $\rho_1=\rho_2$ in $\Omega$. Now, from the continuity of $\rho_1,\rho_2$, we infer $\rho_1=\rho_2$ in $\overline{\Omega}$. Turning back to equation \eqref{eq: characteristic equation for uniqueness}, we see that 
            \[
                \int_{\Omega}(q_1-q_2 )\LC V^{(0)}\RC^{\frac{1}{m}} \psi\,dx=0
            \]
            for all $\psi\in\widetilde{H}^s(\Omega)$. Now, arguing exactly in the same way as for $\rho_1,\rho_2$, we can conclude that $q_1=q_2$ in $\overline{\Omega}$. This finishes the proof.
            \end{proof}

		\appendix
		
		\section{Some compact embeddings}\label{sec: appendix compact}
		
		For the convenience of the reader, we collect here two known compactness results. Here we use the following notation. If $F\subset \distr((0,T);X)$ for a Banach space $X$ and $T>0$, then we set $\partial_t F=\{\partial_tf\,;\,f\in F\}$. 
		
		\begin{theorem}[{Aubin--Lions lemma, \cite[Corollary~4]{Simon}}]
			\label{Aubin-Lions lemma}
			Let $X\hookrightarrow B\hookrightarrow Y$ be Banach spaces, where the first embedding is compact, and $1\leq p<\infty$, $1<r\leq \infty$. If $F$ is bounded set in $L^p(0,T;X)$ and $\partial_t F$ bounded in $L^1(0,T;Y)$, then $F$ is relatively compact in $L^p(0,T;B)$. If $F$ is bounded in $L^{\infty}(0,T;X)$ and $\partial_t F$ bounded in $L^r(0,T;Y)$, then $F$ is relatively compact in $C([0,T];B)$.
		\end{theorem}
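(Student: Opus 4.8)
The plan is to reduce the statement to two classical ingredients: an Ehrling-type interpolation inequality coming from the compact embedding $X\hookrightarrow B$, and a vector-valued Riesz--Fréchet--Kolmogorov compactness criterion in $L^p(0,T;B)$ (respectively the Arzelà--Ascoli theorem in $C([0,T];B)$). First I would establish the Ehrling inequality: for every $\eta>0$ there is $C_\eta>0$ with $\|v\|_B\leq \eta\|v\|_X+C_\eta\|v\|_Y$ for all $v\in X$. This is proved by contradiction: if it failed for some $\eta_0>0$, there would exist $v_n\in X$ with $\|v_n\|_B=1$, $\|v_n\|_X\leq 1/\eta_0$ and $\|v_n\|_Y\to 0$; compactness of $X\hookrightarrow B$ would give $v_n\to v$ in $B$ along a subsequence with $\|v\|_B=1$, while $B\hookrightarrow Y$ continuously forces $v_n\to v$ in $Y$, hence $v=0$, a contradiction.

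For the case $1\leq p<\infty$ I would invoke the criterion that a bounded family $F\subset L^p(0,T;B)$ is relatively compact if and only if (a) $\left\{\int_{t_1}^{t_2}f(t)\,dt : f\in F\right\}$ is relatively compact in $B$ for all $0\leq t_1<t_2\leq T$, and (b) $\sup_{f\in F}\|\tau_h f-f\|_{L^p(0,T-h;B)}\to 0$ as $h\downarrow 0$, where $(\tau_h f)(t)=f(t+h)$. Condition (a) is immediate: by Hölder, $\int_{t_1}^{t_2}f\,dt$ is bounded in $X$ uniformly over $F$, hence relatively compact in $B$ by the compact embedding. For (b) I would first note that $F$ is bounded in $W^{1,1}(0,T;Y)$, since $\|f\|_{L^1(0,T;Y)}\leq C\|f\|_{L^p(0,T;X)}$ is uniformly bounded and $\partial_t F$ is bounded in $L^1(0,T;Y)$; the one-dimensional Sobolev embedding $W^{1,1}(0,T;Y)\hookrightarrow C([0,T];Y)$ then shows $F$ is bounded in $L^\infty(0,T;Y)$. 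Writing $f(t+h)-f(t)=\int_t^{t+h}\partial_t f(s)\,ds$ gives $\|\tau_h f-f\|_{L^1(0,T-h;Y)}\leq h\,\|\partial_t f\|_{L^1(0,T;Y)}\leq Ch$, and combining with the uniform $L^\infty(0,T;Y)$ bound yields $\|\tau_h f-f\|_{L^p(0,T-h;Y)}^p\leq \|\tau_h f-f\|_{L^\infty(0,T;Y)}^{p-1}\,\|\tau_h f-f\|_{L^1(0,T-h;Y)}\leq C^{p}h$. Applying the Ehrling inequality pointwise in $t$ and integrating, with $M:=\sup_{f\in F}\|f\|_{L^p(0,T;X)}$,
\[
\|\tau_h f-f\|_{L^p(0,T-h;B)}\leq \eta\,\|\tau_h f-f\|_{L^p(0,T;X)}+C_\eta\,\|\tau_h f-f\|_{L^p(0,T-h;Y)}\leq 2\eta M+C_\eta (Ch)^{1/p};
\]
choosing $\eta$ small and then $h$ small proves (b) uniformly over $F$.

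For the case $p=\infty$ with $\partial_t F$ bounded in $L^r(0,T;Y)$, $r>1$, I would argue analogously but with Hölder in time: $\|(\tau_h f)(t)-f(t)\|_Y\leq h^{1-1/r}\|\partial_t f\|_{L^r(0,T;Y)}\leq Ch^{1-1/r}$ uniformly in $t$ and $f$, so the Ehrling inequality gives $\sup_{t}\|(\tau_h f)(t)-f(t)\|_B\leq 2\eta M+C_\eta C h^{1-1/r}$, i.e.\ $F$ is uniformly equicontinuous as a family of maps $[0,T]\to B$. Since each $f$ lies in $C([0,T];Y)$ and, by the $L^\infty(0,T;X)$ bound together with the compact embedding, the set $\{f(t):f\in F\}$ is bounded in $X$ for every $t$ (upgrading the a.e.\ bound to all $t$ via continuity into $B$), it is relatively compact in $B$; the Arzelà--Ascoli theorem then yields relative compactness of $F$ in $C([0,T];B)$.

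The main obstacle I expect is part (b) of the $L^p$ case: the hypothesis $\partial_t F\subset L^1(0,T;Y)$ only directly controls the time-translates of $f$ in $L^1$ in time, whereas the compactness criterion requires $L^p$ in time. The step that bridges this gap is the observation that $F$ is automatically bounded in $L^\infty(0,T;Y)$ (via $W^{1,1}(0,T;Y)\hookrightarrow C([0,T];Y)$), which permits the interpolation $\|g\|_{L^p(0,T;Y)}^p\leq\|g\|_{L^\infty(0,T;Y)}^{p-1}\|g\|_{L^1(0,T;Y)}$. One should also be slightly careful that the precompactness of the local time-averages in $B$ is precisely where the compactness of $X\hookrightarrow B$ is used, and that in the $C([0,T];B)$ case the pointwise-in-$t$ boundedness in $X$ needed for Arzelà--Ascoli must be upgraded from ``almost every $t$'' to ``every $t$''.
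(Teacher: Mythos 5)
The paper does not prove this statement; it is cited directly as \cite[Corollary~4]{Simon}, so there is no in-paper proof to compare against. Your argument is, in essence, the standard route taken in Simon's paper: combine the Ehrling interpolation inequality (compactness of $X\hookrightarrow B$ plus continuous $B\hookrightarrow Y$) with a vector-valued Riesz--Fr\'echet--Kolmogorov criterion for $L^p(0,T;B)$, and with Arzel\`a--Ascoli for $C([0,T];B)$. The $L^p$ case is clean: boundedness of the local time-averages in $X$ gives condition (a), and the $L^\infty$--$L^1$ interpolation of the translate error in $Y$ (using $W^{1,1}(0,T;Y)\hookrightarrow C([0,T];Y)$) fed into Ehrling gives (b). For the $C([0,T];B)$ case, one small point deserves sharpening: the phrase that $\{f(t):f\in F\}$ is ``bounded in $X$ for every $t$'' after upgrading the a.e.\ bound is not quite what one gets, since $f(t)$ need not lie in $X$ at exceptional $t$. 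What one actually shows is that the equicontinuity estimate $\sup_{f\in F}\|f(t+h)-f(t)\|_B\leq 2\eta M+C_\eta C h^{1-1/r}$ (first proved for a.e.\ $t$, then extended to all $t$ using $f\in C([0,T];Y)$ and the compactness of $X\hookrightarrow B$ to identify limits) together with relative compactness in $B$ of the slices $\{f(t):f\in F\}$ for a.e.\ $t$ implies total boundedness of the slices in $B$ for every $t$ by moving to a nearby good $t$; this is what Arzel\`a--Ascoli needs. You flag this issue yourself, so the gap is one of precision rather than of idea, and the proposal is otherwise a correct self-contained proof following the same scheme as the cited source.
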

		
		\begin{theorem}[{Aubin--Lions--Simon lemma, \cite[Corollary~5]{Simon}}]
			\label{Aubin-Lions-Simon lemma}
			Let $X\hookrightarrow B\hookrightarrow Y$ be Banach spaces, where the first embedding is compact, and $1\leq p\leq\infty$, $1\leq r\leq \infty$. If $F$ is bounded in $L^p(0,T;X)\cap W^{s,r}(0,T;Y)$ with $s>0$, $r\geq p$ or $s>1/r-1/p$, $r\leq p$. Then $F$ is relatively compact in $L^p(0,T;B)$ if $p<\infty$ and otherwise in $C([0,T];B)$.
		\end{theorem}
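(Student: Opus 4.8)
The final statement is the Aubin--Lions--Simon compactness lemma, and since it is quoted verbatim from \cite{Simon}, the plan is simply to reproduce Simon's argument, whose structure combines an interpolation inequality with an abstract compactness criterion for Bochner spaces. The first ingredient I would record is an \emph{Ehrling-type inequality}: because $X\hookrightarrow B$ is compact and $B\hookrightarrow Y$ is continuous, for every $\eta>0$ there is a constant $C_\eta>0$ with
\[
\|v\|_B\leq \eta\|v\|_X+C_\eta\|v\|_Y\quad\text{for all }v\in X.
\]
This is proved by contradiction: if it failed there would be $v_k\in X$ with $\|v_k\|_B=1$, $\|v_k\|_X\leq 1/\eta$ and $\|v_k\|_Y\to 0$; compactness of $X\hookrightarrow B$ extracts a $B$-convergent subsequence whose limit would have unit $B$-norm yet vanish in $Y$, contradicting $B\hookrightarrow Y$.

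The second ingredient is Simon's abstract criterion, which I would quote rather than reprove from the vector-valued Kolmogorov--Riesz theorem: a bounded family $F\subset L^p(0,T;B)$ (with $p<\infty$; for $p=\infty$ one uses $C([0,T];B)$ in place of $L^p(0,T;B)$) is relatively compact precisely when the slices $\bigl\{\int_{t_1}^{t_2} f(t)\,dt : f\in F\bigr\}$ are relatively compact in $B$ for all $0\le t_1<t_2\le T$, and the time translations are uniformly small, i.e. $\sup_{f\in F}\|\tau_h f-f\|_{L^p(0,T-h;B)}\to 0$ as $h\downarrow 0$, where $(\tau_h f)(t)=f(t+h)$. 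The task then reduces to verifying these two hypotheses from the assumed bounds on $F$.

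The first hypothesis is immediate: since $F$ is bounded in $L^p(0,T;X)$, Hölder's inequality shows $\int_{t_1}^{t_2} f(t)\,dt$ is bounded in $X$ uniformly in $f\in F$, so by compactness of $X\hookrightarrow B$ these slices lie in a relatively compact subset of $B$. For the translation estimate I would split, using Ehrling with small $\eta$,
\[
\|\tau_h f-f\|_{L^p(0,T-h;B)}\leq \eta\,\|\tau_h f-f\|_{L^p(0,T-h;X)}+C_\eta\,\|\tau_h f-f\|_{L^p(0,T-h;Y)}.
\]
The first term is bounded by $2\eta\sup_F\|f\|_{L^p(0,T;X)}$, hence arbitrarily small, uniformly in $h$ and $f$. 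For the second term I would use the bound on $F$ in $W^{s,r}(0,T;Y)$: reducing to $0<s<1$ by differentiating in time when $s\ge1$, the Gagliardo seminorm controls $L^r$-translations via $\|\tau_h f-f\|_{L^r(0,T-h;Y)}\le C\,h^{s}\,[f]_{W^{s,r}(0,T;Y)}$. When $r\ge p$ the finite-interval embedding $L^r\hookrightarrow L^p$ converts this into the desired uniform decay in $L^p(0,T-h;Y)$; when $r<p$ one first invokes the one-dimensional fractional Sobolev embedding $W^{s,r}(0,T;Y)\hookrightarrow W^{s',p}(0,T;Y)$ with $s'=s-1/r+1/p$, and then applies the $L^p$-translation estimate for $W^{s',p}$. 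Feeding both pieces back into Simon's criterion yields relative compactness of $F$ in $L^p(0,T;B)$ (in $C([0,T];B)$ when $p=\infty$).

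The main obstacle is precisely the endpoint range $r<p$: there $L^r(0,T;Y)$ is not continuously embedded in $L^p(0,T;Y)$, so the raw $L^r$-translation bound is insufficient, and one must exploit the fractional-in-time smoothness quantitatively through the embedding $W^{s,r}\hookrightarrow W^{s',p}$ with $s'=s-1/r+1/p>0$ — which is exactly the point where the hypothesis $s>1/r-1/p$ enters. Checking that this scalar Sobolev embedding (and the attendant translation estimates) carries over to Banach-space--valued functions with constants uniform over $F$ is the technically delicate part; the Ehrling inequality and the slice compactness are routine by comparison.
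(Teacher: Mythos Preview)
The paper does not actually prove this statement: Theorem~\ref{Aubin-Lions-Simon lemma} is stated in Appendix~\ref{sec: appendix compact} purely as a quotation of \cite[Corollary~5]{Simon}, with no proof given (the appendix opens with ``For the convenience of the reader, we collect here two known compactness results''). So there is no ``paper's own proof'' to compare your proposal against.

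That said, your outline is a faithful sketch of Simon's original argument --- the Ehrling interpolation inequality, Simon's abstract compactness criterion via slice compactness plus uniform equicontinuity under translations, and the fractional Sobolev embedding $W^{s,r}\hookrightarrow W^{s',p}$ to handle the case $r<p$. The identification of the hypothesis $s>1/r-1/p$ as exactly what is needed for $s'>0$ is correct, and your remark that the vector-valued Sobolev embedding is the only genuinely delicate step is accurate. If the goal were to supply a self-contained proof rather than a citation, your plan would be the right one; for the purposes of this paper, however, the theorem is treated as a black box and no proof is expected.
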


		\section{A density results}\label{sec: appendix density}
		In some of our proofs the following density result will be important. 
		\begin{proposition}
			\label{prop: density}
			Let $\Omega\subset\R^n$ be a bounded Lipschitz domain, $T>0$ and $0<s<1$. Then the space of test functions $C_c^{\infty}([0,T)\times \Omega)$ is dense in
			\begin{equation}
				\label{eq: vanishing end trace}
				\mathcal{W}_T=\{u\in H^1(0,T;\widetilde{H}^s(\Omega))\,;\,u(T)=0\}.
			\end{equation}
		\end{proposition}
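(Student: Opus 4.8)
The plan is to prove the density statement by a standard two-step approximation procedure: first approximate a general $u \in \mathcal{W}_T$ by functions that are \emph{smooth in time} with values in $\widetilde H^s(\Omega)$ and that still vanish at $t = T$, and then approximate those by functions whose spatial part is a test function in $C_c^\infty(\Omega)$. For the second step one simply invokes that $C_c^\infty(\Omega)$ is dense in $\widetilde H^s(\Omega)$ by definition, applied uniformly along a finite time interval, so the real content is the first step, the temporal regularization near the endpoint $T$.

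For the time regularization, first I would extend $u$ to all of $\R$ in the time variable in a way that keeps the endpoint constraint usable. Since $u(T) = 0$ (which makes sense because $H^1(0,T;\widetilde H^s(\Omega)) \hookrightarrow C([0,T];\widetilde H^s(\Omega))$), the natural choice is to extend $u$ by $0$ for $t \geq T$; because $u(T) = 0$ this extension lies in $H^1_{\mathrm{loc}}((-\infty,T+1);\widetilde H^s(\Omega))$, i.e. no spurious jump in the derivative is created at $t = T$. Near $t = 0$ no constraint is imposed, so one may extend, say, by reflection or simply restrict attention to a slightly larger interval $(-\eta, T)$ after translating; the cleanest route is: shift in time by setting $u_\tau(t) = u(t+\tau)$ for small $\tau > 0$, which is supported in $[-\tau, T-\tau]$ and hence vanishes on a full neighborhood of $T$ in $[0,T]$, and $u_\tau \to u$ in $H^1(0,T;\widetilde H^s(\Omega))$ as $\tau \to 0$ by strong continuity of translations on $L^2$. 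Then mollify $u_\tau$ in time with a standard mollifier $\phi_\delta$: for $\delta < \tau$ the function $\phi_\delta * u_\tau$ is $C^\infty$ in $t$, takes values in $\widetilde H^s(\Omega)$, vanishes for $t$ near $T$ (since $u_\tau$ does and $\delta$ is small), and converges to $u_\tau$ in $H^1(0,T;\widetilde H^s(\Omega))$ as $\delta \to 0$ — all of this is the usual Friedrichs mollification argument carried out in the Bochner space, using that convolution commutes with the (distributional) time derivative.

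Having produced $v \in C^\infty_c([0,T); \widetilde H^s(\Omega))$ arbitrarily close to $u$, the second step replaces the spatial values by test functions. Since $v$ has compact support in $[0,T)$, say $\supp_t v \subset [0, T_1]$ with $T_1 < T$, and $t \mapsto v(t)$ is continuous (indeed smooth) into $\widetilde H^s(\Omega)$, its image $\{v(t) : t \in [0,T_1]\}$ is a compact subset of $\widetilde H^s(\Omega)$. One can therefore approximate it uniformly in $t$ by a finite-dimensional construction: expand $v$ in a Schauder-type or simply choose, for each basis-type approximation, $v_k(t) = \sum_j a_j^k(t)\, w_j$ where $w_j \in C_c^\infty(\Omega)$ and $a_j^k \in C_c^\infty([0,T))$, obtained for instance by first truncating a series expansion of $v$ in a fixed orthonormal basis of $\widetilde H^s(\Omega)$ consisting of functions in $C_c^\infty(\Omega)$ (such a basis exists since $C_c^\infty(\Omega)$ is dense), and noting that the coefficient functions inherit the smoothness and compact time-support of $v$. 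Since each $w_j \in C_c^\infty(\Omega)$ and each $a_j^k \in C_c^\infty([0,T))$, the product $a_j^k(t) w_j(x)$ lies in $C_c^\infty([0,T)\times\Omega)$, hence so does $v_k$; and $v_k \to v$ in $H^1(0,T;\widetilde H^s(\Omega))$ because the truncation error of the basis expansion, together with its time derivative, can be controlled uniformly on the compact time interval. Combining the two steps by a diagonal argument yields the claim.

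The main obstacle is the bookkeeping in the endpoint vanishing condition: one must be careful that every approximation step genuinely preserves (or reinstates) the property that the approximant vanishes \emph{identically} near $t = T$, not merely that its trace at $T$ is zero, since $C_c^\infty([0,T)\times\Omega)$ requires compact support away from $t = T$. This is why the translation-then-mollify order matters — translating first creates a genuine buffer zone near $T$ on which $u_\tau \equiv 0$, and choosing the mollification parameter smaller than the buffer preserves it; mollifying first would only give a small trace at $T$, which is not enough. Everything else is routine Bochner-space analysis and the definition of $\widetilde H^s(\Omega)$.
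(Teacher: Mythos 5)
Your argument is correct, but it takes a genuinely different route from the paper in both halves. For the first step (approximating $u\in\mathcal{W}_T$ by functions that vanish identically near $t=T$), the paper flips time so that $u(0)=0$, multiplies by a cut-off $\eta_k(t)=\eta(kt)$, and controls the bad term $\eta_k'u$ via Hardy's inequality applied to $t\mapsto\|u(t)\|_{H^s}/t$; you instead extend $u$ by zero for $t\geq T$ (valid precisely because $u(T)=0$, so no derivative jump is introduced), translate by $\tau>0$ to create a genuine buffer $[T-\tau,T]$ on which the function vanishes, and mollify with parameter $\delta<\tau$. Your route is the more standard Friedrichs-type mollification argument and has the side benefit of also producing smoothness in $t$ for free, while the paper's Hardy-inequality trick is more elementary in the sense that it never leaves the fixed interval $(0,T)$. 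For the second step (replacing the $\widetilde H^s(\Omega)$-valued smooth function $v$ by tensor products), the paper approximates $\partial_t v$ in $L^2(0,T;\widetilde H^s(\Omega))$ by elements of the algebraic tensor product $C_c^\infty((0,T))\otimes C_c^\infty(\Omega)$ and then integrates backward from $T$ (which is where the vanishing-near-$T$ property is used); you instead fix an orthonormal basis $\{w_j\}$ of $\widetilde H^s(\Omega)$ with $w_j\in C_c^\infty(\Omega)$ (obtainable by Gram--Schmidt from a countable dense subset of $C_c^\infty(\Omega)$), expand $v(t)=\sum_j a_j(t)w_j$, and truncate, using that the images of $v$ and $\partial_t v$ are compact in $\widetilde H^s(\Omega)$ so that the finite-rank projections converge uniformly in $t$. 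Both step-two arguments are sound; the paper's avoids the basis construction, while yours avoids integrating the approximated derivative and makes the compact-support bookkeeping arguably more transparent. The one point in your write-up worth tightening is the justification that the truncation converges in $H^1(0,T;\widetilde H^s(\Omega))$: what one actually needs is either the uniform-on-compacta convergence of the projections (which you gesture at) or a dominated-convergence argument using Bessel's inequality, $\|v_k(t)\|_{\widetilde H^s}\leq\|v(t)\|_{\widetilde H^s}$ and likewise for $\partial_t v$; either is fine, but it should be spelled out rather than left as ``controlled uniformly.''
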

		
		\begin{proof}
			First of all recall that by standard results we have $H^1(0,T;\widetilde{H}^s(\Omega))\hookrightarrow C([0,T];\widetilde{H}^s(\Omega))$ and hence $\mathcal{W}_T$ is as closed subspace of $H^1(0,T;\widetilde{H}^s(\Omega))$ itself a Hilbert space. First, we show that any element in $\mathcal{W}_T$ can be approximated by elements in
			\begin{equation}
				\label{eq: truncated subspace}
				\mathcal{V}_T=\left\{v\in \mathcal{W}_T\,:\,v(t)=0\,\text{in a neighborhood of}\,t=T \right\}.
			\end{equation}
			For this purpose choose $u\in\mathcal{W}_T$. Up to time inversion, translation and scaling we can assume that $u\in H^1(0,1;\widetilde{H}^s(\Omega))$ vanishes at $t=0$. Next, we take any $\eta\in C^{\infty}(\R)$ satisfying
			\[
			0\leq \eta\leq 1\quad\text{and}\quad \eta(t)=\begin{cases}
				0,&\quad t\leq 1,\\
				1,&\quad t\geq 2.
			\end{cases}
			\]
			For any $k\in\N$, we now introduce the sequence $\eta_k(t)=\eta(kt)$ and define $u_k(t)=\eta_k(t)u(t)$. Clearly, $u_k\in H^1(0,1;\widetilde{H}^s(\Omega))$ and $u_k(t)=0$ in a neighborhood of $t=0$. Hence, it remains to prove that $u_k\to u$ in $H^1(0,1;\widetilde{H}^s(\Omega))$. The convergence $u_k\to u$ in $L^2(0,1;\widetilde{H}^s(\Omega))$ is an immediate consequence of Lebesgue's dominated convergence theorem. Next, observe that by the product rule we have $\partial_t u_k=\eta'_k u+\eta_k\partial_t u$.
			
			Since $\partial_t u\in L^2(0,1;\widetilde{H}^s(\Omega))$, we have again $\eta_k\partial_t u\to \partial_t u$ in $L^2(0,1;\widetilde{H}^s(\Omega))$. Hence, if we can show $\eta'_k u\to 0$ in $L^2(0,1;\widetilde{H}^s(\Omega))$, then we have established $u_k\to u$ in $H^1(0,1;\widetilde{H}^s(\Omega))$. To see this note that
			\begin{equation}
				\label{eq: estimate for density}
				\begin{split}
					\|\eta'_k u\|^2_{L^2(0,1;H^s(\R^n))}&\leq\int_0^1|\eta'_k(t)|^2\|u(t)\|_{H^s(\R^n)}^2\,dt\\
					&=k^2\int_{1/k}^{2/k}|\eta'(kt)|^2\|u(t)\|_{H^s(\R^n)}^2\,dt\\
					&\leq k^2\int_{1/k}^{2/k}\|u(t)\|_{H^s(\R^n)}^2\,dt\\
					&=Ck^2\int_{1/k}^{2/k}t^2\frac{\|u(t)\|_{H^s(\R^n)}^2}{t^2}\,dt\\
					&\leq C\int_{1/k}^{2/k}\frac{\|u(t)\|_{H^s(\R^n)}^2}{t^2}\,dt.
				\end{split}
			\end{equation}
			Thus, if we can show that $\frac{\|u(t)\|_{H^s(\R^n)}^2}{t^2}\in L^1(0,1)$, then it follows from the absolute continuity of the Lebesgue integral that $\eta'_k u\to 0$ in $L^2(0,1;\widetilde{H}^s(\Omega))$. As $u\in H^1(0,1;\widetilde{H}^s(\Omega))$ one has 
			\begin{equation}
				\label{fundamental theorem of analysis}
				u(t)=u(s)+\int_s^t \partial_t u\,d\tau
			\end{equation}
			for $0\leq s\leq t\leq 1$. Moreover, that Hardy's inequality states that for any $1<p<\infty$ and any measurable function $f\colon\R_+\to \R_+$ one has
			\begin{equation}
				\label{eq: Hardys inequality}
				\int_0^{\infty}\left(\frac{1}{t}\int_0^tf(s)\,ds\right)\,dt\leq \left(\frac{p}{p-1}\right)^p\int_0^{\infty}(f(t))^p\,dt.
			\end{equation}
			Hence using \eqref{fundamental theorem of analysis}, $u(0)=0$ and \eqref{eq: Hardys inequality}, we get
			\[
			\begin{split}
				\int_0^1\left(\frac{\|u(t)\|_{H^s(\R^n)}}{t}\right)^2\,dt&\leq \int_0^1\left(\frac{1}{t}\int_0^t\|\partial_t u(\tau)\|_{H^s(\R^n)}d\tau\right)^2\,dt\\
				&\leq 4\int_0^1 \|\partial_t u(t)\|_{H^s(\R^n)}^2\,dt.
			\end{split}
			\]
			This establishes the required integrability of $\|u(t)\|_{H^s(\R^n)}^2/t^2$ and hence $\eta'_ku \to 0$ in $L^2(0,1;\widetilde{H}^s(\Omega))$. Thus, we can conclude that $\mathcal{V}_T$ is dense in $\mathcal{W}_T$. 
			
			Next we show that any $u\in \mathcal{V}_T$ can be approximated by elements in 
			\[
			\mathcal{D}_T=\left\{\sum_{j=1}^N \varphi_j w_j\,;\,w_j\in C_c^{\infty}(\Omega),\,\rho_j\in C_c^{\infty}([0,T)),\,1\leq j\leq N,\,N\in\N\right\}.
			\]
			This then establishes that $C_c^{\infty}([0,T)\times \Omega)$ is dense in $\mathcal{W}_T$. Let $u\in \mathcal{V}_T$. By assumption $\partial_t u\in L^2(0,T;\widetilde{H}^s(\Omega))$ and hence by standard results there exists a sequence $(v_n)_{n\in\N}$ in
			\[
			\mathscr{D}_T=\left\{\sum_{j=1}^N \rho_j \psi_j\,;\,\psi_j\in C_c^{\infty}(\Omega),\,\rho_j\in C_c^{\infty}((0,T)),\,1\leq j\leq N,\,N\in\N\right\}
			\]
			such that $v_n\to \partial_t v$ in $L^2(0,T;\widetilde{H}^s(\Omega))$. Now define $u_j\colon (0,T)\to\widetilde{H}^s(\Omega)$ by
			\[
			u_j(t)=-\int^T_t v_j(s)\,ds
			\]
			for $j\in\N$. Observe that since $u$ vanishes in a neighborhood of $t=T$, we have $u_j\in \mathcal{D}_T$. Note that there holds
			\[
			\partial_t u_j=v_j\to \partial_t u\,\text{in}\,L^2(0,T\,;\widetilde{H}^s(\Omega)).
			\]
			But since $u(T)=0$, the fundamental theorem of calculus guarantees
			\[
			u(t)=-\int^T_t\partial_t u\,ds
			\]
			and thus $u_j\to u$ in $L^2(0,T;\widetilde{H}^s(\Omega))$. In fact, first the convergence holds uniformly in $t$ and then by Lebesgue's dominated convergence theorem also in $L^2(0,T;\widetilde{H}^s(\Omega))$. This proves the assertion.
		\end{proof}

		\medskip 
		
		\subsection*{Acknowledgments}  Y.-H. Lin is partially supported by the National Science and Technology Council (NSTC) Taiwan, under the projects 111-2628-M-A49-002 and 112-2628-M-A49-003.

		\bibliography{refs} 

\begin{thebibliography}{GRSU20b}

\bibitem[BFRO17]{BFX-2017}
Matteo Bonforte, Alessio Figalli, and Xavier Ros-Oton.
\newblock Infinite speed of propagation and regularity of solutions to the
  fractional porous medium equation in general domains.
\newblock {\em Communications on Pure and Applied Mathematics},
  70(8):1472--1508, 2017.

\bibitem[BGU21]{bhattacharyya2021inverse}
Sombuddha Bhattacharyya, Tuhin Ghosh, and Gunther Uhlmann.
\newblock Inverse problems for the fractional-{L}aplacian with lower order
  non-local perturbations.
\newblock {\em Trans. Amer. Math. Soc.}, 374(5):3053--3075, 2021.

\bibitem[BH22]{SobolevSpacesCompact}
Ali Behzadan and Michael Holst.
\newblock Sobolev-slobodeckij spaces on compact manifolds, revisited.
\newblock {\em Mathematics}, 10(3), 2022.

\bibitem[Bre11]{Brezis}
Haim Brezis.
\newblock {\em Functional analysis, {S}obolev spaces and partial differential
  equations}.
\newblock Universitext. Springer, New York, 2011.

\bibitem[BSV15]{BSV-2015}
Matteo Bonforte, Yannick Sire, and Juan~Luis V\'{a}zquez.
\newblock Existence, uniqueness and asymptotic behaviour for fractional porous
  medium equations on bounded domains.
\newblock {\em Discrete Contin. Dyn. Syst.}, 35(12):5725--5767, 2015.

\bibitem[BV16]{BV-2016}
Matteo Bonforte and Juan~Luis V\'{a}zquez.
\newblock Fractional nonlinear degenerate diffusion equations on bounded
  domains part {I}. {E}xistence, uniqueness and upper bounds.
\newblock {\em Nonlinear Anal.}, 131:363--398, 2016.

\bibitem[CGN21]{CGN21}
C{\u{a}}t{\u{a}}lin~I C{\^a}rstea, Tuhin Ghosh, and Gen Nakamura.
\newblock An inverse boundary value problem for the inhomogeneous porous medium
  equation.
\newblock {\em arXiv preprint arXiv:2105.01368}, 2021.

\bibitem[CGRU23]{CGRU2023reduction}
Giovanni Covi, Tuhin Ghosh, Angkana R{\"u}land, and Gunther Uhlmann.
\newblock A reduction of the fractional {C}alder\'on problem to the local
  {C}alder\'on problem by means of the {C}affarelli-{S}ilvestre extension.
\newblock {\em arXiv preprint arXiv:2305.04227}, 2023.

\bibitem[CGU23]{CGU21}
C{\u{a}}t{\u{a}}lin~I C{\^a}rstea, Tuhin Ghosh, and Gunther Uhlmann.
\newblock An inverse problem for the porous medium equation with partial data
  and a possibly singular absorption term.
\newblock {\em SIAM Journal on Mathematical Analysis}, 55(1):162--185, 2023.

\bibitem[CL19]{CL2019determining}
Xinlin Cao and Hongyu Liu.
\newblock Determining a fractional {H}elmholtz equation with unknown source and
  scattering potential.
\newblock {\em Commun. Math. Sci.}, 17(7):1861--1876, 2019.

\bibitem[CLL19]{CLL2017simultaneously}
Xinlin Cao, Yi-Hsuan Lin, and Hongyu Liu.
\newblock Simultaneously recovering potentials and embedded obstacles for
  anisotropic fractional {S}chr\"{o}dinger operators.
\newblock {\em Inverse Probl. Imaging}, 13(1):197--210, 2019.

\bibitem[CLR20]{cekic2020calderon}
Mihajlo Cekic, Yi-Hsuan Lin, and Angkana R{\"u}land.
\newblock The {C}alder{\'o}n problem for the fractional {S}chr{\"o}dinger
  equation with drift.
\newblock {\em Cal. Var. Partial Differential Equations}, 59(91), 2020.

\bibitem[CMR21]{CMR20}
Giovanni Covi, Keijo M\"{o}nkk\"{o}nen, and Jesse Railo.
\newblock Unique continuation property and {P}oincar\'{e} inequality for higher
  order fractional {L}aplacians with applications in inverse problems.
\newblock {\em Inverse Probl. Imaging}, 15(4):641--681, 2021.

\bibitem[CMRU22]{CMRU20}
Giovanni Covi, Keijo M\"{o}nkk\"{o}nen, Jesse Railo, and Gunther Uhlmann.
\newblock The higher order fractional {C}alder\'{o}n problem for linear local
  operators: {U}niqueness.
\newblock {\em Adv. Math.}, 399:Paper No. 108246, 2022.

\bibitem[CRTZ22]{CRTZ-2022}
Giovanni Covi, Jesse Railo, Teemu Tyni, and Philipp Zimmermann.
\newblock Stability estimates for the inverse fractional conductivity problem,
  2022.

\bibitem[CRZ22]{CRZ2022global}
Giovanni Covi, Jesse Railo, and Philipp Zimmermann.
\newblock The global inverse fractional conductivity problem.
\newblock {\em arXiv preprint arXiv:2204.04325}, 2022.

\bibitem[DL92]{DautrayLionsVol5}
Robert Dautray and Jacques-Louis Lions.
\newblock {\em Mathematical analysis and numerical methods for science and
  technology. {V}ol. 5}.
\newblock Springer-Verlag, Berlin, 1992.
\newblock Evolution problems. I, With the collaboration of Michel Artola,
  Michel Cessenat and H\'{e}l\`ene Lanchon, Translated from the French by Alan
  Craig.

\bibitem[EG15]{evans-mt}
Lawrence~C. Evans and Ronald~F. Gariepy.
\newblock {\em Measure theory and fine properties of functions}.
\newblock Textbooks in Mathematics. CRC Press, Boca Raton, FL, revised edition,
  2015.

\bibitem[Eva10]{EvansPDE}
Lawrence~C. Evans.
\newblock {\em Partial differential equations}, volume~19 of {\em Graduate
  Studies in Mathematics}.
\newblock American Mathematical Society, Providence, RI, second edition, 2010.

\bibitem[FGKU21]{feizmohammadi2021fractional}
Ali Feizmohammadi, Tuhin Ghosh, Katya Krupchyk, and Gunther Uhlmann.
\newblock Fractional anisotropic {C}alder\'on problem on closed {R}iemannian
  manifolds.
\newblock {\em arXiv:2112.03480}, 2021.

\bibitem[Gho22]{ghosh2021non}
Tuhin Ghosh.
\newblock A non-local inverse problem with boundary response.
\newblock {\em Rev. Mat. Iberoam.}, 38(6):2011--2032, 2022.

\bibitem[GLX17]{GLX}
Tuhin Ghosh, Yi-Hsuan Lin, and Jingni Xiao.
\newblock The {C}alder\'{o}n problem for variable coefficients nonlocal
  elliptic operators.
\newblock {\em Comm. Partial Differential Equations}, 42(12):1923--1961, 2017.

\bibitem[GRSU20a]{GRSU18}
Tuhin Ghosh, Angkana R\"{u}land, Mikko Salo, and Gunther Uhlmann.
\newblock Uniqueness and reconstruction for the fractional {C}alder\'{o}n
  problem with a single measurement.
\newblock {\em J. Funct. Anal.}, 279(1):108505, 42, 2020.

\bibitem[GRSU20b]{GRSU20}
Tuhin Ghosh, Angkana R\"{u}land, Mikko Salo, and Gunther Uhlmann.
\newblock Uniqueness and reconstruction for the fractional {C}alder\'{o}n
  problem with a single measurement.
\newblock {\em J. Funct. Anal.}, 279(1):108505, 42, 2020.

\bibitem[GSU20]{GSU20}
Tuhin Ghosh, Mikko Salo, and Gunther Uhlmann.
\newblock The {C}alder\'{o}n problem for the fractional {S}chr\"{o}dinger
  equation.
\newblock {\em Anal. PDE}, 13(2):455--475, 2020.

\bibitem[GU21]{GU2021calder}
Tuhin Ghosh and Gunther Uhlmann.
\newblock The {C}alder\'{o}n problem for nonlocal operators.
\newblock {\em arXiv:2110.09265}, 2021.

\bibitem[HL19]{harrach2017nonlocal-monotonicity}
Bastian Harrach and Yi-Hsuan Lin.
\newblock Monotonicity-based inversion of the fractional {S}chr\"{o}dinger
  equation {I}. {P}ositive potentials.
\newblock {\em SIAM J. Math. Anal.}, 51(4):3092--3111, 2019.

\bibitem[HL20]{harrach2020monotonicity}
Bastian Harrach and Yi-Hsuan Lin.
\newblock Monotonicity-based inversion of the fractional {S}ch\"{o}dinger
  equation {II}. {G}eneral potentials and stability.
\newblock {\em SIAM J. Math. Anal.}, 52(1):402--436, 2020.

\bibitem[KLL22]{WienerCriterion}
Minhyun Kim, Ki-Ahm Lee, and Se-Chan Lee.
\newblock The {W}iener criterion for nonlocal {D}irichlet problems, 2022.

\bibitem[KLW22]{KLW2021calder}
Pu-Zhao Kow, Yi-Hsuan Lin, and Jenn-Nan Wang.
\newblock The {C}alder\'{o}n problem for the fractional wave equation:
  uniqueness and optimal stability.
\newblock {\em SIAM J. Math. Anal.}, 54(3):3379--3419, 2022.

\bibitem[KLZ22]{KLZ-2022}
Manas Kar, Yi-Hsuan Lin, and Philipp Zimmermann.
\newblock Determining coefficients for a fractional $p$-laplace equation from
  exterior measurements, 2022.

\bibitem[KRZ23]{KRZ-2023}
Manas Kar, Jesse Railo, and Philipp Zimmermann.
\newblock The fractional {$p\,$}-biharmonic systems: optimal {P}oincar\'{e}
  constants, unique continuation and inverse problems.
\newblock {\em Calc. Var. Partial Differential Equations}, 62(4):Paper No. 130,
  36, 2023.

\bibitem[KV84]{KV84}
R.~V. Kohn and M.~Vogelius.
\newblock Determining conductivity by boundary measurements.
\newblock {\em Comm. Pure Appl. Math.}, 37:289--298, 1984.

\bibitem[Li21]{li2021inverse}
Li~Li.
\newblock On inverse problems arising in fractional elasticity, 2021.

\bibitem[Lin22]{lin2020monotonicity}
Yi-Hsuan Lin.
\newblock Monotonicity-based inversion of fractional semilinear elliptic
  equations with power type nonlinearities.
\newblock {\em Calc. Var. Partial Differential Equations}, 61(5):Paper No. 188,
  30, 2022.

\bibitem[LL22]{LL2020inverse}
Ru-Yu Lai and Yi-Hsuan Lin.
\newblock Inverse problems for fractional semilinear elliptic equations.
\newblock {\em Nonlinear Anal.}, 216:Paper No. 112699, 21, 2022.

\bibitem[LL23]{LL2022inverse}
Yi-Hsuan Lin and Hongyu Liu.
\newblock Inverse problems for fractional equations with a minimal number of
  measurements.
\newblock {\em Communications and Computational Analysis}, 1:72--93, 2023.

\bibitem[LLR20]{LLR2019calder}
Ru-Yu Lai, Yi-Hsuan Lin, and Angkana R\"{u}land.
\newblock The {C}alder\'{o}n problem for a space-time fractional parabolic
  equation.
\newblock {\em SIAM J. Math. Anal.}, 52(3):2655--2688, 2020.

\bibitem[LLU22]{LLU2022calder}
Ching-Lung Lin, Yi-Hsuan Lin, and Gunther Uhlmann.
\newblock The {C}alder\'on problem for nonlocal parabolic operators.
\newblock {\em arXiv:2209.11157}, 2022.

\bibitem[LRZ22]{LRZ2022calder}
Yi-Hsuan Lin, Jesse Railo, and Philipp Zimmermann.
\newblock The {C}alder\'{o}n problem for a nonlocal diffusion equation with
  time-dependent coefficients.
\newblock {\em arXiv preprint arXiv:2211.07781}, 2022.

\bibitem[Nam80]{namba1980density}
Toshiyuki Namba.
\newblock Density-dependent dispersal and spatial distribution of a population.
\newblock {\em Journal of theoretical biology}, 86(2):351--363, 1980.

\bibitem[Ole57]{oleinik1957equations}
O.~Arsen'evna Ole\u{\i}nik.
\newblock On the equations of unsteady filtration.
\newblock {\em Dokl. Akad. Nauk SSSR (N.S.)}, 113:1210--1213, 1957.

\bibitem[RK83]{rosenau1983thermal}
Philip Rosenau and Shoshana Kamin.
\newblock Thermal waves in an absorbing and convecting medium.
\newblock {\em Physica D: Nonlinear Phenomena}, 8(1-2):273--283, 1983.

\bibitem[RO16]{survey-xavier}
Xavier Ros-Oton.
\newblock Nonlocal elliptic equations in bounded domains: a survey.
\newblock {\em Publ. Mat.}, 60(1):3--26, 2016.

\bibitem[RS18]{ruland2018exponential}
Angkana R\"{u}land and Mikko Salo.
\newblock Exponential instability in the fractional {C}alder\'{o}n problem.
\newblock {\em Inverse Problems}, 34(4):045003, 21, 2018.

\bibitem[RS20]{RS17}
Angkana R\"{u}land and Mikko Salo.
\newblock The fractional {C}alder\'{o}n problem: low regularity and stability.
\newblock {\em Nonlinear Anal.}, 193:111529, 56, 2020.

\bibitem[RZ22a]{RZ2022FracCondCounter}
Jesse Railo and Philipp Zimmermann.
\newblock Counterexamples to uniqueness in the inverse fractional conductivity
  problem with partial data.
\newblock {\em Inverse Problems and Imaging}, 2022.

\bibitem[RZ22b]{RZ2022unboundedFracCald}
Jesse Railo and Philipp Zimmermann.
\newblock Fractional {C}alder\'{o}n problems and {P}oincar\'{e} inequalities on
  unbounded domains, 2022.

\bibitem[RZ22c]{RZ-low-2022}
Jesse Railo and Philipp Zimmermann.
\newblock Low regularity theory for the inverse fractional conductivity
  problem, 2022.

\bibitem[Sch60]{Adrian60}
Adrian~E. Scheidegger.
\newblock {\em The physics of flow through porous media}.
\newblock The Macmillan Company, New York, revised edition, 1960.

\bibitem[Sim87]{Simon}
Jacques Simon.
\newblock Compact sets in the space {$L^p(0,T;B)$}.
\newblock {\em Ann. Mat. Pura Appl. (4)}, 146:65--96, 1987.

\bibitem[SZ12]{Salo:Zhong:2012}
Mikko Salo and Xiao Zhong.
\newblock An inverse problem for the $p$-{L}aplacian: {B}oundary determination.
\newblock {\em {SIAM} J. Math. Anal.}, 44(4):2474--2495, March 2012.

\bibitem[V{\'a}z92]{PME}
Juan~Luis V{\'a}zquez.
\newblock An introduction to the mathematical theory of the porous medium
  equation.
\newblock In {\em Shape optimization and free boundaries ({M}ontreal, {PQ},
  1990)}, volume 380 of {\em NATO Adv. Sci. Inst. Ser. C: Math. Phys. Sci.},
  pages 347--389. Kluwer Acad. Publ., Dordrecht, 1992.

\bibitem[V{\'{a}}z14]{RecentProgressFracPorous}
Juan~Luis V{\'{a}}zquez.
\newblock Recent progress in the theory of nonlinear diffusion with fractional
  {L}aplacian operators.
\newblock {\em Discrete Contin. Dyn. Syst. Ser. S}, 7(4):857--885, 2014.

\bibitem[Zim23]{zimmermann2023inverse}
Philipp Zimmermann.
\newblock Inverse problem for a nonlocal diffuse optical tomography equation.
\newblock {\em arXiv preprint arXiv:2302.08610}, 2023.

\end{thebibliography}
		
		\bibliographystyle{alpha}
		
	\end{document}